\begin{document}
\title{\onehalfspacing{}On the Burau representation of $B_{3}$ modulo $p$}
\author{Donsung Lee}
\date{June 20, 2024}

\maketitle
\medskip{}

\begin{abstract}
\begin{spacing}{0.9}
\noindent We present an algorithm that, given a prime $p$ as input,
determines whether or not the Burau representation of the 3-strand
braid group modulo $p$ is faithful. We also prove that the representation
is indeed faithful when $p\le13$. Additionally, we re-pose Salter's
question on the Burau representation of $B_{3}$ over finite fields
$\mathbb{F}_{p}$, and solve it for every $p$.\vspace{1cm}

\noindent \textbf{Keywords:} braid group, Burau representation, Hecke
congruence subgroup, ping-pong lemma for HNN extensions, Bass\textendash Serre
theory, Stallings' folding\medskip{}

\noindent \textbf{Mathematics Subject Classification 2020:} 20F36,
20H05, 20E05, 20E06, 14H52 
\end{spacing}
\end{abstract}
\noindent $ $\theoremstyle{definition}
\theoremstyle{remark}
\newtheorem{theorem}{Theorem}[section]
\newtheorem{lemma}[theorem]{Lemma}
\newtheorem{corollary}[theorem]{Corollary}
\newtheorem{question}[theorem]{Question}
\newtheorem{remark}[theorem]{Remark}
\newtheorem{example}[theorem]{Example}
\newtheorem{definition}[theorem]{Definition}
\newtheorem{algorithm}[theorem]{Algorithm}
\newtheorem{conjecture}[theorem]{Conjecture}

\newtheorem*{ack}{Acknowledgements}
\newtheorem*{theoremA}{Theorem \textnormal{A}}
\newtheorem*{question1}{Question \textnormal{1}}
\newtheorem*{question2}{Question \textnormal{2}}
\newtheorem*{claim1}{Claim \textnormal{1}}
\newtheorem*{proofclaim1}{Proof of Claim \textnormal{1}}
\newtheorem*{claim2}{Claim \textnormal{2}}
\newtheorem*{proofclaim2}{Proof of Claim \textnormal{2}}
\newtheorem*{proofofa}{Proof of \textnormal{(a)}}
\newtheorem*{proofofb}{Proof of \textnormal{(b)}}
\newtheorem*{cases1}{Cases \textnormal{(1)}}
\newtheorem*{cases2}{Cases \textnormal{(2)}}
\newtheorem*{prooflemma218}{Proof of Lemma \textnormal{2.18} for p=\textnormal{3}}
\newtheorem*{prooftheorem11}{Proof of Theorem \textnormal{1.1}}
\newtheorem*{prooftheorem12}{Proof of Theorem \textnormal{1.2}}
\newtheorem*{prooftheorem41}{Proof of Theorem \textnormal{4.1}}

\section{Introduction}

The \emph{Burau representation} is a fundamental linear representation
for braid groups $B_{n}$, which has the generators $\sigma_{1},\,\sigma_{2},\cdots,\,\sigma_{n-1}$.
For each integer $i$ such that $1\le i\le n-1$, the representation
$\beta_{n}$ is given explicitly by
\begin{center}
$\sigma_{i}\mapsto\left(\begin{array}{cccc}
I_{i-1} & 0 & 0 & 0\\
0 & 1-t & t & 0\\
0 & 1 & 0 & 0\\
0 & 0 & 0 & I_{n-i-1}
\end{array}\right)\in\mathrm{GL}\left(n,\,\mathbb{Z}\left[t,\,t^{-1}\right]\right)$.
\par\end{center}

The faithfulness of the Burau representation has been a subject of
interest for a long time. Although the faithfulness was proven for
the case $n=3$ at least in the late 1960s \citep{MR0222880}, it
was in the 1990s that Moody \citep{MR1158006}, Long\textendash Paton
\citep{MR1217079}, and Bigelow \citep{MR1725480} proved the unfaithfulness
for all $n\ge5$. Meanwhile, we may define another representation
for $B_{n}$ to $\mathrm{GL}\left(n,\,\mathbb{F}_{p}\left[t,\,t^{-1}\right]\right)$
by taking modulo $p$ on the image $\beta_{n}\left(B_{n}\right)$.
Call this representation the \emph{Burau representation modulo} $p$,
and denote it by $\beta_{n,\,p}$. There have been research on $\beta_{4,\,p}$
as well, related to the study of the kernel of the original Burau
representation $\beta_{4}$, the triviality of which remains unknown.
For example, Cooper\textendash Long \citep{MR1431138} showed that
$\beta_{4,\,2}$ is unfaithful, Cooper\textendash Long \citep{MR1668343}
proved that $\beta_{4,\,3}$ is unfaithful, and Gibson\textendash Williamson\textendash Yacobi
\citep{gibson20234strand} proved that $\beta_{4,\,5}$ is unfaithful.
Witzel\textendash Zaremsky \citep{MR3415244} implicitly dealt with
$\beta_{4,\,p}$, proving that two matrices in the Burau image generate
a free group. Lee\textendash Song \citep{MR2175118} showed that every
nontrivial element in $\ker\beta_{4,\,p}$ is pseudo-Anosov.

However, there has been relatively little attention given to $\beta_{3,\,p}$.
As Cooper\textendash Long \citep{MR1431138} briefly stated, the methods
used by Moody \citep{MR1158006} and Long\textendash Paton \citep{MR1217079}
were applicable to prove that $\beta_{3,\,2}$ is faithful. In this
paper, we prove that the faithfulness problem of $\beta_{3,\,p}$
is decidable for every $p$. In other words, we provide an algorithm
to determine whether $\beta_{3,\,p}$ is faithful.

\noindent \begin{theorem}

For each prime $p$, the faithfulness problem of $\beta_{3,\,p}$
is decidable.

\noindent \end{theorem}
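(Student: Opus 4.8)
The plan is to reduce the faithfulness of $\beta_{3,p}$ to that of a map between groups acting on a Bruhat--Tits tree, and then to decide the latter by combining Bass--Serre theory with a Stallings-type folding. As in the classical proof that $\beta_3$ is faithful, it is enough to treat the reduced Burau representation $\overline{\beta}_{3,p}\colon B_3\to\mathrm{GL}(2,\mathbb{F}_p[t,t^{-1}])$, which has the same kernel as $\beta_{3,p}$. Write $B_3=\langle x,y\mid x^2=y^3\rangle$ with $x=\sigma_1\sigma_2\sigma_1$, $y=\sigma_1\sigma_2$, so the centre is $Z=\langle x^2\rangle$ and $B_3/Z\cong\mathrm{PSL}_2(\mathbb{Z})=\langle\bar x\rangle*\langle\bar y\rangle$ with $\bar x,\bar y$ of orders $2,3$. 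A computation gives $\overline{\beta}_{3,p}(x^2)=t^3 I$, so $Z$ maps isomorphically onto the scalars $\{t^{3k}I\}$ and $\ker\overline{\beta}_{3,p}\cap Z=1$; hence $\beta_{3,p}$ is faithful if and only if the induced map $\rho_p\colon\mathrm{PSL}_2(\mathbb{Z})\to\mathrm{PGL}_2(\mathbb{F}_p[t,t^{-1}])$ is. Since every torsion element of $\mathrm{PSL}_2(\mathbb{Z})$ is conjugate to a power of $\bar x$ or $\bar y$, and $\rho_p(\bar x),\rho_p(\bar y)$ are classes of explicit non-scalar matrices of orders $2,3$, no torsion lies in $\ker\rho_p$; thus $\ker\rho_p$ is free and the problem becomes whether a certain free normal subgroup of $\mathrm{PSL}_2(\mathbb{Z})$ is trivial.

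Next I would use the action of $\mathrm{PGL}_2(\mathbb{F}_p((t^{-1})))$ on its Bruhat--Tits tree $T$ at the place $t=\infty$. Because $\mathbb{F}_p[t,t^{-1}]$ is $\mathbb{F}_p[t]$ with $t$ inverted, $\mathrm{PGL}_2(\mathbb{F}_p[t,t^{-1}])$ is generated by $\mathrm{PGL}_2(\mathbb{F}_p[t])$ --- which by Nagao's theorem has quotient a ray on $T$, carrying $\mathrm{PGL}_2(\mathbb{F}_p)$ at its endpoint and locally finite Borel-type groups along it --- together with the class $\delta$ of $\mathrm{diag}(t,1)$, which translates by one step along the apartment containing that ray and therefore rolls the ray up into a loop. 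This presents $\mathrm{PGL}_2(\mathbb{F}_p[t,t^{-1}])$ as the fundamental group of a finite graph of groups on $T$: an HNN extension of $\mathrm{PGL}_2(\mathbb{F}_p[s])$ (itself a Nagao amalgam over $\mathbb{F}_p$) along a Borel-type edge group, with stable letter $\delta$; the edge groups occurring are locally finite. Restricting the $T$-action to the finitely generated subgroup $G:=\rho_p(\mathrm{PSL}_2(\mathbb{Z}))$ and passing to the minimal invariant subtree then presents $G$ as $\pi_1$ of a graph of groups whose vertex groups turn out to be conjugates of Hecke-type congruence subgroups, and $\rho_p$ is faithful if and only if the action of $\mathrm{PSL}_2(\mathbb{Z})$ on $T$ is faithful.

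For the algorithm I would run a Stallings-type folding. Starting from the elliptic isometries $\rho_p(\bar x),\rho_p(\bar y)$ and their fixed vertices in $T$ --- both computable from the explicit Burau matrices --- one builds the core of $G$ inside the ambient graph of groups by repeatedly identifying edges with the same image and updating the vertex and edge groups. Since $G$ is finitely generated, the ambient edge groups are locally finite, and $\mathrm{PSL}_2(\mathbb{Z})$ is virtually free (hence accessible), this folding should terminate after a number of steps bounded in terms of $p$, returning a finite graph of groups $\mathcal{G}$ together with the induced morphism $\mathrm{PSL}_2(\mathbb{Z})\to\pi_1(\mathcal{G})\cong G$. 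If some fold collapses a loop or a vertex group it exhibits an explicit nontrivial element of $\ker\rho_p$, so $\beta_{3,p}$ is unfaithful; if instead the folding stabilizes, the resulting configuration is a ping-pong configuration for the HNN/amalgam structure of $\mathrm{PGL}_2(\mathbb{F}_p[t,t^{-1}])$, and the ping-pong lemma for HNN extensions certifies that $\rho_p$ --- hence $\beta_{3,p}$ --- is faithful. Running this on input $p$ decides the problem.

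The step I expect to be the main obstacle is making the last two stages rigorous. One must pin down the precise HNN/graph-of-groups description of $\mathrm{PGL}_2(\mathbb{F}_p[t,t^{-1}])$ acting on $T$ and identify the conjugacy classes of vertex groups that arise (this is where the Hecke congruence subgroups enter), and --- above all --- prove that the folding procedure both terminates and is correct in this specific setting. The latter requires controlling the vertex stabilizers produced by the folds, showing they remain of the restricted congruence type so that their intersections with the locally finite edge groups stay bounded and a uniform accessibility estimate applies, and it requires checking that a stabilized fold genuinely yields a ping-pong configuration rather than a spurious one; handling the borderline case in which $\rho_p(\bar x)$ and $\rho_p(\bar y)$ share a fixed point in $T$ (so that one must recurse into a vertex group) is likely the most delicate point.
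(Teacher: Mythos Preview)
Your reduction to the injectivity of the induced map $\rho_p\colon\mathrm{PSL}_2(\mathbb{Z})\to\mathrm{PGL}_2(\mathbb{F}_p[t,t^{-1}])$, and the observation that $\ker\rho_p$ is torsion-free, are correct and match the paper's Lemma~5.4. The divergence begins immediately afterwards, and the gap is genuine rather than cosmetic.

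The description of $\mathrm{PGL}_2(\mathbb{F}_p[t,t^{-1}])$ as an HNN extension of the Nagao amalgam along a Borel-type edge group with stable letter $\delta=[\mathrm{diag}(t,1)]$ is not correct as stated. Conjugation by $\delta$ sends the upper-triangular Borel $B(\mathbb{F}_p[t])$ into a \emph{proper} subgroup of itself (the upper-right entry becomes divisible by $t$), so at best you obtain an \emph{ascending} HNN extension, and even that does not give the full group. More to the point, on the single Bruhat--Tits tree at $t=\infty$ the vertex stabilizers for $\mathrm{PGL}_2(\mathbb{F}_p[t,t^{-1}])$ are conjugates of $\mathrm{PGL}_2(\mathbb{F}_p[t^{-1}])$, which are themselves Nagao amalgams---neither finite nor locally finite. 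So the ambient object is not a finite graph of groups with locally finite vertex or edge groups, and there is no off-the-shelf termination theorem for Stallings-type folding in that setting; accessibility of the \emph{source} $\mathrm{PSL}_2(\mathbb{Z})$ does not help, since what controls termination is the structure of the \emph{target}. Indeed $\mathrm{SL}_2(\mathbb{F}_q[t,t^{-1}])$ is known not to be finitely presented, which already signals that its single-tree Bass--Serre structure cannot be as tame as you need.

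The paper avoids all of this by exploiting a feature you do not use: Squier's unitarity. The image of $\rho_p$ does not merely sit in $\mathrm{PGL}_2(\mathbb{F}_p[t,t^{-1}])$ but, after passing to the finite-index Hecke subgroup $\Gamma^0(p)\le\mathrm{PSL}_2(\mathbb{Z})$, lands in a much smaller ``quaternionic'' group $\mathcal{Q}_p$ cut out by a hermitian form. The substantial work (Sections~3--4) is a hands-on structure theorem for $\mathcal{Q}_p$: it is a free product of $p$ explicit ``type groups'', most infinite cyclic, with four exceptional factors (when $p\equiv 1\pmod 6$) that are themselves explicit HNN extensions of $\mathrm{Aff}(\mathbb{F}_p[x])*\mathrm{Aff}(\mathbb{F}_p[x])$. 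Because Rademacher's generators of $\Gamma^0(p)$ have bounded complexity, their images lie in a finitely generated level-$k$ subgroup $\mathcal{Q}_p[k]$ which \emph{is} the fundamental group of a finite graph of \emph{finite} groups; only then does Stallings folding apply and terminate. In short, the missing idea in your plan is the unitarity constraint and the resulting structure theorem for $\mathcal{Q}_p$; without it, the target is too wild for the folding argument to go through.
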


It is well-known that the quotient group of $B_{3}$ by its center
is isomorphic to the modular group $\mathrm{PSL}\left(2,\,\mathbb{Z}\right)$
\citep{MR2435235}. The first step to approach the faithfulness problem
is to reinterpret the Burau representation $\beta_{3}$ through a
map
\begin{align*}
\mu & :\mathrm{PSL}\left(2,\,\mathbb{Z}\right)\to\mathrm{PGL}\left(2,\,\mathbb{Z}\left[t,\,t^{-1},\,\left(1+t\right)^{-1}\right]\right),
\end{align*}
defined by
\begin{align*}
\mu\left[\begin{array}{cc}
1 & 0\\
1 & 1
\end{array}\right] & :=\left[\begin{array}{cc}
1 & 0\\
0 & -t
\end{array}\right],\;\mu\left[\begin{array}{cc}
1 & -1\\
0 & 1
\end{array}\right]:=\left[\begin{array}{cc}
-\frac{t^{2}}{1+t} & \frac{t}{1+t}\\
\frac{1+t+t^{2}}{1+t} & \frac{1}{1+t}
\end{array}\right],
\end{align*}
where we used the convention that a matrix in the square brakets $\left[\right]$
is included in a projective linear group, while a usual matrix is
in the round brakets $\left(\right)$. This map is injective by the
faithfulness of $\beta_{3}$. Denote by $m_{p}$ the modulo $p$ map
\begin{align*}
m_{p}:\mathrm{PGL}\left(2,\,\mathbb{Z}\left[t,\,t^{-1},\,\left(1+t\right)^{-1}\right]\right) & \to\mathrm{PGL}\left(2,\,\mathbb{F}_{p}\left[t,\,t^{-1},\,\left(1+t\right)^{-1}\right]\right).
\end{align*}

Define $\mu_{p}$ to be $m_{p}\circ\,\mu$. Then, one of our key lemmas
states that $\beta_{3,\,p}$ is faithful if and only if $\mu_{p}$
is injective. Moreover, the image of $\mu_{p}$ satisfies a kind of
\emph{unitarity}, as found by Squier \citep{MR0727232}. If we put
\begin{align*}
J_{n} & :=\left(\begin{array}{ccccc}
1 & -t^{-1} & -t^{-1} & \cdots & -t^{-1}\\
-t & 1 & -t^{-1} & \cdots & -t^{-1}\\
-t & -t & 1 & \cdots & -t^{-1}\\
\vdots & \vdots & \vdots & \ddots & \vdots\\
-t & -t & -t & \cdots & 1
\end{array}\right),
\end{align*}
then any element $A\in\beta_{n}\left(B_{n}\right)\subset\mathrm{GL}\left(n,\,\mathbb{Z}\left[t,\,t^{-1}\right]\right)$
satisfies the \emph{unitarity relation}
\begin{align}
\overline{A}\,J_{n}\,A^{T} & =J_{n},
\end{align}
where for a matrix $A$ with Laurent polynomial entries, $A\mapsto\overline{A}$
maps $t\mapsto t^{-1}$ for every entry. Define further for any Laurent
polynomial $f\left(t\right)\in\mathbb{Z}\left[t,\,t^{-1}\right]$,
$\overline{f\left(t\right)}:=f\left(t^{-1}\right)$. The image of
$\mu_{p}$ satisfies a {*}-diagonalized version of unitarity inherited
from (1).

The image of an element $M\in\mathrm{PSL}\left(2,\,\mathbb{Z}\right)$
under $\mu_{p}$ is included in $\mathrm{PGL}\left(2,\,\mathbb{F}_{p}\left[t,\,t^{-1}\right]\right)$
if and only if $M\in\Gamma^{0}\left(p\right)$, or $M$ is included
in the transpose of the Hecke congruence subgroup $\Gamma_{0}\left(p\right)$
(Lemma 2.13). Our main preliminary task is to establish a structure
theorem (${\mathrm{Theorem}\;3.11}$) of the \emph{quaternionic group}
$\mathcal{Q}_{p}$ (Definition 2.14), closely related to the unitary
group over the Laurent polynomial ring. To prove Theorem 3.11, we
will use Fenchel\textendash Nielsen's theorem, which is a variant
of the ping-pong lemma for HNN extensions \citep{MR1958350,MR0959135,MR4735233}.
On the other hand, Rademacher \citep{MR3069525} described the structure
and a generating set of $\Gamma_{0}\left(p\right)$. Our proof of
decidability relies on Rademacher's classical results, Bass\textendash Serre
theory \citep{MR1954121,MR0263928,MR1239551} and Stallings' folding
process \citep{MR0695906,MR2130178,MR1882114}. At the end of Section
5, we will prove that $\beta_{3,\,p}$ is faithful for $p\le13$ (Theorem
5.18).

In 2021, Salter in \citep{MR4228497} identified several algebraic
properties satisfied by the elements in the image of $\beta_{n}$
and defined the \emph{target group} $\Gamma_{n}$ as a subgroup of
the unitary group satisfying the properties. Denote by $\overline{\Gamma_{n}}$
the quotient of $\Gamma_{n}$ by its center via the quotient map $q_{n}:\Gamma_{n}\to\overline{\Gamma_{n}}$.
He then posed the question of whether the composition $q_{n}\circ\beta_{n}:B_{n}\to\overline{\Gamma_{n}}$
is surjective. We negatively solved Salter's problem for the case
$n=3$ in \citep{lee2024salters}. For every prime $p$, we can pose
a problem analogous to Salter's, asking whether the Burau representation
modulo $p$ surjects onto the central quotient $\overline{\Gamma_{n,\,p}}=q_{n,\,p}\left(\Gamma_{n,\,p}\right)$
of the target group $\Gamma_{n,\,p}$ defined modulo $p$. Let's term
this a \emph{Salter-type problem modulo} $p$. We will solve this
for every prime $p$ in the case $n=3$. This result also follows
from our structure theorem (Theorem 3.11).

\noindent \begin{theorem}

The map $q_{3,\,2}\circ\beta_{3,\,2}:B_{3}\to\overline{\Gamma_{3,\,2}}$
is surjective. However, for any $p>2$, the map $q_{3,\,p}\circ\beta_{3,\,p}$
is not surjective. Moreover, we have $\left[\overline{\Gamma_{3,\,p}}\,:\,q_{3,\,p}\circ\beta_{3,\,p}\left(B_{3}\right)\right]=\infty$.

\noindent \end{theorem}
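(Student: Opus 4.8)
The plan is to transport the Salter-type problem to the two-dimensional picture and then read off the answer from the structure theorem. Since $\beta_{3,p}$ sends the center $Z(B_3)=\langle(\sigma_1\sigma_2)^3\rangle$ to scalar matrices, which $q_{3,p}$ kills, the map $q_{3,p}\circ\beta_{3,p}$ factors through $B_3/Z(B_3)\cong\mathrm{PSL}(2,\mathbb{Z})$; composing with the reinterpretation $\mu_p$ and the identification of $\overline{\Gamma_{3,p}}$ with $\mathcal{Q}_p$ (or with a finite-index subgroup of it) from Section 2, the induced homomorphism $\mathrm{PSL}(2,\mathbb{Z})\to\mathcal{Q}_p$ is precisely $\mu_p$. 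Hence $q_{3,p}\circ\beta_{3,p}$ is surjective if and only if $\mu_p(\mathrm{PSL}(2,\mathbb{Z}))=\mathcal{Q}_p$, and in general $\bigl[\overline{\Gamma_{3,p}}:q_{3,p}\circ\beta_{3,p}(B_3)\bigr]=\bigl[\mathcal{Q}_p:\mu_p(\mathrm{PSL}(2,\mathbb{Z}))\bigr]$. By Lemma 2.13, the elements of $\mu_p(\mathrm{PSL}(2,\mathbb{Z}))$ with entries in $\mathbb{F}_p[t,t^{-1}]$ are exactly those in $\mu_p(\Gamma^0(p))$, which is the bridge to the vertex groups appearing in Theorem 3.11.

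Next I would invoke Theorem 3.11, which presents $\mathcal{Q}_p$ as the fundamental group of a graph of groups; since it is proved via the Fenchel\textendash Nielsen ping-pong lemma for HNN extensions, there is at least one HNN layer, with an explicit stable letter $s_p$ over a base vertex group $V_p$ chosen so that $\mu_p(\mathrm{PSL}(2,\mathbb{Z}))\le V_p$, i.e.\ so that $\mu_p(\mathrm{PSL}(2,\mathbb{Z}))$ fixes a vertex of the associated Bass\textendash Serre tree. For $p=2$ the decomposition degenerates: using that $-1=1$ and $(a+b)^2=a^2+b^2$ in $\mathbb{F}_2$, together with $[\mathrm{PSL}(2,\mathbb{Z}):\Gamma^0(2)]=3$, one checks that $s_2$ and the remaining generators of $V_2$ are words in $\mu_2(\sigma_1),\mu_2(\sigma_2)$, so that $\mathcal{Q}_2=V_2=\mu_2(\mathrm{PSL}(2,\mathbb{Z}))$; this is where the hypothesis $p=2$ is essential, since the same reduction fails for every odd $p$. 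It follows that $q_{3,2}\circ\beta_{3,2}$ is surjective.

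For $p>2$ the splitting of $\mathcal{Q}_p$ is nondegenerate: $s_p\notin V_p$, so $\mathcal{Q}_p$ acts on the associated Bass\textendash Serre tree $T_p$ without inversions and without a global fixed point, with $s_p$ acting as a hyperbolic isometry. Since $\mu_p(\mathrm{PSL}(2,\mathbb{Z}))\le V_p$ fixes the base vertex $v$, if the index $[\mathcal{Q}_p:\mu_p(\mathrm{PSL}(2,\mathbb{Z}))]$ were finite then the orbit $\mathcal{Q}_p\cdot v$ would be finite, hence a bounded subset of $T_p$, forcing $\mathcal{Q}_p$ to fix a vertex or an edge of $T_p$ and contradicting the hyperbolicity of $s_p$. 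Therefore $[\mathcal{Q}_p:\mu_p(\mathrm{PSL}(2,\mathbb{Z}))]=\infty$, and transporting back along the identification of Section 2 yields $[\overline{\Gamma_{3,p}}:q_{3,p}\circ\beta_{3,p}(B_3)]=\infty$; in particular $q_{3,p}\circ\beta_{3,p}$ is not surjective.

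I expect the main obstacle to lie at the interface with Theorem 3.11: verifying that $\mu_p(\mathrm{PSL}(2,\mathbb{Z}))$ genuinely sits inside a single vertex group of the decomposition (so that it acts elliptically on $T_p$ and the Bass\textendash Serre argument applies), and carrying out the finite but delicate characteristic-$2$ computation that collapses the splitting when $p=2$ while confirming that it stays nondegenerate for every odd $p$. A secondary point is the bookkeeping that ensures $q_{3,p}\circ\beta_{3,p}(B_3)$ corresponds to all of $\mu_p(\mathrm{PSL}(2,\mathbb{Z}))$ and that the index transfers verbatim from $\mathcal{Q}_p$ to $\overline{\Gamma_{3,p}}$.
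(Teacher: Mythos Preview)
Your proposal has a genuine structural gap that cannot be repaired without changing the strategy. The core of your argument is that $\mathcal{Q}_p$ admits an HNN decomposition with a vertex group $V_p$ containing $\mu_p(\mathrm{PSL}(2,\mathbb{Z}))$, so that the image acts elliptically on the Bass--Serre tree while a stable letter acts hyperbolically. But Theorem 3.11 does not say this. For $p\equiv 5\pmod 6$ the quaternionic group $\mathcal{Q}_p$ is \emph{free of rank $p$}: it is the free product of $p$ infinite cyclic groups $\langle\overline{g_p}[r]\rangle$, with no HNN layer whatsoever. The only vertex groups available in any graph-of-groups description are conjugates of these cyclic factors, and the image you want to place inside one of them is (a quotient of) $\mathcal{K}_p$, a free group of rank $\tfrac{p+4}{3}\ge 3$ by Lemma 5.9. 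A non-cyclic free group cannot embed in $\mathbb{Z}$, so ellipticity fails outright. The explicit computations in the proof of Theorem 5.18 confirm this: already for $p=5$ and $p=11$ the images of the generators of $\mathcal{K}_p$ are long reduced words passing through many different type groups, so they are visibly hyperbolic in the free-product tree. Even for $p\equiv 1\pmod 6$, the HNN extensions live only inside the four exceptional type groups $\mathcal{G}_{p,\tau}$; the ambient $\mathcal{Q}_p$ is still a free product of type groups, and the image again spans several factors.

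There is also a bookkeeping issue you flagged yourself but underestimated: $\mu_p$ does not map $\mathrm{PSL}(2,\mathbb{Z})$ into $\mathcal{Q}_p$. By Lemma 2.13 only $\Gamma^0(p)$ lands in $\mathrm{PGL}(2,\mathbb{F}_p[t,t^{-1}])$, and one must pass to $\mathcal{K}_p$ to reach $\mathcal{Q}_p^1$ (Lemma 5.10). Moreover $\overline{\Gamma_{3,p}}$ is not identified with $\mathcal{Q}_p$; rather $\overline{\phi_p}(\mathcal{B}_p)\cap\mathrm{PGL}(2,\mathbb{F}_p[t,t^{-1}])=I_p$ sits inside $\mathcal{Q}_p$ with finite index (Theorem 2.17), and $\Gamma_{3,p}$ itself can be a proper finite-index subgroup of $\mathcal{B}_p$ (see the $p=11$ example in Section 6). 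These indices are all finite, so they don't obstruct the conclusion, but they must be tracked.

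The paper's actual argument avoids Bass--Serre trees entirely and splits into two mechanisms. For $p\equiv 1,3\pmod 6$, the type groups $\mathcal{G}_{p,\tau}$ contain the additive subgroups $\overline{a_{p,\tau,u}}(\mathbb{F}_p[x])\cong\mathbb{F}_p[x]$, so $\mathcal{Q}_p$ is \emph{not finitely generated} (Lemma 6.3); since the Burau image is finitely generated, the index is infinite (Corollary 6.4). For $p\equiv 5\pmod 6$, a rank count does the work: $\mathcal{Q}_p^1$ is free of rank $2p-1$, while the image $G_p^1$ is free of rank at most $\tfrac{p+4}{3}<2p-1$, and the Schreier index formula forces infinite index. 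The $p=2$ case is handled by the explicit computation that $\mu_2(\mathcal{K}_2)$ already equals $\mathcal{Q}_2$.
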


The rest of this paper is organized as follows. In Section 2, we prove
basic properties of the Burau representation modulo $p$ and the quaternionic
group. In Section 3, we list the generating set of the quaternionic
group, and state the structure theorem (Theorem 3.11). We prove Theorem
3.11 when $p\not\equiv1$ (mod 6). In Section 4, we prove Theorem
3.11 when $p\equiv1$ (mod 6). In Section 5, we prove Theorem 1.1.
In Section 6, we prove Theorem 1.2.

\noindent \begin{ack}

\noindent This paper is supported by \emph{National Research Foundation
of Korea (grant number 2020R1C1C1A01006819).} The author thanks Dohyeong
Kim (SNU), Sang-hyun Kim (KIAS), and Carl-Fredrik Nyberg Brodda (KIAS)
for valuable and encouraging comments.

\noindent \end{ack}

\section{Preliminary Results}

Here we review the unitarity of the Burau representation, define quaternionic
groups over finite fields, and explore their basic properties. Many
aspects covered here overlap with our previous work \citep{lee2024salters}.
Therefore, we simplify or omit several proofs to avoid redundancy.
Towards the end of this section, our focus will shift to phenomena
that occur because we are working with the base ring $\mathbb{F}_{p}$,
rather than $\mathbb{Q}$ or $\mathbb{Z}$.

Consider a matrix $A\in\beta_{3}\left(B_{3}\right)$. By definition,
$A$ preserves two vectors as
\begin{align*}
A\left(\begin{array}{c}
1\\
1\\
1
\end{array}\right) & =\left(\begin{array}{c}
1\\
1\\
1
\end{array}\right),\;\left(\begin{array}{c}
t\\
t^{2}\\
t^{3}
\end{array}\right)^{T}A=\left(\begin{array}{c}
t\\
t^{2}\\
t^{3}
\end{array}\right)^{T}.
\end{align*}

These properties and the unitarity (1) still hold modulo $p$, and
we use the same symbols abusing notation. Unless we specially mention
it, we suppose that the prime $p$ is generic.

\noindent \begin{definition}

The \emph{formal Burau group} $\mathcal{B}_{p}$ is defined as
\begin{align*}
\mathcal{B}_{p} & :=\left\{ A\in\mathrm{GL}\left(3,\,\mathbb{F}_{p}\left[t,\,t^{-1}\right]\right)\::\:vA=v,\:A\overrightarrow{1}=\overrightarrow{1},\:\overline{A}\,J_{3}\,A^{T}=J_{3}\right\} .
\end{align*}
\end{definition}

We express the entries of $A\in\mathcal{B}_{p}$ as
\begin{align}
A & =\left(\begin{array}{ccc}
A_{11} & A_{12} & 1-A_{11}-A_{12}\\
A_{21} & A_{22} & 1-A_{21}-A_{22}\\
t^{-2}\left(1-A_{11}-tA_{21}\right) & t^{-2}\left(t-A_{12}-tA_{22}\right) & *
\end{array}\right),
\end{align}
where $*=t^{-2}\left(-1-t+t^{2}+A_{11}+A_{12}+tA_{21}+tA_{22}\right).$

Define matrices
\begin{align*}
 & \xi:=\left(\begin{array}{ccc}
t+t^{2} & 0 & -1\\
-1 & t & -1\\
-1 & -1 & -1
\end{array}\right),\\
 & D:=\left(\overline{\xi}\right)^{-1}J_{3}\left(\xi^{T}\right)^{-1}.
\end{align*}

By direct computation, we have a diagonal matrix
\begin{align*}
D & =\left(\begin{array}{ccc}
\frac{t}{1+t+t^{2}} & 0 & 0\\
0 & 1 & 0\\
0 & 0 & -\frac{t}{1+t+t^{2}}
\end{array}\right).
\end{align*}

To apply the unitarity (1), for integers $1\le k,\,l\le2$, define
a Laurent polynomial
\begin{align*}
f_{kl} & :=\left(A\xi\right)_{kl}.
\end{align*}

Then, from the definition of $\xi$, we have
\begin{align}
f_{k1} & =A_{k1}\left(1+t+t^{2}\right)-1,\;f_{k2}=A_{k1}+A_{k2}\left(1+t\right)-1,\;k=1,2.
\end{align}

\noindent \begin{lemma}

For each matrix $A\in\mathcal{B}_{p}$, we have the following properties.
\begin{description}
\item [{(a)}] the determinant is expressed by $\left\{ f_{kl}\right\} _{1\le k,\,l\le2}$
as
\begin{align}
\det\left(A\right) & =\frac{f_{11}f_{22}-f_{12}f_{21}}{t^{2}\left(1+t\right)}.
\end{align}
\item [{(b)}] the Laurent polynomials $\left\{ f_{kl}\right\} _{1\le k,\,l\le2}$
satisfies the three functional equations
\begin{align}
 & f_{k1}\overline{f_{k1}}+\left(1+t+t^{-1}\right)f_{k2}\overline{f_{k2}}=\frac{\left(1+t\right)^{2}}{t},\;k=1,2,\\
 & 1+t+t^{2}\left(\overline{f_{11}}f_{21}+\left(1+t+t^{-1}\right)\overline{f_{12}}f_{22}\right)=0.
\end{align}
\end{description}
\noindent \end{lemma}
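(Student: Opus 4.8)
The plan is to route the whole argument through the single matrix $G:=A\xi$, whose top-left $2\times2$ block is $(f_{kl})_{1\le k,l\le2}$ by the definition of the $f_{kl}$, and to exploit the fact that $\xi$ was engineered so that $J_{3}=\overline{\xi}\,D\,\xi^{T}$ with $D$ diagonal. First I would determine the rest of $G$. Its third column is $A$ applied to the third column $(-1,-1,-1)^{T}=-\overrightarrow{1}$ of $\xi$, so $A\overrightarrow{1}=\overrightarrow{1}$ forces it to equal $-\overrightarrow{1}$; hence $G_{13}=G_{23}=G_{33}=-1$. For part (a) I also use $vA=v$: then $vG=v\xi=(0,\,0,\,-t(1+t+t^{2}))$, and the first two coordinates of this give $G_{31}=-t^{-2}(f_{11}+tf_{21})$ and $G_{32}=-t^{-2}(f_{12}+tf_{22})$ (the third coordinate is automatically consistent).

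For (a), compute $\det\xi=-(1+t)(1+t+t^{2})$ directly, and compute $\det G$ by cofactor expansion along its third column, all of whose entries equal $-1$; substituting the explicit bottom row, each of the three $2\times2$ minors collapses to a Laurent-monomial multiple of $f_{11}f_{22}-f_{12}f_{21}$, and one collects $\det G=-t^{-2}(1+t+t^{2})(f_{11}f_{22}-f_{12}f_{21})$. Since $\det G=\det(A)\det\xi$ and $\mathbb{F}_{p}[t,t^{-1}]$ is a domain in which $1+t$ and $1+t+t^{2}$ are nonzero, dividing yields formula (5).

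For (b), rewrite the unitarity relation using $J_{3}=\overline{\xi}\,D\,\xi^{T}$ (just the definition of $D$ rearranged): since $\overline{A\xi}=\overline{A}\,\overline{\xi}$ and $(A\xi)^{T}=\xi^{T}A^{T}$, the identity $\overline{A}\,J_{3}\,A^{T}=J_{3}$ becomes $\overline{G}\,D\,G^{T}=J_{3}$. Because $D=\mathrm{diag}(d_{1},d_{2},d_{3})$ is diagonal, $(\overline{G}\,D\,G^{T})_{ij}=\sum_{k=1}^{3}d_{k}\,\overline{G_{ik}}\,G_{jk}$, which for $i,j\in\{1,2\}$ involves only $f_{11},f_{12},f_{21},f_{22}$ and the constant $-1$. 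Reading off the $(1,1)$ and $(2,2)$ entries of $\overline{G}\,D\,G^{T}=J_{3}$, clearing the denominator $1+t+t^{2}$, and using $1+\tfrac{1+t+t^{2}}{t}=\tfrac{(1+t)^{2}}{t}$, gives (6) for $k=1,2$; reading off the $(1,2)$ entry and clearing denominators likewise gives (7). The $(2,1)$ entry only reproduces the bar-conjugate of (7), since $D$ is itself bar-invariant, so it contributes nothing new.

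Everything here is routine computation; the only real points of care are keeping the row/column-vector conventions and the interaction of the bar operation with transposition straight, together with the input (stated in the excerpt) that $D=(\overline{\xi})^{-1}J_{3}(\xi^{T})^{-1}$ is the displayed diagonal matrix. What little conceptual content there is sits in the choice of $\xi$: it simultaneously trivializes the fixed-vector data (its third column is $-\overrightarrow{1}$) and $*$-diagonalizes Squier's form $J_{3}$, so that the single matrix identity (1) collapses into the scalar identities (5), (6), (7) among the four Laurent polynomials $f_{kl}$.
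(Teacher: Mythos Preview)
Your proof is correct and follows essentially the same route as the paper's: both rewrite the unitarity relation as $\overline{A\xi}\,D\,(A\xi)^{T}=J_{3}$ and read off the scalar identities from the entries. You spell out more of the details than the paper does (which simply says ``direct from computations using (2) and (3)''), in particular deriving the third column and bottom row of $G=A\xi$ directly from the fixed-vector conditions rather than via the explicit entry formulas, but this is only a cosmetic difference.
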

\begin{proof}

\noindent Rewrite the unitarity satisfied by $A\in\mathcal{B}_{p}$
as
\begin{align*}
\overline{A\xi}D\left(A\xi\right)^{T} & =J_{3}.
\end{align*}

The properties are direct from computations using (2) and (3). $\qedhere$

\noindent \end{proof}

By substitution using $f_{ij}$, we write the entries of a matrix
$A\in\mathcal{B}_{p}$ from (2) as
\begin{align}
A & =\left(\begin{array}{ccc}
\frac{1+f_{11}}{1+t+t^{2}} & \frac{-f_{11}+t\left(1+t\right)+f_{12}\left(1+t+t^{2}\right)}{\left(1+t\right)\left(1+t+t^{2}\right)} & *\\
\frac{1+f_{21}}{1+t+t^{2}} & \frac{-f_{21}+t\left(1+t\right)+f_{22}\left(1+t+t^{2}\right)}{\left(1+t\right)\left(1+t+t^{2}\right)} & *\\
* & * & *
\end{array}\right).
\end{align}

For integers $1\le i,j\le2$, we introduce a new substitution
\begin{align}
g_{ij} & :=\frac{f_{ij}}{t\left(1+t\right)},
\end{align}

Then, the entries of $A\in\beta_{3}\left(B_{3}\right)$ are also written
as
\begin{align}
A & =\left(\begin{array}{ccc}
\frac{1+g_{11}t\left(1+t\right)}{1+t+t^{2}} & \frac{t\left(1-g_{11}\right)}{\left(1+t+t^{2}\right)}+tg_{12} & *\\
\frac{1+g_{21}t\left(1+t\right)}{1+t+t^{2}} & \frac{t\left(1-g_{21}\right)}{\left(1+t+t^{2}\right)}+tg_{22} & *\\
* & * & *
\end{array}\right).
\end{align}

\noindent \begin{definition}

Define the \emph{first homomorphism} $\phi_{p}:\mathcal{B}_{p}\to\mathrm{GL}\left(2,\,\mathbb{F}_{p}\left[t,\,t^{-1},\,\left(1+t\right)^{-1}\right]\right)$
by
\begin{align*}
A & \mapsto\left(\begin{array}{cc}
g_{11} & g_{12}\\
t^{-1}g_{11}+\left(1+t\right)g_{21} & t^{-1}g_{12}+\left(1+t\right)g_{22}
\end{array}\right).
\end{align*}
\end{definition}
\begin{lemma}

The map $\phi$ is an injective group homomorphism preserving the
determinant.

\noindent \end{lemma}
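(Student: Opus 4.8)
The plan is to realize $\phi_p$ as conjugation by $\xi$ followed by extraction of a diagonal block. Since $\det\xi=-(1+t)(1+t+t^{2})$ is a nonzero element of $\mathbb{F}_p(t)$, the matrix $\xi$ lies in $\mathrm{GL}\left(3,\mathbb{F}_p(t)\right)$, so $C\mapsto\xi^{-1}C\xi$ is an (inner) automorphism of $\mathrm{GL}\left(3,\mathbb{F}_p(t)\right)$; restricting it along $\mathrm{GL}\left(3,\mathbb{F}_p[t,t^{-1}]\right)\hookrightarrow\mathrm{GL}\left(3,\mathbb{F}_p(t)\right)$ to $\mathcal{B}_p$ I set $\eta_A:=\xi^{-1}A\xi$. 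The first step is to show $\eta_A$ has the block shape $\operatorname{diag}(B_A,1)$: the two invariant–vector conditions defining $\mathcal{B}_p$, namely $A\overrightarrow{1}=\overrightarrow{1}$ and $v^{T}A=v^{T}$ with $v=(t,t^{2},t^{3})^{T}$, translate — via the identities $\xi e_{3}=-\overrightarrow{1}$ and $v^{T}\xi=-t(1+t+t^{2})\,e_{3}^{T}$, both obtained by one multiplication with the explicit $\xi$ — into $\eta_A e_{3}=e_{3}$ and $e_{3}^{T}\eta_A=e_{3}^{T}$, which force the last row and column of $\eta_A$ to coincide with those of $I_{3}$.

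The second step is to identify $B_A$ with the matrix in Definition 2.5. Writing $\xi=\left(\begin{smallmatrix}\xi_{0} & *\\ * & -1\end{smallmatrix}\right)$ with leading $2\times2$ block $\xi_{0}=\left(\begin{smallmatrix}t+t^{2} & 0\\ -1 & t\end{smallmatrix}\right)$, and comparing the leading $2\times2$ blocks on the two sides of $A\xi=\xi\,\eta_A$, one gets $(f_{kl})_{1\le k,l\le2}=\xi_{0}B_A$, where $f_{kl}=(A\xi)_{kl}$ as in (3); hence $B_A=\xi_{0}^{-1}(f_{kl})$. Because $\det\xi_{0}=t^{2}(1+t)$, the entries of $\xi_{0}^{-1}$, and therefore of $B_A$, already lie in $\mathbb{F}_p[t,t^{-1},(1+t)^{-1}]$; and expanding $\xi_{0}^{-1}(f_{kl})$ while substituting $g_{ij}=f_{ij}/(t(1+t))$ produces exactly the matrix defining $\phi_p(A)$. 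Thus $\eta_A=\operatorname{diag}(\phi_p(A),1)$, and in particular $\phi_p(A)\in\mathrm{GL}\left(2,\mathbb{F}_p[t,t^{-1},(1+t)^{-1}]\right)$ since $\det\phi_p(A)=\det\eta_A=\det A$ is a unit of $\mathbb{F}_p[t,t^{-1}]$.

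The three assertions then follow formally. The assignment $A\mapsto\eta_A$ is an injective group homomorphism $\mathcal{B}_p\to\mathrm{GL}\left(3,\mathbb{F}_p(t)\right)$ — a restriction of an inner automorphism precomposed with an inclusion — and $\operatorname{diag}(B,1)\mapsto B$ is an isomorphism from the subgroup of such block matrices onto its image in $\mathrm{GL}\left(2,\mathbb{F}_p(t)\right)$; so $\phi_p$, being their composite, is an injective group homomorphism. The determinant is preserved because $\det A=\det\eta_A=\det B_A=\det\phi_p(A)$. (Here one uses, as already recorded with Definition 2.1, that $\mathcal{B}_p$ really is a group: each of its defining conditions is stable under products and inverses, the unitarity relation because $\overline{AA'}\,J_{3}(AA')^{T}=\overline{A}\,(\overline{A'}J_{3}A'^{T})\,A^{T}$.)

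I do not expect a serious obstacle here; the only point needing care is the bookkeeping of base rings. The matrix $\xi$ is \emph{not} invertible over $\mathbb{F}_p[t,t^{-1},(1+t)^{-1}]$, since $1+t+t^{2}$ is not a unit there, so the conjugation must be carried out over $\mathbb{F}_p(t)$ (or over the larger localization inverting $1+t+t^{2}$), and one then has to check \emph{a posteriori} that the resulting $2\times2$ matrix has coefficients back in $\mathbb{F}_p[t,t^{-1},(1+t)^{-1}]$ — which the closed form $B_A=\xi_{0}^{-1}(f_{kl})$ together with $\det\xi_{0}=t^{2}(1+t)$ makes immediate. Everything else reduces to one- or two-line matrix computations.
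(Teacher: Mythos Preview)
Your proof is correct and follows the natural approach that the paper's own setup with $\xi$ and $D$ anticipates; the paper itself does not give a proof here but refers to \cite{lee2024salters}, and your conjugation-by-$\xi$ argument is almost certainly what that reference contains. The identifications $\xi e_{3}=-\overrightarrow{1}$, $v^{T}\xi=-t(1+t+t^{2})\,e_{3}^{T}$, the block computation $(f_{kl})=\xi_{0}B_{A}$, and the care about which ring the inversion takes place in are all correct.
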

\begin{proof}See the proof of \citep[Lemma 3.4]{lee2024salters}. $\qedhere$

\noindent \end{proof}

The functional equations (5) and (6) imply that $f_{21}$ and $f_{22}$
are expressed in terms of $f_{11}$, $f_{12}$, and the determinant.

\noindent \begin{lemma}

For a matrix $A\in\mathcal{B}_{p}$, when we write the entries of
$A$ in terms of $\left\{ f_{kl}\right\} _{1\le k,\,l\le2}$ as in
(3), we have
\begin{align}
 & f_{21}=-\left(\frac{f_{11}+\overline{f_{12}}t^{3}\left(1+t+t^{2}\right)\det\left(A\right)}{t\left(1+t\right)}\right),\\
 & f_{22}=\frac{-f_{12}+\overline{f_{11}}t^{4}\det\left(A\right)}{t\left(1+t\right)}.
\end{align}
\end{lemma}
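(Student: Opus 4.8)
The plan is to read the functional equation (6) together with the determinant formula (4) as a system of two \emph{linear} equations in the unknowns $f_{21}$ and $f_{22}$, with coefficients in $R:=\mathbb{F}_{p}\left[t,\,t^{-1},\,\left(1+t\right)^{-1}\right]$. Rearranging (6) gives $\overline{f_{11}}\,f_{21}+\left(1+t+t^{-1}\right)\overline{f_{12}}\,f_{22}=-\frac{1+t}{t^{2}}$, and rearranging (4) gives $-f_{12}\,f_{21}+f_{11}\,f_{22}=t^{2}\left(1+t\right)\det\left(A\right)$. Thus the coefficient matrix of the system is $\left(\begin{array}{cc}\overline{f_{11}} & \left(1+t+t^{-1}\right)\overline{f_{12}}\\ -f_{12} & f_{11}\end{array}\right)$, and extracting $f_{21},\,f_{22}$ is just Cramer's rule, provided the coefficient determinant is a unit of $R$.

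The crucial point is precisely that this determinant, namely $f_{11}\overline{f_{11}}+\left(1+t+t^{-1}\right)f_{12}\overline{f_{12}}$, equals $\frac{\left(1+t\right)^{2}}{t}$ by the $k=1$ instance of the functional equation (5); and $\frac{\left(1+t\right)^{2}}{t}$ is a unit in $R$. Hence the system has a unique solution over $R$, computed by Cramer's rule. I emphasize that (4), (5) for $k=1$, and (6) are all consequences of the single unitarity relation $\overline{A}\,J_{3}\,A^{T}=J_{3}$ built into the definition of $\mathcal{B}_{p}$, so no consistency check against the remaining data is needed; we are simply solving for two entries that the relation already determines.

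It then remains to expand the two Cramer determinants and simplify. For $f_{21}$ one gets $f_{21}=\frac{t}{\left(1+t\right)^{2}}\left(-\frac{1+t}{t^{2}}f_{11}-t\left(1+t\right)\left(1+t+t^{2}\right)\overline{f_{12}}\det\left(A\right)\right)$, using the identity $\left(1+t+t^{-1}\right)t^{2}=t\left(1+t+t^{2}\right)$; pulling out the common factor $-\frac{1+t}{t^{2}}$ and cancelling against $\frac{\left(1+t\right)^{2}}{t}$ leaves $-\frac{1}{t\left(1+t\right)}\left(f_{11}+\overline{f_{12}}\,t^{3}\left(1+t+t^{2}\right)\det\left(A\right)\right)$, which is (11). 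Likewise $f_{22}=\frac{t}{\left(1+t\right)^{2}}\cdot\frac{1+t}{t^{2}}\left(-f_{12}+\overline{f_{11}}\,t^{4}\det\left(A\right)\right)=\frac{1}{t\left(1+t\right)}\left(-f_{12}+\overline{f_{11}}\,t^{4}\det\left(A\right)\right)$, which is (12).

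No step here presents a genuine obstacle: the entire content is the observation that (4), (5), (6) constitute a linear system in $f_{21},f_{22}$ whose coefficient matrix has determinant $\frac{(1+t)^{2}}{t}\in R^{\times}$ by (5), after which the formulas follow from a short and purely mechanical simplification. The only point demanding any care is keeping track of the powers of $t$ and $1+t$ so that the final expressions land exactly in the stated form.
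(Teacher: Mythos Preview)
Your proof is correct and is essentially the same approach the paper (via its reference to \cite{lee2024salters}) has in mind: treat (4) and (6) as a linear system in $f_{21},f_{22}$, observe that the coefficient determinant is exactly the left-hand side of (5) with $k=1$, hence a unit $\frac{(1+t)^{2}}{t}$ in $\mathbb{F}_{p}\bigl[t,t^{-1},(1+t)^{-1}\bigr]$, and apply Cramer's rule. The algebraic simplifications you carry out are accurate.
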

\begin{proof}See the proof of \citep[Lemma 3.5]{lee2024salters}. $\qedhere$

\noindent \end{proof}
\begin{corollary}

The first homomorphism $\phi$ on $\mathcal{B}$ maps $A\in\mathcal{B}$
into
\begin{align*}
\left(\begin{array}{cc}
g_{11} & g_{12}\\
-\det\left(A\right)\left(1+t^{-1}+t\right)\overline{g_{12}} & \det\left(A\right)\overline{g_{11}}
\end{array}\right).
\end{align*}
\end{corollary}
\begin{proof}

\noindent In terms of $\left\{ g_{kl}\right\} _{1\le k,\,l\le2}$,
by (8), the equations (10) and (11) in Lemma 2.5 are written as
\begin{align*}
 & g_{21}=-\left(\frac{g_{11}+\overline{g_{12}}\left(1+t+t^{2}\right)\det\left(A\right)}{t\left(1+t\right)}\right),\\
 & g_{22}=\frac{-g_{12}+\overline{g_{11}}t\det\left(A\right)}{t\left(1+t\right)}.
\end{align*}
which simplifies the image of the map $\phi$ as desired. $\qedhere$

\noindent \end{proof}

For a matrix $A\in\mathcal{B}_{p}$, we can evaluate the entries at
$t=-1$. Abusing notation slightly, we define the evaluation map:
\begin{align*}
\mathrm{eval}_{-1} & :\mathcal{B}_{p}\to\mathrm{GL}\left(3,\,\mathbb{F}_{p}\right).
\end{align*}

For a matrix $B\in\mathrm{eval}_{-1}\left(\mathcal{B}\right)$, we
write the entries in terms of the four corner entries using (2) as
\begin{align*}
B & =\left(\begin{array}{ccc}
B_{11} & 1-B_{11}-B_{13} & B_{13}\\
-1+B_{11}+B_{31} & 3-B_{11}-B_{13}-B_{31}-B_{33} & -1+B_{13}+B_{33}\\
B_{31} & 1-B_{31}-B_{33} & B_{33}
\end{array}\right).
\end{align*}

\noindent \begin{definition}

Define the \emph{second homomorphism} $\rho_{p}:\mathrm{eval}_{-1}\left(\mathcal{B}_{p}\right)\to\mathrm{GL}\left(2,\,\mathbb{F}_{p}\right)$
by
\begin{align*}
B & \mapsto\left(\begin{array}{cc}
1-B_{13} & 1-B_{11}\\
1-B_{33} & 1-B_{31}
\end{array}\right).
\end{align*}
\end{definition}
\begin{lemma}

The map $\rho$ is an injective group homomorphism preserving the
determinant.

\noindent \end{lemma}
\begin{proof}

\noindent See the proof of \citep[Lemma 2.8]{lee2024salters}. $\qedhere$

\noindent \end{proof}

We compute the images of $\beta_{3}\left(\sigma_{1}\right)=\left(\begin{array}{ccc}
1-t & t & 0\\
1 & 0 & 0\\
0 & 0 & 1
\end{array}\right)$ and $\beta_{3}\left(\sigma_{2}\right)=\left(\begin{array}{ccc}
1 & 0 & 0\\
0 & 1-t & t\\
0 & 1 & 0
\end{array}\right)$ under the two homomorphisms $\phi_{p}$ and $\rho_{p}$. They are
given by
\begin{align}
 & \phi_{p}\left(\beta_{3,\,p}\left(\sigma_{1}\right)\right)=\left(\begin{array}{cc}
-\frac{t^{2}}{1+t} & \frac{t}{1+t}\\
\frac{1+t+t^{2}}{1+t} & \frac{1}{1+t}
\end{array}\right),\;\phi_{p}\left(\beta_{3,\,p}\left(\sigma_{2}\right)\right)=\left(\begin{array}{cc}
1 & 0\\
0 & -t
\end{array}\right),\\
 & \rho_{p}\left(\beta_{3,\,p}\left(\sigma_{1}\right)|_{t=-1}\right)=\left(\begin{array}{cc}
1 & -1\\
0 & 1
\end{array}\right),\;\rho_{p}\left(\beta_{3,\,p}\left(\sigma_{2}\right)|_{t=-1}\right)=\left(\begin{array}{cc}
1 & 0\\
1 & 1
\end{array}\right).
\end{align}

Put $\pi_{p}$ as the natural projection
\begin{align*}
\pi_{p} & :\mathrm{GL}\left(2,\,\mathbb{F}_{p}\left(t\right)\right)\to\mathrm{PGL}\left(2,\,\mathbb{F}_{p}\left(t\right)\right),
\end{align*}
where $\ker\pi_{p}$ includes the scalar matrices $uI$, for units
$u$ in $\mathbb{F}_{p}\left(t\right)$. Denote by $\overline{\phi_{p}}$
(resp. $\overline{\rho_{p}}$) the composition $\pi_{p}\circ\phi_{p}$
(resp. $\pi_{p}\circ\rho_{p}\circ\mathrm{eval}_{-1}$). 

Because both $\beta_{3,\,p}\left(\sigma_{1}\right)$ and $\beta_{3,\,p}\left(\sigma_{2}\right)$
have determinant $-t$, the restriction $\overline{\rho_{p}}$ on
$\beta_{3,\,p}\left(B_{3}\right)$ maps into $\mathrm{PSL}\left(2,\,\mathbb{F}_{p}\right)$.
In fact, the image is the entire group $\mathrm{PSL}\left(2,\,\mathbb{F}_{p}\right)$
by the strong approximation theorem for $\mathrm{PSL}\left(2,\,\mathbb{Z}\right)$
\citep{MR1278263}. The following lemma guarantees that the image
of $\mathcal{B}_{p}$ under $\overline{\rho_{p}}$ is also $\mathrm{PSL}\left(2,\,\mathbb{F}_{p}\right)$.

\noindent \begin{lemma}

For a matrix $A\in\mathcal{B}_{p}$, we have $\det\left(A\right)=\left(-t\right)^{k}$
for some integer $k$. In particular, we have $\overline{\rho_{p}}\left(\mathcal{B}_{p}\right)=\mathrm{PSL}\left(2,\,\mathbb{F}_{p}\right)$.

\noindent \end{lemma}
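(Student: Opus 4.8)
The plan is to establish the determinant identity first; the statement about $\overline{\rho_{p}}\left(\mathcal{B}_{p}\right)$ then follows quickly. Since $A\in\mathrm{GL}\left(3,\,\mathbb{F}_{p}\left[t,\,t^{-1}\right]\right)$, the determinant $\det\left(A\right)$ is a unit of $\mathbb{F}_{p}\left[t,\,t^{-1}\right]$, so $\det\left(A\right)=c\,t^{k}$ for some $c\in\mathbb{F}_{p}^{\times}$ and $k\in\mathbb{Z}$. Evaluating at $t=-1$ gives $\det\left(\mathrm{eval}_{-1}\left(A\right)\right)=c\left(-1\right)^{k}$, so the whole claim reduces to proving $\det\left(\mathrm{eval}_{-1}\left(A\right)\right)=1$: once this is known, $c\left(-1\right)^{k}=1$ forces $c=\left(-1\right)^{k}$, i.e.\ $\det\left(A\right)=\left(-t\right)^{k}$.

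Write $B:=\mathrm{eval}_{-1}\left(A\right)$. On its own the unitarity relation gives no information at $t=-1$, because there $J_{3}$ degenerates to the rank-one matrix $\overrightarrow{1}\,\overrightarrow{1}^{T}$ and $\overline{A}\,J_{3}\,A^{T}=J_{3}$ becomes merely $B\overrightarrow{1}=\overrightarrow{1}$. I would therefore extract first-order information: differentiate the identity $\overline{A}\left(t\right)J_{3}\left(t\right)A\left(t\right)^{T}=J_{3}\left(t\right)$ with respect to $t$ and evaluate at $t=-1$. Using $\frac{d}{dt}\overline{A}\left(t\right)\big|_{t=-1}=-B'$ with $B':=\frac{d}{dt}A\left(t\right)\big|_{t=-1}$, the value $N:=\frac{d}{dt}J_{3}\left(t\right)\big|_{t=-1}=\left(\begin{smallmatrix}0 & 1 & 1\\-1 & 0 & 1\\-1 & -1 & 0\end{smallmatrix}\right)$, and the two identities $B\overrightarrow{1}=\overrightarrow{1}$ and $B'\overrightarrow{1}=0$ (the latter from differentiating $A\left(t\right)\overrightarrow{1}=\overrightarrow{1}$), the terms containing $\overrightarrow{1}\,\overrightarrow{1}^{T}$ cancel and what remains is exactly $B\,N\,B^{T}=N$, i.e.\ $B^{T}$ preserves the alternating form $N$. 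I expect this step — recognizing the degeneracy at $t=-1$ and differentiating there to produce $B\,N\,B^{T}=N$ — to be the main obstacle; the rest is bookkeeping.

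To conclude, set $r:=\left(-1,\,1,\,-1\right)^{T}=\left(v|_{t=-1}\right)^{T}$; then $N$ is precisely the antisymmetric matrix with $Nx=r\times x$ for all $x\in\mathbb{F}_{p}^{3}$, so $\ker N=\left\langle r\right\rangle$ (as $\mathrm{rank}\,N=2$). The identity $B\left(r\times B^{T}x\right)=\left(\det\left(B\right)B^{-T}r\right)\times x$, valid for every $B\in\mathrm{GL}\left(3,\,\mathbb{F}_{p}\right)$, rewrites $B\,N\,B^{T}=N$ as $\det\left(B\right)B^{-T}r=r$. But the defining relation $vA=v$ at $t=-1$ reads $v|_{t=-1}\,B=v|_{t=-1}$, i.e.\ $B^{T}r=r$, whence $B^{-T}r=r$; substituting gives $\det\left(B\right)r=r$, and since $r\neq0$ this forces $\det\left(B\right)=1$. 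Hence $\det\left(A\right)=\left(-t\right)^{k}$. (One could equally pass to the nondegenerate alternating form induced by $N$ on $\mathbb{F}_{p}^{3}/\left\langle r\right\rangle$, which $B^{T}$ preserves; the induced automorphism lies in $\mathrm{Sp}\left(2,\,\mathbb{F}_{p}\right)=\mathrm{SL}\left(2,\,\mathbb{F}_{p}\right)$, and since $B^{T}$ fixes $r$ one again gets $\det\left(B\right)=1$.)

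For the second assertion, $\beta_{3,\,p}\left(B_{3}\right)\subseteq\mathcal{B}_{p}$ yields $\overline{\rho_{p}}\left(\mathcal{B}_{p}\right)\supseteq\overline{\rho_{p}}\left(\beta_{3,\,p}\left(B_{3}\right)\right)=\mathrm{PSL}\left(2,\,\mathbb{F}_{p}\right)$, the last equality being the one recorded just before the lemma via strong approximation for $\mathrm{PSL}\left(2,\,\mathbb{Z}\right)$. Conversely, for any $A\in\mathcal{B}_{p}$ the identity just proved gives $\det\left(\mathrm{eval}_{-1}\left(A\right)\right)=\left(-t\right)^{k}\big|_{t=-1}=1$, and since $\rho_{p}$ preserves determinants (the preceding lemma) it follows that $\rho_{p}\left(\mathrm{eval}_{-1}\left(A\right)\right)\in\mathrm{SL}\left(2,\,\mathbb{F}_{p}\right)$, hence $\overline{\rho_{p}}\left(A\right)\in\mathrm{PSL}\left(2,\,\mathbb{F}_{p}\right)$. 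Therefore $\overline{\rho_{p}}\left(\mathcal{B}_{p}\right)=\mathrm{PSL}\left(2,\,\mathbb{F}_{p}\right)$.
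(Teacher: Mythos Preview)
Your proof is correct. The paper itself defers entirely to \cite[Lemma~3.10]{lee2024salters} for the determinant statement and gives no argument of its own, so a line-by-line comparison is not possible; what you supply is a complete, self-contained alternative. The key step---observing that the unitarity relation $\overline{A}J_{3}A^{T}=J_{3}$ degenerates at $t=-1$ but that its \emph{formal derivative} there produces the nondegenerate constraint $BNB^{T}=N$ for the alternating form $N$---is clean and works uniformly in every characteristic, since formal differentiation of Laurent polynomials over $\mathbb{F}_{p}$ obeys the product and chain rules. Your cross-product identity $B(r\times B^{T}x)=\det(B)\,(B^{-T}r)\times x$ is just the cofactor formula $(Ma)\times(Mb)=\det(M)\,M^{-T}(a\times b)$ with $M=B^{T}$, so the reduction of $BNB^{T}=N$ to $\det(B)\,B^{-T}r=r$ is sound; combining with $B^{T}r=r$ (from $vA=v$ at $t=-1$) then forces $\det(B)=1$. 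The treatment of the second assertion matches the paper's: strong approximation gives surjectivity onto $\mathrm{PSL}(2,\mathbb{F}_{p})$, and $\det(B)=1$ together with Lemma~2.8 gives the reverse containment.
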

\begin{proof}See the proof of \citep[Lemma 3.10]{lee2024salters}. The strong approximation
on $\mathrm{PSL}\left(2,\,\mathbb{Z}\right)$ ensures that $\overline{\rho_{p}}$
is surjective onto $\mathrm{PSL}\left(2,\,\mathbb{F}_{p}\right)$.
$\qedhere$

\noindent \end{proof}

The following lemma characterizes the image of $\phi_{p}$.

\noindent \begin{lemma}

Suppose a matrix $M=\left(\begin{array}{cc}
g_{11} & g_{12}\\
-\det\left(M\right)\left(1+t^{-1}+t\right)\overline{g_{12}} & \det\left(M\right)\overline{g_{11}}
\end{array}\right)$ which included in $\mathrm{GL}\left(2,\,\mathbb{F}_{p}\left[t,\,t^{-1}\right]\right)$
satisfies that $\det\left(M\right)=\left(-t\right)^{k}$ for some
integer $k$. Then, the image group $\phi_{p}\left(\mathcal{B}_{p}\right)$
includes $M$ if and only if $g_{11}=1$ (mod $1+t+t^{2}$).

\noindent \end{lemma}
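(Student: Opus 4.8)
Since $\phi_{p}$ is injective (Lemma 2.4) and, by Corollary 2.7 together with Lemma 2.9, every element of $\phi_{p}(\mathcal{B}_{p})$ already has exactly the displayed shape with determinant a power of $-t$, the statement reduces to pinning down which such $M$ lie in the image; the plan is to do this by producing an explicit preimage. The ``only if'' direction is immediate: if $M=\phi_{p}(A)$ with $A\in\mathcal{B}_{p}$, then by (11) the $(1,1)$-entry of $A$ is $\frac{1+g_{11}t(1+t)}{1+t+t^{2}}$, which must be a genuine Laurent polynomial, so $1+g_{11}t(1+t)\equiv0\pmod{1+t+t^{2}}$; as $t(1+t)\equiv-1\pmod{1+t+t^{2}}$ this says precisely $g_{11}\equiv1\pmod{1+t+t^{2}}$.

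For the converse, assume $M$ has the given shape, $\det(M)=(-t)^{k}$, and $g_{11}\equiv1\pmod{1+t+t^{2}}$. I would reconstruct a candidate $A$ by running the formulas of Section 2 backwards: set $f_{11}:=g_{11}t(1+t)$, $f_{12}:=g_{12}t(1+t)$, declare $\det(A):=\det(M)$, define $f_{21},f_{22}$ by the identities (10)--(11) of Lemma 2.5, take the first two columns of $A$ from (7), and complete $A$ by formula (2) (which builds in $A\overrightarrow{1}=\overrightarrow{1}$ and $vA=v$). A short manipulation shows $f_{21}=-\bigl(g_{11}+\overline{g_{12}}(1+t+t^{2})\det(M)\bigr)$ and $f_{22}=-g_{12}+\overline{g_{11}}t\det(M)$ are honest Laurent polynomials, so $A$ has entries in $\mathbb{F}_{p}[t,t^{-1},(1+t)^{-1}]$ and, by Lemma 2.2(a), $\det(A)=\det(M)=(-t)^{k}$ is a unit; thus membership in $\mathrm{GL}(3,\mathbb{F}_{p}[t,t^{-1}])$ comes down to showing the four corner entries $A_{11},A_{12},A_{21},A_{22}$ are Laurent polynomials. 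Each is a quotient whose denominator divides $(1+t)(1+t+t^{2})$: reducing each numerator modulo $1+t+t^{2}$ and using $t(1+t)\equiv-1$ collapses it to a multiple of $g_{11}-1$, which vanishes by hypothesis, while reducing each numerator modulo $1+t$ gives $0$ automatically, since $g_{11},g_{12}$ are Laurent polynomials (so $f_{11},f_{12}$ vanish at $t=-1$) and $\det(M)|_{t=-1}=1$.

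It remains to verify the unitarity relation $\overline{A}\,J_{3}\,A^{T}=J_{3}$. Reversing the computation in the proof of Lemma 2.2 (conjugating by $\xi$, diagonalizing via $D$, and using $f_{13}=f_{23}=f_{33}=-1$ and $f_{3l}=-t^{-2}f_{1l}-t^{-1}f_{2l}$), this is equivalent to the three functional equations (5)--(6) for $\{f_{kl}\}$, so it suffices to check those. Equation (5) for $k=1$ becomes, after dividing $f_{11}\overline{f_{11}}+(1+t+t^{-1})f_{12}\overline{f_{12}}$ by $\frac{(1+t)^{2}}{t}$, the identity $g_{11}\overline{g_{11}}+(1+t+t^{-1})g_{12}\overline{g_{12}}=1$, which is exactly what expanding $\det(M)$ from the given shape of $M$ and cancelling the nonzero factor $\det(M)$ yields. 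Equation (6) is then a formal consequence of the defining formulas for $f_{21},f_{22}$ and the $k=1$ case of (5), the cross-terms cancelling because $-(1+t+t^{2})+t(1+t+t^{-1})=0$. Equation (5) for $k=2$ reduces the same way to $g_{21}\overline{g_{21}}+(1+t+t^{-1})g_{22}\overline{g_{22}}=1$; expanding $g_{21},g_{22}$ via Corollary 2.7 and using $\det(M)\overline{\det(M)}=1$, every mixed term cancels and the left side collapses once more to $g_{11}\overline{g_{11}}+(1+t+t^{-1})g_{12}\overline{g_{12}}=1$. Hence $A\in\mathcal{B}_{p}$, and $\phi_{p}(A)=M$ by Corollary 2.7 and $\det(A)=\det(M)$.

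None of this is deep; the only real obstacle is bookkeeping --- keeping the three substitutions $A_{ij}\rightsquigarrow f_{ij}\rightsquigarrow g_{ij}$, the involution $t\mapsto t^{-1}$, and the two separate divisibility checks (modulo $1+t$ and modulo $1+t+t^{2}$) from interfering, and in particular tracking which quantities are genuine Laurent polynomials versus merely elements of $\mathbb{F}_{p}[t,t^{-1},(1+t)^{-1}]$. Committing at the outset to exhibiting a concrete preimage, rather than trying to characterize $\phi_{p}(\mathcal{B}_{p})$ intrinsically, is what makes the verification routine.
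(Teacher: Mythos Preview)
Your argument is correct. The paper itself does not give a proof here but simply refers to \cite[Lemma~3.11]{lee2024salters}, the analogous statement over $\mathbb{Z}$; what you have written is essentially the explicit reconstruction of that cited argument, carried out over $\mathbb{F}_{p}$. Two minor labeling slips: the display of $A$ in terms of $g_{ij}$ is equation~(9), not~(11), and the image description of $\phi_{p}$ you invoke is Corollary~2.6, not~2.7 (Definition~2.7 is the second homomorphism~$\rho_{p}$). One small addition worth making explicit for the reader: your reduction of the full $3\times 3$ unitarity $\overline{A}J_{3}A^{T}=J_{3}$ to the three scalar equations (5)--(6) relies not only on $f_{13}=f_{23}=f_{33}=-1$ and $f_{3l}=-t^{-2}f_{1l}-t^{-1}f_{2l}$ for $l=1,2$, but also on the hermitian-type symmetry $\overline{J_{3}^{T}}=J_{3}$ (and likewise for $\overline{B}DB^{T}$), which is what makes the $(i,j)$ and $(j,i)$ entries equivalent; this is routine but worth a sentence.
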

\begin{proof}

\noindent See the proof of \citep[Lemma 3.11]{lee2024salters}. $\qedhere$

\noindent \end{proof}

Define a braid $\Delta\in B_{3}$ to be $\left(\sigma_{1}\sigma_{2}\sigma_{1}\right)^{2}$.
It is a generator of the center of $B_{3}$ \citep{MR2435235}.

\noindent \begin{corollary}

A matrix $A\in\mathcal{B}_{p}$ is contained in $\ker\overline{\phi_{p}}$
if and only if $A=\beta_{3,\,p}\left(\Delta\right)^{k}$ for some
$k\in\mathbb{Z}$.

\noindent \end{corollary}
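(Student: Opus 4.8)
The plan is to unwind $\overline{\phi_{p}}=\pi_{p}\circ\phi_{p}$ and translate the claim into a statement about scalar matrices in $\phi_{p}\left(\mathcal{B}_{p}\right)$. Since $\ker\pi_{p}$ consists exactly of the matrices $uI$ with $u$ a unit of $\mathbb{F}_{p}\left[t,\,t^{-1},\,\left(1+t\right)^{-1}\right]$, and $\phi_{p}$ is injective by Lemma 2.4, we have $A\in\ker\overline{\phi_{p}}$ if and only if $\phi_{p}\left(A\right)$ is scalar. So the whole content is to show that the scalar matrices in $\phi_{p}\left(\mathcal{B}_{p}\right)$ are precisely the $\phi_{p}\left(\beta_{3,\,p}\left(\Delta\right)^{k}\right)$, $k\in\mathbb{Z}$.

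For the ``if'' part I would directly compute $\phi_{p}\left(\beta_{3,\,p}\left(\Delta\right)\right)$. Using $\Delta=\left(\sigma_{1}\sigma_{2}\sigma_{1}\right)^{2}$, the homomorphism property of $\phi_{p}$, and the explicit matrices in (12), one finds
\[
\phi_{p}\left(\beta_{3,\,p}\left(\sigma_{1}\sigma_{2}\sigma_{1}\right)\right)=\frac{t}{1+t}\left(\begin{array}{cc}
-t & -t\\
-\left(1+t+t^{2}\right) & t
\end{array}\right),
\]
and since the square of that bracketed matrix is $t\left(1+t\right)^{2}I$, we get $\phi_{p}\left(\beta_{3,\,p}\left(\Delta\right)\right)=t^{3}I$; hence $\phi_{p}\left(\beta_{3,\,p}\left(\Delta\right)^{k}\right)=t^{3k}I$ is scalar and $\beta_{3,\,p}\left(\Delta\right)^{k}\in\ker\overline{\phi_{p}}$ for all $k$. (One could also see that $\phi_{p}\left(\beta_{3,\,p}\left(\Delta\right)\right)$ is scalar without computing it: $\Delta$ is central in $B_{3}$, so $\phi_{p}\left(\beta_{3,\,p}\left(\Delta\right)\right)$ commutes with $\phi_{p}\left(\beta_{3,\,p}\left(\sigma_{1}\right)\right)$ and $\phi_{p}\left(\beta_{3,\,p}\left(\sigma_{2}\right)\right)$, and these two matrices have distinct eigenvalues $1,\,-t$ and no common eigenline.)

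For the ``only if'' part, assume $\phi_{p}\left(A\right)=uI$. By Corollary 2.6 the $(1,1)$-entry of $\phi_{p}\left(A\right)$ is $g_{11}$, so $g_{11}=u$. Since $\phi_{p}$ preserves the determinant (Lemma 2.4) and $\det A=\left(-t\right)^{k}$ for some $k$ (Lemma 2.9), we get $u^{2}=\left(-t\right)^{k}$; because every unit of $\mathbb{F}_{p}\left[t,\,t^{-1},\,\left(1+t\right)^{-1}\right]$ has the form $c\,t^{a}\left(1+t\right)^{b}$ with $c\in\mathbb{F}_{p}^{\times}$, comparing these expressions forces $b=0$, $k=2a$, $c=\pm1$, i.e.\ $u=\pm t^{a}$ for some $a\in\mathbb{Z}$. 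Now I would use that $A$ has Laurent-polynomial entries: by (9) its $(1,1)$-entry is
\[
\frac{1+g_{11}t\left(1+t\right)}{1+t+t^{2}}=\frac{1\pm t^{a+1}\left(1+t\right)}{1+t+t^{2}}\in\mathbb{F}_{p}\left[t,\,t^{-1}\right],
\]
so $1+t+t^{2}$ divides $1\pm t^{a+1}\left(1+t\right)$. Reducing modulo $1+t+t^{2}$ via $t^{3}\equiv1$ and $1+t\equiv-t^{2}$ and running through the three residues of $a$ modulo $3$, one finds the divisibility holds exactly when the sign is $+$ and $3\mid a$. Writing $a=3m$, we get $\phi_{p}\left(A\right)=t^{3m}I=\phi_{p}\left(\beta_{3,\,p}\left(\Delta\right)^{m}\right)$, hence $A=\beta_{3,\,p}\left(\Delta\right)^{m}$ by the injectivity of $\phi_{p}$.

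The only place where actual work happens is the final divisibility check, so that is the step I expect to be the main (minor) obstacle. The one edge case is $p=3$, where $1+t+t^{2}=\left(t-1\right)^{2}$ fails to be squarefree; but $t^{3}\equiv1\pmod{1+t+t^{2}}$ still holds and the residue of $t$ still has multiplicative order $3$ there, so the case analysis goes through verbatim, and for $p=2$ the sign is automatically $+$.
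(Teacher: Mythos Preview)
Your proof is correct and follows essentially the same approach as the paper. The only cosmetic differences are that the paper reaches $g_{11}=\pm t^{l}$ via the identity $g_{11}\overline{g_{11}}=1$ (from Corollary~2.6) rather than via $u^{2}=(-t)^{k}$, and it packages your divisibility check as an appeal to Lemma~2.10; your direct verification of the congruence modulo $1+t+t^{2}$, including the $p=3$ edge case, is equivalent.
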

\begin{proof}

\noindent This proof is a reduplication of the proof of \citep[Corollary 3.12]{lee2024salters}.
The Burau image of $\Delta$ is given by
\begin{align*}
\beta_{3,\,p}\left(\Delta\right) & =\left(\begin{array}{ccc}
1-t+t^{3} & t-t^{2} & t^{2}-t^{3}\\
1-t & t-t^{2}+t^{3} & t^{2}-t^{3}\\
1-t & t-t^{2} & t^{2}
\end{array}\right),
\end{align*}
by which we compute
\begin{align*}
\phi_{p}\left(\beta_{3,\,p}\left(\Delta\right)\right) & =\left(\begin{array}{cc}
t^{3} & 0\\
0 & t^{3}
\end{array}\right),
\end{align*}
which is included in $\ker\overline{\phi_{p}}$.

On the other hand, choose a matrix $A\in\ker\overline{\phi_{p}}$,
which is equivalent to the condition that $\phi\left(A\right)=uI$
for some unit $u=\pm t^{l_{1}}\left(1+t\right)^{l_{2}}$ in $\mathbb{F}_{p}\left[t,\,t^{-1},\,\left(1+t\right)^{-1}\right]$,
where $l_{1}$ and $l_{2}$ are integers. By Corollary 2.6, we have
\begin{align}
\phi\left(A\right) & =\left(\begin{array}{cc}
g_{11} & 0\\
0 & \det\left(A\right)\overline{g_{11}}
\end{array}\right).
\end{align}

Since the determinant is preserved under $\phi$ by Lemma 2.4, by
equating $\det\phi\left(A\right)$ with $\det\left(A\right)$ in (14),
we have a functional equation
\begin{align*}
g_{11}\overline{g_{11}} & =1,
\end{align*}
which has only solutions $g_{11}=\pm t^{l}$ for some integer $l$
in $g_{11}\in\mathbb{F}_{p}\left[t,\,t^{-1},\,\left(1+t\right)^{-1}\right]$.
From Lemma 2.10, we need to require that
\begin{align*}
t^{l}=1 & \;(\mathrm{mod}\:1+t+t^{2}),
\end{align*}
which implies $l=3k$ for some integer $k$. $\qedhere$

\noindent \end{proof}
\begin{corollary}

The center of the formal Burau group $\mathcal{B}_{p}$ is generated
by $\beta_{3,\,p}\left(\Delta\right)$.

\noindent \end{corollary}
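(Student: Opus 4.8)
The plan is to prove the two inclusions $\langle\beta_{3,\,p}(\Delta)\rangle\subseteq Z(\mathcal{B}_{p})$ and $Z(\mathcal{B}_{p})\subseteq\langle\beta_{3,\,p}(\Delta)\rangle$ separately, leaning on Corollary 2.15 (which already identifies $\langle\beta_{3,\,p}(\Delta)\rangle$ with $\ker\overline{\phi_{p}}$) and on the injectivity of $\phi_{p}$ from Lemma 2.4. For the first inclusion I would recall from the proof of Corollary 2.15 that $\phi_{p}\left(\beta_{3,\,p}(\Delta)\right)=t^{3}I$ is scalar; hence for every $A\in\mathcal{B}_{p}$ we get $\phi_{p}\!\left(A\,\beta_{3,\,p}(\Delta)\right)=\phi_{p}(A)\cdot t^{3}I=t^{3}I\cdot\phi_{p}(A)=\phi_{p}\!\left(\beta_{3,\,p}(\Delta)\,A\right)$, and injectivity of $\phi_{p}$ forces $A$ to commute with $\beta_{3,\,p}(\Delta)$. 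So $\beta_{3,\,p}(\Delta)$, and therefore the cyclic subgroup it generates, lies in $Z(\mathcal{B}_{p})$.

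For the reverse inclusion I would take $A\in Z(\mathcal{B}_{p})$ and show $\phi_{p}(A)$ is a scalar matrix, so that $A\in\ker\overline{\phi_{p}}$ and Corollary 2.15 applies. Since $\beta_{3,\,p}(\sigma_{1}),\beta_{3,\,p}(\sigma_{2})\in\mathcal{B}_{p}$, the matrix $\phi_{p}(A)$ must commute with both $\phi_{p}\!\left(\beta_{3,\,p}(\sigma_{1})\right)$ and $\phi_{p}\!\left(\beta_{3,\,p}(\sigma_{2})\right)$ as computed in $(12)$. Commuting with the diagonal matrix $\phi_{p}\!\left(\beta_{3,\,p}(\sigma_{2})\right)=\mathrm{diag}(1,-t)$ forces $\phi_{p}(A)$ to be diagonal, because the two off-diagonal relations read $b(1+t)=0$ and $c(1+t)=0$ and $1+t\neq0$ in $\mathbb{F}_{p}(t)$. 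Writing $\phi_{p}(A)=\mathrm{diag}(a,d)$ and then imposing commutation with $\phi_{p}\!\left(\beta_{3,\,p}(\sigma_{1})\right)$, whose upper-right entry $\tfrac{t}{1+t}$ is nonzero, yields $a\cdot\tfrac{t}{1+t}=d\cdot\tfrac{t}{1+t}$, i.e. $a=d$. Hence $\phi_{p}(A)=aI$ is scalar (with $a$ a unit, since $\phi_{p}(A)$ is invertible), so $A\in\ker\overline{\phi_{p}}$, and Corollary 2.15 gives $A=\beta_{3,\,p}(\Delta)^{k}$ for some $k\in\mathbb{Z}$. Combining the two inclusions yields $Z(\mathcal{B}_{p})=\langle\beta_{3,\,p}(\Delta)\rangle$.

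There is no real obstacle here: the substantive work was already done in Corollary 2.15 and in the explicit table $(12)$. The only points that need a moment's care are the non-vanishing facts $1+t\neq0$ and $1-(-t)\neq0$ in $\mathbb{F}_{p}(t)$, valid for every prime $p$, which are exactly what make "centralizes a diagonal matrix with distinct diagonal entries" force diagonality, and "a diagonal matrix centralizes a matrix with a nonzero off-diagonal entry" force scalarity. I would state these two observations as the elementary linear-algebra core of the argument and otherwise let Corollary 2.15 and Lemma 2.4 do the rest.
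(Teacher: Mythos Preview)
Your argument is correct and is the natural one: pull the center question through the injective homomorphism $\phi_{p}$ (Lemma 2.4), use the explicit images in (12) to force $\phi_{p}(A)$ to be scalar, and then invoke the kernel description $\ker\overline{\phi_{p}}=\langle\beta_{3,\,p}(\Delta)\rangle$; this is presumably the same route as in the cited \cite[Corollary 3.13]{lee2024salters}, to which the paper simply defers. One minor point: what you call ``Corollary 2.15'' is Corollary 2.11 in this paper's numbering.
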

\begin{proof}See the proof of \citep[Corollary 3.13]{lee2024salters}. $\qedhere$

\noindent \end{proof}

The following lemma provides a characterization of when the image
$\phi_{p}\left(A\right)$ of an element $A\in\mathcal{B}_{p}$ has
Laurent polynomial entries.

\noindent \begin{lemma}

For a matrix $A\in\mathcal{B}_{p}$, the matrix $\phi_{p}\left(A\right)$
is contained in $\mathrm{GL}\left(2,\,\mathbb{F}_{p}\left[t,\,t^{-1}\right]\right)$
if and only if $\rho_{p}\left(A|_{t=-1}\right)_{12}=0$.

\noindent \end{lemma}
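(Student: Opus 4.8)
The plan is to translate the condition ``$\phi_{p}(A)$ has Laurent polynomial entries'' into a single scalar condition on $A$ evaluated at $t=-1$, and then to recognize that scalar as the $(1,2)$-entry of $\rho_{p}(A|_{t=-1})$.

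First I would note that, because $\phi_{p}$ preserves the determinant (Lemma 2.4) and $\det(A)=(-t)^{k}$ is a unit in $\mathbb{F}_{p}\left[t,t^{-1}\right]$ (Lemma 2.9), the matrix $\phi_{p}(A)$ lies in $\mathrm{GL}\left(2,\mathbb{F}_{p}\left[t,t^{-1}\right]\right)$ if and only if all four of its entries are Laurent polynomials. Using the normal form from Corollary 2.6, the entries of $\phi_{p}(A)$ are $g_{11}$, $g_{12}$, $-\det(A)(1+t^{-1}+t)\overline{g_{12}}$, and $\det(A)\overline{g_{11}}$. Since $\det(A)$ is a unit, $1+t^{-1}+t\in\mathbb{F}_{p}\left[t,t^{-1}\right]$, and the involution $g\mapsto\overline{g}$ preserves the Laurent polynomial ring, all four entries are Laurent polynomials precisely when $g_{11}$ and $g_{12}$ are. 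So the claim reduces to: $g_{11},g_{12}\in\mathbb{F}_{p}\left[t,t^{-1}\right]$ if and only if $\rho_{p}(A|_{t=-1})_{12}=0$.

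Next I would unwind the definitions $g_{1j}=f_{1j}/(t(1+t))$ (equation (8)), where $f_{11},f_{12}\in\mathbb{F}_{p}\left[t,t^{-1}\right]$ since $A$ itself has Laurent polynomial entries. As $t$ is a unit, $g_{11}\in\mathbb{F}_{p}\left[t,t^{-1}\right]$ is equivalent to $(1+t)\mid f_{11}$, i.e.\ to $f_{11}(-1)=0$, and likewise for $g_{12}$. Equation (3) gives $f_{11}=A_{11}(1+t+t^{2})-1$ and $f_{12}=A_{11}+A_{12}(1+t)-1$, and evaluating at $t=-1$ (where $1+t+t^{2}=1$ and $1+t=0$) yields $f_{11}(-1)=f_{12}(-1)=A_{11}(-1)-1$. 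Hence $g_{11}$ and $g_{12}$ are both Laurent polynomials exactly when $A_{11}(-1)=1$. Finally, writing $B=A|_{t=-1}$, Definition 2.7 gives $\rho_{p}(B)_{12}=1-B_{11}=1-A_{11}(-1)$, so the condition $A_{11}(-1)=1$ is literally $\rho_{p}(A|_{t=-1})_{12}=0$, which finishes the proof.

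No step here presents a genuine obstacle; the whole argument is a chain of substitutions through the normal forms already established. The one point deserving a moment of care is that one needs only a single factor of $1+t$ cleared from each of $f_{11}$ and $f_{12}$, so that vanishing at $t=-1$ is both necessary and sufficient — this is why the clean criterion is the value $A_{11}(-1)$ rather than a higher-order divisibility condition.
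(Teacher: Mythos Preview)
Your proof is correct and follows essentially the same route as the paper's: reduce to $g_{11},g_{12}\in\mathbb{F}_{p}[t,t^{-1}]$ via Corollary 2.6, then to $f_{11}(-1)=f_{12}(-1)=0$ via (8), then to $A_{11}(-1)=1$ via (3), and finally identify this with $\rho_{p}(A|_{t=-1})_{12}=0$. You are simply more explicit than the paper about why the determinant condition makes Laurent-polynomial entries equivalent to membership in $\mathrm{GL}$, and about why the two lower entries of $\phi_{p}(A)$ are automatic once $g_{11},g_{12}$ are.
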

\begin{proof}

\noindent For a matrix $A\in\mathcal{B}_{p}$, Corollary 2.6 ensures
that $\phi_{p}\left(A\right)\in\mathrm{GL}\left(2,\,\mathbb{F}_{p}\left[t,\,t^{-1}\right]\right)$
if and only if $g_{11}$ and $g_{12}$ are Laurent polynomials. By
definition, $g_{11}$ and $g_{12}$ are Laurent polynomials if and
only if $f_{11}|_{t=-1}=0=f_{12}|_{t=-1}$. From (3), we see the last
is equivalent to $A_{11}|_{t=-1}=1$, which is in turn equivalent
to $\rho_{p}\left(A|_{t=-1}\right)_{12}=0$ by Definition 2.7. $\qedhere$

\noindent \end{proof}

By Lemma 2.9 and Lemma 2.13, we see that the formal Burau group $\mathcal{B}_{p}$
\emph{virtually} has its image in $\mathrm{PGL}\left(2,\,\mathbb{Z}\left[t,\,t^{-1}\right]\right)$
under $\overline{\phi_{p}}$. This observation will be further elaborated
in Section 5. For now, we analyze the elements of $\overline{\phi_{p}}\left(\mathcal{B}\right)\cap\mathrm{PGL}\left(2,\,\mathbb{F}_{p}\left[t,\,t^{-1}\right]\right)$.
For the sake of abbreviation, from now on, denote by $\Phi$ the Laurent
polynomial $1+t^{-1}+t$.

\noindent \begin{definition}

Define the \emph{quaternionic group} as
\begin{align*}
\mathcal{Q}_{p}\, & :=\,\left\{ M\in\mathrm{PGL}\left(2,\,\mathbb{F}_{p}\left[t,\,t^{-1}\right]\right)\::\:M=\left[\begin{array}{cc}
g_{1} & g_{2}\\
-\Phi\overline{g_{2}} & \overline{g_{1}}
\end{array}\right],\,\mathrm{where}\;g_{1},\,g_{2}\in\mathbb{F}_{p}\left[t,\,t^{-1}\right]\right\} .
\end{align*}

In addition, define the $\mathcal{Q}_{p}^{1}$ to be the subgroup
of $\mathcal{Q}_{p}$ of elements having a representative $B$ such
that $\det\left(B\right)=1$. We also define two groups by
\begin{align*}
 & I_{p}:=\overline{\phi_{p}}\left(\mathcal{B}_{p}\right)\cap\mathrm{PGL}\left(2,\,\mathbb{F}_{p}\left[t,\,t^{-1}\right]\right),\\
 & I_{p}^{1}:=I_{p}\cap\mathcal{Q}_{p}^{1},
\end{align*}
where we call $I_{p}$ the \emph{Laurent image subgroup}.

\noindent \end{definition}

For each element $M\in\mathcal{Q}_{p}$, by definition, there exists
a representative $B=\left(\begin{array}{cc}
g_{1} & g_{2}\\
-\Phi\overline{g_{2}} & \overline{g_{1}}
\end{array}\right)$ in $\mathrm{GL}\left(2,\,\mathbb{F}_{p}\left[t,\,t^{-1}\right]\right)$.
Since $\overline{\Phi}=\Phi$, we observe that
\begin{align*}
\det\left(B\right) & =\overline{\det\left(B\right)}.
\end{align*}

Since the only palindromic units in $\mathbb{F}_{p}\left[t,\,t^{-1}\right]$
are units in $\mathbb{F}_{p}$, we have a functional equation for
elements of $\mathcal{Q}_{p}$
\begin{align}
g_{1}\overline{g_{1}}+\left(1+t^{-1}+t\right)g_{2}\overline{g_{2}} & =k,
\end{align}
where $k$ is included in $\mathbb{F}_{p}^{\times}$. The number $k$
depends on the element $M$ up to multiplication by an element in
$\left(\mathbb{F}_{p}^{\times}\right)^{2}$.

For a rational number $r\in\mathbb{F}_{p}$, note that
\begin{align*}
 & \,\det\left(\begin{array}{cc}
t-r^{2} & r\\
-r\left(1+t^{-1}+t\right) & t^{-1}-r^{2}
\end{array}\right)\\
= & \,1-r^{2}\left(t^{-1}+t\right)+r^{4}+r^{2}\left(1+t^{-1}+t\right)\\
= & \,1+r^{2}+r^{4},
\end{align*}
which is always included in $\mathbb{F}_{p}$. Unlike the case that
the base field is $\mathbb{Q}$, the value $1+r^{2}+r^{4}$ may be
zero in $\mathbb{F}_{p}$.

\noindent \begin{definition}

For a number $r\in\mathbb{F}_{p}$, define a matrix $g_{p}\left[r\right]$
to be
\begin{equation}
g_{p}\left[r\right]\,:=\,\left(\begin{array}{cc}
t-r^{2} & r\\
-r\left(1+t^{-1}+t\right) & t^{-1}-r
\end{array}\right).
\end{equation}

When $r$ satisfies that $1+r^{2}+r^{4}\ne0$, we define $\overline{g_{p}}\left[r\right]$
to be its projectivization $\pi_{p}\left(g_{p}\left[r\right]\right)$,
and call this the \emph{elementary generator}.

\noindent \end{definition}

Since the determinant of the matrix $g_{p}\left[r\right]$ is $1+r^{2}+r^{4}$,
the group $\mathcal{Q}_{p}$ includes $\overline{g_{p}}\left[r\right]$
for any $r\in\mathbb{F}_{p}$ such that $1+r^{2}+r^{4}\ne0$. When
we examine the determinant 1 subgroup $\mathcal{Q}_{p}^{1}$ of $\mathcal{Q}_{p}$
or $I_{p}^{1}$ of $I_{p}$, we consider whether an element has a
representative whose determinant is a square. In the case of $\overline{g_{p}}\left[r\right]$,
we need to find a solution of the equation
\begin{align}
y^{2} & =x^{4}+x^{2}+1.
\end{align}

Whenever $p>3$, the equation (17) defines an elliptic curve as a
Jacobi quartic, since the discriminant is 144. We call this curve
the \emph{determinant curve} $E_{d}$. Hasse's theorem on elliptic
curves \citep[p. 138]{MR2514094} gives bounds for the number of rational
points over a finite field.

\noindent \begin{theorem}

Let $E$ be an elliptic curve over a finite field $\mathbb{F}_{q}$.
The number of points $\#E\left(\mathbb{F}_{q}\right)$ satisfies the
following inequality:
\begin{align*}
q+1-2\sqrt{q} & \le\#E\left(\mathbb{F}_{q}\right)\le q+1+2\sqrt{q}.
\end{align*}
\end{theorem}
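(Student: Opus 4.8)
The plan is to prove Hasse's inequality by the classical argument via the Frobenius endomorphism and the positive-definiteness of the degree form on the endomorphism ring (Hasse\textendash Deuring).

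First I would introduce the $q$-power Frobenius morphism $\phi\colon E\to E$, which raises coordinates to the $q$-th power. Its fixed locus is exactly $E(\mathbb{F}_q)$, so $E(\mathbb{F}_q)=\ker(1-\phi)$ as a set. Next I would note that the isogeny $1-\phi$ is separable: on the cotangent space at the origin it acts by $1-d\phi=1-0=1\ne 0$ (Frobenius is purely inseparable, so $d\phi=0$), hence $1-\phi$ is étale and $\#\ker(1-\phi)=\deg(1-\phi)$. Therefore $\#E(\mathbb{F}_q)=\deg(1-\phi)$.

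Second \textemdash{} the technical core \textemdash{} I would establish that $\deg\colon\mathrm{End}(E)\to\mathbb{Z}$ extends to a positive-definite quadratic form on $\mathrm{End}(E)\otimes\mathbb{Q}$. The cleanest route is to fix a prime $\ell\ne\mathrm{char}(\mathbb{F}_q)$ and use the faithful action of $\mathrm{End}(E)$ on the Tate module $T_\ell E\cong\mathbb{Z}_\ell^2$; under this representation $\deg(\psi)=\det\rho_\ell(\psi)$, and the determinant of a $2\times 2$ matrix is manifestly a quadratic form, positive-definite because a nonzero isogeny has finite kernel so $\deg(\psi)>0$ for $\psi\ne 0$. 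Alternatively one argues with divisor classes: bilinearity of $(\psi,\eta)\mapsto\deg(\psi+\eta)-\deg(\psi)-\deg(\eta)$ follows from $\deg[n]=n^2$ and the theorem of the cube on $\mathrm{Pic}^0(E)$. In either case, writing $a:=1+q-\#E(\mathbb{F}_q)=\deg(1)+\deg(\phi)-\deg(1-\phi)$, this $a$ is (up to sign) the value of the polarizing bilinear form on the pair $(1,\phi)$, hence an integer, and using $\deg(1)=1$, $\deg(\phi)=q$ and bilinearity we get, for all $m,n\in\mathbb{Z}$,
\[
\deg(m-n\phi)=m^2-amn+qn^2\ge 0 .
\]

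Third, a binary quadratic form over $\mathbb{Q}$ that is non-negative on all of $\mathbb{Q}^2$ is, by density, non-negative on $\mathbb{R}^2$, so its discriminant is non-positive: $a^2-4q\le 0$, that is $|1+q-\#E(\mathbb{F}_q)|\le 2\sqrt{q}$, which rearranges to the stated bounds. I expect the main obstacle to be precisely the second step \textemdash{} proving that $\deg$ is a quadratic form and identifying its polarization \textemdash{} since this needs either a little input from the theory of the Tate module or a hands-on computation on $\mathrm{Pic}^0$; the remaining steps (the count $\#E(\mathbb{F}_q)=\deg(1-\phi)$, separability of $1-\phi$, and the discriminant estimate) are then immediate. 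For the curve $E_d\colon y^2=x^4+x^2+1$ relevant here one also records that, for $p>3$, it has genus one with the rational point $(0,1)$, hence is genuinely an elliptic curve over $\mathbb{F}_p$ and the theorem applies.
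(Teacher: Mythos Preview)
Your proposal is correct and is the standard Hasse\textendash Deuring argument. The paper, however, does not prove this theorem at all: it is stated as Theorem~2.16 and simply cited from Silverman's textbook \cite{MR2514094}, to be used as a black box in the proofs of Theorem~2.17 and Lemma~2.18. So there is nothing to compare on the paper's side; your sketch supplies a complete proof where the paper only gives a reference.
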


Unlike the case where the base ring is $\mathbb{Z}$ \citep[Lemma 4.5]{lee2024salters},
the group $I_{p}^{1}$ is generally not the same as $\mathcal{Q}_{p}^{1}$.
Nonetheless, the index is finite.

\noindent \begin{theorem}

For the primes $p\le7$, we have $\mathcal{Q}_{p}^{1}=I_{p}^{1}$.
For each prime $p>7$, we have an inequality:
\begin{align}
2\le\left[\mathcal{Q}_{p}^{1}\::\:I_{p}^{1}\right] & \le\frac{p-\left(\frac{-3}{p}\right)}{6},
\end{align}
where $\left(\frac{\cdot}{p}\right)$ is the usual Legendre symbol.

\noindent \end{theorem}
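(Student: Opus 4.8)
The plan is to compress the gap between $\mathcal{Q}_{p}^{1}$ and $I_{p}^{1}$ into one abelian invariant. The starting observation is the identity $t\,\Phi=t(1+t^{-1}+t)=1+t+t^{2}$, so $\Phi$ becomes $0$ in the ring $R_{p}:=\mathbb{F}_{p}[t,t^{-1}]/(1+t+t^{2})$. Hence, for a quaternionic matrix $\left(\begin{smallmatrix}g_{1}&g_{2}\\-\Phi\overline{g_{2}}&\overline{g_{1}}\end{smallmatrix}\right)$, the top-left entry of a product of two such matrices is congruent modulo $1+t+t^{2}$ to the product of the two top-left entries. Since the product of quaternionic matrices is again quaternionic and determinants multiply, every $M\in\mathcal{Q}_{p}^{1}$ has a determinant-one quaternionic representative, unique up to sign, and I would define a group homomorphism
\[
\lambda\colon\ \mathcal{Q}_{p}^{1}\ \longrightarrow\ R_{p}^{\times}/\{\pm1\},\qquad M\ \longmapsto\ \bigl[\,g_{1}\bmod(1+t+t^{2})\,\bigr].
\]
A determinant-one quaternionic representative satisfies $g_{1}\overline{g_{1}}+\Phi\,g_{2}\overline{g_{2}}=1$ (equation (15) with $k=1$), so $g_{1}\overline{g_{1}}\equiv1\pmod{1+t+t^{2}}$ and $\lambda$ lands in $N_{p}/\{\pm1\}$, where $N_{p}:=\{x\in R_{p}^{\times}:x\overline{x}=1\}$ and $\overline{x}$ is the image of $x$ under the involution induced by $t\mapsto t^{-1}$.

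Next I would pin down the target. The ring $R_{p}$ is $\mathbb{F}_{p^{2}}$ when $p\equiv2\pmod3$, is $\mathbb{F}_{p}\times\mathbb{F}_{p}$ when $p\equiv1\pmod3$, and is $\mathbb{F}_{3}[\varepsilon]/(\varepsilon^{2})$ when $p=3$ (with $R_{2}=\mathbb{F}_{4}$). In each case a direct check shows $N_{p}$ is cyclic, with $N_{p}/\{\pm1\}$ cyclic of order $3$ for $p\in\{2,3,5,7\}$ and of order $\tfrac12\!\left(p-\left(\frac{-3}{p}\right)\right)$ for $p\ge11$; this order is always divisible by $3$. Writing $\omega$ for the image of $t$ in $R_{p}$, the class $[\omega]$ is nontrivial (since $\omega\neq\pm1$) and satisfies $[\omega]^{3}=1$, so it has order exactly $3$ and generates the unique subgroup of order $3$ of the cyclic group $N_{p}/\{\pm1\}$.

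The heart of the argument is the identification $I_{p}^{1}=\lambda^{-1}\bigl(\langle[\omega]\rangle\bigr)$. Let $M\in\mathcal{Q}_{p}^{1}$ have determinant-one quaternionic representative $B$. Any representative of $M$ of the exact shape in Lemma 2.10 — with determinant $(-t)^{k}$ — equals $cB$ for a scalar $c$ with $c/\overline{c}=(-t)^{k}$ and $c^{2}=(-t)^{k}$; together these force $k$ even and $c=\pm t^{k/2}$, so the top-left entry of $cB$ is $\pm t^{k/2}g_{1}$. Since $M$ is already a Laurent matrix, $M\in I_{p}^{1}$ amounts to $M\in\overline{\phi_{p}}(\mathcal{B}_{p})$, which by Lemma 2.10 holds exactly when $\pm t^{k/2}g_{1}\equiv1\pmod{1+t+t^{2}}$ for some even $k$, i.e. when $g_{1}\equiv\pm\omega^{-k/2}$, i.e. when $\lambda(M)\in\langle[\omega]\rangle$. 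Consequently $\mathcal{Q}_{p}^{1}/I_{p}^{1}$ embeds into the cyclic group $\bigl(N_{p}/\{\pm1\}\bigr)/\langle[\omega]\rangle$, of order $1$ for $p\le7$ and $\tfrac16\!\left(p-\left(\frac{-3}{p}\right)\right)$ for $p\ge11$; this yields both $\mathcal{Q}_{p}^{1}=I_{p}^{1}$ for $p\le7$ and the claimed upper bound.

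Finally, for $p>7$ I would show $[\mathcal{Q}_{p}^{1}:I_{p}^{1}]\ge2$, i.e. that $\lambda$ does not take all its values in $\langle[\omega]\rangle$, by using the elementary generators. For each $r\in\mathbb{F}_{p}$ with $1+r^{2}+r^{4}\neq0$ the square $\overline{g_{p}}[r]^{2}$ lies in $\mathcal{Q}_{p}^{1}$ (its determinant is a square), and from (16), together with $\Phi\equiv0$ and $1+r^{2}+r^{4}=(\omega-r^{2})(\omega^{2}-r^{2})$ in $R_{p}$, one computes $\lambda\bigl(\overline{g_{p}}[r]^{2}\bigr)=\bigl[(\omega-r^{2})/(\omega^{2}-r^{2})\bigr]$. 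As $v=r^{2}$ ranges over the (roughly $p/2$) squares of $\mathbb{F}_{p}$, the fractional-linear expression $(\omega-v)/(\omega^{2}-v)$ gives pairwise distinct classes in $N_{p}/\{\pm1\}$, of which only $3$ are powers of $[\omega]$; hence for $p\ge17$ some $\overline{g_{p}}[r]^{2}$ has $\lambda$-value outside $\langle[\omega]\rangle$, and for the two remaining primes $p\in\{11,13\}$ one verifies directly that $\overline{g_{p}}[2]^{2}$ does (here $1+2^{2}+2^{4}=21$, whose image under the above map is not a sixth root of unity). Hasse's bound (Theorem 2.16) on the determinant curve $E_{d}$ may be invoked instead to guarantee enough elementary generators in $\mathcal{Q}_{p}^{1}$. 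I expect the main difficulty to be the third paragraph, namely making the transit between the Lemma 2.10 representative and the normalized quaternionic one fully rigorous — tracking the effect of rescaling on $g_{1}\bmod(1+t+t^{2})$, the non-uniqueness of representatives, and the degenerate behaviour of $R_{p}$ at $p=2,3$ — with a secondary difficulty in tightening the small-prime part of the lower bound.
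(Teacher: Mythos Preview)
Your approach is essentially the paper's: your map $\lambda$ is exactly the paper's $\overline{\eta_{p}}$, and the upper bound comes from the same count of norm-one elements in $\mathbb{F}_{p}[t,t^{-1}]/(1+t+t^{2})$ modulo sign (the paper phrases it as counting $(a,b)$ with $a^{2}-ab+b^{2}=1$, which is the same thing). Your identification $I_{p}^{1}=\lambda^{-1}(\langle[\omega]\rangle)$ is correct and in fact slightly sharper than what the paper writes down; the paper only records $\ker\overline{\eta_{p}}\subset I_{p}^{1}$ and $|\overline{\eta_{p}}(I_{p}^{1})|=3$, which already suffices for the index identity.

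The one genuine difference is the lower bound. The paper looks for a single $\overline{g_{p}}[r]$ with $r\neq0$, $r^{2}\neq-1$ and $1+r^{2}+r^{4}$ a nonzero square, and invokes Hasse's bound on the determinant curve $y^{2}=x^{4}+x^{2}+1$ to guarantee such an $r$ for $p>7$. You instead use $\overline{g_{p}}[r]^{2}$, which is automatically in $\mathcal{Q}_{p}^{1}$, and your formula $\lambda(\overline{g_{p}}[r]^{2})=[(\omega-r^{2})/(\omega^{2}-r^{2})]$ is correct. This is a nice simplification in spirit, but two small points need fixing: the map $v\mapsto(\omega-v)/(\omega^{2}-v)$ is injective into $N_{p}$ but only at most $2$-to-$1$ into $N_{p}/\{\pm1\}$ (the $-1$ case gives the involution $v\mapsto-(v+2)/(1+2v)$), so ``pairwise distinct classes'' should be weakened accordingly; and the values of $v$ landing in $\langle[\omega]\rangle$ are the solutions of $(\omega-v)/(\omega^{2}-v)\in\{\pm1,\pm\omega,\pm\omega^{2}\}$, which gives up to five values $v\in\{0,1,-1,-1/2,-2\}$, not three. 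With these corrections your count still goes through for $p\ge17$ (at least $(p+1)/2$ squares against at most $7$ bad values), and your direct check at $p=11,13$ with $r=2$ is valid.
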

\begin{proof}

\noindent We introduce a series of temporary definitions. For a Laurent
polynomial $f$ in $\mathbb{F}_{p}\left[t,\,t^{-1}\right]$, define
$f^{e}$ to be its modulo $1+t+t^{2}$ quotient image in the ring
$\frac{\mathbb{F}_{p}\left[t,\,t^{-1}\right]}{\left(1+t+t^{2}\right)}$.
Define
\begin{align*}
Q_{p}^{1} & :=\pi_{p}^{-1}\left(\mathcal{Q}_{p}^{1}\right)\cap\mathrm{SL}\left(2,\,\mathbb{F}_{p}\left[t,\,t^{-1}\right]\right),
\end{align*}
and define a group homomorphism $\eta_{p}:Q_{p}^{1}\to\left(\frac{\mathbb{F}_{p}\left[t,\,t^{-1}\right]}{\left(1+t+t^{2}\right)}\right)^{\times}$
by $B\mapsto\left(B_{11}\right)^{e}$ as the composition of two maps:
\begin{align*}
\left(\begin{array}{cc}
g_{1} & g_{2}\\
-\Phi\overline{g_{2}} & \overline{g_{1}}
\end{array}\right) & \mapsto\left(\begin{array}{cc}
g_{1}^{e} & g_{2}^{e}\\
0 & \overline{g_{1}}^{e}
\end{array}\right),\;\left(\begin{array}{cc}
g_{1}^{e} & g_{2}^{e}\\
0 & \overline{g_{1}}^{e}
\end{array}\right)\mapsto g_{1}^{e},
\end{align*}
where one should note that the quotient $Q_{p}^{1}\to\mathcal{Q}_{p}^{1}$
is a 2-to-1 map, by which $B$ and $-B$ has the same image. For an
element $f^{e}\in\left(\frac{\mathbb{F}_{p}\left[t,\,t^{-1}\right]}{\left(1+t+t^{2}\right)}\right)^{\times}$,
define an equivalence relation $f^{e}\sim\pm f^{e}$. Denote by $R_{p}$
the quotient $\left(\frac{\mathbb{F}_{p}\left[t,\,t^{-1}\right]}{\left(1+t+t^{2}\right)}\right)^{\times}/\sim$,
and denote by $q_{\sim}$ the quotient map. It is direct to see that
$R_{p}$ is also a group. By projectivizing $\eta_{p}$ to $\overline{\eta_{p}}:\mathcal{Q}_{p}^{1}\to R_{p}$,
we have a commutative diagram:
\begin{align*}
\xymatrix{1\ar[r] & \ker\eta_{p}\ar[r]\ar[d]^{\pi_{p}} & Q_{p}^{1}\ar[d]^{\pi_{p}}\ar[r]^{\eta_{p}} & \left(\frac{\mathbb{F}_{p}\left[t,\,t^{-1}\right]}{\left(1+t+t^{2}\right)}\right)^{\times}\ar[d]^{q_{\sim}}\\
1\ar[r] & \ker\overline{\eta_{p}}\ar[r] & \mathcal{Q}_{p}^{1}\ar[r]^{\overline{\eta_{p}}} & R_{p}
}
\end{align*}

By Lemma 2.10, we have $\ker\eta_{p}\subset\phi_{p}\left(\mathcal{B}_{p}\right)$,
which implies $\ker\overline{\eta_{p}}\subset I_{p}^{1}$. Therefore,
we have an identity:
\begin{align}
\left[\mathcal{Q}_{p}^{1}\::\:I_{p}^{1}\right] & =\left[\overline{\eta_{p}}\left(\mathcal{Q}_{p}^{1}\right)\::\:\overline{\eta_{p}}\left(I_{p}^{1}\right)\right].
\end{align}

Suppose $p>3$. For the upper bound of (18), it suffices to show that
the right-hand side of (19) is not larger than $\frac{p-\left(\frac{-3}{p}\right)}{6}$.
Take an element $M\in\mathcal{Q}_{p}^{1}$, and choose its representative
$B=\left(\begin{array}{cc}
g_{1} & g_{2}\\
-\Phi\overline{g_{2}} & \overline{g_{1}}
\end{array}\right)$ such that $g_{1},\,g_{2}\in\mathbb{F}_{p}\left[t,\,t^{-1}\right]$
and $\det\left(B\right)=1$. The condition about the determinant yields
\begin{align*}
g_{1}\overline{g_{1}}+\left(1+t^{-1}+t\right)g_{2}\overline{g_{2}} & =1.
\end{align*}

Therefore, we have $g_{1}^{e}\overline{g_{1}}^{e}=1$. Put $g_{1}^{e}=at+b$
for some $a,\,b\in\mathbb{F}_{p}$. Then, we have
\begin{align}
g_{1}^{e}\overline{g_{1}}^{e} & =a^{2}-ab+b^{2}.
\end{align}

We rewrite the right-hand side of (20) as
\begin{align*}
\left(a-\frac{1}{2}b\right)^{2}+3\left(\frac{b}{2}\right)^{2} & =1,
\end{align*}
which implies there are at most $p-\left(\frac{-3}{p}\right)$ possible
combinations of pairs $\left(a,\,b\right)\in\mathbb{F}_{p}^{2}$ through
the usual parametrization of generalized Pythagorean triples. Because
the equivalence between $\left(a,\,b\right)$ and $\left(-a,\,-b\right)$
under $\sim$ halves the number of elements in $R_{p}$ mapped from
$\left(\frac{\mathbb{F}_{p}\left[t,\,t^{-1}\right]}{\left(1+t+t^{2}\right)}\right)^{\times}$,
the order of the group $\overline{\eta_{p}}\left(\mathcal{Q}_{p}^{1}\right)$
is at most $\frac{p-\left(\frac{-3}{p}\right)}{2}$.

On the other hand, take an element $M\in I_{p}^{1}$, and choose its
representative $B=\left(\begin{array}{cc}
g_{1} & g_{2}\\
-\Phi\overline{g_{2}} & \overline{g_{1}}
\end{array}\right)$ such that $g_{1},\,g_{2}\in\mathbb{F}_{p}\left[t,\,t^{-1}\right]$
and $\det\left(B\right)=1$. Lemma 2.9 and Lemma 2.10 requires that
$\left(\pm t^{k}g_{1}\right)^{e}=1$ for some integer $k$. Since
we have $\left(t^{3}\right)^{e}=1$, the only possible options for
$g_{1}^{e}$ are
\begin{align}
\pm g_{1}^{e} & =1,\,t,\,1+t,
\end{align}
which makes the order of $\overline{\eta_{p}}\left(I_{p}^{1}\right)$
at most 3. The order is indeed 3, because the group $I_{p}^{1}$ includes
$\overline{g_{p}}\left[0\right]$ and $\overline{g_{p}}\left[0\right]^{2}$.
Therefore, we have
\begin{align*}
\left[\overline{\eta_{p}}\left(\mathcal{Q}_{p}^{1}\right)\::\:\overline{\eta_{p}}\left(I_{p}^{1}\right)\right] & \le\frac{p-\left(\frac{-3}{p}\right)}{6},
\end{align*}
which gives the upper bound for (18). This bound implies $\mathcal{Q}_{p}^{1}=I_{p}^{1}$
for the cases $p=5,\,7$. This is also true for the cases $p=2,\,3$
by counting possible combinations $\left(a,\,b\right)$ satisfying
(20).

We now prove the lower bound for (18) by using Hasse's theorem.

\noindent \begin{claim1}

Suppose $p>3$ and there is a matrix $g_{p}\left[r\right]$ such that
$r\ne0$, $r^{2}\ne-1$, and the determinant $1+r^{2}+r^{4}$ is a
nonzero quadratic residue modulo $p$. Then, we have
\begin{align*}
2 & \le\left[\mathcal{Q}_{p}^{1}\::\:I_{p}^{1}\right].
\end{align*}
\end{claim1}
\begin{proofclaim1}

\noindent Suppose $\mathcal{Q}_{p}^{1}=I_{p}^{1}$. Choose an element
$\alpha\in\mathbb{F}_{p}^{\times}$ such that $\alpha^{2}\left(1+r^{2}+r^{4}\right)=1$.
Then, we have $\det\left(\alpha g\left[r\right]\right)=1$ and $\left(\alpha g\left[r\right]\right)_{11}^{e}=\alpha t-\alpha r^{2}$.
We expand the cube of this entry as
\begin{align*}
 & \,\left(\left(\alpha g\left[r\right]\right)_{11}^{e}\right)^{3}\\
= & \,\left(\alpha t-\alpha r^{2}\right)^{3}\\
= & \,\alpha^{3}\left(t^{3}-3t^{2}r^{2}+3tr^{4}-r^{6}\right)\\
= & \,\alpha^{3}\left(1+3r^{2}-r^{6}+3t\left(r^{2}+r^{4}\right)\right),
\end{align*}
which is in particular not included in $\mathbb{F}_{p}$, because
the assumption implies $r^{2}+r^{4}\ne0$. When we suppose $\mathcal{Q}_{p}^{1}=I_{p}^{1}$,
we must have $\left(\left(\alpha g\left[r\right]\right)_{11}^{e}\right)^{3}=\pm1$
by (21), which implies $\mathcal{Q}_{p}^{1}\ne I_{p}^{1}$. $\qed$

\noindent \end{proofclaim1}

We return to the proof of Theorem 2.17. Suppose $p>7$. By Claim 1,
it suffices to show that there is a matrix $g_{p}\left[r\right]$
satisfying the conditions given in Claim 1. We consider how many rational
points $\left(x,\,y\right)$ the determinant curve $E_{d}$ given
by (17) has, depending on the conditions in Claim 1. The condition
$r=0$ gives two points $\left(0,\,\pm1\right)$. For an element $r_{0}\in\mathbb{F}_{p}^{\times}$
satisfying $r_{0}^{2}=-1$, the condition $r^{2}=-1$ gives four points
$\left(\pm r_{0},\,\pm1\right)$. If the equation $x^{4}+x^{2}+1=0$
has a solution, then it has exactly four distinct solutions. In this
case, for each solution $x$, we have a point $\left(x,\,0\right)$.
Therefore, the condition $1+r^{2}+r^{4}=0$ gives four points.

In any case, the number of points excluded by Claim 1 is at most 10.
The lower bound $p+1-2\sqrt{p}$ by Theorem 2.16 implies $10<\#E_{d}\left(\mathbb{F}_{p}\right)$
when $p>17$. Since there is no solution to the equation ${x^{4}+x^{2}+1=0}$
over $\mathbb{F}_{17}$, we have $6<\#E_{d}\left(\mathbb{F}_{17}\right)$
by the bound. Since both equations $x^{2}=-1$ and $x^{4}+x^{2}+1=0$
have no solutions over $\mathbb{F}_{11}$, we have $2<\#E_{d}\left(\mathbb{F}_{11}\right)$.
Now, the only remaining prime is 13. In the field $\mathbb{F}_{13}$,
$3$ is a quadratic residue as $4^{2}=3$. Thus, the matrix $g_{13}\left[1\right]$
satisfies the three conditions in Claim 1. $\qedhere$

\noindent \end{proof}
\begin{lemma}

The subgroup index of $\mathcal{Q}_{p}^{1}$ in $\mathcal{Q}_{p}$
is 1 for $p\le3$ and 2 for $p>3$.

\noindent \end{lemma}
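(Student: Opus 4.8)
The plan is to identify $\mathcal{Q}_p^1$ as the kernel of a determinant-type homomorphism out of $\mathcal{Q}_p$, so that the index in question is just the order of the image of that homomorphism, and then to decide when that image is trivial.

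First I would set up the homomorphism. For $M\in\mathcal{Q}_p$ pick a quaternionic representative $B$ with upper row $\left(g_{1},\,g_{2}\right)$, so that $\det B=g_{1}\overline{g_{1}}+\Phi\,g_{2}\overline{g_{2}}$; by the argument preceding (15) this determinant is a palindromic unit of $\mathbb{F}_{p}\left[t,\,t^{-1}\right]$, hence lies in $\mathbb{F}_{p}^{\times}$. Any two quaternionic representatives of $M$ differ by a unit scalar $\lambda$, and comparing $\left(1,1\right)$ and $\left(2,2\right)$ entries forces $\lambda=\overline{\lambda}$ (note $g_{1}\neq0$, since $\Phi$ is not a unit), so $\lambda\in\mathbb{F}_{p}^{\times}$; thus the class of $\det B$ in $\mathbb{F}_{p}^{\times}/\left(\mathbb{F}_{p}^{\times}\right)^{2}$ depends only on $M$. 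Since a short computation on the four entries shows that a product of quaternionic matrices is again quaternionic, $\det_{\mathcal{Q}}\colon\mathcal{Q}_p\to\mathbb{F}_{p}^{\times}/\left(\mathbb{F}_{p}^{\times}\right)^{2}$, $M\mapsto\left[\det B\right]$, is a group homomorphism. Next I would check that $\ker\det_{\mathcal{Q}}=\mathcal{Q}_p^1$: if $\det B=c^{2}$ then $c^{-1}B$ has determinant $1$, while conversely any $\mathrm{GL}\left(2,\mathbb{F}_{p}\left[t,t^{-1}\right]\right)$-representative of $M$ has the form $vt^{k}B$ with $v\in\mathbb{F}_{p}^{\times}$, and $\det\left(vt^{k}B\right)=v^{2}t^{2k}\det B$ can equal $1$ only if $k=0$ and $\det B\in\left(\mathbb{F}_{p}^{\times}\right)^{2}$, using $\det B\in\mathbb{F}_{p}^{\times}$. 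Hence $\left[\mathcal{Q}_p:\mathcal{Q}_p^1\right]=\left|\operatorname{im}\det_{\mathcal{Q}}\right|\in\left\{ 1,2\right\}$, and everything reduces to deciding triviality of $\det_{\mathcal{Q}}$.

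For $p=2$ the target group is trivial, so the index is $1$. For $p=3$ the key observation is the degeneration $\Phi|_{t=1}=1+1+1\equiv0$ (equivalently, $1+t+t^{2}=t\Phi$ becomes $t\left(t-1\right)^{2}$ over $\mathbb{F}_{3}$): evaluating $g_{1}\overline{g_{1}}+\Phi\,g_{2}\overline{g_{2}}=\det B$ at $t=1$ gives $\det B=g_{1}(1)^{2}$, a nonzero square, so $\det B=1$ and $\det_{\mathcal{Q}}$ is trivial; the index is again $1$. For $p>3$ I would instead produce a quaternionic matrix with non-square determinant, using $\det g_p[r]=1+r^{2}+r^{4}$ and seeking $r\in\mathbb{F}_{p}$ for which this is a quadratic non-residue (hence nonzero, so $g_p[r]\in\mathcal{Q}_p$). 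For $p\geq19$ this follows from Hasse's theorem (Theorem 2.16) applied to the determinant curve $E_d\colon y^{2}=x^{4}+x^{2}+1$: at most four values of $r$ kill $1+r^{2}+r^{4}$, and the number of $r$ making it a nonzero square is at most $\tfrac12\#E_d\left(\mathbb{F}_{p}\right)\leq\tfrac12\left(p+1+2\sqrt p\right)$, which still leaves a non-residue value for $p$ large. For the remaining primes $p\in\left\{ 5,7,11,13,17\right\}$ one checks by hand: $\det g_p[1]=3$ is a non-residue mod $5,7,17$, and $\det g_p[2]=21\equiv10,\,8$ is a non-residue mod $11,\,13$. This makes $\det_{\mathcal{Q}}$ surjective, so $\left[\mathcal{Q}_p:\mathcal{Q}_p^1\right]=2$ for all $p>3$.

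The only genuinely delicate step is the $p=3$ case: the content is noticing that the collapse of the index is forced precisely by the coincidence $3\equiv0$, which makes $\Phi$ (and $1+t+t^{2}$) degenerate at $t=1$, rather than by an accidental algebraic cancellation. The homomorphism bookkeeping in the first two steps and the Hasse-plus-finite-check for $p>3$ are routine.
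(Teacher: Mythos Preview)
Your proof is correct and, for $p>3$, follows essentially the same route as the paper: set up the determinant map to $\mathbb{F}_p^\times/(\mathbb{F}_p^\times)^2$, identify $\mathcal{Q}_p^1$ as its kernel, and then use Hasse's bound on the determinant curve $E_d$ plus a short finite check to exhibit a non-square value of $1+r^2+r^4$. The paper phrases the Hasse step as a proof by contradiction (if the index were $1$ then $E_d$ would have at least $2p-4$ points), but the content is the same.

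The genuine difference is at $p=3$. The paper postpones this case and ultimately deduces it from the full structure theorem for $\mathcal{Q}_3$ (Theorem 3.20): once one knows $\mathcal{Q}_3$ is generated by $\overline{g_3}[0]$ and the additive generators, and that all of these have determinant-$1$ representatives (Lemma 3.17), the conclusion follows. Your argument is strictly more elementary: you simply evaluate the identity $\det B = g_1\overline{g_1}+\Phi\,g_2\overline{g_2}\in\mathbb{F}_3^\times$ at $t=1$, where $\Phi(1)=3\equiv 0$, to get $\det B = g_1(1)^2 = 1$. This bypasses the structure theorem entirely and isolates exactly why $p=3$ is special (the degeneration $\Phi|_{t=1}\equiv 0$). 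The paper's route has the advantage that the structure theorem is needed anyway for the main results, so no extra work is incurred; your route has the advantage of making Lemma 2.18 logically independent of Section 3 and giving a cleaner conceptual explanation.
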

\begin{proof}

\noindent The case $p=2$ is trivial. The case $p=3$ requires a detailed
analysis of the structure of $\mathcal{Q}_{p}$, which we postpone
to the end of Section 3. Suppose $p>3$. Since the determinant map
is defined only up to square multiplication for projective linear
groups, we have an exact sequence induced by the determinant map
\begin{align*}
1 & \to\mathcal{Q}_{p}^{1}\to\mathcal{Q}_{p}\to\mathbb{F}_{p}^{\times}/\left(\mathbb{F}_{p}^{\times}\right)^{2},
\end{align*}
where $\mathbb{F}_{p}^{\times}/\left(\mathbb{F}_{p}^{\times}\right)^{2}$
is cyclic of order 2 for the cases $p>3$. Thus, the index $\left[\mathcal{Q}_{p}\::\:\mathcal{Q}_{p}^{1}\right]$
must be 1 or 2. Suppose this value is 1. Then, each elementary generator
$\overline{g_{p}}\left[r\right]$ has a representative having a square
determinant. In other words, $r^{4}+r^{2}+1$ is always square for
each $r\in\mathbb{F}_{p}$, and the determinant curve $E_{d}$ given
by (17) has at least $2p-4$ rational points, since if $\left(x,\,y\right)$
is a rational point such that $x^{4}+x^{2}+1\ne0$, then $\left(x,\,-y\right)$
is also a rational point. By the upper bound given by Theorem 2.16,
we have an inequality:
\[
2p-4\le\#E_{d}\left(\mathbb{F}_{p}\right)\le p+1+2\sqrt{p},
\]
which holds only when $p\le11$, and we get a contradiction for the
cases $p>11$. If the equation $x^{4}+x^{2}+1=0$ has no solution
over $\mathbb{F}_{p}$, the bound gives an inequality:
\[
2p\le\#E_{d}\left(\mathbb{F}_{p}\right)\le p+1+2\sqrt{p},
\]
which holds only when $p\le5$, making the case $p=11$ contradictory.
Now, the only remaining primes are 5 and 7. For these cases, 3 is
a quadratic nonresidue, which implies that $\overline{g_{5}}\left[1\right]\notin\mathcal{Q}_{5}^{1}$
and $\overline{g_{7}}\left[1\right]\notin\mathcal{Q}_{7}^{1}$. $\qedhere$

\noindent \end{proof}

\section{Structure Theorem of $\mathcal{Q}_{p}$}

In our previous paper \citep{lee2024salters}, we proved that the
quaternionic group $\mathcal{Q}$ over the field $\mathbb{Q}$ is
freely generated by the set of elementary generators $\overline{g}\left[r\right]$
for $r\in\mathbb{Q}$. Over $\mathbb{F}_{p}$, the equation
\begin{align}
x^{4}+x^{2}+1 & =0
\end{align}
may have a solution. Concretely, the equation is equivalent to $\left(x^{2}+x+1\right)\left(x^{2}-x+1\right)=0$,
which has no solution if and only if $\left(\frac{-3}{p}\right)=-1$
when $p>3$, where $\left(\frac{\cdot}{p}\right)$ is the Legendre
symbol. By elementary number theory, this condition is equivalent
to $p$ being 5 modulo 6. For $p=2$, it is direct that (22) has no
solution. In these cases, we need only the elementary generators to
generate the whole quaternionic group $\mathcal{Q}_{p}$. On the other
hand, when $p\equiv1,\,3$ (mod 6), we are not able to define the
elementary generator $\overline{g_{p}}\left[r\right]$ when $r$ is
a solution of (22), and need other sorts of generators. The goal of
this section is to identify the generators and describe the structure
of $\mathcal{Q}_{p}$ for every prime $p$.

\noindent \begin{definition}

For a Laurent polynomial
\begin{align*}
g & =a_{0}t^{m}+a_{1}t^{m+1}+\cdots+a_{n-m}t^{n}
\end{align*}
such that $n\ge m$ and $a_{0}\ne0\ne a_{n-m}$, define the \emph{relative
degree} $\mathrm{rd}\left(g\right):=n-m$.

\noindent \end{definition}

When a matrix $\left(\begin{array}{cc}
g_{1} & g_{2}\\
-\Phi\overline{g_{2}} & \overline{g_{1}}
\end{array}\right)$ is included in $\mathrm{GL}\left(2,\,\mathbb{F}_{p}\left[t,t^{-1}\right]\right)$,
from the functional equation (15), it is straightforward that
\begin{align*}
\mathrm{rd}\left(g_{1}\right) & =\mathrm{rd}\left(g_{2}\right)+1.
\end{align*}

Thus, in this case, we also define
\begin{align*}
\mathrm{rd}\left(\begin{array}{cc}
g_{1} & g_{2}\\
-\Phi\overline{g_{2}} & \overline{g_{1}}
\end{array}\right) & :=\mathrm{rd}\left(g_{1}\right).
\end{align*}

For a matrix $B=\left(\begin{array}{cc}
g_{1} & g_{2}\\
-\Phi\overline{g_{2}} & \overline{g_{1}}
\end{array}\right)\in\mathrm{GL}\left(2,\,\mathbb{F}_{p}\left[t,t^{-1}\right]\right)$ such that $g_{2}\ne0$, write the first row entries as
\begin{align}
 & g_{1}=a_{0}t^{m_{1}}+a_{1}t^{m_{1}+1}+\cdots+a_{n_{1}-m_{1}}t^{n_{1}},\\
 & g_{2}=b_{0}t^{m_{2}}+b_{1}t^{m_{2}+1}+\cdots+b_{n_{2}-m_{2}}t^{n_{2}}.
\end{align}

Then, we also have $n_{1}-m_{1}=n_{2}-m_{2}+1$ from (15).

\noindent \begin{definition}

For a matrix $B=\left(\begin{array}{cc}
g_{1} & g_{2}\\
-\Phi\overline{g_{2}} & \overline{g_{1}}
\end{array}\right)\in\mathrm{GL}\left(2,\,\mathbb{F}_{p}\left[t,t^{-1}\right]\right)$ such that $g_{2}\ne0$, write the first row entries as in (23) and
(24). Define $B$ to be \emph{upper-balanced} if $m_{1}=m_{2}$ and
${n_{1}=n_{2}+1}$, and \emph{lower-balanced} if $m_{1}=m_{2}-1$
and $n_{1}=n_{2}$. Also define $B$ to be \emph{balanced} if $B$
is upper-balanced or lower-balanced. If $g_{2}=0$ or $B$ is not
balanced, we call $B$ \emph{unbalanced}. For a balanced matrix $B$,
define $B$ to be \emph{evenly balanced} (resp. \emph{oddly balanced})
if $\mathrm{rd}\left(B\right)$ is an even number (resp. an odd number).

\noindent \end{definition}
\begin{lemma}

For a matrix $B=\left(\begin{array}{cc}
g_{1} & g_{2}\\
-\Phi\overline{g_{2}} & \overline{g_{1}}
\end{array}\right)\in\mathrm{GL}\left(2,\,\mathbb{F}_{p}\left[t,t^{-1}\right]\right)$, suppose $g_{2}\ne0$. Then, there exists a unique balanced matrix
$\mathrm{bl}\left(B\right)$ among the set of matrices
\begin{align*}
\left\{ Bg_{p}\left[0\right]^{k}\;|\;k\in\mathbb{Z}\right\} .
\end{align*}

Let us call $\mathrm{bl}\left(B\right)$ the \emph{balanced companion
of} $B$.

\noindent \end{lemma}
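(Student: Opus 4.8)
The plan is to multiply $B$ on the right by powers of $g_p[0]=\left(\begin{smallmatrix} t & 0 \\ 0 & t^{-1}\end{smallmatrix}\right)$ and track how this shifts the supports of the first-row entries $g_1$ and $g_2$. Since $g_p[0]$ is diagonal with entries $t$ and $t^{-1}$, right-multiplying $B$ by $g_p[0]^k$ replaces $g_1$ by $t^k g_1$ and $g_2$ by $t^{-k}g_2$ (up to the scalar conventions of $\mathrm{PGL}$, but we work with honest matrices here). So if we write $g_1=a_0 t^{m_1}+\cdots+a_{n_1-m_1}t^{n_1}$ and $g_2=b_0 t^{m_2}+\cdots+b_{n_2-m_2}t^{n_2}$ as in (23)--(24), then the exponents of the first entry become $[m_1+k,\,n_1+k]$ and those of the second become $[m_2-k,\,n_2-k]$.

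First I would recall the constraint $n_1-m_1=n_2-m_2+1$ coming from the functional equation (15), so $\mathrm{rd}(g_1)=\mathrm{rd}(g_2)+1$ is invariant under the multiplication (it just shifts both intervals). Being \emph{upper-balanced} means $m_1+k=m_2-k$ and $n_1+k=(n_2-k)+1$; being \emph{lower-balanced} means $m_1+k=(m_2-k)-1$ and $n_1+k=n_2-k$. Using the relation $n_1-m_1=n_2-m_2+1$, in both cases the two equations are equivalent to a single linear equation in $k$: upper-balanced amounts to $2k=m_2-m_1$, and lower-balanced amounts to $2k=m_2-m_1-1$. Exactly one of the integers $m_2-m_1$ and $m_2-m_1-1$ is even, so there is a unique $k\in\mathbb{Z}$ achieving one of the two balanced conditions, and this $k$ achieves \emph{only} that one (the other parity class has no integer solution). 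This gives both existence and uniqueness of $\mathrm{bl}(B)=Bg_p[0]^k$.

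The only point that needs a little care is checking that $Bg_p[0]^k$ still has the quaternionic shape $\left(\begin{smallmatrix} g_1' & g_2' \\ -\Phi\overline{g_2'} & \overline{g_1'}\end{smallmatrix}\right)$, so that Definition 3.3 even applies to it — but this is immediate since $g_p[0]$ itself is of that form (with $r=0$) and lies in $\mathrm{GL}(2,\mathbb{F}_p[t,t^{-1}])$, and the quaternionic condition is closed under products (it is the defining condition of $\mathcal{Q}_p$ lifted to matrices, as used throughout Section 2). I would also note $g_2\ne0\Rightarrow g_2'\ne0$ since multiplication by the unit $t^{-k}$ preserves nonvanishing, so the "$g_2\ne0$" hypothesis in Definition 3.3 is met and the notions of upper/lower/evenly/oddly balanced are well-defined for $\mathrm{bl}(B)$. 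The main obstacle, such as it is, is purely bookkeeping: being scrupulous about whether "$n_1=n_2+1$" versus "$m_1=m_2$" is the independent condition, and invoking (15) to see that fixing the bottom exponent forces the top one (and vice versa), so that each of the two balanced types is cut out by a single congruence $2k\equiv c\pmod{}$ rather than two; once that is clear, the parity dichotomy finishes the argument.
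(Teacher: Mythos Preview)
Your proof is correct and follows essentially the same approach as the paper: both track how right-multiplication by $g_p[0]^k$ shifts the degree intervals of $g_1$ and $g_2$ in opposite directions by $k$, reduce the balanced condition to a single linear equation in $2k$ via the relation $n_1-m_1=n_2-m_2+1$, and then use parity to see that exactly one of the upper/lower conditions admits an integer solution. The paper compresses this into the single formula $k=\lfloor (n_2-n_1+1)/2\rfloor$, while you spell out the two cases $2k=m_2-m_1$ and $2k=m_2-m_1-1$ explicitly; these are equivalent since $m_2-m_1=n_2-n_1+1$.
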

\begin{proof}

From (16), we have $g_{p}\left[0\right]=\left(\begin{array}{cc}
t & 0\\
0 & t^{-1}
\end{array}\right)$. Thus, the number $n_{1}-n_{2}$ increases by 2 multiplying $g_{p}\left[0\right]$
on the right of $B$, and decreases by 2 multiplying $g_{p}\left[0\right]^{-1}$.
Thus, the matrix $Bg_{p}\left[0\right]^{k}$ is balanced if and only
if $k=\left\lfloor \frac{n_{2}-n_{1}+1}{2}\right\rfloor $. $\qedhere$

\noindent \end{proof}
\begin{definition}

For a matrix $B=\left(\begin{array}{cc}
g_{1} & g_{2}\\
-\Phi\overline{g_{2}} & \overline{g_{1}}
\end{array}\right)\in\mathrm{GL}\left(2,\,\mathbb{F}_{p}\left[t,t^{-1}\right]\right)$ such that $g_{2}\ne0$, write the first row entries as in (23) and
(24). We define the \emph{type} $\tau\left(B\right)$ to be $-a_{0}b_{0}^{-1}$
when $\mathrm{bl}\left(A\right)$ is upper-balanced, and $-a_{0}^{-1}b_{0}$
when $\mathrm{bl}\left(B\right)$ is lower-balanced. For an unbalanced
matrix $B$, define $\tau\left(B\right)$ to be 0. Abusing notation,
for an element $M\in\mathcal{Q}_{p}$, we also define $\tau\left(M\right)$
to be $\tau\left(B\right)$ for a representative $B$ of $M$, and
we will call $M$ (resp. upper-, lower-, evenly, oddly) balanced if
$B$ is (resp. upper-, lower-, evenly, oddly) balanced.

\noindent \end{definition}

From now on, denote by $I$ the identity matrix $\left(\begin{array}{cc}
1 & 0\\
0 & 1
\end{array}\right)$ over any base ring. For simplicity, for any pairs of Laurent polynomials
$g_{1},\,g_{2}\in\mathbb{F}_{p}\left[t,\,t^{-1}\right]$, we define
the \emph{dagger} notation as
\begin{align*}
\left(\begin{array}{cc}
g_{1} & g_{2}\\
-\Phi\overline{g_{2}} & \overline{g_{1}}
\end{array}\right)^{\dagger} & :=\left(\begin{array}{cc}
\overline{g_{1}} & -g_{2}\\
\Phi\overline{g_{2}} & g_{1}
\end{array}\right).
\end{align*}

For a matrix $B=\left(\begin{array}{cc}
g_{1} & g_{2}\\
-\Phi\overline{g_{2}} & \overline{g_{1}}
\end{array}\right)$, it is direct that if $B\in\mathrm{GL}\left(2,\,\mathbb{F}_{p}\left[t,\,t^{-1}\right]\right)$,
then we have $B^{\dagger}=\det\left(B\right)B^{-1}$. On the other
hand, when $\det\left(B\right)=0$, we have $BB^{\dagger}=0=B^{\dagger}B$.
We are ready to list the generators of the quaternionic group $\mathcal{Q}_{p}$,
aside from the elementary generators.

\noindent \begin{definition}

For a unit $\tau\in\mathbb{F}_{3}^{\times}$ and a polynomial $f\left(x\right)\in\mathbb{F}_{3}\left[x\right]$,
we define a matrix
\begin{align*}
a_{3,\,\tau,\,u}\left[f\right] & :=I+f\left(t+t^{-1}\right)\left(\begin{array}{cc}
1+t^{-1} & 0\\
0 & 1+t
\end{array}\right)g_{3}\left[\tau\right],
\end{align*}
where we simplified $f\left(x\right)$ to $f$ for simplicity if there
is no room for confusion.

Define the \emph{upper additive generator }for $f$ to be its projectivization
$\overline{a_{3,\,\tau,\,u}}\left[f\right]:=\pi_{3}\left(a_{3,\,\tau,\,u}\left[f\right]\right)$.
Similarly, define a matrix
\begin{align*}
a_{3,\,\tau,\,l}\left[f\right] & :=I+f\left(t+t^{-1}\right)\left(\begin{array}{cc}
1+t & 0\\
0 & 1+t^{-1}
\end{array}\right)g_{3}\left[\tau\right]^{\dagger}.
\end{align*}

Define the \emph{lower additive generator }for $f$ to be its projectivization
$\overline{a_{3,\,\tau,\,l}}\left[f\right]:=\pi_{3}\left(a_{3,\,\tau,\,l}\left[f\right]\right)$.

\noindent \end{definition}
\begin{definition}

For a prime $p$ such that $p\equiv1$ (mod 6), suppose an element
$\tau\in\mathbb{F}_{p}$ satisfies that $\tau^{4}+\tau^{2}+1=0$.
We define matrices
\begin{align*}
 & a_{p,\,\tau,\,u}\left[f\right]=I+f\left(t+t^{-1}\right)g_{p}\left[\tau\right]^{\dagger}\left(\begin{array}{cc}
t-t^{-1} & 0\\
0 & 0
\end{array}\right)g_{p}\left[\tau\right],\\
 & a_{p,\,\tau,\,l}\left[f\right]=I+f\left(t+t^{-1}\right)g_{p}\left[\tau\right]\left(\begin{array}{cc}
t-t^{-1} & 0\\
0 & 0
\end{array}\right)g_{p}\left[\tau\right]^{\dagger}.
\end{align*}

Define the \emph{upper additive generator} for $f$ to be $\overline{a_{p,\,\tau,\,u}}\left[f\right]:=\pi_{p}\left(a_{p,\,\tau,\,u}\left[f\right]\right)$,
and the \emph{lower additive generator} for $f$ to be $\overline{a_{p,\,\tau,\,l}}\left[f\right]:=\pi_{p}\left(a_{p,\,\tau,\,l}\left[f\right]\right)$.

\noindent \end{definition}
\begin{definition}

For a prime $p$ such that $p\equiv1$ (mod 6), suppose an element
$\tau\in\mathbb{F}_{p}$ satisfies that $\tau^{4}+\tau^{2}+1=0$.
For each $a\in\mathbb{F}_{p}$, we define matrices
\begin{align*}
 & m_{p,\,\tau,\,u}\left[a\right]:=\left(\begin{array}{cc}
\frac{-\tau+at+\tau t^{2}}{t} & \frac{1+\tau^{2}t}{t}\\
-\Phi\left(\tau^{2}+t\right) & \frac{\tau+at-\tau t^{2}}{t}
\end{array}\right),\\
 & m_{p,\,\tau,\,l}\left[a\right]:=\left(\begin{array}{cc}
\frac{-\tau+at+\tau t^{2}}{t} & \tau^{2}+t\\
-\Phi\left(\frac{1+\tau^{2}t}{t}\right) & \frac{\tau+at-\tau t^{2}}{t}
\end{array}\right).
\end{align*}

Suppose that $\det\left(m_{p,\,\tau,\,u}\left[a\right]\right)\ne0$.
Define the \emph{upper multiplicative generator }for $a$ to be $\overline{m_{p,\,\tau,\,u}}\left[a\right]:=\pi_{p}\left(m_{p,\,\tau,\,u}\left[a\right]\right)$.
Similarly, when $\det\left(m_{p,\,\tau,\,l}\left[a\right]\right)\ne0$,
define the \emph{lower multiplicative generator} for $a$ to be $\overline{m_{p,\,\tau,\,l}}\left[a\right]:=\pi_{p}\left(m_{p,\,\tau,\,l}\left[a\right]\right)$.

\noindent \end{definition}
\begin{lemma}

For a prime $p$ such that $p\equiv1$ (mod 6), suppose an element
$\tau\in\mathbb{F}_{p}$ satisfies that $\tau^{4}+\tau^{2}+1=0$.
We have
\begin{align*}
\det\left(m_{p,\,\tau,\,u}\left[a\right]\right) & =a^{2}+3\tau^{2}=\det\left(m_{p,\,\tau,\,l}\left[a\right]\right).
\end{align*}

In particular, the multiplicative generators $\overline{m_{p,\,\tau,\,u}}\left[a\right]$
and $\overline{m_{p,\,\tau,\,l}}\left[a\right]$ are defined if and
only if $a\ne\pm\left(\tau+2\tau^{-1}\right)$.

\noindent \end{lemma}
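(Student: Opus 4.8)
The plan is to prove the determinant formula by direct computation, then deduce the non-vanishing criterion by factoring the resulting quadratic in $a$.

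First I would compute $\det\left(m_{p,\,\tau,\,u}\left[a\right]\right)$ directly from Definition 3.10. Since every $m_{p,\,\tau,\,u}\left[a\right]$ is written in the dagger-symmetric form $\left(\begin{array}{cc} g_{1} & g_{2}\\ -\Phi\overline{g_{2}} & \overline{g_{1}} \end{array}\right)$ with $g_{1}=\frac{-\tau+at+\tau t^{2}}{t}$ and $g_{2}=\frac{1+\tau^{2}t}{t}$, we have $\det = g_{1}\overline{g_{1}}+\Phi\, g_{2}\overline{g_{2}}$ where $\Phi = 1+t^{-1}+t$. Here $g_{1}$ is palindromic (replacing $t\mapsto t^{-1}$ fixes it, since $g_1 = -\tau t^{-1}+a+\tau t$), so $g_{1}\overline{g_{1}} = g_1^2 = (a+\tau(t+t^{-1}))^{2}$. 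For the second term, $g_2\overline{g_2} = (\tau^2+t^{-1})(1+\tau^2 t)\cdot$ (wait — more carefully $g_2 = t^{-1}+\tau^2$, $\overline{g_2}=t+\tau^2$), so $g_2\overline{g_2} = (t^{-1}+\tau^2)(t+\tau^2) = 1+\tau^2(t+t^{-1})+\tau^4$. Multiplying by $\Phi = 1+(t+t^{-1})$ and expanding, and adding $g_1\overline{g_1}$, the terms involving $t+t^{-1}$ and $(t+t^{-1})^2$ should cancel after using the hypothesis $\tau^4+\tau^2+1=0$ (equivalently $\tau^4 = -\tau^2-1$). What remains is a constant in $\mathbb{F}_p$, which I expect to simplify to $a^2+3\tau^2$. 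The computation for $m_{p,\,\tau,\,l}\left[a\right]$ is symmetric: it has the same $g_1$ and the roles of the off-diagonal entries are swapped, so its determinant is $g_1\overline{g_1} + \Phi^{-1}\cdot(\text{stuff})$ — actually one checks it equals the same value by an analogous cancellation, or simply by noting $m_{p,\tau,l}[a]$ is obtained from $m_{p,\tau,u}[a]$ by a transformation preserving the determinant (e.g. conjugation or the dagger operation composed with $t\mapsto t^{-1}$).

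Then the second assertion is elementary number theory over $\mathbb{F}_p$. The generators are defined exactly when the determinant is nonzero, i.e. when $a^2+3\tau^2\neq 0$, i.e. $a^2 \neq -3\tau^2$. We must show $a^2 = -3\tau^2$ has solutions precisely $a = \pm(\tau+2\tau^{-1})$. Compute $(\tau+2\tau^{-1})^2 = \tau^2 + 4 + 4\tau^{-2}$; multiply the target equation $\tau^2(\tau^2+4+4\tau^{-2}) = \tau^4+4\tau^2+4 \overset{?}{=} -3\tau^4$, i.e. $4\tau^4+4\tau^2+4 = 0$, i.e. $\tau^4+\tau^2+1=0$, which is exactly the hypothesis on $\tau$. (Note $p\neq 2$ since $p\equiv 1\pmod 6$, so dividing by $4$ is legitimate, and $\tau\neq 0$ so $\tau^{-1}$ makes sense.) Hence $a=\pm(\tau+2\tau^{-1})$ are roots of $a^2+3\tau^2$, and since a quadratic over a field has at most two roots and these two are distinct (their difference $2(\tau+2\tau^{-1})$ is nonzero as $p>3$ and $\tau+2\tau^{-1}=0$ would force $\tau^2=-2$, incompatible with $\tau^4+\tau^2+1=0$ — or one simply notes if they coincided the determinant would be a perfect square $(a\mp(\tau+2\tau^{-1}))^2$ up to a unit, contradicting that it has the two claimed roots unless they agree, which a separate check rules out), they are exactly the roots.

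The main obstacle is the bookkeeping in the determinant expansion: one has products of degree-two Laurent polynomials, and the cancellation of the non-constant part is the crux and relies essentially on $\tau^4+\tau^2+1=0$; care is needed to track the coefficient of $(t+t^{-1})$ and of $(t^2+t^{-2})$ separately and confirm both vanish. Everything else — the symmetry argument for the lower generator and the factoring of $a^2+3\tau^2$ — is routine once the formula is in hand.
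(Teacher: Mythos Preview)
Your approach is correct and matches the paper's: direct computation of the determinant using $\tau^4+\tau^2+1=0$, followed by factoring $a^2+3\tau^2$. The paper phrases the second step slightly differently (it first finds the solutions of $x^2=-3$ as $\pm(2\tau^2+1)$ and then multiplies by $\tau$), but your direct verification that $(\tau+2\tau^{-1})^2=-3\tau^2$ is equivalent. One small cleanup: your hesitation about $m_{p,\tau,l}[a]$ is unnecessary, since its $g_2=\tau^2+t$ satisfies $g_2\overline{g_2}=(\tau^2+t)(\tau^2+t^{-1})$, which is literally the same product as for the upper case, so the two determinant computations are identical, not merely analogous.
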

\begin{proof}

\noindent We have the determinant by direct computation using the
assumption $\tau^{4}+\tau^{2}+1=0$. Therefore, the determinant is
nonzero if and only if $a^{2}\ne-3\tau^{2}$. By the assumption, the
only solutions for the equation $x^{2}=-3$ are $\pm\left(2\tau^{2}+1\right)$.
We conclude the proof by the equality $\pm\tau\left(2\tau^{2}+1\right)=\pm\left(\tau+2\tau^{-1}\right)$.
$\qedhere$

\noindent \end{proof}

For the cases $p\equiv1$ (mod 6), we will collectively refer to additive
generators and multiplicative generators as \emph{affine generators}.

\noindent \begin{definition}

For a prime $p$ such that $p\equiv1$ (mod 6), suppose an element
$\tau\in\mathbb{F}_{p}$ satisfies that $\tau^{4}+\tau^{2}+1=0$.
A \emph{stable generator} is an element in $\mathcal{Q}_{p}$ of the
following form:
\begin{align*}
\left[\begin{array}{cc}
\frac{-\tau+a_{1}t+a_{2}t^{2}+\tau^{-1}t^{3}}{t} & \frac{1+a_{3}t+t^{2}}{t}\\
-\Phi\left(\frac{1+a_{3}t+t^{2}}{t}\right) & \frac{\tau^{-1}+a_{2}t+a_{1}t^{2}-\tau t^{3}}{t^{2}}
\end{array}\right],
\end{align*}
where we used the upper-balanced representative as a convention.

\noindent \end{definition}
\begin{lemma}

For a prime $p$ such that $p\equiv1$ (mod 6), suppose an element
$\tau\in\mathbb{F}_{p}$ satisfies that $\tau^{4}+\tau^{2}+1=0$.
Then, there are exactly $p-1$ stable generators for type $\tau$.

\noindent \end{lemma}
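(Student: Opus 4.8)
<br>

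The plan is to count the stable generators directly from the defining shape in Definition 3.12. A stable generator for type $\tau$ is determined by the tuple $\left(a_{1},\,a_{2},\,a_{3}\right)\in\mathbb{F}_{p}^{3}$ together with the constraint that the displayed matrix actually lies in $\mathcal{Q}_{p}$, i.e.\ that its two first-row entries $g_{1}=t^{-1}\left(-\tau+a_{1}t+a_{2}t^{2}+\tau^{-1}t^{3}\right)$ and $g_{2}=t^{-1}\left(1+a_{3}t+t^{2}\right)$ satisfy the functional equation (15) for some $k\in\mathbb{F}_{p}^{\times}$. So the first step is to impose (15): expand $g_{1}\overline{g_{1}}+\Phi\, g_{2}\overline{g_{2}}$ as a Laurent polynomial in $t$ and require that it be a nonzero constant. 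Since $g_{1}$ has relative degree $3$ and $g_{2}$ relative degree $2$, the product $g_{1}\overline{g_{1}}+\Phi g_{2}\overline{g_{2}}$ is a palindromic Laurent polynomial supported on $t^{-3},\dots,t^{3}$; requiring it to be constant kills the $t^{\pm1},t^{\pm2},t^{\pm3}$ coefficients, giving a system of equations in $a_{1},a_{2},a_{3}$.

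Next I would solve that system. Using $\tau^{4}+\tau^{2}+1=0$ (equivalently $\tau^{2}+\tau^{-2}=-1$) to simplify, the top-degree ($t^{3}$) coefficient of $g_{1}\overline{g_{1}}$ is $-\tau^{-1}\cdot\tau=-1$ while that of $\Phi g_{2}\overline{g_{2}}$ is $1\cdot 1\cdot 1 = 1$ — wait, one must be careful: $\Phi=t+1+t^{-1}$, so $\Phi g_2\overline{g_2}$ has top degree $t^{1+2}=t^3$ with coefficient $1$, and $g_1\overline{g_1}$ has top coefficient (from $\tau^{-1}t^3$ times $-\tau t^{-3}$... being careful with $\overline{g_1}$) contributing, so the $t^{3}$-coefficients cancel automatically given the $\tau$ relation. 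The remaining equations at $t^{2}$ and $t^{1}$ should, after using the $\tau$-relation, become \emph{linear} (or collapse) in the $a_{i}$: typically the $a_{3}$ in $g_{2}$ is forced, or one relation expresses $a_{2}$ in terms of $a_{1}$ (or vice versa), leaving a one-parameter family. I expect the outcome is that exactly one linear relation among $a_{1},a_{2},a_{3}$ is genuinely constraining beyond what the $\tau$-relation already gives, cutting $\mathbb{F}_{p}^{3}$ down to an $\mathbb{F}_{p}^{2}$; then a further constraint (or the requirement $k\neq 0$, excluding a single ``degenerate'' value) brings the count to $p-1$. Alternatively — and this is the cleaner route — one shows the stable generators of type $\tau$ form a coset-like family parametrized by $\mathbb{F}_{p}^{\times}$ via the constant $k\in\mathbb{F}_{p}^{\times}$ appearing in (15): the shape is rigid enough that $k$ determines the matrix and every $k\neq 0$ is achieved, so there are $|\mathbb{F}_{p}^{\times}|=p-1$ of them.

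Concretely, here is the approach I would commit to: (i) write $B$ for the matrix in Definition 3.12 and compute $\det(B)$; by the palindromic-units argument already used for (15), $\det(B)=k\in\mathbb{F}_{p}$, and $B\in\mathcal{Q}_{p}$ requires $k\neq 0$; (ii) expand $\det(B) = g_1\overline{g_1}+\Phi g_2 \overline g_2$ and set the non-constant coefficients to zero, obtaining a system $\mathcal{S}$ in $(a_1,a_2,a_3)$; (iii) solve $\mathcal{S}$ using $\tau^{4}+\tau^{2}+1=0$, showing the solution set is a single affine line $L\subset\mathbb{F}_{p}^{3}$ (parametrized, say, by $a_{1}$), so $|L|=p$; (iv) compute $k$ as a function of the parameter along $L$ — it will be a nonconstant affine (degree $\le 1$) function, hence vanishes at exactly one point of $L$; excluding that point gives $p-1$ admissible tuples; (v) check that distinct admissible tuples give distinct elements of $\mathcal{Q}_{p}$ (immediate, since $a_{1},a_{2},a_{3}$ are recovered from the entries of any representative once the leading coefficient $\tau^{-1}$ of $g_{1}$ is normalized). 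The main obstacle is step (iii): one must verify that the coefficient equations from $\det(B)=\text{const}$ really do collapse to a single line rather than a point or a plane, and this is where the specific algebra of the relation $\tau^{2}+\tau^{-2}=-1$ does the work — miscounting the rank of $\mathcal{S}$ would give $1$ or $p^{2}-\text{something}$ instead of $p-1$. A secondary subtlety is confirming that the ``$k\neq 0$'' exclusion removes exactly one tuple and not zero or two; this follows once $k$ is pinned down as a genuinely linear (not constant, not quadratic) function of the line parameter, which again uses the $\tau$-relation to knock out the would-be quadratic term.
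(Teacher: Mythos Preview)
Your overall strategy coincides with the paper's: expand the determinant, kill the non-constant coefficients to get a system $\mathcal{S}$ in $(a_{1},a_{2},a_{3})$, and then count the solutions with nonzero constant term. The place where your expectations diverge from what actually happens is step~(iii). The $t^{2}$-coefficient equation is indeed linear and lets you eliminate $a_{1}$, but after that substitution the $t^{1}$-coefficient equation is \emph{not} linear: it is a genuine quadratic in $a_{3}$ (with an $a_{2}^{2}$ term as well). Its discriminant simplifies, using $\tau^{4}+\tau^{2}+1=0$, to the constant $-12$, and since $(2\tau^{2}+1)^{2}=-3$ this is a nonzero square in $\mathbb{F}_{p}$. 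So the solution set of $\mathcal{S}$ is \emph{two} disjoint affine lines in $\mathbb{F}_{p}^{3}$, giving $2p$ tuples rather than the $p$ you predict.

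Your step~(iv) then needs adjustment. On one of the two lines the constant term $k$ is not a nonconstant linear function of the parameter; it is identically zero (the factor $1+\tau^{2}+\tau^{4}$ appears and kills it). That entire branch of $p$ tuples is discarded. On the other line $k$ is the nonconstant linear function $-12(1+3\tau^{2}-\tau a_{2})$, vanishing at the single value $a_{2}=3\tau+\tau^{-1}$. The final count is therefore $2p-p-1=p-1$, not the $p-1$ coming from a single line minus one point. Your plan would uncover this once you actually solved the quadratic, but as written it does not anticipate the second branch or its wholesale degeneration, and the heuristic ``$k$ is linear along $L$, hence one exclusion'' would mislead you on the wrong branch.
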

\begin{proof}

\noindent By Definition 3.9, we compute the determinant:
\begin{equation}
\det\left(\begin{array}{cc}
\frac{-\tau+a_{1}t+a_{2}t^{2}+\tau^{-1}t^{3}}{t} & \frac{1+a_{3}t+t^{2}}{t}\\
-\Phi\left(\frac{1+a_{3}t+t^{2}}{t}\right) & \frac{\tau^{-1}+a_{2}t+a_{1}t^{2}-\tau t^{3}}{t^{2}}
\end{array}\right).
\end{equation}

From the $t^{2}$ coefficient of (25), we derive the following equation:
\begin{align*}
\tau^{-1}\left(-\tau+a_{1}-\tau^{2}a_{2}+2\tau a_{3}\right) & =0.
\end{align*}

We choose $a_{1}=-\tau+\tau^{2}a_{2}-2\tau a_{3}$. Substituting this
into (25), we obtain an equation from the $t$ coefficient
\begin{align*}
a_{3}^{2}+2a_{3}+2\tau^{2}a_{3}-2\tau a_{2}a_{3}+3+\tau^{2}+2\tau^{-1}a_{2}+\tau^{2}a_{2}^{2} & =0.
\end{align*}

Choosing as $a_{3}$ the variable, we solve this qudratic equation.
We have two solutions:
\begin{align*}
a_{3} & =-1-\tau^{2}+\tau a_{2}\pm\left(2\tau^{2}+1\right),
\end{align*}
where we used the assumption $\left(2\tau^{2}+1\right)^{2}=-3$. By
taking the $+$ sign, suppose $a_{3}=\tau^{2}+\tau a_{2}$. Then,
the determinant (25) becomes
\begin{align*}
\tau^{-2}\left(1+\tau^{2}+\tau^{4}\right)\left(1+4\tau^{4}+4\tau^{3}a_{2}+\tau^{2}\left(1+a_{2}^{2}\right)\right),
\end{align*}
which must be zero by the assumption $\tau^{4}+\tau^{2}+1=0$. Therefore,
we should take the $-$ sign and suppose $a_{3}=-3\tau^{2}+\tau a_{2}-2$.
Then, the determinant (25) becomes
\begin{align*}
-12\left(1+3\tau^{2}-\tau a_{2}\right),
\end{align*}
which is zero if and only if $a_{2}=3\tau+\tau^{-1}$. In conclusion,
there are exactly $p-1$ options for the choices of $a_{2}$. $\qedhere$

\noindent \end{proof}

From now on, by the proof of Lemma 3.10, for $a\in\mathbb{F}_{p}\backslash\left\{ 3\tau+\tau^{-1}\right\} $
we define the stable generator
\begin{align*}
\overline{s_{p,\,\tau}}\left[a\right] & :=\left[\begin{array}{cc}
\frac{-\tau+a_{1}t+at^{2}+\tau^{-1}t^{3}}{t} & \frac{1+a_{3}t+t^{2}}{t}\\
-\Phi\left(\frac{1+a_{3}t+t^{2}}{t}\right) & \frac{\tau^{-1}+at+a_{1}t^{2}-\tau t^{3}}{t^{2}}
\end{array}\right],
\end{align*}
where $a_{1}=\tau\left(3-a\tau+6\tau^{2}\right)$ and $a_{3}=-3\tau^{2}+\tau a-2$.
For future use, we also define the matrix representative
\begin{align*}
s_{p,\,\tau}\left[a\right] & :=\left(\begin{array}{cc}
\frac{-\tau+a_{1}t+at^{2}+\tau^{-1}t^{3}}{t} & \frac{1+a_{3}t+t^{2}}{t}\\
-\Phi\left(\frac{1+a_{3}t+t^{2}}{t}\right) & \frac{\tau^{-1}+at+a_{1}t^{2}-\tau t^{3}}{t^{2}}
\end{array}\right).
\end{align*}

We state the main theorem of this section. In the following statement,
we denote by $\mathbb{F}_{p}\left[x\right]$ the additive group isomorphic
to the infinite sum of finite cyclic groups $C_{p}^{\infty}$, and
denote by $\mathrm{Aff}\left(\mathbb{F}_{p}\left[x\right]\right)$
the affine group over the ring $\mathbb{F}_{p}\left[x\right]$, which
is isomorphic to $\mathbb{F}_{p}\left[x\right]\rtimes\mathbb{F}_{p}^{\times}$.

\noindent \begin{theorem}

For each prime $p$, there exist $p$ subgroups $\mathcal{G}_{p,\,0},\,\mathcal{G}_{p,\,1},\cdots,\,\mathcal{G}_{p,\,p-1}$,
called \emph{type groups}, of the quaternionic group $\mathcal{Q}_{p}$
satisfying the following properties.
\begin{enumerate}
\item For each integer $r$ such that $0\le r\le p-1$, each nontrivial
element of $\mathcal{G}_{p,\,r}$ has type $r$.
\item $\mathcal{Q}_{p}$ is the free product of $p$ subgroups $\left\{ \mathcal{G}_{p,\,r}\right\} _{0\le r\le p-1}$.
\item Suppose $r^{4}+r^{2}+1\ne0$ (mod $p$). Then, the group $\mathcal{G}_{p,\,r}$
is an infinite cyclic group generated by the elementary generator
$\overline{g}\left[r\right]$. In particular, for the cases $p\equiv2,\,5$
(mod 6), $\mathcal{Q}_{p}$ is freely generated by $p$ elementary
generators.
\item Suppose $p=3$. For a unit $\tau\in\mathbb{F}_{3}^{\times}$, the
set of upper additive generators $\overline{a_{3,\,\tau,\,u}}\left[f\right]$
(resp. the set of lower additive generators $\overline{a_{3,\,\tau,\,l}}\left[f\right]$)
generates an abelian group isomorphic to $\mathbb{F}_{3}\left[x\right]$.
The group $\mathcal{G}_{3,\,\tau}$ is the free product of the subgroup
generated by upper additive generators $\overline{a_{3,\,\tau,\,u}}\left[f\right]$
and the subgroup generated by lower additive generators $\overline{a_{3,\,\tau,\,l}}\left[f\right]$.
In particular, $\mathcal{G}_{3,\,\tau}$ is isomorphic to $\mathbb{F}_{p}\left[x\right]*\mathbb{F}_{p}\left[x\right]$.
\item For $p\equiv1$ (mod 6), suppose an element $\tau\in\mathbb{F}_{p}$
satisfies that $\tau^{4}+\tau^{2}+1=0$. Then, we have the following:
\begin{enumerate}
\item the set of upper additive generators $\overline{a_{p,\,\tau,\,u}}\left[f\right]$
(resp. the set of lower additive generators $\overline{a_{p,\,\tau,\,l}}\left[f\right]$)
generates an abelian group isomorphic to $\mathbb{F}_{p}\left[x\right]$, 
\item the set of upper multiplicative generators $\overline{m_{p,\,\tau,\,u}}\left[a\right]$
(resp. the set of lower multiplicative generators $\overline{m_{p,\,\tau,\,l}}\left[a\right]$)
generates an abelian group isomorphic to the unit group $\mathbb{F}_{p}^{\times}$,
\item the set of upper (resp. lower) affine generators generates a semidirect
product isomorphic to $\mathbb{F}_{p}\left[x\right]\rtimes\mathbb{F}_{p}^{\times}$,
\item for an element $a\in\mathbb{F}_{p}\backslash\left\{ 3\tau+\tau^{-1}\right\} $,
a stable generator $\overline{s_{p,\,\tau}}\left[a\right]$ generates
an infinite cyclic group, and
\item the group $\mathcal{G}_{p,\,\tau}$ is generated by a stable generator
and the set of affine generators, and is an (internal) HNN extension
in which the upper affine generators and the lower affine generators
generate the free product of the subgroups, and the stable generator
plays the role of the stable letter. Concretely, we have an isomorphism
\begin{align*}
\mathcal{G}_{p,\,\tau} & \cong\left(\mathrm{Aff}\left(\mathbb{F}_{p}\left[x\right]\right)*\mathrm{Aff}\left(\mathbb{F}_{p}\left[x\right]\right)\right)*_{\alpha_{\tau}},
\end{align*}
where the affine group $\mathrm{Aff}\left(\mathbb{F}_{p}\left[x\right]\right)$,
isomorphic to $\mathbb{F}_{p}\left[x\right]\rtimes\mathbb{F}_{p}^{\times}$,
is corresponding to the subgroup generated by upper (or lower) affine
generators, $\alpha_{\tau}:\left(\mathbb{F}_{p}^{\times}\right)_{u}\to\left(\mathbb{F}_{p}^{\times}\right)_{l}$
is corresponding to the isomorphism from the unit group generated
by the upper generators to that by the lower generators, and $*_{\alpha_{\tau}}$
means the group is the HNN extension by $\alpha_{\tau}$.
\end{enumerate}
\end{enumerate}
For the case $p=3$, there are two isomorphic factors $\mathcal{G}_{3,\,\tau}$
such that $\tau\ne0$. For the cases $p\equiv1$ (mod 6), there are
exactly four isomorphic factors $\mathcal{G}_{p,\,\tau}$ such that
$\tau^{4}+\tau^{2}+1=0$.

\noindent \end{theorem}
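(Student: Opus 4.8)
The plan is to prove the structure theorem by a ping-pong argument inside $\mathrm{PGL}(2,\mathbb{F}_p(t))$, using the type function $\tau$ introduced in Definition 3.5 as the bookkeeping device that records which factor a group element belongs to. First I would define, for each $r$ with $0\le r\le p-1$, the type group $\mathcal{G}_{p,r}$ as the subgroup of $\mathcal{Q}_p$ generated by: the elementary generator $\overline{g_p}[r]$ when $r^4+r^2+1\ne 0$; the upper and lower additive generators $\overline{a_{3,\tau,u}}[f]$, $\overline{a_{3,\tau,l}}[f]$ when $p=3$ and $r=\tau\ne 0$; and the affine generators together with a fixed stable generator $\overline{s_{p,\tau}}[a_0]$ when $p\equiv 1\ (\mathrm{mod}\ 6)$ and $r=\tau$ satisfies $\tau^4+\tau^2+1=0$. (In the remaining cases — $p=3,r=0$, or $p\equiv1\ (\mathrm{mod}\ 6)$ with $r^4+r^2+1=0$ but $r\ne\tau$ a root — the groups are trivial or absorbed; these need to be handled as degenerate cases.) The heart of the argument is then: (i) verify that every nontrivial element of $\mathcal{G}_{p,r}$ genuinely has type $r$ (item 1); (ii) describe the internal structure of each $\mathcal{G}_{p,r}$ (items 3, 4, 5); and (iii) run ping-pong on the collection $\{\mathcal{G}_{p,r}\}$ to conclude that $\mathcal{Q}_p$ is their free product (item 2).

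For the internal structure, I would proceed case by case. When $r^4+r^2+1\ne 0$, the relation $\overline{g_p}[r]^{n}$ has relative degree $n$ (this follows because multiplication by $g_p[r]$ raises $\mathrm{rd}$ by exactly $1$ — a direct consequence of the balancedness bookkeeping of Section 3 and the nonvanishing of $1+r^2+r^4$), so no nontrivial power is scalar and $\mathcal{G}_{p,r}\cong\mathbb{Z}$. For $p=3$, I would compute the product of two upper additive generators $\overline{a_{3,\tau,u}}[f]\,\overline{a_{3,\tau,u}}[g]$ directly; the rank-one structure $f(t+t^{-1})\,\mathrm{diag}(1+t^{-1},1+t)\,g_3[\tau]$ makes the product correspond to $f+g$, so the upper (and similarly lower) additive generators form a copy of $\mathbb{F}_3[x]$, and a short ping-pong inside $\mathcal{G}_{3,\tau}$ — upper generators lower the smallest exponent, lower generators raise the largest, or vice versa — gives the free product decomposition. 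For $p\equiv1\ (\mathrm{mod}\ 6)$, items (a)–(c) are the analogous computations: additive generators multiply by adding polynomials (Definition 3.8's rank-one deformation), multiplicative generators multiply by multiplying units (Lemma 3.9 plus a determinant computation), and together they give $\mathrm{Aff}(\mathbb{F}_p[x])$; item (d) follows because $\overline{s_{p,\tau}}[a]$ has $\mathrm{rd}=3$ and conjugation by it is injective but not surjective on the affine subgroup; item (e) is then the assertion that $\mathcal{G}_{p,\tau}$ is the HNN extension, which I would establish using Fenchel–Nielsen's theorem (the variant of the ping-pong lemma for HNN extensions cited in the introduction), checking the two required inclusions $\alpha_\tau^{\pm1}$ and that the associated subgroup is malnormal enough for the ping-pong hypotheses.

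The ping-pong for item 2 is where I expect the main obstacle to lie. The natural action is on the projective line $\mathbb{P}^1(\mathbb{F}_p((t)))$ (or on the Bruhat–Tits tree of $\mathrm{PGL}(2,\mathbb{F}_p((t)))$), and the ping-pong sets $X_r$ should be the subsets of $\mathbb{P}^1$ consisting of (classes of) vectors whose valuation data is "dominated" in a way that forces type $r$; concretely, one tracks the leading and trailing terms of the two coordinates and shows that applying a nontrivial element of $\mathcal{G}_{p,r}$ to a point in $X_s$ ($s\ne r$) lands in $X_r$. The subtlety is that, unlike the $\mathbb{Q}$ case treated in \citep{lee2024salters}, the sets $X_r$ for the roots $\tau$ of $x^4+x^2+1$ behave differently — the elementary generator is not available there — so the ping-pong domains for those indices must be built from the balanced representatives of the affine and stable generators, and one must check that the "parabolic-type" affine generators and the "hyperbolic-type" stable generator are simultaneously ping-pong compatible with all the honestly loxodromic elementary generators at the other indices. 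I would organize this by first proving a key lemma: for $M\in\mathcal{Q}_p$ nontrivial with $\tau(M)=r$, the matrix $M$ maps $\mathbb{P}^1\setminus\{P_r^+\}$ into a small neighborhood of $P_r^+$, where $P_r^{\pm}$ are the attracting/repelling data attached to type $r$; this lemma, once available uniformly across the three families of generators, makes the free-product ping-pong and the final counting statements (two factors $\mathcal{G}_{3,\tau}$ for $p=3$; four factors for $p\equiv1\ (\mathrm{mod}\ 6)$, one per root of $x^4+x^2+1$) routine. Since the excerpt ends at the statement, the sections that follow (the rest of Section 3 and all of Section 4) presumably carry out exactly this program, with Section 4 devoted to the genuinely new $p\equiv1\ (\mathrm{mod}\ 6)$ analysis.
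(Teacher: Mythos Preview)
Your broad outline --- case division by residue of $p$ modulo $6$, rank-one computations to identify the additive and multiplicative subgroups, Fenchel--Nielsen for the HNN extension, and a ping-pong argument for the free product of type groups --- matches the paper's architecture. But two concrete points diverge, and one of them is a genuine gap.

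First, a difference in the action. You propose ping-pong on $\mathbb{P}^{1}(\mathbb{F}_{p}((t)))$ or the Bruhat--Tits tree, with attracting/repelling data $P_{r}^{\pm}$. The paper instead runs ping-pong on $\mathcal{Q}_{p}$ acting on itself by right multiplication, with the ping-pong sets $S_{r}$ cut out by the combinatorial invariants (balancedness, type, signature). Your dynamical picture is fine for the elementary generators $\overline{g_{p}}[r]$ with $r^{4}+r^{2}+1\ne0$, which are genuinely loxodromic, but the additive generators are unipotent (determinant $1$, trace $2$), so they have a single parabolic fixed point on $\mathbb{P}^{1}$ rather than an attracting/repelling pair. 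You flag this as a ``subtlety'' but do not explain how the $\mathbb{P}^{1}$ ping-pong domains for parabolic types would be made disjoint from the loxodromic ones; in practice this is exactly where a na\"{i}ve dynamical argument breaks down.

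Second, and more seriously, you are missing the bookkeeping device that makes the ping-pong actually close. The type $\tau(M)$ alone is \emph{not} enough: when one multiplies $M_{1}\in S_{r_{1}}$ by $M_{2}\in\mathcal{G}_{p,r_{2}}$, the leading and trailing coefficients can cancel, and whether the product lands in $S_{r_{2}}$ depends on a further invariant. The paper introduces the \emph{signature} $\sigma(M)$ (Definition~3.12) and proves (Lemma~3.13) that $\tau(M_{1}M_{2})=\tau(M_{2})$ provided $\sigma(M_{2})$ avoids a specific forbidden value depending on $\tau(M_{1}),\tau(M_{2})$; it then checks that every nontrivial element of each $\mathcal{G}_{p,r}$ has the correct signature to avoid these forbidden values. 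For the HNN step when $p\equiv1\pmod 6$, even the signature is insufficient, and the paper introduces \emph{second-order} type and signature (Definition~4.7) to control the next layer of cancellation (Lemmas~4.13--4.16). Without an analogue of these refined invariants, your ping-pong sets will leak.

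Two smaller remarks. Your handling of the ``degenerate cases'' is off: for $p=3$, $r=0$ one has $0^{4}+0^{2}+1=1\ne0$, so $\mathcal{G}_{3,0}=\langle\overline{g_{3}}[0]\rangle\cong\mathbb{Z}$ is an honest infinite-cyclic factor, not trivial; and for $p\equiv1\pmod6$ each of the four roots $\tau$ gets its own type group, none absorbed. Finally, you do not address generation --- that the $\mathcal{G}_{p,r}$ together exhaust $\mathcal{Q}_{p}$ --- which the paper proves by induction on the relative degree $\mathrm{rd}(M)$, producing at each step a generator whose right-multiplication strictly decreases $\mathrm{rd}$ (proofs of (a) in Theorems~3.20 and~4.1, and Lemma~4.18).
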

\begin{proof}

\noindent The proof is divided into several cases. For primes $p$
such that $p\equiv2,\,5$ (mod 6), we prove Theorem 3.14. For the
prime $3$, we prove Theorem 3.20. For primes $p$ such that $p\equiv1$
(mod 6), we prove Theorem 4.1. $\qedhere$

\noindent \end{proof}

This theorem is a full-fledged version of \citep[Theorem 4.9]{lee2024salters},
where there is no solution to ${x^{4}+x^{2}+1=0}$ over the base field
$\mathbb{Q}$. The rest of this section and the entire next section
will be devoted to the proof of Theorem 3.11.

\noindent \begin{definition}

For a balanced element $M\in\mathcal{Q}_{p}$ such that $\tau\left(M\right)\ne0$,
choose a a representative $B=\left(\begin{array}{cc}
g_{1} & g_{2}\\
-\Phi\overline{g_{2}} & \overline{g_{1}}
\end{array}\right)$ and write the first row entries:
\begin{align*}
 & g_{1}=a_{0}t^{m_{1}}+a_{1}t^{m_{1}+1}+\cdots+a_{n_{1}-m_{1}}t^{n_{1}},\\
 & g_{2}=b_{0}t^{m_{2}}+b_{1}t^{m_{2}+1}+\cdots+b_{n_{2}-m_{2}}t^{n_{2}}.
\end{align*}

Define the \emph{signature} $\sigma\left(M\right)$ to be the ratio
$\frac{a_{n_{1}-m_{1}}}{a_{0}}$. It is invariant with respect to
the choice of representative $B$.

\noindent \end{definition}

We first introduce the following lemma for the proof of (2) in Theorem
3.11.

\noindent \begin{lemma}

For two balanced elements $M_{1},\,M_{2}\in\mathcal{Q}_{p}$ with
nonzero types, suppose one of the following:
\begin{enumerate}
\item $M_{1}$ is upper-balanced and $M_{2}$ is oddly upper-balanced.
\item $M_{1}$ is upper-balanced, $M_{2}$ is evenly upper-balanced, and
$\sigma\left(M_{2}\right)\ne-\frac{\tau\left(M_{1}\right)}{\tau\left(M_{2}\right)}$.
\item $M_{1}$ is upper-balanced and $M_{2}$ is evenly lower-balanced.
\item $M_{1}$ is upper-balanced, $M_{2}$ is oddly lower-balanced, and
$\sigma\left(M_{2}\right)\ne-\left(\tau\left(M_{1}\right)\tau\left(M_{2}\right)\right)$.
\item $M_{1}$ is lower-balanced and $M_{2}$ is evenly upper-balanced.
\item $M_{1}$ is lower-balanced, $M_{2}$ is oddly upper-balanced, and
$\sigma\left(M_{2}\right)\ne-\left(\tau\left(M_{1}\right)\tau\left(M_{2}\right)\right)^{-1}$.
\item $M_{1}$ is lower-balanced and $M_{2}$ is oddly lower-balanced.
\item $M_{1}$ is lower-balanced, $M_{2}$ is evenly lower-balanced, and
$\sigma\left(M_{2}\right)\ne-\frac{\tau\left(M_{2}\right)}{\tau\left(M_{1}\right)}$.
\end{enumerate}
Then, we have $\tau\left(M_{1}M_{2}\right)=\tau\left(M_{2}\right)$.

\noindent \end{lemma}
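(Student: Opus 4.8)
The plan is to reduce all eight cases to a single computation by exploiting the symmetry between upper- and lower-balanced matrices under the dagger operation. First I would fix representatives $B_1, B_2 \in \mathrm{GL}(2,\mathbb{F}_p[t,t^{-1}])$ of $M_1, M_2$ and, after multiplying on the right by a suitable power of $g_p[0] = \mathrm{diag}(t,t^{-1})$, assume $B_1 = \mathrm{bl}(B_1)$ and $B_2 = \mathrm{bl}(B_2)$ are their balanced companions; this does not change the product's balanced companion or any of the types and signatures involved. Writing out the first-row entries $g_1 = a_0 t^{m_1} + \cdots + a_{n_1-m_1} t^{n_1}$ and $g_2 = b_0 t^{m_2} + \cdots + b_{n_2-m_2}t^{n_2}$ of $B_1$ (and similarly primed entries for $B_2$), the key observation is that $\tau(M_1 M_2)$ is controlled by the \emph{extreme} (highest- and lowest-degree) terms of the product matrix's first row, so I only need to track leading and trailing coefficients, not the full polynomials.

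The central step is a leading-term calculation. For the product $B_1 B_2$, the $(1,1)$ entry is $g_1 g_1' - \Phi \overline{g_2} g_2'$ and the $(1,2)$ entry is $g_1 g_2' + g_2 \overline{g_1'}$; since $\Phi = t + 1 + t^{-1}$, note that multiplying by $\Phi$ or by $\overline{(\cdot)}$ shifts relative degree in predictable ways (by $2$ for $\Phi$, and reflecting $m \leftrightarrow -n$ for the bar). The eight hypotheses are precisely the conditions under which the top-degree (or bottom-degree) terms of the two summands in each entry do \emph{not} cancel — in the "evenly balanced / wrong signature" cases the parity forces a potential collision of degrees and the signature hypothesis $\sigma(M_2) \neq -\tau(M_1)/\tau(M_2)$ (etc.) is exactly what rules out the cancellation, whereas in the "oddly balanced" cases the degrees of the two summands simply differ by an odd shift and no cancellation is possible. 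In each case I would identify which of the two summands dominates at the top and which at the bottom, read off that $B_1 B_2$ is again balanced with the same upper/lower character as $B_2$ (not $B_1$), and compute that the ratio of extreme coefficients giving $\tau(B_1B_2)$ depends only on $a_0', b_0'$ — i.e., on $B_2$ — yielding $\tau(M_1 M_2) = \tau(M_2)$.

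The main obstacle will be organizing the bookkeeping: there are $2 \times 2 = 4$ choices for the balanced type of $M_1$ versus $M_2$, and within each the parity of $\mathrm{rd}(M_2)$ splits the argument, so a naive treatment means sixteen near-identical computations. I would cut this down using the dagger symmetry — $B \mapsto B^\dagger$ interchanges upper- and lower-balanced and inverts type, and $(B_1 B_2)^\dagger = B_2^\dagger B_1^\dagger$ — together with the bar operation, reducing to essentially two genuinely distinct cases (say, $M_1$ upper-balanced with $M_2$ upper-balanced of each parity); the remaining cases follow by applying these involutions and checking the signature conditions transform correctly. The only subtlety to watch is that $\Phi$ has \emph{three} nonzero terms, so when degrees of summands coincide one must verify the surviving coefficient is genuinely nonzero — this is where the explicit signature hypotheses enter, and I would state each cancellation-avoidance inequality as a short sub-claim before concluding $\tau(M_1 M_2) = \tau(M_2)$.
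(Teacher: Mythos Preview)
Your approach is essentially the same as the paper's: compute the extreme (highest- and lowest-degree) coefficients of the first-row entries of $B_1B_2$ and show that the stated hypotheses are exactly what prevents cancellation, so that the surviving extreme coefficients come from $B_2$ and yield $\tau(M_1M_2)=\tau(M_2)$. The paper carries this out explicitly for the case where both $M_1$ and $M_2$ are upper-balanced and then declares the remaining cases analogous; your plan to organize the casework via involutions is a reasonable refinement of that.

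Two small corrections. First, the $(1,1)$ entry of $B_1B_2$ is $g_1g_1' - \Phi\, g_2\,\overline{g_2'}$, not $g_1g_1' - \Phi\,\overline{g_2}\,g_2'$; the bar lands on the $B_2$-factor, since it comes from the $(2,1)$ entry $-\Phi\overline{g_2'}$ of $B_2$. Second, the dagger $B\mapsto B^{\dagger}$ does \emph{not} cleanly swap upper- and lower-balanced in general: after passing to the balanced companion, whether $B^{\dagger}$ is upper- or lower-balanced depends on the parity of $\mathrm{rd}(B)$. The involution that does swap them cleanly is the bar $B\mapsto\overline{B}$ (which reverses the roles of highest and lowest coefficients and hence exchanges the upper/lower conditions while preserving $\mathrm{rd}$). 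Combined with the anti-automorphism $(B_1B_2)^{\dagger}=B_2^{\dagger}B_1^{\dagger}$, you can still reduce the eight cases to two, but the bookkeeping of how type and signature transform under these operations needs to be stated precisely rather than asserted. None of this affects the soundness of the strategy.
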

\begin{proof}

\noindent Suppose that both $M_{1}$ and $M_{2}$ are upper-balanced.
Temporarily denote by $\tau_{i}$ the type $\tau\left(M_{i}\right)$
for $i=1,\,2$.

We choose representatives $B_{1}=\left(\begin{array}{cc}
g_{1,\,1} & g_{1,\,2}\\
-\Phi\overline{g_{1,\,2}} & \overline{g_{1,\,1}}
\end{array}\right)$ for $M_{1}$ and $B_{2}=\left(\begin{array}{cc}
g_{2,\,1} & g_{2,\,2}\\
-\Phi\overline{g_{2,\,2}} & \overline{g_{2,\,1}}
\end{array}\right)$ for $M_{2}$ such that
\begin{align*}
 & g_{1,\,1}=-\tau_{1}t^{m_{1}}+a_{1,\,1}t^{m_{1}+1}+\cdots+a_{1,\,n_{1}-m_{1}+1}t^{n_{1}+1},\\
 & g_{1,\,2}=t^{m_{1}}+b_{1,\,1}t^{m_{1}+1}+\cdots+\tau_{1}a_{1,\,n_{1}-m_{1}+1}t^{n_{1}},\\
 & g_{2,\,2}=-\tau_{2}t^{m_{2}}+a_{2,\,1}t^{m_{2}+1}+\cdots+a_{2,\,n_{2}-m_{2}+1}t^{n_{2}+1},\\
 & g_{2,\,2}=t^{m_{2}}+b_{2,\,1}t^{m_{2}+1}+\cdots+\tau_{2}a_{2,\,n_{2}-m_{2}+1}t^{n_{2}},
\end{align*}
where the highest degree coefficients and the lowest degree coefficients
are normalized according to the types and the functional equation
(15). Then, the entry $\left(B_{1}B_{2}\right)_{11}$ is computed
as
\begin{align*}
 & \left(\tau_{1}\tau_{2}t^{m_{1}+m_{2}}-\tau_{2}a_{2,\,n_{2}-m_{2}+1}t^{m_{1}-n_{2}-1}+\cdots\right.\\
 & \left.+a_{1,\,n_{1}-m_{1}+1}a_{2,\,n_{2}-m_{2}+1}t^{n_{1}+n_{2}+2}-\tau_{1}a_{1,\,n_{1}-m_{1}+1}t^{n_{1}-m_{2}+1}\right),
\end{align*}
where we expressed the terms with the lowest and the highest degrees.
The entry $\left(B_{1}B_{2}\right)_{11}$ is also computed as
\begin{align*}
 & \left(-\tau_{1}t^{m_{1}+m_{2}}+a_{2,\,n_{2}-m_{2}+1}t^{m_{1}-n_{2}-1}+\cdots\right.\\
 & \left.\tau_{2}a_{1,\,n_{1}-m_{1}+1}a_{2,\,n_{2}-m_{2}+1}t^{n_{1}+n_{2}+1}-\tau_{1}\tau_{2}a_{1,\,n_{1}-m_{1}+1}t^{n_{1}-m_{2}}\right).
\end{align*}

By comparing these terms, when $m_{2}\ne-n_{2}-1$, the matrix $B_{1}B_{2}$
is upper-balanced and $\tau\left(B_{1}B_{2}\right)=\tau_{2}$, which
proves the case (1). On the other hand, suppose $m_{2}=-n_{2}-1$.
In this case, $M_{2}$ must be evenly upper-balanced. By comparing
the coefficients, if $\tau_{1}\ne a_{2,\,n_{2}-m_{2}+1}$, the matrix
$B_{1}B_{2}$ is again upper-balanced and $\tau\left(B_{1}B_{2}\right)=\tau_{2}$,
which proves the case (2). The other cases are proven in the same
way. $\qedhere$

\noindent \end{proof}

The following are the simplest cases of Theorem 3.11.

\noindent \begin{theorem}

For a prime $p$ such that $p\equiv2,\,5$ (mod 6), the group $\mathcal{Q}_{p}$
is freely generated by the set of elementary generators $\left\{ \overline{g_{p}}\left[r\right]\right\} _{r\in\mathbb{F}_{p}}$.

\noindent \end{theorem}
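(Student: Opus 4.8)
The plan is to follow the proof of \citep[Theorem 4.9]{lee2024salters}, with $\mathbb{F}_p$ playing the role of $\mathbb{Q}$; the hypothesis $p\equiv2,\,5$ (mod 6) enters only through the fact recalled at the start of this section, namely that then $1+r^2+r^4\neq0$ for every $r\in\mathbb{F}_p$, so that the elementary generator $\overline{g_p}\left[r\right]$ is a well-defined element of $\mathcal{Q}_p$ for \emph{every} $r\in\mathbb{F}_p$ and no further generators are needed. The statement breaks into two halves: that the $\overline{g_p}\left[r\right]$ generate $\mathcal{Q}_p$, and that they do so freely.

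For \emph{generation} I would run a Euclidean-type descent on the relative degree. Let $M\in\mathcal{Q}_p$ have quaternionic representative $B$. If $g_2=0$, the functional equation (15) forces $g_1\overline{g_1}\in\mathbb{F}_p^{\times}$, so $g_1$ is a monomial and $M=\overline{g_p}\left[0\right]^{k}$. Otherwise pass to the balanced companion $\mathrm{bl}(B)$ of Lemma 3.4, which costs only right multiplication of $M$ by a power of $\overline{g_p}\left[0\right]$, and set $d=\mathrm{rd}(B)\geq1$ and $\tau=\tau(B)$, which is nonzero since $B$ is balanced with $g_2\neq0$. When $d=1$, comparing the (at most two) first-row coefficients of $B$ with those of $g_p[\tau]$ and using the constant-term condition extracted from (15) shows that $B$ is an $\mathbb{F}_p^{\times}$-scalar times a monomial times $g_p[\tau]$, so $M=\overline{g_p}\left[\tau\right]$. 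When $d\geq2$, I would show that the extreme first-row coefficients of $B$ can be cancelled by right multiplication by a suitable $\overline{g_p}\left[r\right]^{\pm1}$, with $r$ read off from $\tau(B)$ and $\sigma(B)$, so that $\mathrm{rd}\bigl(\mathrm{bl}(B\,\overline{g_p}\left[r\right]^{\pm1})\bigr)<d$; iterating down to the base case expresses $M$ as a product of elementary generators.

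For \emph{freeness}, since conjugation preserves triviality and every element of a free product is conjugate to a cyclically reduced word, it suffices to show that a nonempty cyclically reduced word $w=\overline{g_p}\left[r_1\right]^{e_1}\cdots\overline{g_p}\left[r_n\right]^{e_n}$ (so $e_i\neq0$, $r_i\neq r_{i+1}$, and $r_1\neq r_n$ when $n\geq2$) is nontrivial in $\mathcal{Q}_p$. If $n=1$ this is immediate: for $r\neq0$ one computes $\mathrm{rd}\bigl(\overline{g_p}\left[r\right]^{e}\bigr)=2|e|-1$, so $\overline{g_p}\left[r\right]$ has infinite order, while $\overline{g_p}\left[0\right]^{e}=\left[\mathrm{diag}(t^{e},\,t^{-e})\right]\neq I$ for $e\neq0$. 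For $n\geq2$ we may assume $r_n\neq0$: if $r_n=0$ then $r_1\neq0$ by cyclic reducedness, and we replace $w$ by $w^{-1}$. Set $w_j:=\overline{g_p}\left[r_j\right]^{e_j}\cdots\overline{g_p}\left[r_n\right]^{e_n}$. Using that $\tau\bigl(\overline{g_p}\left[r\right]^{e}\bigr)=r$ for $r\neq0$ and $e\neq0$, and that left multiplication by $\overline{g_p}\left[0\right]^{k}=\mathrm{diag}(t^{k},\,t^{-k})$ carries a quaternionic representative to a quaternionic representative without altering its type, balance, or relative degree, I would prove by downward induction on $j$ --- invoking the appropriate case of Lemma 3.13 whenever $r_j\neq0$ --- that each $w_j$ is balanced with $\tau(w_j)=r_n$. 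Hence $\tau(w)=\tau(w_1)=r_n\neq0$, whereas $\tau(I)=0$ because $I$ is unbalanced, so $w\neq I$.

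The crux is this last induction. At each step one must control the balance type and parity of the tail $w_{j+1}$ so that Lemma 3.13 applies, and, when $w_{j+1}$ is evenly balanced, check that its signature $\sigma(w_{j+1})$ avoids the single forbidden value in that case of the lemma --- which is precisely where the reducedness $r_j\neq r_{j+1}$ is consumed. So the real work is to propagate the signature data alongside the type data through the induction (here it helps that every $\overline{g_p}\left[r\right]^{e}$ with $r\neq0$ is oddly balanced, which makes the ``oddly balanced'' cases of Lemma 3.13 apply unconditionally), while isolating the unique unbalanced elementary generator $\overline{g_p}\left[0\right]$ via the transparency of its diagonal action. By comparison, the no-cancellation estimate in the generation step --- that $\mathrm{rd}$ strictly drops when $d\geq2$ --- is routine once the $d=1$ base case is pinned down.
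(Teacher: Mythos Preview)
Your generation argument matches the paper's: a Euclidean descent on relative degree, reducing via right multiplication by $\overline{g_p}[\tau(B)]^{\pm 1}$ (the sign determined by whether $B$ is upper- or lower-balanced; $\sigma(B)$ plays no role here).

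For freeness, though, your downward induction on suffixes runs against the grain of Lemma~3.13. In your setup $w_j = M_1 M_2$ with $M_1 = \overline{g_p}[r_j]^{e_j}$ the fresh letter and $M_2 = w_{j+1}$ the tail. But the hypotheses of Lemma~3.13 bear on the balance parity and signature of the \emph{right} factor $M_2$, so your observation that each $\overline{g_p}[r]^e$ is oddly balanced helps only if the letter sits on the right. As written you would have to track $\sigma(w_{j+1})$ for arbitrary products, and your claim that the signature check ``is precisely where $r_j \neq r_{j+1}$ is consumed'' is not justified: in your framing the forbidden value involves $\tau(M_1) = r_j$ and $\tau(M_2) = r_n$, not $r_{j+1}$, unless you separately establish that $\sigma(w_{j+1})$ is governed by its leftmost letter.

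The paper sidesteps this by running the ping-pong with right multiplication (equivalently, an upward induction on prefixes): one shows that right multiplication by $\overline{g_p}[r]^e$ with $r \neq 0$ carries any balanced element of type $r' \neq r$ to a balanced element of type $r$. Now the letter is $M_2$, always oddly balanced with signature $-r^{-2}$ (upper) or $-r^2$ (lower), so only cases (1), (4), (6), (7) of Lemma~3.13 arise, and the signature conditions in (4) and (6) collapse immediately to $r' \neq r$. Reversing your induction direction recovers exactly this argument and eliminates the need to propagate signature data through arbitrary products.
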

\begin{proof}

\noindent The proof structure mirrors that of \citep[Theorem 4.9]{lee2024salters},
with the exception of the base fields involved. Here is a sketch of
the proof. To prove generation, for any element $g\in\mathcal{Q}_{p}$
such that $\mathrm{rd}\left(g\right)>1$, suppose $g$ is upper-balanced.
Then, we have
\begin{align*}
\mathrm{rd}\left(g\right) & >\mathrm{rd}\left(g\overline{g_{p}}\left[\tau\left(g\right)\right]^{-1}\right).
\end{align*}

On the other hand, suppose $g$ is lower-balanced. Then, we have
\begin{align*}
\mathrm{rd}\left(g\right) & >\mathrm{rd}\left(g\overline{g_{p}}\left[\tau\left(g\right)\right]\right).
\end{align*}

To prove freeness, observe that for a nonzero element $r\in\mathbb{F}_{p}$,
a nontrivial power of an elementary generator $\overline{g_{p}}\left[r\right]$
is always oddly balanced. We establish (2) of Theorem 3.11 by applying
the ping-pong lemma, considering the cases (1), (4), (6), and (7)
in Lemma 3.13. $\qedhere$

\noindent \end{proof}

In the rest of this section, we focus on the case $p=3$ of Theorem
3.11. In Definition 3.6, we listed the additive generators of $\mathcal{Q}_{3}$.
Note that for an element $\tau\in\mathbb{F}_{3}^{\times}$, the matrices
on the right-hand sides in Definition 3.6 are
\begin{align}
 & \left(\begin{array}{cc}
1+t^{-1} & 0\\
0 & 1+t
\end{array}\right)g_{3}\left[\tau\right]=\left(\begin{array}{cc}
t-t^{-1} & \tau\left(1+t^{-1}\right)\\
-\Phi\tau\left(1+t\right) & t^{-1}-t
\end{array}\right),\\
 & \left(\begin{array}{cc}
1+t & 0\\
0 & 1+t^{-1}
\end{array}\right)g_{3}\left[\tau\right]^{\dagger}=\left(\begin{array}{cc}
t^{-1}-t & -\tau\left(1+t\right)\\
\Phi\tau\left(1+t^{-1}\right) & t-t^{-1}
\end{array}\right).
\end{align}
\begin{lemma}

In the monoid of matrices $\mathrm{M}\left(2,\,\mathbb{F}_{3}\left[t,\,t^{-1}\right]\right),$
for an element $\tau\in\mathbb{F}_{3}^{\times}$, we have the following
identities:
\begin{align}
 & \left(\begin{array}{cc}
1+t^{-1} & 0\\
0 & 1+t
\end{array}\right)g_{3}\left[\tau\right]=g_{3}\left[\tau\right]^{\dagger}\left(\begin{array}{cc}
-1-t & 0\\
0 & -1-t^{-1}
\end{array}\right),\\
 & \left(\begin{array}{cc}
1+t & 0\\
0 & 1+t^{-1}
\end{array}\right)g_{3}\left[\tau\right]^{\dagger}=g_{3}\left[\tau\right]\left(\begin{array}{cc}
-1-t^{-1} & 0\\
0 & -1-t
\end{array}\right).
\end{align}
\end{lemma}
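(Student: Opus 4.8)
The plan is to verify both identities by direct matrix multiplication, exploiting the special structure of $g_3[\tau]$ and its dagger, together with the fact that we are working over $\mathbb{F}_3$ (so that $-1 = 2$ and, crucially, $\tau^2 = 1$ for every $\tau \in \mathbb{F}_3^\times$, since $1^2 = 1$ and $2^2 = 4 = 1$). The left-hand sides have already been expanded for us in equations (26) and (27), so the computation reduces to expanding the right-hand sides and matching entries.

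First I would record the explicit matrices. From (16), $g_3[\tau] = \left(\begin{smallmatrix} t - \tau^2 & \tau \\ -\Phi\tau & t^{-1} - \tau \end{smallmatrix}\right)$; over $\mathbb{F}_3$ with $\tau^2 = 1$ this is $\left(\begin{smallmatrix} t - 1 & \tau \\ -\Phi\tau & t^{-1} - \tau \end{smallmatrix}\right)$, and its dagger is $g_3[\tau]^\dagger = \left(\begin{smallmatrix} \overline{t-1} & -\tau \\ \Phi\tau & t-1 \end{smallmatrix}\right) = \left(\begin{smallmatrix} t^{-1} - 1 & -\tau \\ \Phi\tau & t - 1 \end{smallmatrix}\right)$. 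For the first identity, I would compute the product $g_3[\tau]^\dagger \left(\begin{smallmatrix} -1-t & 0 \\ 0 & -1-t^{-1}\end{smallmatrix}\right)$ column by column: the first column is $(-1-t)\cdot(\text{first column of }g_3[\tau]^\dagger)$ and the second column is $(-1-t^{-1})\cdot(\text{second column})$. This gives $\left(\begin{smallmatrix} (-1-t)(t^{-1}-1) & -(-1-t^{-1})\tau \\ (-1-t)\Phi\tau & (-1-t^{-1})(t-1)\end{smallmatrix}\right)$. Then I would simplify each entry: $(-1-t)(t^{-1}-1) = -t^{-1}+1-1+t = t - t^{-1}$, matching the $(1,1)$ entry of (26); $-(-1-t^{-1})\tau = \tau(1+t^{-1})$, matching the $(1,2)$ entry; $(-1-t)\Phi\tau = -\Phi\tau(1+t)$, matching the $(2,1)$ entry; and $(-1-t^{-1})(t-1) = -(t-1) - (1 - t^{-1}) = -t+1-1+t^{-1} = t^{-1}-t$, matching the $(2,2)$ entry. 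Hence the first identity holds. Note that no reduction modulo $3$ is actually needed here beyond the substitution $\tau^2 = 1$ inside $g_3[\tau]$ itself — the cancellations are formal in $\mathbb{Z}[t,t^{-1}]$.

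For the second identity I would proceed identically: multiply $g_3[\tau]\left(\begin{smallmatrix} -1-t^{-1} & 0 \\ 0 & -1-t\end{smallmatrix}\right)$, scaling the first column of $g_3[\tau]$ by $-1-t^{-1}$ and the second by $-1-t$, obtaining $\left(\begin{smallmatrix} (-1-t^{-1})(t-1) & (-1-t)\tau \\ -(-1-t^{-1})\Phi\tau & (-1-t)(t^{-1}-\tau)\end{smallmatrix}\right)$ and comparing with (27). The entries $(1,1)$, $(1,2)$, $(2,1)$ reduce exactly as before to $t^{-1}-t$, $-\tau(1+t)$, $\Phi\tau(1+t^{-1})$; the $(2,2)$ entry is $(-1-t)(t^{-1}-\tau) = -t^{-1}+\tau - 1 + \tau t$, and using $\tau^2 = 1$ over $\mathbb{F}_3$ one rewrites $\tau + \tau t - 1 - t^{-1}$; matching against the claimed $(2,2)$ entry $t - t^{-1}$ of (27) requires $\tau + \tau t - 1 = t$, i.e. $\tau(1+t) = 1+t$, which holds precisely when $\tau = 1$ — so for $\tau = 2$ one must instead track that the dagger of $g_3[\tau]$ already absorbed the sign, i.e. re-expand $g_3[\tau]^\dagger$ with the unsimplified entry $t^{-1} - \tau^2 \overline{(\cdot)}$; I would double-check Definition 3.9 to confirm the $(2,2)$ entry of $g_3[\tau]^\dagger$ is $g_1 = t - \tau^2 = t-1$ over $\mathbb{F}_3$, which is what makes everything consistent. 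This bookkeeping around which entries carry $\tau$ versus $\tau^2$ — and the systematic use of $\tau^2 = 1$ in $\mathbb{F}_3$ — is the only place where care is genuinely required; everything else is routine polynomial arithmetic, so I would simply state "by direct computation in $\mathrm{M}(2,\mathbb{F}_3[t,t^{-1}])$, using $\tau^2 = 1$" and leave the entrywise check to the reader, perhaps displaying the $(1,1)$ entry as a representative sample.
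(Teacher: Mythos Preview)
Your approach is exactly the paper's: the proof there reads ``Direct by computation,'' and entrywise verification against the already-expanded forms (26) and (27) is all that is required. Your check of the first identity is correct.

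The tangle you hit in the $(2,2)$ entry of the second identity is not a real obstruction but an artifact of a typo in Definition~2.15: the $(2,2)$ entry of $g_p[r]$ in (16) must be $t^{-1}-r^2$, not $t^{-1}-r$, since the matrix is supposed to lie in the quaternionic form $\left(\begin{smallmatrix} g_1 & g_2 \\ -\Phi\overline{g_2} & \overline{g_1}\end{smallmatrix}\right)$ with $g_1 = t-r^2$, and since otherwise the determinant is not $1+r^2+r^4$ as stated immediately afterward. Over $\mathbb{F}_3$ with $\tau^2=1$ this gives $g_3[\tau]_{22}=t^{-1}-1$, and then
\[
(-1-t)(t^{-1}-1) = -t^{-1}+1-1+t = t-t^{-1},
\]
matching the $(2,2)$ entry of (27) for both $\tau=1$ and $\tau=2$. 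So there is no case split and no genuine dependence on $\tau$ beyond $\tau^2=1$; once the typo is corrected your computation goes through uniformly.
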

\begin{proof}

\noindent Direct by computation. $\qedhere$\end{proof}
\begin{corollary}

For an element $\tau\in\mathbb{F}_{3}^{\times}$ and a pair of polynomials
$f_{1}\left(x\right),\,f_{2}\left(x\right)\in\mathbb{F}_{3}\left[x\right]$,
we have the following identities:
\begin{align}
 & a_{3,\,\tau,\,u}\left[f_{1}\right]a_{3,\,\tau,\,u}\left[f_{2}\right]=a_{3,\,\tau,\,u}\left[f_{1}+f_{2}\right],\\
 & a_{3,\,\tau,\,l}\left[f_{1}\right]a_{3,\,\tau,\,l}\left[f_{2}\right]=a_{3,\,\tau,\,l}\left[f_{1}+f_{2}\right].
\end{align}
\end{corollary}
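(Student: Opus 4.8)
The plan is to expand each product by bilinearity and reduce everything to the vanishing of a single matrix square. Set $M_{u}:=\left(\begin{smallmatrix}1+t^{-1}&0\\0&1+t\end{smallmatrix}\right)g_{3}[\tau]$ and $M_{l}:=\left(\begin{smallmatrix}1+t&0\\0&1+t^{-1}\end{smallmatrix}\right)g_{3}[\tau]^{\dagger}$, so that $a_{3,\tau,u}[f]=I+f(t+t^{-1})M_{u}$ and $a_{3,\tau,l}[f]=I+f(t+t^{-1})M_{l}$. Since $f(t+t^{-1})$ is a central scalar in $\mathrm{M}(2,\mathbb{F}_{3}[t,t^{-1}])$, expanding gives
\[ a_{3,\tau,u}[f_{1}]\,a_{3,\tau,u}[f_{2}]=I+\bigl(f_{1}(t+t^{-1})+f_{2}(t+t^{-1})\bigr)M_{u}+f_{1}(t+t^{-1})f_{2}(t+t^{-1})\,M_{u}^{2}, \]
and the same with $M_{l}$ in place of $M_{u}$. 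As $f_{1}(t+t^{-1})+f_{2}(t+t^{-1})=(f_{1}+f_{2})(t+t^{-1})$, it therefore suffices to prove $M_{u}^{2}=0=M_{l}^{2}$, and then both stated identities follow immediately.

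For $M_{u}^{2}$, I would feed in the first identity of Lemma 3.18, which reads $M_{u}=g_{3}[\tau]^{\dagger}\left(\begin{smallmatrix}-1-t&0\\0&-1-t^{-1}\end{smallmatrix}\right)$. Substituting this for the left factor of $M_{u}^{2}$ while keeping the defining expression $M_{u}=\left(\begin{smallmatrix}1+t^{-1}&0\\0&1+t\end{smallmatrix}\right)g_{3}[\tau]$ for the right factor, the two diagonal matrices meet in the middle and multiply to the central scalar $-(1+t)(1+t^{-1})I$, leaving $M_{u}^{2}=-(1+t)(1+t^{-1})\,g_{3}[\tau]^{\dagger}g_{3}[\tau]$. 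Now $\det g_{3}[\tau]=1+\tau^{2}+\tau^{4}$, and for $\tau\in\mathbb{F}_{3}^{\times}$ we have $\tau^{2}=1$, so $\det g_{3}[\tau]=3=0$ in $\mathbb{F}_{3}$; hence $g_{3}[\tau]^{\dagger}g_{3}[\tau]=0$ by the degenerate dagger relation recorded just before Definition 3.6, and $M_{u}^{2}=0$. The case of $M_{l}^{2}$ is entirely symmetric: use the second identity of Lemma 3.18 to write $M_{l}=g_{3}[\tau]\left(\begin{smallmatrix}-1-t^{-1}&0\\0&-1-t\end{smallmatrix}\right)$, collapse the resulting middle diagonal product, and apply $g_{3}[\tau]g_{3}[\tau]^{\dagger}=0$.

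The argument is short, and the only point worth flagging is that the vanishing of the quadratic term is not formal nilpotency: it genuinely relies on the characteristic-$3$ degeneracy, namely that $g_{3}[\tau]$ is \emph{singular} because its determinant $1+\tau^{2}+\tau^{4}$ equals $3$. This is precisely why $f\mapsto a_{3,\tau,u}[f]$ (and likewise $f\mapsto a_{3,\tau,l}[f]$) is an additive homomorphism from $(\mathbb{F}_{3}[x],+)$, which is what Corollary 3.19 records in preparation for identifying the type group $\mathcal{G}_{3,\tau}$ with $\mathbb{F}_{3}[x]*\mathbb{F}_{3}[x]$ in Theorem 3.20.
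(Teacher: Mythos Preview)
Your proof is correct and follows essentially the same approach as the paper's: both expand the product, invoke the diagonal-swap identity (the paper's Lemma~3.15, which you cite as Lemma~3.18) to bring a $g_{3}[\tau]$ and a $g_{3}[\tau]^{\dagger}$ adjacent, and then kill the cross term using $\det g_{3}[\tau]=0$ in $\mathbb{F}_{3}$. The only cosmetic difference is that the paper substitutes for the right-hand factor and uses $g_{3}[\tau]g_{3}[\tau]^{\dagger}=0$, while you substitute on the left and use $g_{3}[\tau]^{\dagger}g_{3}[\tau]=0$; your intermediate observation that the two diagonal matrices collapse to the scalar $-(1+t)(1+t^{-1})$ is a nice touch that makes the vanishing especially transparent.
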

\begin{proof}

\noindent By definition, we have $\det\left(g_{3}\left[\tau\right]\right)=0$,
which implies $g_{3}\left[\tau\right]g_{3}\left[\tau\right]^{\dagger}=0=g_{3}\left[\tau\right]^{\dagger}g_{3}\left[\tau\right]$.
We prove the second identity (30) as follows.
\begin{align*}
 & \,a_{3,\,\tau,\,u}\left[f_{1}\right]a_{3,\,\tau,\,u}\left[f_{2}\right]\\
= & \,\left(I+f_{1}\left(t+t^{-1}\right)\left(\begin{array}{cc}
1+t^{-1} & 0\\
0 & 1+t
\end{array}\right)g_{3}\left[\tau\right]\right)\left(I+f_{2}\left(t+t^{-1}\right)\left(\begin{array}{cc}
1+t^{-1} & 0\\
0 & 1+t
\end{array}\right)g_{3}\left[\tau\right]\right)\\
= & \,a_{3,\,\tau,\,u}\left[f_{1}+f_{2}\right]+\left(f_{1}f_{2}\right)\left(t+t^{-1}\right)\left(\begin{array}{cc}
1+t^{-1} & 0\\
0 & 1+t
\end{array}\right)g_{3}\left[\tau\right]g_{3}\left[\tau\right]^{\dagger}\left(\begin{array}{cc}
-1-t & 0\\
0 & -1-t^{-1}
\end{array}\right)\\
= & \,a_{3,\,\tau,\,u}\left[f_{1}+f_{2}\right],
\end{align*}
where we used the identity (28) in Lemma 3.15 for the second eqality
and $g_{3}\left[\sigma\right]g_{3}\left[\sigma\right]^{\dagger}=0$
for the last. The case for $a_{3,\,\tau,\,l}\left[f\right]$ is proven
in the same way by using (29). $\qedhere$

\noindent \end{proof}
\begin{lemma}

For an element $\tau\in\mathbb{F}_{3}^{\times}$ and a polynomial
$f\left(x\right)\in\mathbb{F}_{3}\left[x\right]$, we have the following:
\begin{align*}
\det\left(a_{3,\,\tau,\,u}\left[f\right]\right) & =1=\det\left(a_{3,\,\tau,\,l}\left[f\right]\right).
\end{align*}
\end{lemma}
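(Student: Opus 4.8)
The plan is to reduce the claim to the elementary $2\times 2$ identity $\det(I+M)=1+\mathrm{tr}(M)+\det(M)$. Write $a_{3,\tau,u}[f]=I+h\,N_u$, where $h=f(t+t^{-1})$ and, by the computation (26), $N_u=\begin{pmatrix}1+t^{-1}&0\\0&1+t\end{pmatrix}g_3[\tau]=\begin{pmatrix}t-t^{-1}&\tau(1+t^{-1})\\-\Phi\tau(1+t)&t^{-1}-t\end{pmatrix}$. The first step is to note that $\det(N_u)=\det\begin{pmatrix}1+t^{-1}&0\\0&1+t\end{pmatrix}\cdot\det(g_3[\tau])=(1+t^{-1})(1+t)\,(1+\tau^2+\tau^4)$, using that $\det(g_3[\tau])=1+\tau^2+\tau^4$ as computed just before Definition 2.15. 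Since $\tau\in\mathbb{F}_3^\times$ forces $\tau^2=\tau^4=1$, we get $1+\tau^2+\tau^4=3=0$ in $\mathbb{F}_3$, hence $\det(N_u)=0$ and therefore $\det(hN_u)=0$ for any Laurent polynomial $h$. (Equivalently, $g_3[\tau]$ has rank at most $1$, a fact already used in the proof of Corollary 3.17 via $g_3[\tau]g_3[\tau]^\dagger=0$.)

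The second step is to read $\mathrm{tr}(N_u)=(t-t^{-1})+(t^{-1}-t)=0$ directly off (26); hence $\mathrm{tr}(hN_u)=0$ as well. Combining, $\det(a_{3,\tau,u}[f])=\det(I+hN_u)=1+\mathrm{tr}(hN_u)+\det(hN_u)=1+0+0=1$. For the lower generator the argument is identical, with $N_u$ replaced by $N_l=\begin{pmatrix}1+t&0\\0&1+t^{-1}\end{pmatrix}g_3[\tau]^\dagger=\begin{pmatrix}t^{-1}-t&-\tau(1+t)\\\Phi\tau(1+t^{-1})&t-t^{-1}\end{pmatrix}$ via (27): its trace is again $(t^{-1}-t)+(t-t^{-1})=0$, and $\det(g_3[\tau]^\dagger)=\det(g_3[\tau])=0$ (the dagger operation preserves the determinant of any matrix in quaternionic form, or simply $g_3[\tau]g_3[\tau]^\dagger=0$ forces $g_3[\tau]^\dagger$ to be singular). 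The same two lines give $\det(a_{3,\tau,l}[f])=1$.

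There is no real obstacle here: the computation is immediate once one observes the two vanishings $\mathrm{tr}(N_u)=\mathrm{tr}(N_l)=0$ and $\det(g_3[\tau])=0$. The only point worth flagging is that the second vanishing is a genuine characteristic-$3$ phenomenon, since $1+\tau^2+\tau^4=3$ is a unit over $\mathbb{Z}$; consequently this lemma — and the abelian structure of the additive generators, hence the HNN/free-product description of $\mathcal{G}_{3,\tau}$ resting on it — is special to $p=3$.
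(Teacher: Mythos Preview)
Your proof is correct and is essentially the explicit version of the paper's ``Direct by computation'': the identity $\det(I+M)=1+\mathrm{tr}(M)+\det(M)$ together with the observations $\mathrm{tr}(N_u)=\mathrm{tr}(N_l)=0$ and $\det(g_3[\tau])=1+\tau^2+\tau^4=0$ in $\mathbb{F}_3$ is exactly the computation one does. The only minor slip is the reference to ``Corollary 3.17'' (the identity $g_3[\tau]g_3[\tau]^\dagger=0$ appears in the proof of Corollary 3.16), and your closing remark slightly overstates the specialness to $p=3$, since analogous determinant-$1$ additive generators exist for $p\equiv 1\pmod 6$ via a different construction (Lemma 4.2).
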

\begin{proof}

\noindent Direct by computation. $\qedhere$

\noindent \end{proof}

Corollary 3.16 and Lemma 3.17 ensure that the set of upper (or lower)
additive generators generates an abelian linear group, where every
element has determinant 1. The structure is described as follows.

\noindent \begin{corollary}

For an element $\tau\in\mathbb{F}_{3}^{\times}$, we define a map
$a_{3,\,\tau,\,u}:\mathbb{F}_{3}\left[x\right]\to\mathrm{SL}\left(2,\,\mathbb{F}_{3}\left[t,\,t^{-1}\right]\right)$
by $f\mapsto a_{3,\,\tau,\,u}\left[f\right]$, and another map $a_{3,\,\tau,\,l}:\mathbb{F}_{3}\left[x\right]\to\mathrm{SL}\left(2,\,\mathbb{F}_{3}\left[t,\,t^{-1}\right]\right)$
by $f\mapsto a_{3,\,\tau,\,l}\left[f\right]$, where $\mathbb{F}_{3}\left[x\right]$
is regarded as the group equipped with the addition. Then, the maps
$a_{3,\,\tau,\,u}$ and $a_{3,\,\tau,\,l}$ are injective group homomorphisms.
Moreover, the projectivizations $\overline{a_{3,\,\tau,\,u}}:=\pi_{3}\circ a_{3,\,\tau,\,u}$
and $\overline{a_{3,\,\tau,\,l}}:=\pi_{3}\circ a_{3,\,\tau,\,l}$
are still injective.

\noindent \end{corollary}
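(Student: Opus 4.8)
The plan is to reduce the statement to facts already established: Corollary 3.16 furnishes the homomorphism property for $a_{3,\,\tau,\,u}$ and $a_{3,\,\tau,\,l}$ (with $\mathbb{F}_{3}\left[x\right]$ under addition as the source), and Lemma 3.17 shows that the image lies in $\mathrm{SL}\left(2,\,\mathbb{F}_{3}\left[t,\,t^{-1}\right]\right)$. Hence the only thing left to prove is injectivity, both before and after projectivization, and this is a short computation using the explicit forms (26) and (27).

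For injectivity of $a_{3,\,\tau,\,u}$, recall from the definition and (26) that
\[
a_{3,\,\tau,\,u}\left[f\right]=I+f\left(t+t^{-1}\right)\left(\begin{array}{cc}
t-t^{-1} & \tau\left(1+t^{-1}\right)\\
-\Phi\tau\left(1+t\right) & t^{-1}-t
\end{array}\right).
\]
If $a_{3,\,\tau,\,u}\left[f\right]=I$, the $\left(1,2\right)$-entry gives $f\left(t+t^{-1}\right)\,\tau\left(1+t^{-1}\right)=0$; since $\mathbb{F}_{3}\left[t,\,t^{-1}\right]$ is an integral domain and $\tau\left(1+t^{-1}\right)\ne0$, we get $f\left(t+t^{-1}\right)=0$. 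The evaluation ring map $\mathbb{F}_{3}\left[x\right]\to\mathbb{F}_{3}\left[t,\,t^{-1}\right]$, $x\mapsto t+t^{-1}$, is injective --- for instance because $\mathbb{F}_{3}\left[t,\,t^{-1}\right]$ is free of rank $2$ over the subring $\mathbb{F}_{3}\left[t+t^{-1}\right]$ with basis $\left\{1,\,t\right\}$ --- so $f=0$. The case of $a_{3,\,\tau,\,l}$ is identical, reading the $\left(1,2\right)$-entry of the matrix in (27).

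For the projectivizations, Corollary 3.16 makes $\overline{a_{3,\,\tau,\,u}}=\pi_{3}\circ a_{3,\,\tau,\,u}$ a homomorphism, so it suffices to show its kernel is trivial. If $\overline{a_{3,\,\tau,\,u}}\left[f\right]$ is the identity of $\mathrm{PGL}\left(2,\,\mathbb{F}_{3}\left[t,\,t^{-1}\right]\right)$, then $a_{3,\,\tau,\,u}\left[f\right]=cI$ for some unit $c\in\mathbb{F}_{3}\left[t,\,t^{-1}\right]^{\times}=\left\{\pm t^{k}:k\in\mathbb{Z}\right\}$. Comparing determinants with Lemma 3.17 gives $c^{2}=1$, so $c=\pm1$; if $c=1$ then $f=0$ by the injectivity just proved, and if $c=-1$ then the $\left(1,2\right)$-entry of $a_{3,\,\tau,\,u}\left[f\right]$ vanishes, forcing $f\left(t+t^{-1}\right)=0$, hence $f=0$ and $a_{3,\,\tau,\,u}\left[f\right]=I\ne-I$, a contradiction. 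So $f=0$, and the same argument with (27) handles $\overline{a_{3,\,\tau,\,l}}$.

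No step is a genuine obstacle; the only point meriting care is the passage through $\mathrm{PGL}$, where one must identify the unit group of $\mathbb{F}_{3}\left[t,\,t^{-1}\right]$ and exclude the scalar $-1$, which the off-diagonal entry does immediately.
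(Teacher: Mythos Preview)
Your proof is correct and follows essentially the same approach as the paper: both use Corollary~3.16 and Lemma~3.17 for the homomorphism and $\mathrm{SL}$ properties, both obtain injectivity from the nonvanishing of the $(1,2)$-entry of the matrix in (26) (respectively (27)), and both rule out the scalar $-I$ in the projective step by the same off-diagonal argument. You spell out more carefully why $f(t+t^{-1})=0$ forces $f=0$ and why the scalar must be $\pm1$, points the paper leaves implicit.
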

\begin{proof}

\noindent By the identity (30) and (31) in Corollary 3.16, the maps
$a_{3,\,\tau,\,u}$ and $a_{3,\,\tau,\,l}$ are group homomorphisms.
These are injective by definition. Lemma 3.17 implies that the speical
linear group contains the images of $a_{3,\,\tau,\,u}$ and $a_{3,\,\tau,\,l}$.

We show that $\overline{a_{3,\,\tau,\,u}}$ and $\overline{a_{3,\,\tau,\,l}}$
are also injective. Since the image of $a_{3,\,\tau,\,u}$ has determinant
1 and $a_{3,\,\tau,\,u}$ is injective, if $\overline{a_{3,\,\tau,\,u}}$
were not injective, there would exist a polynomial $f\left(x\right)\in\mathbb{F}_{3}\left[x\right]$
such that $a_{3,\,\tau,\,u}\left(f\right)=-I$. However, since we
have $\left(\left(\begin{array}{cc}
1+t^{-1} & 0\\
0 & 1+t
\end{array}\right)g_{3}\left[\tau\right]\right)_{12}\ne0$, there is no such polynomial. The case $\overline{a_{3,\,\tau,\,l}}$
is proven in the same way. $\qedhere$

\noindent \end{proof}

The next lemma deals with the construction of $\mathcal{G}_{\tau}$
satisfying (4) in Theorem 3.11, preparing for the use of Lemma 3.13.

\noindent \begin{lemma}

For an element $\tau\in\mathbb{F}_{3}^{\times}$, we have
\begin{align*}
\left\langle \overline{a_{3,\,\tau,\,u}}\left(\mathbb{F}_{3}\left[x\right]\right),\,\overline{a_{3,\,\tau,\,l}}\left(\mathbb{F}_{3}\left[x\right]\right)\right\rangle  & =\overline{a_{3,\,\tau,\,u}}\left(\mathbb{F}_{3}\left[x\right]\right)*\overline{a_{3,\,\tau,\,l}}\left(\mathbb{F}_{3}\left[x\right]\right),
\end{align*}
where $*$ on the right-hand side means the (internal) free product.
Moreover, for every element $M\in\left\langle \overline{a_{3,\,\tau,\,u}}\left(\mathbb{F}_{3}\left[x\right]\right),\,\overline{a_{3,\,\tau,\,l}}\left(\mathbb{F}_{3}\left[x\right]\right)\right\rangle $,
$M$ is balanced, $\tau\left(M\right)=\tau$, and $s\left(M\right)=-1$.\end{lemma}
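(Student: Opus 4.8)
The plan is to establish both assertions simultaneously, by a ping-pong induction over reduced words in which the relative degree is the quantity that strictly grows along a word. Write $U:=\overline{a_{3,\tau,u}}\left(\mathbb{F}_{3}[x]\right)$ and $L:=\overline{a_{3,\tau,l}}\left(\mathbb{F}_{3}[x]\right)$. The first step is to record the normal form of the generators. Writing $f(x)=c_{0}+\cdots+c_{d}x^{d}$ with $c_{d}\ne0$ and combining Definition 3.5 with the identities (26) and (27), one reads off from the top and bottom terms of the first row that $a_{3,\tau,u}[f]$ is evenly upper-balanced and $a_{3,\tau,l}[f]$ is evenly lower-balanced, that both have relative degree $2d+2$, type $\tau$ (here one uses $\tau^{-1}=\tau$ in $\mathbb{F}_{3}^{\times}$), and signature $-1$. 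Together with the injectivity of $\overline{a_{3,\tau,u}}$ and $\overline{a_{3,\tau,l}}$ from Corollary 3.18, this says that the nontrivial elements of $U$ (resp. of $L$) are exactly evenly upper-balanced (resp. lower-balanced) elements of type $\tau$ and signature $-1$.

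The core of the argument is a single multiplication step: if $M\in\mathcal{Q}_{3}$ is balanced with $\tau(M)=\tau$ and signature $-1$, and $N$ is a nontrivial element of the factor opposite to the balanced direction of $M$ (so $N\in L$ if $M$ is upper-balanced and $N\in U$ if $M$ is lower-balanced), then $MN$ is balanced, has the balanced direction of $N$, satisfies $\tau(MN)=\tau$ and has signature $-1$, and $\mathrm{rd}(MN)>\mathrm{rd}(M)$. Multiplying by $N$ \emph{on the right} is what makes this clean: only cases (3) and (5) of Lemma 3.13 get invoked, and those carry no side condition on the signature, so Lemma 3.13 gives at once that $MN$ is balanced, inherits the balanced direction of $N$, and has $\tau(MN)=\tau(N)=\tau$. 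Writing the $(1,1)$-entry of $MN$ as $g_{1}g_{1}'-\Phi\,g_{2}\overline{g_{2}'}$, where $g_{1},g_{2}$ (resp. $g_{1}',g_{2}'$) are the first-row entries of balanced representatives of $M$ (resp. $N$), one checks that the two summands have supports which are translates of one another by an \emph{odd} amount; hence they contribute the extreme terms of the product entry without cancellation, and $\mathrm{rd}(MN)\ge\mathrm{rd}(M)+\mathrm{rd}(N)>\mathrm{rd}(M)$. For the signature one expresses the top and bottom coefficients of $g_{1}g_{1}'-\Phi g_{2}\overline{g_{2}'}$ through the leading and trailing coefficients of $g_{1},g_{2},g_{1}',g_{2}'$, then eliminates the leading coefficients of $g_{2}$ and $g_{2}'$ by means of the functional equation (15) applied to $M$ and to $N$; the outcome is $\sigma(MN)=-\tau^{2}$, which equals $-1$ because $\tau^{2}=1$ for every $\tau\in\mathbb{F}_{3}^{\times}$. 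The case $M$ lower-balanced, $N$ upper-balanced is symmetric.

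Granting this, the lemma follows by induction on word length. A nontrivial element of $\langle U,L\rangle$ is a reduced word $c_{1}c_{2}\cdots c_{n}$ with $n\ge1$ whose letters lie alternately in $U$ and $L$; put $w_{k}:=c_{1}\cdots c_{k}$. By the first step, $w_{1}=c_{1}$ is balanced of the direction of $c_{1}$, of type $\tau$ and signature $-1$. Since consecutive letters of a reduced word lie in different factors and, inductively, $w_{k-1}$ is balanced of the direction of the factor containing $c_{k-1}$, the letter $c_{k}$ lies in the factor opposite to the balanced direction of $w_{k-1}$; so the multiplication step applies with $M=w_{k-1}$, $N=c_{k}$ and makes $w_{k}$ balanced of the direction of $c_{k}$, of type $\tau$ and signature $-1$, with $\mathrm{rd}(w_{k})>\mathrm{rd}(w_{k-1})$. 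In particular $w_{n}$ is balanced with $\tau(w_{n})=\tau\ne0$, so $w_{n}\ne I$ (the identity being unbalanced, of type $0$). Thus no nontrivial reduced word is trivial, which with Corollary 3.18 gives that $\langle U,L\rangle$ is the internal free product $U*L$; and the same chain of equalities yields the ``moreover'': every nontrivial element of $\langle U,L\rangle$ is balanced of type $\tau$ and signature $-1$.

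The main obstacle lies entirely in the multiplication step. A minor issue is that Lemma 3.13, as stated, records only $\tau(M_{1}M_{2})=\tau(M_{2})$, so I will need to take from its proof — which in the relevant cases exhibits the extreme terms of $(M_{1}M_{2})_{11}$ — the sharper fact that in cases (3) and (5) the product is balanced in the same direction as $M_{2}$. The genuinely delicate point is the signature identity $\sigma(MN)=-\tau^{2}$: one must determine, in each sub-case, which of $g_{1}g_{1}'$ and $-\Phi g_{2}\overline{g_{2}'}$ supplies the top term and which the bottom term of the product entry (this depends on the parity of the offset between their supports) and then substitute the two instances of (15). This is coefficient bookkeeping rather than a conceptual difficulty, but it is the place where sign errors are most likely.
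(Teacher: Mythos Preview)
Your approach is correct and essentially coincides with the paper's proof. The paper also runs a ping-pong argument with $\mathcal{Q}_{3}$ acting on itself by right multiplication, using as ping-pong sets the upper- and lower-balanced elements of type $\tau$; it then performs exactly the explicit multiplication $B\,a_{3,\tau,u}[f]$ that you outline, reads off the extreme coefficients of the $(1,1)$- and $(1,2)$-entries, and observes that the signature is preserved (your formula $\sigma(MN)=-\tau^{2}=-1$ is the same conclusion). The only cosmetic differences are that the paper invokes the ping-pong lemma directly rather than phrasing it as induction on word length, and it carries out the coefficient computation from scratch instead of routing part of it through Lemma~3.13; your use of cases (3) and (5) there is a legitimate shortcut, and your caveat that the balanced direction of $M_{1}M_{2}$ must be extracted from the proof (not the statement) of Lemma~3.13 is accurate.
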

\begin{proof}

\noindent We use the usual ping-pong lemma, considering the action
by right multiplication in $\mathcal{Q}_{3}$. Define $S_{\tau,\,u}$
(resp. $S_{\tau,\,l}$) to be the set of elements $M\in\mathcal{Q}_{3}$
such that $\tau\left(M\right)=\tau$, $\mathrm{rd}\left(M\right)\ge2$,
and $M$ is upper-balanced (resp. lower-balanced). By definition,
it is direct that $\overline{a_{3,\,\tau,\,u}}\left(\mathbb{F}_{3}\left[x\right]\right)\subset S_{\tau,\,u}$
and $\overline{a_{3,\,\tau,\,l}}\left(\mathbb{F}_{3}\left[x\right]\right)\subset S_{\tau,\,l}$.
For a nonzero polynomial $f\left(x\right)\in\mathbb{F}_{3}\left[x\right]$
and for elements $M_{1}\in S_{\tau,\,l}$, $M_{2}\in S_{\tau,\,u}$,
to establish the free product, it suffices to show that
\begin{align}
 & M_{1}\overline{a_{3,\,\tau,\,u}}\left[f\right]\in S_{\tau,\,u},\\
 & M_{2}\overline{a_{3,\,\tau,\,l}}\left[f\right]\in S_{\tau,\,l}.
\end{align}

Take a representative $B=\left(\begin{array}{cc}
g_{1} & g_{2}\\
-\Phi\overline{g_{2}} & \overline{g_{1}}
\end{array}\right)$ of $M_{1}$, such that the first row entries are
\begin{align*}
 & g_{1}=t^{m_{1}-1}+\cdots+\tau bt^{n_{1}},\\
 & g_{2}=-\tau t^{m_{1}}+\cdots+bt^{n_{1}},
\end{align*}
where the coefficients are normalized for some $b\in\mathbb{F}_{3}^{\times}$
and rewritten satisfying the functional equation (15). Suppose $\deg\left(f\right)=n$
for some nonnegative integer $n$. For some unit $u\in\mathbb{F}_{3}^{\times}$,
we may write $f\left(t+t^{-1}\right)=u\left(t^{n}+\cdots+t^{-n}\right)$.
Expanding, we have
\begin{align*}
 & \,Ba_{3,\,\tau,\,u}\left(f\right)\\
= & \,B\left(I+u\left(t^{n}+\cdots+t^{-n}\right)\left(\begin{array}{cc}
t-t^{-1} & \tau\left(1+t^{-1}\right)\\
-\Phi\tau\left(1+t\right) & t^{-1}-t
\end{array}\right)\right)\\
= & \,u\left(\begin{array}{cc}
-t^{m_{1}-n-2}+\cdots-\tau bt^{n_{1}+n+2} & *\\
\tau t^{m_{1}-n-2}+\cdots-bt^{n_{1}+n+1} & *
\end{array}\right)^{T},
\end{align*}
where we computed the lowest and the highest degree terms. Thus, $Ba_{3,\,\tau,\,u}\left[f\right]$
is upper-balanced and we establish (32). (33) is proven in the same
way.

Moreover, in the computation above, we see that the signature remains
invariant as $\tau b$. Given that for any additive generator has
signature $-1$, we conclude the proof by induction, based on the
normal form of free products. $\qedhere$

\noindent \end{proof}

We are ready to prove Theorem 3.11 for the case $p=3$.

\noindent \begin{theorem}

The quaternionic group $\mathcal{Q}_{3}$ is the free product of three
subgroups $\mathcal{G}_{3,\,0}$, $\mathcal{G}_{3,\,1}$ and $\mathcal{G}_{3,\,2}$,
where $\mathcal{G}_{3,\,0}=\left\langle \overline{g_{3}\left[0\right]}\right\rangle $
and $\mathcal{G}_{3,\,\tau}=\left\langle a_{3,\,\tau,\,u}\left(\mathbb{F}_{3}\left[x\right]\right),\,a_{3,\,\tau,\,l}\left(\mathbb{F}_{3}\left[x\right]\right)\right\rangle $
for $\tau=1,\,2$. Moreover, for $\tau=1,\,2$, the subgroup $\mathcal{G}_{3,\,\tau}$
is the free product of two subgroups $a_{3,\,\tau,\,u}\left(\mathbb{F}_{3}\left[x\right]\right)$
and $a_{3,\,\tau,\,l}\left(\mathbb{F}_{3}\left[x\right]\right)$,
both of which are isomorphic to the additive group $\mathbb{F}_{3}\left[x\right]$.

\noindent \end{theorem}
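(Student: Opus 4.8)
The plan is to follow the two-step scheme of the proof of Theorem 3.14 — first generation, then freeness by a ping-pong argument — in which the role that the infinite cyclic group $\langle\overline{g_p}[\tau]\rangle$ plays for $p\equiv2,5$ (mod $6$) is taken, for $p=3$ and $\tau=1,2$, by the group $\mathcal{G}_{3,\tau}=\langle\overline{a_{3,\tau,u}}(\mathbb{F}_3[x]),\overline{a_{3,\tau,l}}(\mathbb{F}_3[x])\rangle$. The internal structure of $\mathcal{G}_{3,\tau}$ for $\tau=1,2$ needs no further work: Corollary 3.18 says $\overline{a_{3,\tau,u}}$ and $\overline{a_{3,\tau,l}}$ are injective homomorphisms from $(\mathbb{F}_3[x],+)$, and Lemma 3.19 says the subgroup they generate is their internal free product, so $\mathcal{G}_{3,\tau}\cong\mathbb{F}_3[x]*\mathbb{F}_3[x]$ with each free factor a copy of $\mathbb{F}_3[x]$. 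It therefore remains only to establish the free-product decomposition of $\mathcal{Q}_3$ itself.

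For generation, let $M\in\mathcal{Q}_3$. If the $(1,2)$-entry of the special-form representative of $M$ (essentially unique, since such a representative is determined up to sign) vanishes, the functional equation $(15)$ over $\mathbb{F}_3$ forces that representative to be diagonal, whence $M=\overline{g_3}[0]^k\in\mathcal{G}_{3,0}$. Otherwise $M=\mathrm{bl}(M)\,\overline{g_3}[0]^k$ for some $k$, and we are reduced to balanced $M$; one also checks, using $(15)$ over $\mathbb{F}_3$, that every balanced element of $\mathcal{Q}_3$ has even relative degree — the determinant of a balanced element of relative degree $1$ vanishes, and the same parity obstruction rules out all odd relative degrees. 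So suppose $M$ is upper-balanced of type $\tau\in\{1,2\}$ and relative degree $2k$. The crucial point is that when $M$ is multiplied by an upper additive generator of the same type $\tau$ and the same signature $-1$ — precisely the configuration that is excluded from the hypothesis of case $(2)$ of Lemma 3.13 — the leading terms (and, symmetrically, the trailing terms) of the product cancel automatically, so that a division-algorithm choice of the polynomial $f$ forces the relative degree of $M\,\overline{a_{3,\tau,u}}[f]^{-1}$ below $2k$; the lower-balanced case is symmetric, using lower additive generators. Iterating drives $M$ down to relative degree $0$, hence into $\mathcal{G}_{3,0}$, so $\mathcal{Q}_3=\langle\mathcal{G}_{3,0},\mathcal{G}_{3,1},\mathcal{G}_{3,2}\rangle$.

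For freeness, let $\mathcal{Q}_3$ act on itself by right multiplication and apply the ping-pong lemma for free products. Every nontrivial element of the three subgroups has a well-defined type: $\mathcal{G}_{3,\tau}\setminus\{I\}$ ($\tau=1,2$) consists of balanced elements of type $\tau$ and signature $-1$ by Lemma 3.19, while $\mathcal{G}_{3,0}\setminus\{I\}$ consists of the unbalanced diagonal matrices $\overline{g_3}[0]^j$. One takes the ping-pong set for $\mathcal{G}_{3,\tau}$ ($\tau=1,2$) to be a suitable set of balanced elements of type $\tau$, and that for $\mathcal{G}_{3,0}$ a suitable set of unbalanced elements. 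The two inputs are: (i) right multiplication by $\overline{g_3}[0]^j$ with $j\neq0$ pulls the supports of the two first-row entries apart by $2j$, hence destroys balancedness; and (ii) for two balanced elements of distinct nonzero types Lemma 3.13 gives $\tau(M_1M_2)=\tau(M_2)$, the (same-orientation) signature hypotheses being automatically verified because in $\mathbb{F}_3$ one has $-\tau'\tau^{-1}=1\neq-1=\sigma(M_2)$ whenever $\tau\neq\tau'$. To control the action of the $\mathcal{G}_{3,0}$-letters from the other side, one also tracks the integer $\mathrm{pos}(M):=m_1(M)$ (well-defined because the special-form representative is unique up to sign), which is shifted by every letter and by a nonzero amount by every $\mathcal{G}_{3,0}$-letter. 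The ping-pong lemma then yields $\langle\mathcal{G}_{3,0},\mathcal{G}_{3,1},\mathcal{G}_{3,2}\rangle=\mathcal{G}_{3,0}*\mathcal{G}_{3,1}*\mathcal{G}_{3,2}$, which together with the generation step and the first paragraph proves the theorem.

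I expect the main difficulty to lie in the ping-pong step, precisely in the bookkeeping forced by the $\mathcal{G}_{3,0}$-factor. Unlike the elementary generators of Theorem 3.14, the diagonal matrices $\overline{g_3}[0]^j$ change neither the type nor the relative degree of the balanced elements they multiply, so a naive type-and-degree argument cannot see them; worse, in a reduced word the pattern $\mathcal{G}_{3,\tau}\cdot\mathcal{G}_{3,0}\cdot\mathcal{G}_{3,\tau}$ produces two factors of the same type $\tau$ whose product is governed by the equal-signature borderline case of Lemma 3.13, where no conclusion on the type of the product is available. The ping-pong sets must therefore be refined — using the position invariant $\mathrm{pos}$ together with the upper/lower orientation of the relevant factor, the latter being read off from its normal form in $\mathbb{F}_3[x]*\mathbb{F}_3[x]$ via Lemma 3.19 — so that such patterns are nonetheless detected and ruled out. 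A secondary, though still nontrivial, task is to make the relative-degree reduction of the generation step fully rigorous, i.e. to confirm that the automatic cancellation of leading terms makes the relative degree strictly decrease rather than merely fail to increase.
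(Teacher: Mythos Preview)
Your plan matches the paper's: Corollary~3.18 and Lemma~3.19 give the internal structure of $\mathcal{G}_{3,\tau}$; generation is by induction on $\mathrm{rd}$; freeness is by a type-based ping-pong on $\mathcal{Q}_3$ acting on itself by right multiplication. The paper's generation argument is simply a more explicit version of yours --- it computes $B\,g_3[\tau]$, derives the coefficient relation~(34) from the functional equation, and then chooses the additive generator to force a strict drop, with a separate sub-case when the leading cancellation is obstructed.

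The only real divergence is how the $\mathcal{G}_{3,0}$-letters are handled in the ping-pong. The paper uses the crude sets $S_0=\{\text{unbalanced}\}$, $S_\tau=\{\text{balanced of type }\tau\}$, declares every transition touching $S_0$ ``trivial'', and checks only the $S_1\leftrightarrow S_2$ case via Lemma~3.13 together with $\sigma(M_2)=-1\neq 1=-\tau_1\tau_2^{\pm1}$ in $\mathbb{F}_3$. You are right that the pattern $\mathcal{G}_{3,\tau}\cdot\mathcal{G}_{3,0}\cdot\mathcal{G}_{3,\tau}$ produces, after absorbing the middle factor on the left, a same-type product for which the signature hypothesis of Lemma~3.13 fails; so the paper's one word does sweep a genuine verification under the rug. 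Your proposed fix via $\mathrm{pos}(M)=m_1(M)$ and orientation is one way to supply it, though as written it is a sketch rather than a proof --- you note that $\mathrm{pos}$ shifts nontrivially under $\overline{g_3}[0]^j$ but do not explain how that shift forces the correct type after the next $\mathcal{G}_{3,\tau}$-letter. An alternative, closer to the paper's spirit, is to go back to the \emph{proof} of Lemma~3.13: the signature hypothesis is invoked only at one specific degree-alignment of the second factor (e.g.\ $m_2+n_2=-1$ in the upper--upper case), and left-multiplication by $\overline{g_3}[0]^j$ shifts that alignment by $2j\neq0$, after which the same leading-term computation gives $\tau(M_1M_2)=\tau(M_2)$ directly. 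Either route needs roughly the same bookkeeping; the paper's crude sets are adequate once it is done.

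One correction: your claim that every balanced element of $\mathcal{Q}_3$ has even relative degree is false --- for instance $\overline{a_{3,1,u}}[1]\,\overline{a_{3,1,l}}[1]$ is balanced with $\mathrm{rd}=5$. The paper only shows $\mathrm{rd}=1$ is impossible and then handles $\mathrm{rd}\geq2$ uniformly; your generation argument does not need the even-$\mathrm{rd}$ claim either.
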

\begin{proof}

\noindent By Lemma 3.19, we already know that $\mathcal{G}_{3,\,\tau}$
is the free product of $a_{3,\,\tau,\,u}\left(\mathbb{F}_{3}\left[x\right]\right)$
and $a_{3,\,\tau,\,l}\left(\mathbb{F}_{3}\left[x\right]\right)$ for
$\tau=1,\,2$. Thus, it suffices to show that 
\begin{description}
\item [{(a)}] \noindent the subgroups $\mathcal{G}_{3,\,0}$, $\mathcal{G}_{3,\,1}$
and $\mathcal{G}_{3,\,2}$ generate the whole group $\mathcal{Q}_{3}$,
and 
\item [{(b)}] \noindent the subgroup generated by $\mathcal{G}_{3,\,0}$,
$\mathcal{G}_{3,\,1}$ and $\mathcal{G}_{3,\,2}$ is the free product
of them.
\end{description}
\noindent \begin{proofofa}

We use the induction on the relative degree. Take an element $M\in\mathcal{Q}_{3}$
such that $\mathrm{rd}\left(M\right)=1$ and choose a representative
$B$. We may assume that $B$ is balanced by replacing it by the balanced
companion by Lemma 3.3. By normalizing the entry $B_{11}$ multiplying
by some scalar $t^{l_{0}}$, the only possible candidates for the
representative are
\begin{align*}
B & =\left(\begin{array}{cc}
a_{0}+a_{1}t & b_{0}t^{l}\\
\cdots & \cdots
\end{array}\right),
\end{align*}
where the integer $l$ is 0 if $B$ is upper-balnced, and is $1$
if lower-balanced. Suppose $l=0$. From the functional equation (15),
for some unit $\sigma\in\mathbb{F}_{3}^{\times}$, we have
\begin{align*}
\sigma= & \,\left(a_{0}+a_{1}t\right)\left(a_{0}+a_{1}t^{-1}\right)+\left(1+t+t^{-1}\right)b_{0}^{2}\\
= & \,\left(a_{0}a_{1}+b_{0}^{2}\right)\left(t+t^{-1}\right)+a_{0}^{2}+a_{1}^{2}+b_{0}^{2},
\end{align*}
from which we deduce $a_{0}a_{1}+b_{0}^{2}=0$ by comparing the coefficients
in both sides. By substituting $a_{0}=-b_{0}^{2}a_{1}^{-1}$, we have
\begin{align*}
a_{1}^{-1}B & =\left(\begin{array}{cc}
t-\left(b_{0}a_{1}^{-1}\right)^{2} & \left(b_{0}a_{1}^{-1}\right)\\
* & *
\end{array}\right).
\end{align*}

For any $\left(b_{0}a_{1}^{-1}\right)\in\mathbb{F}_{3}^{\times}$,
its square is 1, which implies $1+\left(b_{0}a_{1}^{-1}\right)^{2}+\left(b_{0}a_{1}^{-1}\right)^{4}$
is 0. Therefore, there is no element $M\in\mathcal{Q}_{3}$ such that
$\mathrm{rd}\left(M\right)=1$. Direct computation shows that the
only elements $M\in\mathcal{Q}_{3}$ such that $\mathrm{rd}\left(M\right)=2$
are exactly $\overline{a_{3,\,1,\,u}}\left(\pm1\right)$, $\overline{a_{3,\,1,\,l}}\left(\pm1\right)$,
$\overline{a_{3,\,2,\,u}}\left(\pm1\right)$, $\overline{a_{3,\,2,\,l}}\left(\pm1\right)$,
and some product of them with $\overline{g_{3}}\left[0\right]$.

For an integer $N\ge2$, suppose that the subgroups $\mathcal{G}_{0}$,
$\mathcal{G}_{1}$ and $\mathcal{G}_{2}$ generate every element $M_{0}\in\mathcal{Q}_{3}$
such that $\mathrm{rd}\left(M_{0}\right)\le N$, and suppose $M\in\mathcal{Q}_{3}$
such that $\mathrm{rd}\left(M\right)=N+1$. Choose a representative
$B$ of $M$ in the form
\begin{align*}
B & =\left(\begin{array}{cc}
g_{1} & g_{2}\\
-\Phi\overline{g_{2}} & \overline{g_{1}}
\end{array}\right).
\end{align*}

We assume that $B$ is balanced by replacing $B$ with the balanced
companion by Lemma 3.3. Suppose $B$ is lower-balanced and $\tau\left(B\right)=\tau$
for a unit $\tau\in\mathbb{F}_{3}^{\times}$. We write its entries
as
\begin{align*}
 & g_{1}=t^{m_{1}-1}+a_{1}t^{m_{1}}+\cdots+a_{2}t^{n_{1}-1}+\tau bt^{n_{1}},\\
 & g_{2}=-\tau t^{m_{1}}+a_{3}t^{m_{1}+1}+\cdots+a_{4}t^{n_{1}-1}+bt^{n_{1}},
\end{align*}
where for some $b\in\mathbb{F}_{3}^{\times}$, we normalized the first
coefficient of $g_{1}$ as 1.

A singular matrix $Bg_{3}\left[\tau\right]=B\left(\begin{array}{cc}
t-1 & \tau\\
-\Phi\tau & t^{-1}-1
\end{array}\right)$ is given by
\begin{align*}
Bg_{3}\left[\tau\right] & =\left(\begin{array}{cc}
\left(-a_{1}-1-\tau a_{3}\right)t^{m_{1}}+\cdots+\left(a_{2}+\tau b-\tau a_{4}\right)t^{n_{1}} & *\\
\left(\tau a_{1}+\tau+a_{3}\right)t^{m_{1}}+\cdots+\left(\tau a_{2}-a_{4}+b\right)t^{n_{1}-1} & *
\end{array}\right)^{T},
\end{align*}
where we have cancelled the highest degree terms and the lowest degree
terms once, and the relative degree has decreased. We claim that
\begin{align}
\tau b\left(-a_{1}-1-\tau a_{3}\right) & =\left(a_{2}+\tau b-\tau a_{4}\right).
\end{align}

From the functional equation (15) for $B$, for some unit $u\in\mathbb{F}_{3}^{\times}$,
we have
\begin{align*}
u= & \,g_{1}\overline{g_{1}}+\Phi g_{2}\overline{g_{2}}\\
= & \,\cdots+\left(a_{2}+\tau ba_{1}-\tau b-\tau a_{4}+ba_{3}\right)\left(t^{n_{1}-m_{1}}+t^{m_{1}-n_{1}}\right),
\end{align*}
which implies that
\[
a_{2}+\tau ba_{1}-\tau b-\tau a_{4}+ba_{3}=0,
\]
by comparing the coefficients in both sides. Therefore, we have (34)
by rearrangement. Suppose that $-a_{1}-1-\tau a_{3}\ne0$. Then, we
claim that
\begin{align}
\mathrm{rd}\left(Ba_{3,\,\tau,\,l}\left(\left(-a_{1}-1-\tau a_{3}\right)^{-1}\right)\right) & \le N-1.
\end{align}

To prove the claim, by expanding we have
\begin{align*}
 & \,\,Ba_{3,\,\tau,\,l}\left(\left(-a_{1}-1-\tau a_{3}\right)^{-1}\right)\\
= & \,B\left(I+\left(-a_{1}-1-\tau a_{3}\right)^{-1}g_{3}\left[\tau\right]\left(\begin{array}{cc}
-1-t^{-1} & 0\\
0 & -1-t
\end{array}\right)\right)\\
= & B+\left(-a_{1}-1-\tau a_{3}\right)^{-1}Bg_{3}\left[\tau\right]\left(\begin{array}{cc}
-1-t^{-1} & 0\\
0 & -1-t
\end{array}\right),
\end{align*}
where the first equality follows from Lemma 3.7. Thus, we have the
inequality (35) by comparing the coefficients of the highest degree
terms and the lowest degree terms for each entry.

On the other hand, suppose that $-a_{1}-1-\tau a_{3}=0$. Consider
the Laurent polynomial entry
\begin{align*}
\left(Bg_{3}\left[\tau\right]\right)_{11} & =\tau_{l}t^{m_{0}}+\cdots+\tau_{h}t^{n_{0}},
\end{align*}
where $m_{0}$ is the power of $t$ in the lowest degree term, $n_{0}$
is the power of $t$ in the highest degree term, and the coefficients
$\tau_{l},\,\tau_{h}$ are units. Denote by $m$ the integer
\begin{align*}
\min\left(\left(m_{0}-m_{1}\right),\,\left(n_{1}-n_{0}\right)\right),
\end{align*}
satisfying $m\ge1$ by the equality (34). Without loss of generality,
suppose $m$ is equal to $\left(m_{0}-m_{1}\right)$. Then, the $\left(1,\,1\right)$-entry
of a matrix $\tau_{l}^{-1}\left(t^{-m}+t^{m}\right)Bg_{3}\left[\tau\right]$
has $t^{m_{1}}$ as the lowest degree term. Choose a polynomial $f\left(x\right)\in\mathbb{F}_{3}\left[x\right]$
such that $f\left(t+t^{-1}\right)=\tau_{l}^{-1}\left(t^{-m+1}+t^{m-1}\right)$.
As in the proof of (35), we have
\begin{align*}
\mathrm{rd}\left(Ba_{3,\,\tau,\,l}\left(f\right)\right) & \le N,
\end{align*}
which concludes the proof. The case where $B$ is upper-balanced is
proven in the same way. $\qed$

\noindent \end{proofofa}
\begin{proofofb}

\noindent We use the usual ping-pong lemma, considering the action
by right multiplication in $\mathcal{Q}_{3}$. For each unit $\tau\in\mathbb{F}_{3}^{\times}$,
define $S_{\tau}$ to be the set of balanced elements $M\in\mathcal{Q}_{3}$
such that $\tau\left(M\right)=\tau$, and define $S_{0}$ to be the
set of unbalanced elements in $\mathcal{Q}_{3}$. Lemma 3.19 ensures
that for each $\tau\in\mathbb{F}_{3}$, $\mathcal{G}_{3,\,\tau}\subset S_{\tau}$.
The cases involving $S_{0}$ are trivial, so we consider only balanced
matrices. Without loss of generality, for elements $M_{1}\in S_{1}$
and $M_{2}\in\mathcal{G}_{3,\,2}$, it suffices to show that
\begin{align}
 & M_{1}M_{2}\in S_{2}.
\end{align}

Since $\tau\left(M_{1}\right)=1$ and $\tau\left(M_{2}\right)=2$,
we have
\begin{align*}
-\tau\left(M_{1}\right)\tau\left(M_{2}\right) & =1=-\frac{\tau\left(M_{1}\right)}{\tau\left(M_{2}\right)}.
\end{align*}

However, we have $\sigma\left(M_{2}\right)=-1$ by Lemma 3.19, which
satisfies the conditions (2), (4), (6), (8) on signature in Lemma
3.13. Therefore, by Lemma 3.13, we have (36) for all cases. The case
where $M_{1}$ is upper-balanced is proven in the same way. $\qedhere$

\noindent \end{proofofb}
\end{proof}

Before concluding this section, we present the postponed proof of
Lemma 2.18, as announced in Section 2.

\noindent \begin{prooflemma218}

By Theorem 3.20, the quaternionic group $\mathcal{Q}_{3}$ is generated
by $\overline{g_{3}}\left[0\right]$ and the additive generators.
Note that $\det\left(g_{3}\left[0\right]\right)=1$ by (16). By Lemma
3.17, every additive generator has a representative with determinant
1. Therefore, every element $M\in\mathcal{Q}_{3}$ has a representative
with determinant 1. $\qed$

\noindent \end{prooflemma218}

\section{The Cases $p\equiv1$ (mod 6)}

Throughout this section, unless otherwise stated, we assume that the
prime $p$ satisfies $p\equiv1$ (mod 6). For any $r\in\mathbb{F}_{p}$
such that $r^{4}+r^{2}+1\ne0$, define the \emph{type group} $\mathcal{G}_{p,\,r}$
to be the infinite cyclic group $\left\langle \overline{g_{p}}\left[r\right]\right\rangle $.
For any $\tau\in\mathbb{F}_{p}$ such that $\tau^{4}+\tau^{2}+1=0$,
define the \emph{type group} $\mathcal{G}_{p,\,\tau}$ to be generated
by all affine generators of type $\tau$ and a stable generator $\overline{s_{p,\,\tau}}\left[a\right]$
for any element $a\in\mathbb{F}_{p}\backslash\left\{ 3\tau+\tau^{-1}\right\} $.
The goal of this section is to prove the following.

\noindent \begin{theorem}

The quaternionic group $\mathcal{Q}_{p}$ is the free product of type
groups $\mathcal{G}_{p,\,0},\cdots,\,\mathcal{G}_{p,\,p-1}$. For
any $\tau\in\mathbb{F}_{p}$ such that $\tau^{4}+\tau^{2}+1=0$, the
set of upper (or lower) additive generators for $\tau$ generates
an abelian group isomorphic to $\mathbb{F}_{p}\left[x\right]$, and
the set of upper (or lower) multiplicative generators for $\tau$
generates another abelian group isomorphic to $\mathbb{F}_{p}^{\times}$.
The set of upper (or lower) affine generators for $\tau$ generates
a group isomoprhic to $\mathrm{Aff}\left(\mathbb{F}_{p}\left[x\right]\right)\cong\mathbb{F}_{p}\left[x\right]\rtimes\mathbb{F}_{p}^{\times}$,
where a multiplicative generator acts on additive generators by multiplication
by a unit in $\mathbb{F}_{p}^{\times}$. The group generated by all
affine generators for $\tau$ is the free product of the group generated
by the upper ones and the group generated by the lower ones.

Finally, the type group $\mathcal{G}_{p,\,\tau}$ is the (internal)
HNN extension isomorphic to
\begin{align*}
\mathcal{G}_{p,\,\tau} & \cong\left(\mathrm{Aff}\left(\mathbb{F}_{p}\left[x\right]\right)*\mathrm{Aff}\left(\mathbb{F}_{p}\left[x\right]\right)\right)*_{\alpha_{\tau}},
\end{align*}
where the free product inside corresponds to the group generated by
the affine generators, and the stable letter is the stable generator
$\overline{s_{p,\,\tau}}\left[a\right]$ such that for any $b\in\mathbb{F}_{p}\backslash\left\{ \pm\left(\tau+2\tau^{-1}\right)\right\} $,
inducing the isomorphism corresponding to $\alpha_{\tau}$ by
\begin{align*}
\overline{s_{p,\,\tau}}\left[a\right]\overline{m_{p,\,\tau,\,u}}\left[b\right]\overline{s_{p,\,\tau}}\left[a\right]^{-1} & =\overline{m_{p,\,\tau,\,l}}\left[b\right].
\end{align*}

The results are independent of the choice of $a\in\mathbb{F}_{p}\backslash\left\{ 3\tau+\tau^{-1}\right\} $
for $\overline{s_{p,\,\tau}}\left[a\right]$.

\noindent \end{theorem}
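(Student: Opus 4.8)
The plan is to follow the blueprint of Theorem 3.20, with the additive-generator bookkeeping of the $p=3$ case replaced by the richer affine/stable-generator bookkeeping available here, and to split the argument into four tasks: (i) the internal structure of the affine subgroups attached to each root $\tau$ of $x^{4}+x^{2}+1$; (ii) the HNN structure of $\mathcal{G}_{p,\,\tau}$; (iii) generation of $\mathcal{Q}_{p}$ by the type groups; and (iv) the global free-product decomposition. Throughout I would work with the invariants already in play: the relative degree $\mathrm{rd}$, the type $\tau(M)$, balancedness and the balanced companion (Definitions 3.2, 3.4 and Lemma 3.3), the signature $\sigma(M)$ (Definition 3.12), and the product rules of Lemma 3.13, applied now to the new generators of Definitions 3.6, 3.7, and 3.9.

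For task (i) I would first record the multiplication laws among the new generators by direct matrix computation. One expects the analogue of Corollary 3.16, namely $a_{p,\,\tau,\,u}[f_{1}]\,a_{p,\,\tau,\,u}[f_{2}]=a_{p,\,\tau,\,u}[f_{1}+f_{2}]$ and likewise for the lower family, the cross term vanishing because $g_{p}[\tau]$ is singular (as $\tau^{4}+\tau^{2}+1=0$) so that $g_{p}[\tau]\,g_{p}[\tau]^{\dagger}=0$; combined with $\det a_{p,\,\tau,\,u}[f]=1$ and the non-vanishing of an off-diagonal entry, this yields, exactly as in Corollary 3.18, that the upper (resp.\ lower) additive generators form an abelian group isomorphic to $\mathbb{F}_{p}[x]$ which projectivizes injectively. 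For the multiplicative generators I would show, after a convenient reparametrization of $a$, that the $m_{p,\,\tau,\,u}[a]$ multiply like $\mathbb{F}_{p}^{\times}$ --- Lemma 3.8's determinant $a^{2}+3\tau^{2}$ records the admissible $a$ and guides the reparametrization --- and that conjugating an additive generator by a multiplicative one rescales the polynomial $f$ by a unit, producing the semidirect product $\mathrm{Aff}(\mathbb{F}_{p}[x])\cong\mathbb{F}_{p}[x]\rtimes\mathbb{F}_{p}^{\times}$. Finally, the free product of the upper and lower affine subgroups is a ping-pong argument parallel to Lemma 3.19: right multiplication by a nontrivial upper (resp.\ lower) affine generator carries lower-balanced (resp.\ upper-balanced) elements of type $\tau$ to upper-balanced (resp.\ lower-balanced) ones --- read off from the lowest and highest degree terms --- and the signature stays constant along these orbits, which is the data that Lemma 3.13 will need in task (iv).

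For task (ii) I would first verify the defining conjugation relation $\overline{s_{p,\,\tau}}[a]\,\overline{m_{p,\,\tau,\,u}}[b]\,\overline{s_{p,\,\tau}}[a]^{-1}=\overline{m_{p,\,\tau,\,l}}[b]$ by a direct computation, for every admissible $b$, using the explicit form of $\overline{s_{p,\,\tau}}[a]$ recorded after Lemma 3.10; this exhibits $\mathcal{G}_{p,\,\tau}$ as a quotient of the abstract HNN extension $\bigl(\mathrm{Aff}(\mathbb{F}_{p}[x])\ast\mathrm{Aff}(\mathbb{F}_{p}[x])\bigr)\ast_{\alpha_{\tau}}$. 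To show there are no further relations I would invoke the Fenchel--Nielsen/Britton criterion cited in the introduction: build ping-pong subsets of $\mathcal{Q}_{p}$ indexed by balancedness type and relative-degree parity (together with the signature data from task (i)), check that the two affine vertex subgroups preserve the corresponding subsets while $\overline{s_{p,\,\tau}}[a]$ conjugates the ``upper-multiplicative'' subset onto the ``lower-multiplicative'' one and strictly raises relative degree off the associated subgroup, and conclude via the HNN ping-pong lemma that every Britton-reduced word is nontrivial, hence that the natural surjection is an isomorphism. Independence of $a$ follows because, comparing the normal forms from the proof of Lemma 3.10, two admissible choices $a,a'$ give stable generators differing by a product of an upper affine element and a lower affine element, so they serve as interchangeable stable letters for the same edge isomorphism $\alpha_{\tau}$, which itself does not involve $a$.

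For tasks (iii) and (iv) I would reuse the structure of Theorem 3.20 almost verbatim. Generation: induct on $\mathrm{rd}(M)$; after replacing $M$ by its balanced companion (Lemma 3.3), the functional equation (15) pins down the top and bottom coefficients of the two first-row entries, and then right multiplication by $\overline{g_{p}}[\tau(M)]^{\pm1}$ when $\tau(M)^{4}+\tau(M)^{2}+1\ne0$, or --- when $\tau(M)$ is a root of $x^{4}+x^{2}+1$ --- by a suitable affine generator (first adjusting by $\overline{g_{p}}[0]^{\pm1}$ and then by an additive generator, or by a multiplicative generator to absorb the new possibilities for a leading coefficient), strictly decreases $\mathrm{rd}$; the finitely many matrices of small relative degree are checked by hand. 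Global free product: a ping-pong on types as in part (b) of Theorem 3.20, using that a nontrivial element of a cyclic type group $\mathcal{G}_{p,\,r}$, resp.\ of $\mathcal{G}_{p,\,\tau}$, is balanced of type $r$, resp.\ $\tau$, and that by Lemma 3.13 a product of elements from distinct type groups retains the type of its last factor. The step I expect to be the main obstacle is exactly this last point made uniform: one must know that \emph{every} nontrivial element of $\mathcal{G}_{p,\,\tau}$ --- not merely a generator, but an arbitrary product of affine and stable generators --- lies in the ping-pong set with a signature avoiding the degenerate cases of Lemma 3.13 (the evenly-balanced and specific-$\sigma$ exceptions). Establishing this signature control for all of $\mathcal{G}_{p,\,\tau}$ requires an induction along the HNN normal form produced in task (ii), and that is where the bulk of the real work lies.
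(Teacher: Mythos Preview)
Your overall architecture matches the paper's: the four tasks you name are exactly the ones carried out in Section 4, and your identification of the signature control along the HNN normal form as ``where the bulk of the real work lies'' is accurate. However, there is a genuine missing ingredient that the paper introduces precisely to make tasks (ii) and (iii) go through: a \emph{second-order type} $\tau2_{\tau}(M)$ (and a companion second-order signature $\sigma2_{\tau}(M)$), defined from the next-to-leading coefficients of a balanced representative (Definition 4.7). The ping-pong sets you propose --- cut out by balancedness, parity, and signature alone --- cannot separate the action of the stable letter from that of the base group: both $\overline{s_{p,\,\tau}}[a]$ and nontrivial upper affine elements are upper-balanced of type $\tau$ with the same admissible signatures, so conditions (1) and (3) of the Fenchel--Nielsen lemma (Theorem 4.12) cannot both be arranged. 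The paper's sets $Z_{s}$, $Z_{\overline{s}}$, $X_{0}$ are instead defined by the condition $\tau2_{\tau}(M)=-(\tau+3\tau^{-1})$ versus $\tau2_{\tau}(M)\neq-(\tau+3\tau^{-1})$; Lemmas 4.13--4.16 then verify the five Fenchel--Nielsen conditions, with Lemma 4.16 (the hard one) using $\sigma2_{\tau}$ to show that a base-group element outside the associated multiplicative subgroup changes the second-order type away from $-(\tau+3\tau^{-1})$.

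The same invariant is what makes your reduction step in task (iii) well-defined. Your proposal says to multiply by ``a suitable affine generator \ldots\ or by a multiplicative generator'' when $\tau(M)$ is a root, but this omits the case where a \emph{stable} generator is required: by Lemma 4.18, the correct move is dictated by $\tau2_{\tau}(M)$ --- use $\overline{s_{p,\,\tau}}[a]^{\mp1}$ when $\tau2_{\tau}(M)=-(\tau+3\tau^{-1})$, an additive generator when $\tau2_{\tau}(M)=-\tau^{3}$, and a multiplicative generator otherwise. In particular, the stable generator itself (which has $\tau2_{\tau}=-(\tau+3\tau^{-1})$) cannot be reduced using affine generators alone, so your algorithm as stated does not terminate on all inputs. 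Once you introduce $\tau2_{\tau}$ and $\sigma2_{\tau}$ and redo the bookkeeping of Lemmas 4.13--4.18 with them, the rest of your plan (including the final signature induction along Britton normal form feeding into Lemma 3.13) goes through exactly as you outline.
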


To prove Theorem 4.1, we begin by establishing the structure of the
subgroups $\mathcal{G}_{p,\,\tau}$. The process for handling the
additive generators is similar to the case $p=3$.

\noindent \begin{lemma}

For an element $\tau\in\mathbb{F}_{p}$ such that $\tau^{4}+\tau^{2}+1=0$
and a polynomial $f\left(x\right)\in\mathbb{F}_{p}\left[x\right]$,
we have the following:
\begin{align*}
\det\left(a_{p,\,\tau,\,u}\left[f\right]\right) & =1=\det\left(a_{p,\,\tau,\,l}\left[f\right]\right).
\end{align*}
\end{lemma}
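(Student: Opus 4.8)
The plan is to sidestep any entrywise matrix multiplication and instead use only that $g_{p}\left[\tau\right]$ is singular together with the cyclic invariance of the trace.

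First I would observe that the hypothesis $\tau^{4}+\tau^{2}+1=0$ is exactly the statement $\det\left(g_{p}\left[\tau\right]\right)=1+\tau^{2}+\tau^{4}=0$. Since $g_{p}\left[\tau\right]$ has the standard shape $\left(\begin{array}{cc}
g_{1} & g_{2}\\
-\Phi\overline{g_{2}} & \overline{g_{1}}
\end{array}\right)$, the $\dagger$-identity recalled in Section 3 gives, in the singular case,
\begin{align*}
g_{p}\left[\tau\right]\,g_{p}\left[\tau\right]^{\dagger} & =0=g_{p}\left[\tau\right]^{\dagger}\,g_{p}\left[\tau\right].
\end{align*}

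Next I would invoke the elementary identity $\det\left(I+M\right)=1+\mathrm{tr}\left(M\right)+\det\left(M\right)$, valid for any $2\times2$ matrix $M$ over a commutative ring. Writing $c:=f\left(t+t^{-1}\right)$ and
\begin{align*}
N_{u} & :=g_{p}\left[\tau\right]^{\dagger}\left(\begin{array}{cc}
t-t^{-1} & 0\\
0 & 0
\end{array}\right)g_{p}\left[\tau\right],\quad N_{l}:=g_{p}\left[\tau\right]\left(\begin{array}{cc}
t-t^{-1} & 0\\
0 & 0
\end{array}\right)g_{p}\left[\tau\right]^{\dagger},
\end{align*}
we have $a_{p,\,\tau,\,u}\left[f\right]=I+cN_{u}$ and $a_{p,\,\tau,\,l}\left[f\right]=I+cN_{l}$, so that
\begin{align*}
\det\left(a_{p,\,\tau,\,u}\left[f\right]\right) & =1+c\,\mathrm{tr}\left(N_{u}\right)+c^{2}\det\left(N_{u}\right),
\end{align*}
and likewise for $a_{p,\,\tau,\,l}\left[f\right]$. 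The determinant terms vanish at once, because the middle factor $\left(\begin{array}{cc}
t-t^{-1} & 0\\
0 & 0
\end{array}\right)$ is singular, hence $\det\left(N_{u}\right)=\det\left(N_{l}\right)=0$. For the trace terms, cyclicity of the trace together with the first step gives
\begin{align*}
\mathrm{tr}\left(N_{u}\right) & =\mathrm{tr}\left(\left(\begin{array}{cc}
t-t^{-1} & 0\\
0 & 0
\end{array}\right)g_{p}\left[\tau\right]\,g_{p}\left[\tau\right]^{\dagger}\right)=0,\\
\mathrm{tr}\left(N_{l}\right) & =\mathrm{tr}\left(\left(\begin{array}{cc}
t-t^{-1} & 0\\
0 & 0
\end{array}\right)g_{p}\left[\tau\right]^{\dagger}\,g_{p}\left[\tau\right]\right)=0.
\end{align*}
Therefore $\det\left(a_{p,\,\tau,\,u}\left[f\right]\right)=1=\det\left(a_{p,\,\tau,\,l}\left[f\right]\right)$, as claimed.

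There is no real obstacle here: the only point to get right is to route the computation through the $\det\left(I+M\right)$ identity and the relation $g_{p}\left[\tau\right]g_{p}\left[\tau\right]^{\dagger}=0$, rather than expanding the triple products entrywise. The entrywise route is also feasible, but then one must invoke $\tau^{4}+\tau^{2}+1=0$ by hand to cancel the surviving terms, and this is precisely the step where that relation is needed.
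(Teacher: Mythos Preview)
Your proof is correct and more conceptual than the paper's. The paper simply records the result as ``direct by computation'', presumably via entrywise expansion of the triple products in (41) and (42) followed by invoking $\tau^{4}+\tau^{2}+1=0$ to cancel the remaining terms. Your route bypasses all of that: by combining the $2\times 2$ identity $\det(I+M)=1+\mathrm{tr}(M)+\det(M)$ with the singularity of the middle factor (killing $\det(N_{u})$, $\det(N_{l})$) and with trace cyclicity plus the relation $g_{p}[\tau]g_{p}[\tau]^{\dagger}=0=g_{p}[\tau]^{\dagger}g_{p}[\tau]$ (killing the traces), you never need to expand anything. This has the advantage of making transparent exactly where the hypothesis $\tau^{4}+\tau^{2}+1=0$ enters (only through $\det g_{p}[\tau]=0$), and it would generalise immediately to any additive generator built in the same $I+A^{\dagger}DA$ pattern with $A$ singular.
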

\begin{proof}

\noindent Direct by computation. $\qedhere$

\noindent \end{proof}
\begin{lemma}

For an element $\tau\in\mathbb{F}_{p}$ such that $\tau^{4}+\tau^{2}+1=0$
and for a pair of polynomials $f_{1}\left(x\right),\,f_{2}\left(x\right)\in\mathbb{F}_{p}\left[x\right]$,
we have the following identities:
\begin{align}
 & a_{p,\,\tau,\,u}\left[f_{1}\right]a_{p,\,\tau,\,u}\left[f_{2}\right]=a_{p,\,\tau,\,u}\left[f_{1}+f_{2}\right],\\
 & a_{p,\,\tau,\,l}\left[f_{1}\right]a_{p,\,\tau,\,l}\left[f_{2}\right]=a_{p,\,\tau,\,l}\left[f_{1}+f_{2}\right].
\end{align}
\end{lemma}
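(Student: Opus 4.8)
The plan is to mirror the proof of Corollary~3.16, the only new input being the degeneracy of $g_{p}[\tau]$ for the prescribed value of $\tau$. Since $\tau^{4}+\tau^{2}+1=0$ and the determinant of $g_{p}[r]$ equals $1+r^{2}+r^{4}$, we have $\det\left(g_{p}[\tau]\right)=0$. The matrix $g_{p}[\tau]$ has the symmetric shape $\left(\begin{smallmatrix}g_{1} & g_{2}\\ -\Phi\overline{g_{2}} & \overline{g_{1}}\end{smallmatrix}\right)$ with $g_{1}=t-\tau^{2}$ and $g_{2}=\tau$ (reading the $(2,2)$-entry of Definition~2.15 as $\overline{t-\tau^{2}}=t^{-1}-\tau^{2}$), so the identity $BB^{\dagger}=0=B^{\dagger}B$ valid for $\det B=0$, noted just before Definition~3.6, applies and yields
\[
g_{p}[\tau]\,g_{p}[\tau]^{\dagger}=0=g_{p}[\tau]^{\dagger}\,g_{p}[\tau].
\]

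First I would set $N_{u}:=g_{p}[\tau]^{\dagger}\left(\begin{smallmatrix}t-t^{-1} & 0\\ 0 & 0\end{smallmatrix}\right)g_{p}[\tau]$, so that $a_{p,\,\tau,\,u}[f]=I+f(t+t^{-1})\,N_{u}$ by Definition~3.7. As $f(t+t^{-1})$ is a scalar Laurent polynomial, it is central in $\mathrm{M}\left(2,\,\mathbb{F}_{p}\left[t,\,t^{-1}\right]\right)$, and expanding the product gives
\[
a_{p,\,\tau,\,u}[f_{1}]\,a_{p,\,\tau,\,u}[f_{2}]=I+\bigl(f_{1}(t+t^{-1})+f_{2}(t+t^{-1})\bigr)N_{u}+f_{1}(t+t^{-1})f_{2}(t+t^{-1})\,N_{u}^{2}.
\]
The quadratic term contains the factor $N_{u}^{2}=g_{p}[\tau]^{\dagger}\left(\begin{smallmatrix}t-t^{-1} & 0\\ 0 & 0\end{smallmatrix}\right)\bigl(g_{p}[\tau]\,g_{p}[\tau]^{\dagger}\bigr)\left(\begin{smallmatrix}t-t^{-1} & 0\\ 0 & 0\end{smallmatrix}\right)g_{p}[\tau]$, which vanishes by the displayed identity; combined with $f_{1}(t+t^{-1})+f_{2}(t+t^{-1})=(f_{1}+f_{2})(t+t^{-1})$, this collapses the right-hand side to $a_{p,\,\tau,\,u}[f_{1}+f_{2}]$, establishing the first identity.

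For the second identity the argument is verbatim with $N_{l}:=g_{p}[\tau]\left(\begin{smallmatrix}t-t^{-1} & 0\\ 0 & 0\end{smallmatrix}\right)g_{p}[\tau]^{\dagger}$; here $N_{l}^{2}$ carries the middle factor $g_{p}[\tau]^{\dagger}\,g_{p}[\tau]=0$, so again $N_{l}^{2}=0$ and the product telescopes to $a_{p,\,\tau,\,l}[f_{1}+f_{2}]$. I do not anticipate any genuine obstacle: the computation is entirely formal once the nilpotence $N_{u}^{2}=N_{l}^{2}=0$ is in hand, and the single point worth double-checking is merely that $g_{p}[\tau]$ indeed has the $\dagger$-symmetric form, so that the general fact for singular $B$ is applicable.
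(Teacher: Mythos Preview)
Your proof is correct and follows essentially the same approach as the paper: both arguments reduce to the observation that $\det g_{p}[\tau]=0$ forces $g_{p}[\tau]\,g_{p}[\tau]^{\dagger}=0=g_{p}[\tau]^{\dagger}\,g_{p}[\tau]$, whence the ``square'' term in the expansion of $\bigl(I+f_{1}N\bigr)\bigl(I+f_{2}N\bigr)$ vanishes. The only cosmetic difference is that the paper writes the expansion out in full rather than naming the nilpotent factor $N_{u}$ (resp.\ $N_{l}$) as you do.
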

\begin{proof}

\noindent By definition, we have $\det\left(g_{p}\left[\tau\right]\right)=0$,
which implies $g_{p}\left[\tau\right]g_{p}\left[\tau\right]^{\dagger}=0=g_{p}\left[\tau\right]^{\dagger}g_{p}\left[\tau\right]$.
We prove the first identity (37) as follows.
\begin{align*}
 & \,a_{p,\,\tau,\,u}\left[f_{1}\right]a_{p,\,\tau,\,u}\left[f_{2}\right]\\
= & \,\left(I+f_{1}\left(t+t^{-1}\right)g_{p}\left[\tau\right]^{\dagger}\left(\begin{array}{cc}
t-t^{-1} & 0\\
0 & 0
\end{array}\right)g_{p}\left[\tau\right]\right)\cdot\\
 & \left(I+f_{2}\left(t+t^{-1}\right)g_{p}\left[\tau\right]^{\dagger}\left(\begin{array}{cc}
t-t^{-1} & 0\\
0 & 0
\end{array}\right)g_{p}\left[\tau\right]\right)\\
= & \,a_{p,\,\tau,\,u}\left[f_{1}+f_{2}\right]+\left(f_{1}f_{2}\right)\left(t+t^{-1}\right)g_{p}\left[\tau\right]^{\dagger}\left(\begin{array}{cc}
t-t^{-1} & 0\\
0 & 0
\end{array}\right)g_{p}\left[\tau\right]g_{p}\left[\tau\right]^{\dagger}\left(\begin{array}{cc}
t-t^{-1} & 0\\
0 & 0
\end{array}\right)g_{p}\left[\tau\right]\\
= & \,a_{p,\,\tau,\,u}\left[f_{1}+f_{2}\right],
\end{align*}
where we used the identity $g_{p}\left[\tau\right]g_{p}\left[\tau\right]^{\dagger}=0$
for the last equality. The case (38) for $a_{p,\,\tau,\,l}\left[f\right]$
is proven in the same way. $\qedhere$

\noindent \end{proof}
\begin{corollary}

For an element $\tau\in\mathbb{F}_{p}$ such that $\tau^{4}+\tau^{2}+1=0$,
we define a map
\begin{align*}
a_{p,\,\tau,\,u} & :\mathbb{F}_{p}\left[x\right]\to\mathrm{SL}\left(2,\,\mathbb{F}_{p}\left[t,\,t^{-1}\right]\right)
\end{align*}
by $f\mapsto a_{p,\,\tau,\,u}\left[f\right]$, and another map
\begin{align*}
a_{p,\,\tau,\,l} & :\mathbb{F}_{p}\left[x\right]\to\mathrm{SL}\left(2,\,\mathbb{F}_{p}\left[t,\,t^{-1}\right]\right)
\end{align*}
by $f\mapsto a_{p,\,\tau,\,l}\left[f\right]$. Then, the maps $a_{p,\,\tau,\,u}$
and $a_{p,\,\tau,\,l}$ are injective group homomorphisms. The projectivizations
$\overline{a_{p,\,\tau,\,u}}:=\pi_{p}\circ a_{p,\,\tau,\,u}$ and
$\overline{a_{p,\,\tau,\,l}}:=\pi_{p}\circ a_{p,\,\tau,\,l}$ are
still injective. Let us call the image $a_{p,\,\tau,\,u}\left(\mathbb{F}_{p}\left[x\right]\right)$
(resp. $a_{p,\,\tau,\,l}\left(\mathbb{F}_{p}\left[x\right]\right)$)
the \emph{upper} (resp. \emph{lower}) \emph{additive subgroup} for
$\tau$.

\noindent \end{corollary}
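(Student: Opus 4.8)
The plan is to follow the proof of Corollary~3.18 almost verbatim, with the $p=3$ lemmas replaced by their $p\equiv1\pmod 6$ analogues. First, Lemma~4.3 says precisely that $a_{p,\,\tau,\,u}[f_{1}]\,a_{p,\,\tau,\,u}[f_{2}]=a_{p,\,\tau,\,u}[f_{1}+f_{2}]$ and likewise for the lower family, so $a_{p,\,\tau,\,u}$ and $a_{p,\,\tau,\,l}$ are group homomorphisms out of the additive group $\mathbb{F}_{p}[x]$, and Lemma~4.2 says their images consist of determinant-one matrices, hence land in $\mathrm{SL}\left(2,\,\mathbb{F}_{p}[t,\,t^{-1}]\right)$.

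For injectivity of $a_{p,\,\tau,\,u}$ itself, I would read off from Definition~3.6 that $a_{p,\,\tau,\,u}[f]-I=f\left(t+t^{-1}\right)N_{\tau,\,u}$, where $N_{\tau,\,u}:=g_{p}[\tau]^{\dagger}\left(\begin{smallmatrix}t-t^{-1}&0\\0&0\end{smallmatrix}\right)g_{p}[\tau]$ is a fixed Laurent-polynomial matrix. A one-line check of a single entry --- for instance the $(2,2)$-entry works out to $\Phi\tau^{2}\left(t-t^{-1}\right)$, which is nonzero because $\tau\ne0$ and $p$ is odd --- shows $N_{\tau,\,u}\ne0$; this is the exact analogue of the observation $\left(\left(\begin{smallmatrix}1+t^{-1}&0\\0&1+t\end{smallmatrix}\right)g_{3}[\tau]\right)_{12}\ne0$ used in the proof of Corollary~3.18. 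Since $f\mapsto f\left(t+t^{-1}\right)$ is injective on $\mathbb{F}_{p}[x]$, distinct $f$ give distinct matrices $a_{p,\,\tau,\,u}[f]$, so $a_{p,\,\tau,\,u}$ is injective; the same argument with $N_{\tau,\,l}:=g_{p}[\tau]\left(\begin{smallmatrix}t-t^{-1}&0\\0&0\end{smallmatrix}\right)g_{p}[\tau]^{\dagger}$ handles $a_{p,\,\tau,\,l}$.

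The only step that needs an actual (but very short) argument is injectivity after projectivizing. Since $\ker\pi_{p}$ consists of scalar matrices and a scalar matrix of determinant $1$ must be $\pm I$, the homomorphism property reduces $\overline{a_{p,\,\tau,\,u}}(f_{1})=\overline{a_{p,\,\tau,\,u}}(f_{2})$ to $a_{p,\,\tau,\,u}(f_{1}-f_{2})=\pm I$; the $+I$ case forces $f_{1}=f_{2}$ by the previous paragraph, so it suffices to rule out $a_{p,\,\tau,\,u}[f]=-I$ for $f\ne0$, i.e.\ $f\left(t+t^{-1}\right)N_{\tau,\,u}=-2I$. But $\det N_{\tau,\,u}=0$, because the middle factor $\left(\begin{smallmatrix}t-t^{-1}&0\\0&0\end{smallmatrix}\right)$ is singular, whereas $\det(-2I)=4\ne0$ in $\mathbb{F}_{p}$ (as $p$ is odd); this contradiction finishes the proof, and the lower case is identical. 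Naming the images the \emph{upper} and \emph{lower additive subgroups} is then just a definition.

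I do not expect a genuine obstacle here: everything structurally delicate --- that these subgroups fit together into the free product and HNN extension of Theorem~4.1 --- is deferred to the later lemmas of this section, while the present corollary reduces entirely to the vanishing $\det N_{\tau,\,u}=0$ together with the injectivity of $f\mapsto f\left(t+t^{-1}\right)$ on $\mathbb{F}_{p}[x]$. The only line worth a careful computation is confirming that some entry of $N_{\tau,\,u}$ is a nonzero Laurent polynomial.
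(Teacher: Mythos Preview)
Your proposal is correct and follows essentially the same approach as the paper, which simply says ``Direct by using Lemma~4.2 and Lemma~4.3, as in the proof of Corollary~3.18.'' Your one small variation---ruling out $a_{p,\,\tau,\,u}[f]=-I$ via $\det N_{\tau,\,u}=0$ rather than by pointing to a nonzero off-diagonal entry of $N_{\tau,\,u}$---is a perfectly valid alternative to the argument in Corollary~3.18.
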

\begin{proof}

\noindent Direct by using Lemma 4.2 and Lemma 4.3, as in the proof
of Corollary 3.18. $\qedhere$

\noindent \end{proof}

We turn our attention to the multiplicative generators.

\noindent \begin{lemma}

Suppose an element $\tau\in\mathbb{F}_{p}$ satisfies $\tau^{4}+\tau^{2}+1=0$.
Define a map
\begin{align*}
h_{\tau} & :\mathbb{F}_{p}\backslash\left\{ \pm\left(\tau+2\tau^{-1}\right)\right\} \to\mathbb{F}_{p}^{\times}\backslash\left\{ 1\right\} 
\end{align*}
by $a\mapsto\frac{a-\left(\tau+2\tau^{-1}\right)}{a+\left(\tau+2\tau^{-1}\right)}$.
Define maps
\begin{align*}
 & \overline{f_{p,\,\tau,\,u}}:\overline{m_{p,\,\tau,\,u}}\left[\mathbb{F}_{p}\backslash\left\{ \pm\left(\tau+2\tau^{-1}\right)\right\} \right]\cup\left\{ \left[\begin{array}{cc}
1 & 0\\
0 & 1
\end{array}\right]\right\} \to\mathbb{F}_{p}^{\times},\\
 & \overline{f_{p,\,\tau,\,l}}:\overline{m_{p,\,\tau,\,l}}\left[\mathbb{F}_{p}\backslash\left\{ \pm\left(\tau+2\tau^{-1}\right)\right\} \right]\cup\left\{ \left[\begin{array}{cc}
1 & 0\\
0 & 1
\end{array}\right]\right\} \to\mathbb{F}_{p}^{\times},
\end{align*}
as $\left[\begin{array}{cc}
1 & 0\\
0 & 1
\end{array}\right]\mapsto1$, $\overline{m_{p,\,\tau,\,l}}\left[a\right]\mapsto h_{\tau}\left(a\right)$
(resp. $\left[\begin{array}{cc}
1 & 0\\
0 & 1
\end{array}\right]\mapsto1$, $\overline{m_{p,\,\tau,\,u}}\left[a\right]\mapsto h_{\tau}\left(a\right)$).
Then, $\overline{f_{p,\,\tau,\,l}}$ and $\overline{f_{p,\,\tau,\,u}}$
are group isomoprhisms. Let us call the domain of $\overline{f_{p,\,\tau,\,u}}$
(resp. $\overline{f_{p,\,\tau,\,l}}$) the \emph{upper} (resp. \emph{lower})
\emph{multiplicative subgroup} for $\tau$.

\noindent \end{lemma}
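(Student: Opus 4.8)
The plan is to put each multiplicative generator into the affine form $m_{p,\,\tau,\,u}[a] = aI + N_u$, where $N_u := m_{p,\,\tau,\,u}[a] - aI$ is trace-zero and independent of $a$, and then to observe that $N_u$ squares to a scalar matrix. Write $\lambda := \tau + 2\tau^{-1}$; note $\lambda \in \mathbb{F}_p$, and $\lambda \neq 0$ since $\tau \neq 0$ and $p \neq 3$. Because $\tau^{4}+\tau^{2}+1=0$ forces $\tau^{2}+1+\tau^{-2}=0$, one gets $\lambda^{2}=-3\tau^{2}$, so the determinant formula of Lemma 3.8 reads $\det m_{p,\,\tau,\,u}[a] = a^{2}-\lambda^{2} = (a-\lambda)(a+\lambda)$ and the map $h_\tau$ is $a \mapsto \frac{a-\lambda}{a+\lambda}$. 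First I would simplify the diagonal entries of $m_{p,\,\tau,\,u}[a]$ to $a \pm \tau(t-t^{-1})$, so that $N_u$ has diagonal entries $\pm\tau(t-t^{-1})$ and off-diagonal entries $P:=\tau^{2}+t^{-1}$ and $Q:=-\Phi(\tau^{2}+t)$. A short computation, invoking $\tau^{4}+\tau^{2}+1=0$ only to reduce the product $PQ$, gives $\tau^{2}(t-t^{-1})^{2}+PQ=-3\tau^{2}$, hence $N_u^{2}=\lambda^{2}I$. The matrix $N_l := m_{p,\,\tau,\,l}[a]-aI$ is the entrywise bar-conjugate of $N_u$, so it satisfies $N_l^{2}=\lambda^{2}I$ by the same computation.

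From $N_u^{2}=\lambda^{2}I$ the composition law is immediate: $m_{p,\,\tau,\,u}[a]\,m_{p,\,\tau,\,u}[b] = (ab+\lambda^{2})I+(a+b)N_u$. Projectivizing, this gives $\overline{m_{p,\,\tau,\,u}}[a]\,\overline{m_{p,\,\tau,\,u}}[b] = \overline{m_{p,\,\tau,\,u}}[a*b]$ with $a*b := \frac{ab+\lambda^{2}}{a+b}$ whenever $a+b\neq 0$, and equals $\pi_p(I)$ when $b=-a$ (the product is then the scalar matrix $(\lambda^{2}-a^{2})I$ with $\lambda^{2}-a^{2}\neq 0$); in particular $\overline{m_{p,\,\tau,\,u}}[a]^{-1}=\overline{m_{p,\,\tau,\,u}}[-a]$. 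Since the $(1,2)$-entry $P=\tau^{2}+t^{-1}$ is a fixed nonzero Laurent polynomial, two multiplicative generators are projectively equal only when their parameters coincide, so $\overline{m_{p,\,\tau,\,u}}$ is injective and $a\mapsto\overline{m_{p,\,\tau,\,u}}[a]$ identifies the domain of $\overline{f_{p,\,\tau,\,u}}$ with the parameter set $\mathbb{F}_p\setminus\{\pm\lambda\}$ together with $\pi_p(I)$. A quick check shows $a*b$ never equals $\pm\lambda$ (the equality $a*b=\lambda$ rearranges to $(a-\lambda)(b-\lambda)=0$, and similarly for $-\lambda$), so this set is closed under multiplication, closed under inverses, and contains the identity; hence it is a subgroup of $\mathcal{Q}_p$, which is exactly what is needed for the statement "$\overline{f_{p,\,\tau,\,u}}$ is an isomorphism" to make sense.

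It remains to identify this subgroup with $\mathbb{F}_p^{\times}$ through $h_\tau$. The assignment $z\mapsto\frac{z-\lambda}{z+\lambda}$ is a M\"obius transformation of $\mathbb{P}^{1}(\mathbb{F}_p)$, hence a bijection, sending $\lambda\mapsto 0$, $-\lambda\mapsto\infty$, $\infty\mapsto 1$; restricting it to $\mathbb{F}_p\setminus\{\pm\lambda\}$ yields a bijection onto $\mathbb{F}_p^{\times}\setminus\{1\}$, and adjoining $\pi_p(I)\mapsto 1$ gives a bijection onto $\mathbb{F}_p^{\times}$. Finally, the purely algebraic identity $h_\tau(a*b)=h_\tau(a)\,h_\tau(b)$ — which unwinds to $(a*b-\lambda)(a+\lambda)(b+\lambda)=(a*b+\lambda)(a-\lambda)(b-\lambda)$, immediate from the definition of $a*b$ — together with $h_\tau(-a)=h_\tau(a)^{-1}$, shows that $\overline{f_{p,\,\tau,\,u}}$ is a group homomorphism, and being a bijection it is an isomorphism. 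The lower case is identical after replacing $N_u$ by $N_l$ throughout. The only step that is not entirely mechanical is spotting the affine form $m[a]=aI+N$ with $N^{2}$ a scalar — it is precisely this that makes $h_\tau$ the Cayley-type transform linearizing the rational composition law $*$ — and the one subtlety to handle with care is the degenerate product $b=-a$, where the matrix product is a scalar matrix and hence trivial in the projective group.
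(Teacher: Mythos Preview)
Your proof is correct and follows the same overall arc as the paper's: establish the composition law for the multiplicative generators, check closure, and verify that $h_\tau$ intertwines that law with multiplication in $\mathbb{F}_p^\times$. The paper simply multiplies out $m_{p,\tau,u}[a_1]\,m_{p,\tau,u}[a_2]$ entry by entry to obtain $\overline{m_{p,\tau,u}}[a_1]\,\overline{m_{p,\tau,u}}[a_2]=\overline{m_{p,\tau,u}}\!\left(\tfrac{a_1a_2-3\tau^2}{a_1+a_2}\right)$, then checks algebraically that $h_\tau$ turns this into ordinary multiplication. Your route is cleaner: writing $m[a]=aI+N$ and observing $N^2=\lambda^2 I$ collapses the matrix computation to one line and explains conceptually why the Cayley-type map $a\mapsto\tfrac{a-\lambda}{a+\lambda}$ linearizes the law $a*b=\tfrac{ab+\lambda^2}{a+b}$. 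The paper's direct computation buys nothing extra; your structural observation is the more illuminating of the two.

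One small inaccuracy to fix: $N_l$ is not literally the entrywise bar of $N_u$, since the diagonal entries $\pm\tau(t-t^{-1})$ change sign under $t\mapsto t^{-1}$ while those of $N_l$ are the same as those of $N_u$. What is true is that $(N_l)_{12}(N_l)_{21}=(\tau^2+t)\bigl(-\Phi(\tau^2+t^{-1})\bigr)$ equals $(N_u)_{12}(N_u)_{21}$, so the scalar $\tau^2(t-t^{-1})^2+PQ=-3\tau^2$ comes out identically and $N_l^2=\lambda^2 I$ holds as you claim. Just rephrase that sentence.
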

\begin{proof}

\noindent For $a_{1},\,a_{2}\in\mathbb{F}_{p}\backslash\left\{ \pm\left(\tau+2\tau^{-1}\right)\right\} $,
by calculating the product of two upper multiplicative generators
by representatives, we have
\begin{align*}
 & \,m_{p,\,\tau,\,u}\left[a_{1}\right]m_{p,\,\tau,\,u}\left[a_{2}\right]\\
= & \,\left(\begin{array}{cc}
\frac{-\tau+a_{1}t+\tau t^{2}}{t} & \frac{1+\tau^{2}t}{t}\\
-\Phi\left(\tau^{2}+t\right) & \frac{\tau+a_{1}t-\tau t^{2}}{t}
\end{array}\right)\left(\begin{array}{cc}
\frac{-\tau+a_{2}t+\tau t^{2}}{t} & \frac{1+\tau^{2}t}{t}\\
-\Phi\left(\tau^{2}+t\right) & \frac{\tau+a_{2}t-\tau t^{2}}{t}
\end{array}\right)\\
= & \,\left(\begin{array}{cc}
\left(a_{1}+a_{2}\right)\left(-\tau t^{-1}+\tau t\right)+a_{1}a_{2}-3\tau^{2} & *\\
\left(a_{1}+a_{2}\right)\left(t^{-1}+t\right) & *
\end{array}\right)^{T},
\end{align*}
which implies that when $\frac{a_{1}a_{2}-3\tau^{2}}{a_{1}+a_{2}}\ne\pm\left(\tau+2\tau^{-1}\right)$,

\noindent 
\begin{align}
 & \overline{m_{p,\,\tau,\,u}}\left[a_{1}\right]\overline{m_{p,\,\tau,\,u}}\left[a_{2}\right]=\overline{m_{p,\,\tau,\,u}}\left(\frac{a_{1}a_{2}-3\tau^{2}}{a_{1}+a_{2}}\right),\;a_{1}\ne-a_{2},\\
 & \overline{m_{p,\,\tau,\,u}}\left[a_{1}\right]\overline{m_{p,\,\tau,\,u}}\left[-a_{1}\right]=\left[\begin{array}{cc}
1 & 0\\
0 & 1
\end{array}\right].
\end{align}

Furthermore, the cases where $\frac{a_{1}a_{2}-3\tau^{2}}{a_{1}+a_{2}}=\pm\left(\tau+2\tau^{-1}\right)$
are impossible. It is because by using the assumption $\left(\tau+2\tau^{-1}\right)^{2}=-3\tau^{2}$,
the condition $\frac{a_{1}a_{2}-3\tau^{2}}{a_{1}+a_{2}}=\pm\left(\tau+2\tau^{-1}\right)$
under the condition $a_{1}\ne-a_{2}$ is simplified to
\begin{align*}
\left(a_{1}\pm\left(\tau+2\tau^{-1}\right)\right)\left(a_{2}\pm\left(\tau+2\tau^{-1}\right)\right) & =0.
\end{align*}

In summary, the equalities (39) and (40) imply that there is a multiplication
on the set $\overline{m_{p,\,\tau,\,l}}\left[\mathbb{F}_{p}\backslash\left\{ \pm\left(\tau+2\tau^{-1}\right)\right\} \right]\cup\left\{ \left[\begin{array}{cc}
1 & 0\\
0 & 1
\end{array}\right]\right\} $ inherited from the usual matrix multiplication.

On the other hand, for $a_{1},\,a_{2}\in\mathbb{F}_{p}\backslash\left\{ \pm\left(\tau+2\tau^{-1}\right)\right\} $
such that $a_{1}\ne-a_{2}$, the map
\begin{align*}
h_{\tau}\left(a\right) & =\frac{a-\left(\tau+2\tau^{-1}\right)}{a+\left(\tau+2\tau^{-1}\right)}
\end{align*}
gives that
\begin{align*}
 & \,h_{\tau}\left(\frac{a_{1}a_{2}-3\tau^{2}}{a_{1}+a_{2}}\right)\\
= & \,\frac{a_{1}a_{2}+\left(\tau+2\tau^{-1}\right)^{2}-\left(a_{1}+a_{2}\right)\left(\tau+2\tau^{-1}\right)}{a_{1}a_{2}+\left(\tau+2\tau^{-1}\right)^{2}+\left(a_{1}+a_{2}\right)\left(\tau+2\tau^{-1}\right)}\\
= & \,\left(\frac{a_{1}-\left(\tau+2\tau^{-1}\right)}{a_{1}+\left(\tau+2\tau^{-1}\right)}\right)\left(\frac{a_{2}-\left(\tau+2\tau^{-1}\right)}{a_{2}+\left(\tau+2\tau^{-1}\right)}\right),\\
= & \,h_{\tau}\left(a_{1}\right)h_{\tau}\left(a_{2}\right),
\end{align*}
which establishes that $\overline{f_{\tau,\,u}}$ is a group homomorphism.
The injectivity and the surjectivity are canonical, and we conclude
the proof for $\overline{m_{p,\,\tau,\,u}}$. The case for $\overline{m_{p,\,\tau,\,l}}$
is proven in the same way, deriving the same equalities as (39) and
(40) only with the subscript $u$ replaced with the subscript $l$.
$\qedhere$

\noindent \end{proof}
\begin{lemma}

Suppose an element $\tau\in\mathbb{F}_{p}$ satisfies $\tau^{4}+\tau^{2}+1=0$.
For $a\in\mathbb{F}_{p}\backslash\left\{ \pm\left(\tau+2\tau^{-1}\right)\right\} $,
we have the following identities:
\begin{align*}
 & \overline{m_{p,\,\tau,\,u}}\left[a\right]\overline{a_{p,\tau,\,u}}\left[f\right]\overline{m_{p,\,\tau,\,u}}\left[a\right]^{-1}=\overline{a_{p,\,\tau,\,u}}\left[h_{\tau}\left(a\right)f\right],\\
 & \overline{m_{p,\,\tau,\,l}}\left[a\right]\overline{a_{p,\,\tau,\,l}}\left[f\right]\overline{m_{p,\,\tau,\,l}}\left[a\right]^{-1}=\overline{a_{p,\,\tau,\,l}}\left[h_{\tau}\left(a\right)^{-1}f\right],
\end{align*}
where $h_{\tau}\left(a\right)=\frac{a-\left(\tau+2\tau^{-1}\right)}{a+\left(\tau+2\tau^{-1}\right)}$,
defined in Lemma 4.5. In particular, the subgroup generated by the
upper (resp. lower) multiplicative subgroup for $\tau$ and the upper
(resp. lower) additive subgroup for $\tau$ is an inner semidirect
product isomorphic to $\mathrm{Aff}\left(\mathbb{F}_{p}\left[x\right]\right)$.
Let us refer to the semidirect product as the \emph{upper} (resp.
\emph{lower}) \emph{affine subgroup} for $\tau$, denoted by $\mathrm{Aff}_{p,\,\tau,\,u}$
(resp. $\mathrm{Aff}_{p,\,\tau,\,l}$).

\noindent \end{lemma}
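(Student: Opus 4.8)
The plan is to reduce both conjugation identities to a single intertwining relation between the multiplicative generators and $g_p[\tau]^{\dagger}$. Write $N_u:=g_p[\tau]^{\dagger}\bigl(\begin{smallmatrix}t-t^{-1}&0\\0&0\end{smallmatrix}\bigr)g_p[\tau]$ and $N_l:=g_p[\tau]\bigl(\begin{smallmatrix}t-t^{-1}&0\\0&0\end{smallmatrix}\bigr)g_p[\tau]^{\dagger}$, so that $a_{p,\,\tau,\,u}[f]=I+f(t+t^{-1})N_u$ and $a_{p,\,\tau,\,l}[f]=I+f(t+t^{-1})N_l$. Since $\det g_p[\tau]=\tau^{4}+\tau^{2}+1=0$ we have $g_p[\tau]g_p[\tau]^{\dagger}=0=g_p[\tau]^{\dagger}g_p[\tau]$, exactly as in the proof of Lemma~4.3, so $N_u^{2}=0=N_l^{2}$. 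Also $m_{p,\,\tau,\,u}[a]$ is invertible over $\mathbb{F}_p[t,t^{-1}]$ whenever $a\ne\pm(\tau+2\tau^{-1})$, its determinant $a^{2}+3\tau^{2}$ being a nonzero constant; hence $m_{p,\,\tau,\,u}[a]\,a_{p,\,\tau,\,u}[f]\,m_{p,\,\tau,\,u}[a]^{-1}=I+f(t+t^{-1})\bigl(m_{p,\,\tau,\,u}[a]\,N_u\,m_{p,\,\tau,\,u}[a]^{-1}\bigr)$, and the first identity follows once $m_{p,\,\tau,\,u}[a]\,N_u\,m_{p,\,\tau,\,u}[a]^{-1}$ is known; the lower identity is analogous.

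The heart of the argument is the pair of intertwining identities
\[
m_{p,\,\tau,\,u}[a]\,g_p[\tau]^{\dagger}=\bigl(a-\tau-2\tau^{-1}\bigr)g_p[\tau]^{\dagger},\qquad g_p[\tau]^{\dagger}\,m_{p,\,\tau,\,l}[a]=\bigl(a-\tau-2\tau^{-1}\bigr)g_p[\tau]^{\dagger},
\]
which I would check by direct computation in $\mathrm{M}(2,\mathbb{F}_p[t,t^{-1}])$, using $\tau^{4}+\tau^{2}+1=0$ in the forms $\tau^{3}=-\tau-\tau^{-1}$ and $(\tau+2\tau^{-1})^{2}=-3\tau^{2}$; these play the role of Lemma~3.15 in the present setting. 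Since $g_p[\tau]$, $m_{p,\,\tau,\,u}[a]$ and $m_{p,\,\tau,\,l}[a]$ all have quaternionic shape, $B\mapsto B^{\dagger}$ is an anti-homomorphism on their products with $(B^{\dagger})^{\dagger}=B$ and $BB^{\dagger}=\det(B)I$; applying $\dagger$ to the two displayed identities and using $\det m_{p,\,\tau,\,u}[a]=\det m_{p,\,\tau,\,l}[a]=a^{2}+3\tau^{2}=(a-\tau-2\tau^{-1})(a+\tau+2\tau^{-1})$ (Lemma~3.8 together with $(\tau+2\tau^{-1})^{2}=-3\tau^{2}$) gives, with no further computation, $g_p[\tau]\,m_{p,\,\tau,\,u}[a]^{-1}=(a+\tau+2\tau^{-1})^{-1}g_p[\tau]$ and $m_{p,\,\tau,\,l}[a]\,g_p[\tau]=(a+\tau+2\tau^{-1})g_p[\tau]$. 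Combining the left and right versions yields $m_{p,\,\tau,\,u}[a]\,N_u\,m_{p,\,\tau,\,u}[a]^{-1}=\tfrac{a-\tau-2\tau^{-1}}{a+\tau+2\tau^{-1}}N_u=h_\tau(a)N_u$ and, likewise, $m_{p,\,\tau,\,l}[a]\,N_l\,m_{p,\,\tau,\,l}[a]^{-1}=h_\tau(a)^{-1}N_l$, where $h_\tau(a)=\tfrac{a-(\tau+2\tau^{-1})}{a+(\tau+2\tau^{-1})}$ as in Lemma~4.5. As $h_\tau(a)$ is a constant one has $h_\tau(a)f(t+t^{-1})=(h_\tau(a)f)(t+t^{-1})$, so feeding this back into $a_{p,\,\tau,\,u}[f]=I+f(t+t^{-1})N_u$ produces the two stated conjugation formulas as honest matrix identities, and hence, after projectivizing, in $\mathcal{Q}_p$.

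For the structural conclusion, put $A_u:=\overline{a_{p,\,\tau,\,u}}(\mathbb{F}_p[x])$ and let $M_u$ be the upper multiplicative subgroup for $\tau$; by Corollary~4.4 and Lemma~4.5 these are abelian with $A_u\cong\mathbb{F}_p[x]$ and $M_u\cong\mathbb{F}_p^{\times}$ via $\overline{f_{p,\,\tau,\,u}}$ (which sends $\overline{m_{p,\,\tau,\,u}}[a]\mapsto h_\tau(a)$). The conjugation identity, combined with the fact that $\overline{f_{p,\,\tau,\,u}}$ is a homomorphism, shows by induction on word length that every $m'\in M_u$ satisfies $m'\,\overline{a_{p,\,\tau,\,u}}[f]\,(m')^{-1}=\overline{a_{p,\,\tau,\,u}}\bigl[\overline{f_{p,\,\tau,\,u}}(m')\,f\bigr]$; thus $M_u$ normalizes $A_u$ and acts on $A_u\cong\mathbb{F}_p[x]$ by multiplication by the unit $\overline{f_{p,\,\tau,\,u}}(m')\in\mathbb{F}_p^{\times}$. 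Finally $A_u\cap M_u=\{1\}$: every element of $A_u$ is unipotent, since $(a_{p,\,\tau,\,u}[f]-I)^{2}=\bigl(f(t+t^{-1})\bigr)^{2}N_u^{2}=0$, whereas a nontrivial $\overline{m_{p,\,\tau,\,u}}[a]$ with $a\ne\pm(\tau+2\tau^{-1})$ has the two distinct eigenvalues $a\pm(\tau+2\tau^{-1})$ (distinct because $(\tau+2\tau^{-1})^{2}=-3\tau^{2}\ne0$), so no scalar multiple of it is unipotent and it cannot coincide in $\mathrm{PGL}(2,\mathbb{F}_p(t))$ with any $\overline{a_{p,\,\tau,\,u}}[f]$ other than the identity. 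Therefore $\langle A_u,M_u\rangle=A_u\rtimes M_u\cong\mathbb{F}_p[x]\rtimes\mathbb{F}_p^{\times}=\mathrm{Aff}(\mathbb{F}_p[x])$, and the lower case is word-for-word the same with $N_u,\,m_{p,\,\tau,\,u},\,a_{p,\,\tau,\,u},\,f_{p,\,\tau,\,u}$ replaced throughout by $N_l,\,m_{p,\,\tau,\,l},\,a_{p,\,\tau,\,l},\,f_{p,\,\tau,\,l}$.

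The main obstacle is the explicit verification of the two intertwining identities — a routine but careful computation with $\tau^{4}+\tau^{2}+1=0$ — and, in the lower case, keeping track of which of $a\pm(\tau+2\tau^{-1})$ attaches to $g_p[\tau]$ and which to $g_p[\tau]^{\dagger}$, so that the conjugation yields $h_\tau(a)^{-1}$ rather than $h_\tau(a)$; once these are settled, the remaining steps are formal and parallel the $p=3$ case (Corollary~3.16, Lemma~3.17).
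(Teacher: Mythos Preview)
Your proposal is correct. The paper's own proof is the single line ``Direct by computation'', so your approach is not merely consistent with it but supplies the structure the paper omits: rather than verifying the conjugation formula entry by entry, you isolate the rank--one matrix $g_p[\tau]^{\dagger}$ (resp.\ $g_p[\tau]$) as an eigenmatrix for left and right multiplication by the multiplicative generators, and then the conjugation identity drops out as a product of two scalar eigenvalues. The intertwining identities $m_{p,\,\tau,\,u}[a]\,g_p[\tau]^{\dagger}=(a-\tau-2\tau^{-1})\,g_p[\tau]^{\dagger}$ and $g_p[\tau]^{\dagger}\,m_{p,\,\tau,\,l}[a]=(a-\tau-2\tau^{-1})\,g_p[\tau]^{\dagger}$ do hold (a short check on a single column or row of $g_p[\tau]^{\dagger}$ suffices, since the matrix has rank one), and your $\dagger$--trick to transport them to the companion identities is clean and avoids a second computation. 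Your argument for $A_u\cap M_u=\{1\}$ via unipotence versus distinct eigenvalues is also valid; note that $\tau+2\tau^{-1}\ne 0$ follows from $\tau^{2}=-2\Rightarrow \tau^{4}+\tau^{2}+1=3\ne 0$ for $p\equiv 1\pmod 6$. What your route buys over a brute--force verification is a transparent explanation of \emph{why} the scalar is exactly $h_\tau(a)$: it is the ratio of the two eigenvalues $a\mp(\tau+2\tau^{-1})$ of $m_{p,\,\tau,\,u}[a]$, which also makes the appearance of $h_\tau(a)^{-1}$ in the lower case immediate rather than a sign to be tracked.
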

\begin{proof}

\noindent Direct by computation. $\qedhere$

\noindent \end{proof}
\begin{definition}

Suppose an element $\tau\in\mathbb{F}_{p}$ satisfies $\tau^{4}+\tau^{2}+1=0$.
Choose an element $M\in\mathcal{Q}_{p}$ such that $\tau\left(M\right)=\tau$
and $\mathrm{rd}\left(M\right)\ge3$. Choose a representative $B=\left(\begin{array}{cc}
g_{1} & g_{2}\\
-\Phi\overline{g_{2}} & \overline{g_{1}}
\end{array}\right)$ of $M$, and write its entries as
\begin{align*}
 & g_{1}=a_{0}t^{m_{1}}+a_{1}t^{m_{1}+1}+\cdots+a_{n_{1}-m_{1}-1}t^{n_{1}-1}+a_{n_{1}-m_{1}}t^{n_{1}},\\
 & g_{2}=b_{0}t^{m_{2}}+b_{1}t^{m_{2}+1}+\cdots+b_{n_{2}-m_{2}-1}t^{n_{2}-1}+b_{n_{2}-m_{2}}t^{n_{2}}.
\end{align*}

Suppose that $B$ is upper-balanced. Define the \emph{second-order
type }of $B$ (or $M$) for $\tau$ as
\begin{align*}
\tau2_{\tau}\left(B\right) & :=\left(a_{1}+\tau b_{1}\right)b_{0}^{-1}+\tau.
\end{align*}

On the other hand, suppose that $B$ is lower-balanced. Define the
\emph{second-order type} of $B$ (or $M$) for $\tau$ as
\begin{align*}
\tau2_{\tau}\left(B\right) & :=\left(b_{1}+\tau a_{1}\right)a_{0}^{-1}.
\end{align*}

Moreover, define the \emph{second-order signature} of $B$ (or $M$)
for $\tau$ as
\begin{itemize}
\item $\sigma2_{\tau}\left(B\right):=\left(-a_{n_{1}-m_{1}-1}+b_{1}\tau\right)b_{0}^{-1}+\tau,$
when $B$ is evenly upper-balanced,
\item $\sigma2_{\tau}\left(B\right):=\left(a_{n_{1}-m_{1}-1}\tau-b_{1}\right)\tau b_{0}^{-1},$
when $B$ is oddly upper-balanced,
\item $\sigma2_{\tau}\left(B\right):=\left(a_{n_{1}-m_{1}-1}\tau^{-1}-b_{1}\right)a_{0}^{-1}+\tau,$
when $B$ is oddly lower-balanced,
\item $\sigma2_{\tau}\left(B\right):=\left(-a_{n_{1}-m_{1}-1}\tau+b_{1}\right)a_{0}^{-1},$
when $B$ is evenly lower-balanced.
\end{itemize}
\noindent \end{definition}

We introduced the two values in Definition 4.7 to distinguish between
various generators of the same type $\tau$ such that $\tau^{4}+\tau^{2}+1=0$.
The second-order type is particularly useful when we aim to algorithmically
express an element $M$ having type $\tau$ using affine generators
and a stable generator (Lemma 4.18). The introduction of the second-order
signature serves a technical purpose to establish the internal HNN
extension (Lemma 4.16).

For example, for an element $\tau\in\mathbb{F}_{p}$ such that $\tau^{4}+\tau^{2}+1=0$
and for a nonzero polynomial $f$, the matrices in Definition 3.7
are
\begin{align}
 & a_{p,\,\tau,\,u}\left[f\right]=\nonumber \\
 & I+f\tau^{2}\left(\begin{array}{cc}
\left(t^{-2}+t^{-1}-t-t^{2}\right) & \left(-\tau^{-1}t^{-2}+\tau t^{-1}+\tau^{-1}-\tau t\right)\\
-\Phi\tau\left(-\tau t^{-1}+\tau^{-1}+\tau t-\tau^{-1}t^{2}\right) & \left(-t^{-2}-t^{-1}+t+t^{2}\right)
\end{array}\right),\\
 & a_{p,\,\tau,\,l}\left[f\right]=\nonumber \\
 & I+f\tau^{2}\left(\begin{array}{cc}
\left(t^{-2}+t^{-1}-t-t^{2}\right) & \left(-\tau t^{-1}+\tau^{-1}+\tau t-\tau^{-1}t^{2}\right)\\
-\Phi\tau\left(-\tau^{-1}t^{-2}+\tau t^{-1}+\tau^{-1}-\tau t\right) & \left(-t^{-2}-t^{-1}+t+t^{2}\right)
\end{array}\right).
\end{align}

By (41) and (42), both has type $\tau$, signature $-1$, second-order
type $-\tau^{3}$, and second-order signature $\tau+\tau^{-1}$.

On the other hand, for an element $a\in\mathbb{F}_{p}\backslash\left\{ \pm\left(\tau+2\tau^{-1}\right)\right\} $,
put $\xi_{\tau,\,a,\,u}$ as $\tau^{2}+a\tau+2$. For a nonzero polynomial
$f$, we have
\begin{align}
 & \,a_{p,\,\tau,\,u}\left[f\right]m_{p,\,\tau,\,u}\left[a\right]\nonumber \\
= & \,\left(\begin{array}{cc}
f\tau\xi_{\tau,\,a,\,u}t^{-2}+\tau\left(-1+f\xi_{\tau,\,a,\,u}\right)t^{-1}+\cdots-\tau\left(-1+f\xi_{\tau,\,a,\,u}\right)t-f\tau\xi_{\tau,\,a,\,u}t^{2} & *\\
-f\xi_{\tau,\,a,\,u}t^{-2}+\left(1+f\tau^{2}\xi_{\tau,\,a,\,u}\right)t^{-1}+\left(f\xi_{\tau,\,a,\,u}+\tau^{2}\right)-f\tau^{2}\xi_{\tau,\,a,\,u}t & *
\end{array}\right)^{T}.
\end{align}

By (43), the element $\overline{a_{p,\,\tau,\,u}}\left[f\right]\overline{m_{p,\,\tau,\,u}}\left[a\right]$
is upper-balanced and has type $\tau$, signature $-1$, second-order
type $-\tau^{3}$, and second-order signature $\tau+\tau^{-1}$. Assuming
the same conditions, put $\xi_{\tau,\,a,\,l}$ as $\tau^{2}-a\tau+2$.
We also have
\begin{align}
 & \,a_{p,\,\tau,\,l}\left[f\right]m_{p,\,\tau,\,l}\left[a\right]\nonumber \\
= & \,\left(\begin{array}{cc}
-f\tau\xi_{\tau,\,a,\,l}t^{-2}-\tau\left(1+f\xi_{\tau,\,a,\,l}\right)t^{-1}+\cdots+\tau\left(1+f\xi_{\tau,\,a,\,l}\right)t+f\tau\xi_{\tau,\,a,\,l}t^{2} & *\\
f\tau^{2}\xi_{\tau,\,a,\,l}t^{-1}+\left(\tau^{2}-f\xi_{\tau,\,a,\,l}\right)+\left(1-\tau^{2}\xi_{\tau,\,a,\,l}\right)t+f\xi_{\tau,\,a}t^{2} & *
\end{array}\right)^{T}.
\end{align}

By (44), the element $\overline{a_{p,\,\tau,\,l}}\left[f\right]\overline{m_{p,\,\tau,\,l}}\left[a\right]$
is lower-balanced and has type $\tau$, signature $-1$, second-order
type $-\tau^{3}$, and second-order signature $\tau+\tau^{-1}$.

\noindent \begin{lemma}

Suppose an element $\tau\in\mathbb{F}_{p}$ satisfies $\tau^{4}+\tau^{2}+1=0$.
We have
\begin{align*}
\left\langle \mathrm{Aff}_{p,\,\tau,\,u},\,\mathrm{Aff}_{p,\,\tau,\,l}\right\rangle  & =\mathrm{Aff}_{p,\,\tau,\,u}*\mathrm{Aff}_{p,\,\tau,\,l},
\end{align*}
where $*$ on the right-hand side denotes the internal free product.
Let us refer to the free prodcut as the \emph{base group} for $\tau$,
denoted by $G_{p,\,\tau}$.

Choose an element $M\in G_{p,\,\tau}$. Then, $M$ is balanced and
$\tau\left(M\right)=\tau$. When $\mathrm{rd}\left(M\right)\ge3$,
we also obtain that $\tau2_{\tau}\left(M\right)=-\tau^{3}$ and $\sigma2_{\tau}\left(M\right)=\tau+\tau^{-1}$.
Moreover, when $M$ is evenly balanced, we have $\sigma\left(M\right)=-1$.
When $M$ is oddly upper-balanced, we have $\sigma\left(M\right)=-\tau^{-2}$.
When $M$ is oddly lower-balanced, we have $\sigma\left(M\right)=-\tau^{2}$.\end{lemma}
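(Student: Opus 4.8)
The plan is to run a ping-pong argument for the action of $\mathcal{Q}_{p}$ on itself by right multiplication, in exact parallel with the proof of Lemma 3.19, but with the affine subgroups $\mathrm{Aff}_{p,\,\tau,\,u}$ and $\mathrm{Aff}_{p,\,\tau,\,l}$ playing the roles taken there by the two additive subgroups. As a preliminary step I would record the invariants of a single affine generator. By Corollary 4.4, Lemma 4.5 and Lemma 4.6, every nontrivial element of $\mathrm{Aff}_{p,\,\tau,\,u}$ equals $\overline{a_{p,\,\tau,\,u}}\left[f\right]\overline{m_{p,\,\tau,\,u}}\left[a\right]$ for suitable $f,\,a$ (allowing $f=0$, or the multiplicative factor to be the identity $I$), and from the explicit matrices (41) and (43) one reads off that such an element is evenly upper-balanced, has type $\tau$, has signature $-1$, and --- precisely when the additive factor is nontrivial, which is exactly when its relative degree is at least $3$ --- has second-order type $-\tau^{3}$ and second-order signature $\tau+\tau^{-1}$; the pure multiplicative generators are the relative-degree-$2$ instances, still evenly balanced with signature $-1$. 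The mirror statements for $\mathrm{Aff}_{p,\,\tau,\,l}$ come from (42) and (44).

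Next I would introduce the two ping-pong sets. Let $S_{u}$ (resp. $S_{l}$) be the set of upper-balanced (resp. lower-balanced) elements $M\in\mathcal{Q}_{p}$ of type $\tau$ with $\mathrm{rd}\left(M\right)\ge2$ such that $\sigma\left(M\right)=-1$ when $M$ is evenly balanced, $\sigma\left(M\right)=-\tau^{-2}$ when $M$ is oddly upper-balanced, $\sigma\left(M\right)=-\tau^{2}$ when $M$ is oddly lower-balanced, and such that $\tau2_{\tau}\left(M\right)=-\tau^{3}$ and $\sigma2_{\tau}\left(M\right)=\tau+\tau^{-1}$ whenever $\mathrm{rd}\left(M\right)\ge3$. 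The preliminary step shows $\mathrm{Aff}_{p,\,\tau,\,u}\setminus\left\{ I\right\} \subseteq S_{u}$, $\mathrm{Aff}_{p,\,\tau,\,l}\setminus\left\{ I\right\} \subseteq S_{l}$, and $S_{u}\cap S_{l}=\emptyset$. The core of the proof is the pair of inclusions
\[S_{l}\cdot\bigl(\mathrm{Aff}_{p,\,\tau,\,u}\setminus\left\{ I\right\} \bigr)\subseteq S_{u},\qquad S_{u}\cdot\bigl(\mathrm{Aff}_{p,\,\tau,\,l}\setminus\left\{ I\right\} \bigr)\subseteq S_{l}.\]
For the first (the second is the $u\leftrightarrow l$ mirror image), I would take $M\in S_{l}$ and a nontrivial $g=\overline{a_{p,\,\tau,\,u}}\left[f\right]\overline{m_{p,\,\tau,\,u}}\left[a\right]\in\mathrm{Aff}_{p,\,\tau,\,u}$, choose for $M$ a lower-balanced representative $B$ with first-row coefficients normalized with respect to $\tau$ and the functional equation (15) as in the proof of Lemma 3.13, expand $Bg$ via (43) (or via $m_{p,\,\tau,\,u}\left[a\right]$ alone when $f=0$), and track the lowest- and highest-degree terms of the two first-column entries. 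The leading coefficient of $g$ is never annihilated, so $Bg$ is balanced; comparing extreme powers shows $Bg$ is upper-balanced and that a single cancellation occurs at the seam, whence $\mathrm{rd}\left(Bg\right)=\mathrm{rd}\left(M\right)+\mathrm{rd}\left(g\right)+1\ge5$ and the relative degree changes parity. The type of $Bg$ is $\tau$ by case (5) of Lemma 3.13 --- no condition on signatures intervenes, since $g$ is evenly upper-balanced --- and the explicit coefficient bookkeeping identifies $\sigma\left(Bg\right)$ as $-1$ or $-\tau^{-2}$ according to the new parity, together with $\tau2_{\tau}\left(Bg\right)=-\tau^{3}$ and $\sigma2_{\tau}\left(Bg\right)=\tau+\tau^{-1}$; the last two are fed by the corresponding values already carried by $M$ when $\mathrm{rd}\left(M\right)\ge3$, and by the fact that $M$ is then a lower affine generator carrying exactly those values when $\mathrm{rd}\left(M\right)=2$. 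Hence $Bg\in S_{u}$.

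Given the two inclusions, the free-product ping-pong argument is immediate and needs no external base point: any reduced word $s_{1}s_{2}\cdots s_{k}$ with nontrivial $s_{i}$ alternating between $\mathrm{Aff}_{p,\,\tau,\,u}$ and $\mathrm{Aff}_{p,\,\tau,\,l}$ has $s_{1}\in S_{u}\cup S_{l}$, and by the inclusions each partial product $s_{1}\cdots s_{j}$ lies in $S_{u}\cup S_{l}$, hence is balanced and in particular nontrivial; therefore $\left\langle \mathrm{Aff}_{p,\,\tau,\,u},\,\mathrm{Aff}_{p,\,\tau,\,l}\right\rangle =\mathrm{Aff}_{p,\,\tau,\,u}*\mathrm{Aff}_{p,\,\tau,\,l}=G_{p,\,\tau}$. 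The same running-product computation, applied to the normal form of an arbitrary nontrivial $M\in G_{p,\,\tau}$, shows by induction that $M$ lies in $S_{u}\cup S_{l}$, which packages the remaining assertions (balanced, type $\tau$, the signature trichotomy, and the second-order values when $\mathrm{rd}\left(M\right)\ge3$), with the identity being the trivial exception. I expect the main obstacle to be the seam computation itself: keeping simultaneous and accurate track of the extreme and sub-extreme coefficients of $\left(Bg\right)_{11}$ and $\left(Bg\right)_{12}$ so as to certify at once the balance type, the single cancellation that flips parity, all three possible signature values, and both second-order invariants --- while handling on an equal footing the relative-degree-$2$ pure multiplicative generators and the higher-degree affine generators, and checking the low-relative-degree base cases against the hypothesis $\mathrm{rd}\left(M\right)\ge3$ governing the second-order data.
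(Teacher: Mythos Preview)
Your overall strategy matches the paper's: a ping-pong argument for right multiplication of $\mathcal{Q}_{p}$ on itself, with the invariants then read off along the normal form. The only organisational difference is that you load every invariant (signature trichotomy, $\tau2_{\tau}$, $\sigma2_{\tau}$) into the ping-pong sets $S_{u},S_{l}$ from the start, whereas the paper uses the minimal sets $S_{p,\tau,u},S_{p,\tau,l}$ (upper/lower-balanced, type $\tau$, $\mathrm{rd}\ge3$) to get the free product, and only afterwards runs the induction for the finer invariants.

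That packaging difference is where your argument breaks. The inclusion $S_{l}\cdot\bigl(\mathrm{Aff}_{p,\tau,u}\setminus\{I\}\bigr)\subseteq S_{u}$ fails when the right factor is a \emph{pure} multiplicative generator $g=\overline{m_{p,\tau,u}}[a]$. Normalising a lower-balanced $M\in S_{l}$ so that $g_{1}=t^{m_{1}-1}+a_{1}t^{m_{1}}+\cdots$ and $g_{2}=-\tau t^{m_{1}}+\cdots$ and multiplying out gives
\[
\tau2_{\tau}\bigl(Mg\bigr)=a+\tau+\tau^{3},
\]
which equals $-\tau^{3}$ only for the excluded parameter $a=\tau+2\tau^{-1}$; hence $Mg\notin S_{u}$ as you defined it. Relatedly, your remark that $\tau2_{\tau}(Bg)$ and $\sigma2_{\tau}(Bg)$ are ``fed by the corresponding values already carried by $M$'' is backwards: the second-order data of the product are governed by the right factor $g$. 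When $g$ has a nontrivial additive part one has $\tau2_{\tau}(g)=-\tau^{3}$ from (41), (43), and this is what propagates to $Mg$ (this is exactly the content of the paper's computation of (45)); but a bare multiplicative generator has $\mathrm{rd}(g)=2$, carries no second-order type, and injects its parameter $a$ into $\tau2_{\tau}(Mg)$ instead. The paper's leaner ping-pong sets avoid the problem entirely: the product is still upper-balanced of type $\tau$ with $\mathrm{rd}\ge6$, so $Mg\in S_{p,\tau,u}$ and the free product goes through; the second-order bookkeeping is handled separately, where the pure-multiplicative case can be treated on its own terms.
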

\begin{proof}

\noindent We use the usual ping-pong lemma, considering the action
by right multiplication in $\mathcal{Q}_{p}$. Define $S_{p,\,\tau,\,u}$
to be the set of elements $M\in\mathcal{Q}_{p}$ such that $\tau\left(M\right)=\tau$,
$\mathrm{rd}\left(M\right)\ge3$ and $M$ is upper-balanced, and define
$S_{p,\,\tau,\,l}$ to be the set of elements $M\in\mathcal{Q}_{p}$
such that $\tau\left(M\right)=\tau$, $\mathrm{rd}\left(M\right)\ge3$
and $M$ is lower-balanced. By definition, $S_{p,\,\tau,\,u}$ contains
the upper additive subgroup and $S_{p,\,\tau,\,l}$ contains the lower
additive subgroup. For elements $M_{1}\in S_{p,\,\tau,\,l}$ and $M_{2}\in S_{p,\,\tau,\,u}$,
for a polynomial $f\left(x\right)\in\mathbb{F}_{p}\left[x\right]$
and for an element $a\in\mathbb{F}_{p}\backslash\left\{ \pm\left(\tau+2\tau^{-1}\right)\right\} $,
to establish the free product, it suffices to show that
\begin{align}
 & M_{1}\overline{a_{p,\,\tau,\,u}}\left[f\right]\in S_{p,\,\tau,\,u},\\
 & M_{1}\overline{a_{p,\,\tau,\,u}}\left[f\right]\overline{m_{p,\,\tau,\,u}}\left[a\right]\in S_{p,\,\tau,\,u},\\
 & M_{2}\overline{a_{p,\,\tau,\,l}}\left[f\right]\in S_{p,\,\tau,\,l},\\
 & M_{2}\overline{a_{p,\,\tau,\,l}}\left[f\right]\overline{m_{p,\,\tau,\,l}}\left[a\right]\in S_{p,\,\tau,\,l},
\end{align}
where we used the fact that the upper (or lower) affine subgroup is
a semidirect product by Lemma 4.6.

We first prove (45). Take a representative $B=\left(\begin{array}{cc}
g_{1} & g_{2}\\
-\Phi\overline{g_{2}} & \overline{g_{1}}
\end{array}\right)$ of $M_{1}$, such that the first row entries are
\begin{align}
 & g_{1}=t^{m_{1}-1}+a_{1}t^{m_{1}}+\cdots+a_{2}t^{n_{1}-1}+\tau b_{3}t^{n_{1}},\\
 & g_{2}=-\tau t^{m_{1}}+b_{1}t^{m_{1}+1}+\cdots+b_{2}t^{n_{1}-1}+b_{3}t^{n_{1}},
\end{align}
where $b_{3}\ne0$ and the coefficients are rewritten satisfying the
functional equation (15). From the equation (15), we also have
\begin{align}
b_{2} & =\tau^{-1}a_{2}+b_{3}\left(-1+a_{1}+b_{1}\tau^{-1}\right).
\end{align}

Suppose $f\ne0$ and $\deg\left(f\right)=n$ for some nonnegative
integer $n$. For some units $u_{1},\,u_{2}\in\mathbb{F}_{3}^{\times}$,
we may write $f\left(t+t^{-1}\right)=u_{1}\left(t^{-n}+u_{2}t^{-n+1}+\cdots+u_{2}t^{n-1}+t^{n}\right)$.
Expanding by (41), we have

\noindent 
\begin{align*}
 & \,Ba_{p,\,\tau,\,u}\left(f\right)\\
= & \,u\tau^{2}B\left(\begin{array}{cc}
t^{-n-2}+\left(1+u_{2}\right)t^{-n-1}+\cdots-\left(1+u_{2}\right)t^{n+1}-t^{n+2} & *\\
-\tau^{-1}t^{-n-2}+\left(\tau-u_{2}\tau^{-1}\right)t^{-n-1}+\cdots+\left(\tau^{-1}-u_{2}\tau\right)t^{n}-\tau t^{n+1} & *
\end{array}\right)^{T}\\
= & \,u\tau^{2}\left(\begin{array}{cc}
t^{m_{1}-n-3}+\left(2+u_{2}+a_{1}+\tau^{-2}\right)t^{m_{1}-n-2}+\cdots+a_{2}'t^{n_{1}+n+2}+\tau^{-1}b_{3}t^{n_{1}+n+3} & *\\
-\tau^{-1}t^{m_{1}-n-3}+\left(2\tau-\left(a_{1}+u_{2}\right)\tau^{-1}\right)t^{m_{1}-n-2}+\cdots+b_{2}'t^{n_{1}+n+1}+b_{3}t^{n_{1}+n+2} & *
\end{array}\right)^{T},
\end{align*}

\noindent where $a_{2}'=\tau^{-1}\left(b_{2}+b_{3}+b_{3}u_{2}\right)-2b_{3}\tau$
and $b_{2}'=b_{2}+b_{3}\left(u_{2}+1-\tau^{2}\right)$.

Therefore, the matrix $Ba_{p,\,\tau,\,u}\left(f\right)$ is upper-balanced
and has type $\tau$, signature $\tau^{-1}b_{3}$, second-order type
$-\tau^{3}$, and a relative degree that is not smaller than that
of $B$. In particular, we have (45). The proof of (46) follows in
the same manner.

On the other hand, we prove (47). Take a representative $B=\left(\begin{array}{cc}
g_{1} & g_{2}\\
-\Phi\overline{g_{2}} & \overline{g_{1}}
\end{array}\right)$ of $M_{2}$, such that the first row entries are
\begin{align*}
 & g_{1}=-\tau t^{m_{1}}+a_{1}t^{m_{1}+1}+\cdots+a_{2}t^{n_{1}}+b_{3}t^{n_{1}+1},\\
 & g_{2}=t^{m_{1}}+b_{1}t^{m_{1}+1}+\cdots+b_{2}t^{n_{1}-1}+\tau b_{3}t^{n_{1}},
\end{align*}
where $b_{3}\ne0$ and the coefficients are rewritten satisfying the
functional equation (15). Suppose $f\ne0$ and $\deg\left(f\right)=n$
for some nonnegative integer $n$. For some units $u_{1},\,u_{2}\in\mathbb{F}_{3}^{\times}$,
we may write $f\left(t+t^{-1}\right)=u_{1}\left(t^{-n}+u_{2}t^{-n+1}+\cdots+u_{2}t^{n-1}+t^{n}\right)$.
Expanding by (42), we have

\noindent 
\begin{align*}
 & \,Ba_{p,\,\tau,\,l}\left(f\right)\\
= & \,u\tau^{2}B\left(\begin{array}{cc}
t^{-n-2}+\left(1+u_{2}\right)t^{-n-1}+\cdots-\left(1+u_{2}\right)t^{n+1}-t^{n+2} & *\\
-\tau t^{-n-1}+\left(\tau^{-1}-u_{2}\tau\right)t^{-n}+\cdots+\left(\tau-u_{2}\tau^{-1}\right)t^{n+1}-\tau^{-1}t^{n+2} & *
\end{array}\right)^{T}\\
= & \,u\tau^{2}\left(\begin{array}{cc}
\tau^{-1}t^{m_{1}-n-3}+\left(1+u_{2}+b_{1}-2\tau^{2}\right)\tau^{-1}t^{m_{1}-n-2}+\cdots+a_{2}'t^{n_{1}+n+2}-b_{3}t^{n_{1}+n+3} & *\\
-t^{m_{1}-n-2}-\left(1+b_{1}+u_{2}-\tau^{2}\right)t^{m_{1}-n-1}+\cdots+b_{2}'t^{n_{1}+n+2}-b_{3}\tau^{-1}t^{n_{1}+n+3} & *
\end{array}\right)^{T},
\end{align*}

\noindent where $a_{2}'=b_{3}\tau^{2}-b_{3}\left(1+u_{2}\right)-a_{2}$
and $b_{2}'=b_{3}\left(2\tau-u_{2}\tau^{-1}\right)-a_{2}\tau^{-1}$.

Therefore, the matrix $Ba_{p,\,\tau,\,l}\left(f\right)$ is lower-balanced
and has type $\tau$, signature $-\tau b_{3}$, second-order type
$-\tau^{3}$, and relative degree not smaller than that of $B$. In
particular, we have (47). The proof of (48) is done in the same way.
We have established the free product.

We use induction to prove the rest. Every multiplicative generator
is evenly balanced, with type $\tau$ and signature $-1$. For any
nonzero polynomial $f\left(x\right)\in\mathbb{F}_{p}\left[x\right]$,
every additive generator $\overline{a_{p,\,\tau,\,u}}\left[f\right]$
(or $\overline{a_{p,\,\tau,\,l}}\left[f\right]$) is also evenly balanced,
with type $\tau$, signature $-1$, second-order type $-\tau^{3}$,
and second-order signature $\tau+\tau^{-1}$ as seen from (41) and
(42). When we proved (45) and (46) above, if $M_{1}$ is evenly (resp.
oddly) balanced with signature $\sigma$, for any $A\in\mathrm{Aff}_{p,\,\tau,\,u}$,
$M_{1}A$ is oddly (resp. evenly) balanced with signature $\sigma\tau^{-2}$
(resp. $\sigma\tau^{2}$), while the type and the second-order type
remain the same. Therefore, the normal form of the free product inductively
establishes the desired properties on the balancedness, the type,
the signature, and the second-order type, by retracing the above calculations.

On the second-order signature, suppose $B$ is evenly lower-balanced
in (45). Then, from $\sigma\left(B\right)=-1$, we have $b_{3}=-\tau^{-1}$
in (49) and (50). Since $Ba_{p,\,\tau,\,u}\left(f\right)$ is oddly
upper-balanced, the second-order signature of $Ba_{p,\,\tau,\,u}\left(f\right)$
is computed to be
\begin{align*}
 & \,-\tau^{2}\left(a_{2}'\tau-\left(2\tau-\left(a_{1}+u_{2}\right)\tau^{-1}\right)\right)\\
= & \,-\tau^{2}\left(b_{2}+b_{3}+b_{3}u_{2}-2b_{3}\tau^{2}-2\tau+\left(a_{1}+u_{2}\right)\tau^{-1}\right)\\
= & \,-\tau^{2}\left(\tau^{-1}a_{2}-\tau^{-1}\left(a_{1}+u_{2}+b_{1}\tau^{-1}-2\tau^{2}\right)-2\tau+\left(a_{1}+u_{2}\right)\tau^{-1}\right)\\
= & \,b_{1}-\tau a_{2}\\
= & \,\sigma2_{\tau}\left(M_{1}\right),
\end{align*}
where the first equality follows from (51), the second from the fact
$b_{3}=-\tau^{-1}$, and the last from the assumption that $B$ is
evenly lower-balanced and Definition 4.7. Therefore, we have
\[
\sigma2_{\tau}\left(M_{1}\overline{a_{p,\,\tau,\,u}}\left[f\right]\right)=\sigma2_{\tau}\left(M_{1}\right).
\]

The other cases (46), (47) and (48) are dealt with in the same manner,
and inductively, for any $M\in G_{p,\,\tau}$ such that $\mathrm{rd}\left(M\right)\ge3$,
we conclude that $\sigma2_{\tau}\left(M\right)=\tau+\tau^{-1}$. $\qedhere$

\noindent \end{proof}

From now on, we will focus on the properties of stable generators.
For an element $\tau\in\mathbb{F}_{p}$ such that $\tau^{4}+\tau^{2}+1=0$,
recall that the stable generator for $a\in\mathbb{F}_{p}\backslash\left\{ 3\tau+\tau^{-1}\right\} $
is given by
\begin{align*}
\overline{s_{p,\,\tau}}\left[a\right] & =\left[\begin{array}{cc}
\frac{-\tau+a_{1}t+at^{2}+\tau^{-1}t^{3}}{t} & \frac{1+a_{3}t+t^{2}}{t}\\
-\Phi\left(\frac{1+a_{3}t+t^{2}}{t}\right) & \frac{\tau^{-1}+at+a_{1}t^{2}-\tau t^{3}}{t^{2}}
\end{array}\right],
\end{align*}
where $a_{1}=\tau\left(3-a\tau+6\tau^{2}\right)$ and $a_{3}=-3\tau^{2}+\tau a-2$.

\noindent \begin{lemma}

Suppose that an element $\tau\in\mathbb{F}_{p}$ satisfies $\tau^{4}+\tau^{2}+1=0$.
For a pair of elements $a_{1}\in\mathbb{F}_{p}\backslash\left\{ \pm\left(\tau+2\tau^{-1}\right)\right\} $
and $a_{2}\in\mathbb{F}_{p}\backslash\left\{ 3\tau+\tau^{-1}\right\} $,
we have
\begin{align}
 & \overline{m_{p,\,\tau,\,l}}\left[a_{1}\right]\overline{s_{p,\,\tau}}\left[a_{2}\right]=\overline{s_{p,\,\tau}}\left[a_{3}\right],\\
 & \overline{s_{p,\,\tau}}\left[a_{2}\right]\overline{m_{p,\,\tau,\,u}}\left[a_{1}\right]=\overline{s_{p,\,\tau}}\left[a_{3}\right],
\end{align}
where $a_{3}$ is given by
\begin{align*}
a_{3} & =\frac{2\tau\left(\tau^{2}+5\right)-a_{2}\left(\tau^{2}+a_{1}\tau+2\right)}{\tau^{2}-a_{1}\tau+2},
\end{align*}
such that $\tau^{2}-a_{1}\tau+2\ne0$ and $a_{2}\ne a_{3}$.

\noindent \end{lemma}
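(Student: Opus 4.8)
The plan is to prove both identities by an explicit computation with matrix representatives, keeping the relation $\tau^{4}+\tau^{2}+1=0$ in play at every simplification. For the first identity I would take the representative $m_{p,\,\tau,\,l}\left[a_{1}\right]$ from Definition 3.7 and the representative $s_{p,\,\tau}\left[a_{2}\right]$ defined just after Lemma 3.10 (with auxiliary coefficients $\tau\left(3-a_{2}\tau+6\tau^{2}\right)$ and $-3\tau^{2}+\tau a_{2}-2$) and form the product $B:=m_{p,\,\tau,\,l}\left[a_{1}\right]s_{p,\,\tau}\left[a_{2}\right]$. Since both factors represent elements of $\mathcal{Q}_{p}$, the product $B$ again has the quaternionic shape $\left[\begin{array}{cc}h_{1} & h_{2}\\ -\Phi\overline{h_{2}} & \overline{h_{1}}\end{array}\right]$, and its determinant is $\left(a_{1}^{2}+3\tau^{2}\right)\cdot\left(-12\right)\left(1+3\tau^{2}-\tau a_{2}\right)$ by Lemma 3.8 and the determinant computation in the proof of Lemma 3.10, which is a nonzero element of $\mathbb{F}_{p}$ under the hypotheses on $a_{1},\,a_{2}$. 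So it is enough to compute the first row $\left(h_{1},\,h_{2}\right)$ and identify it, after normalization, with the first row of a stable generator.

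The structural reason the product stays small is that it falls into the exceptional case of Lemma 3.13: $m_{p,\,\tau,\,l}\left[a_{1}\right]$ is evenly lower-balanced of relative degree $2$ with type $\tau$ and signature $-1$, $s_{p,\,\tau}\left[a_{2}\right]$ is oddly upper-balanced of relative degree $3$ with type $\tau$ and signature $-\tau^{-2}$ (read off Definition 3.9), and since $\sigma\left(s_{p,\,\tau}\left[a_{2}\right]\right)=-\tau^{-2}=-\left(\tau\left(m_{p,\,\tau,\,l}\left[a_{1}\right]\right)\tau\left(s_{p,\,\tau}\left[a_{2}\right]\right)\right)^{-1}$, the hypothesis of case (6) of Lemma 3.13 fails and cancellation occurs at both ends of $B$. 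Carrying out the product of the first row and repeatedly using $\tau^{4}+\tau^{2}+1=0$, I expect the leading and trailing terms to cancel so that $B$ has relative degree exactly $3$; dividing by the scalar $\tau^{2}-a_{1}\tau+2$ and the appropriate monomial should then leave the first row in the precise shape $h_{1}=\left(-\tau+a_{1}'t+a_{3}t^{2}+\tau^{-1}t^{3}\right)/t$, $h_{2}=\left(1+a_{3}'t+t^{2}\right)/t$ of Definition 3.9. By Lemma 3.10 the elements of $\mathcal{Q}_{p}$ of that shape are exactly the stable generators, parametrized by the coefficient of $t$ in $h_{1}$, so $B$ represents $\overline{s_{p,\,\tau}}\left[a_{3}\right]$ and reading off that coefficient yields
\[
a_{3}=\frac{2\tau\left(\tau^{2}+5\right)-a_{2}\left(\tau^{2}+a_{1}\tau+2\right)}{\tau^{2}-a_{1}\tau+2}.
\]
The two side conditions then come for free from the hypotheses: $\tau^{2}-a_{1}\tau+2=0$ is equivalent to $a_{1}=\tau+2\tau^{-1}$, which is excluded, and solving $a_{3}=a_{2}$ reduces, via $\tau^{4}+\tau^{2}+1=0$, to $a_{2}=3\tau+\tau^{-1}$, which is also excluded.

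The step I expect to be the main obstacle is the normalization in the previous paragraph: one must check not only that the top cancellation happens but that the relative degree drops all the way to $3$, and that the normalized first row has the rigid two-parameter shape of Definition 3.9 — in particular that $h_{2}$ comes out palindromic with unit leading and trailing coefficients — rather than that of a generic relative-degree-$3$ element of $\mathcal{Q}_{p}$; this is where the exact form of $m_{p,\,\tau,\,l}\left[a_{1}\right]$ and the relation $\tau^{4}+\tau^{2}+1=0$ are used decisively. For the second identity $\overline{s_{p,\,\tau}}\left[a_{2}\right]\overline{m_{p,\,\tau,\,u}}\left[a_{1}\right]=\overline{s_{p,\,\tau}}\left[a_{3}\right]$ I would run the mirror-image computation, multiplying $s_{p,\,\tau}\left[a_{2}\right]$ on the right by $m_{p,\,\tau,\,u}\left[a_{1}\right]$; the two calculations are parallel because $m_{p,\,\tau,\,u}\left[a_{1}\right]$ and $m_{p,\,\tau,\,l}\left[a_{1}\right]$ differ only by interchanging $g_{2}$ and $\overline{g_{2}}$, and one can alternatively transfer one identity to the other using the anti-homomorphism $B\mapsto B^{\dagger}$ together with $\pi_{p}\left(m_{p,\,\tau,\,u}\left[a\right]^{\dagger}\right)=\overline{m_{p,\,\tau,\,u}}\left[-a\right]$, provided one tracks how $\dagger$ interacts with balancing. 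Either way the extracted parameter is the same $a_{3}$.
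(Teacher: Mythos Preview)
Your proposal is correct and follows essentially the same route as the paper: the paper's proof of the two formulas is literally ``direct from computation'', and then it verifies the two side conditions exactly as you do, showing $\tau^{2}-a_{1}\tau+2=0$ forces $a_{1}=\tau+2\tau^{-1}$ and that $a_{2}=a_{3}$ forces $a_{2}=3\tau+\tau^{-1}$ via the relation $\tau^{4}+\tau^{2}+1=0$. Your additional structural explanation through the exceptional case of Lemma 3.13 is a nice heuristic for why the relative degree collapses, but the paper does not invoke it and simply carries out the multiplication.
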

\begin{proof}

\noindent The formulas (52) and (53) are direct from computation.
The denominator $\tau^{2}-a_{1}\tau+2$ is nonzero by the assumption
$a_{1}\ne\tau+2\tau^{-1}$. By the definition of $a_{3}$, the condition
$a_{2}=a_{3}$ implies
\begin{align}
a_{2} & =\frac{\tau\left(5+\tau^{2}\right)}{\tau^{2}+2}.
\end{align}

However, the equality (54) implies $a_{2}=3\tau+\tau^{-1}$, since
we have
\begin{align*}
\frac{\tau\left(5+\tau^{2}\right)}{\tau^{2}+2}-\left(3\tau+\tau^{-1}\right) & =-\frac{2\left(1+\tau^{2}+\tau^{4}\right)}{\tau\left(\tau^{2}+2\right)},
\end{align*}
where the right-hand side is 0 by the assumption $\tau^{4}+\tau^{2}+1=0$.
$\qedhere$

\noindent \end{proof}

\noindent \begin{lemma}

Suppose an element $\tau\in\mathbb{F}_{p}$ satisfies $\tau^{4}+\tau^{2}+1=0$.
For a pair of elements $a_{1},\,a_{2}\in\mathbb{F}_{p}\backslash\left\{ 3\tau+\tau^{-1}\right\} $
such that $a_{1}\ne a_{2}$, we have
\begin{align}
 & \overline{s_{p,\,\tau}}\left[a_{1}\right]^{-1}\overline{s_{p,\,\tau}}\left[a_{2}\right]=\overline{m_{p,\,\tau,\,u}}\left[-a_{3}\right],\\
 & \overline{s_{p,\,\tau}}\left[a_{1}\right]\overline{s_{p,\,\tau}}\left[a_{2}\right]^{-1}=\overline{m_{p,\,\tau,\,l}}\left[a_{3}\right],
\end{align}
where $a_{3}$ is given by
\begin{align*}
a_{3} & =\frac{3\left(6+4\tau^{2}+\left(a_{1}+a_{2}\right)\tau^{3}\right)}{\left(a_{1}-a_{2}\right)\left(\tau^{2}+2\right)},
\end{align*}
such that $a_{3}\ne\pm\tau\left(2\tau^{2}+1\right)$. Note that $\overline{m_{p,\,\tau,\,u}}\left[-a_{3}\right]=\overline{m_{p,\,\tau,\,u}}\left[a_{3}\right]^{-1}$
by (40).

\noindent \end{lemma}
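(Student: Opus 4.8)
The plan is to deduce both identities from the relations already established in the preceding lemma (equations (52) and (53)), so that essentially no new matrix computation is required. First note that every stable generator $\overline{s_{p,\tau}}[a]$ admits a representative $s_{p,\tau}[a]$ lying in $\mathrm{GL}(2,\mathbb{F}_p[t,t^{-1}])$: by the determinant computation carried out in the proof of Lemma 3.10 one has $\det\bigl(s_{p,\tau}[a]\bigr)=-12(1+3\tau^2-\tau a)$, a nonzero element of $\mathbb{F}_p$ whenever $a\neq 3\tau+\tau^{-1}$ (here $p\equiv 1\pmod 6$, so $p\neq 2,3$). Since $s_{p,\tau}[a]$ has the quaternionic shape, $s_{p,\tau}[a]^{\dagger}=\det\bigl(s_{p,\tau}[a]\bigr)\,s_{p,\tau}[a]^{-1}$, and one checks immediately that $s_{p,\tau}[a]^{\dagger}$ is again of quaternionic shape; hence in $\mathrm{PGL}$ we have $\overline{s_{p,\tau}}[a]^{-1}=\pi_p\bigl(s_{p,\tau}[a]^{\dagger}\bigr)$, and both products $\overline{s_{p,\tau}}[a_1]^{-1}\overline{s_{p,\tau}}[a_2]$ and $\overline{s_{p,\tau}}[a_1]\overline{s_{p,\tau}}[a_2]^{-1}$ are elements of $\mathcal{Q}_p$.

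For the first identity I would view $\overline{s_{p,\tau}}[a_2]$ as the image of $\overline{s_{p,\tau}}[a_1]$ under right multiplication by an upper multiplicative generator. Indeed, by relation (53), for a parameter $m\in\mathbb{F}_p\setminus\{\pm(\tau+2\tau^{-1})\}$ one has $\overline{s_{p,\tau}}[a_1]\,\overline{m_{p,\tau,u}}[m]=\overline{s_{p,\tau}}[s']$ with $s'=\bigl(2\tau(\tau^2+5)-a_1(\tau^2+m\tau+2)\bigr)/(\tau^2-m\tau+2)$. Imposing $s'=a_2$ gives a linear equation in $m$; clearing denominators and repeatedly using $\tau^4+\tau^2+1=0$ (equivalently $\tau^3=-\tau-\tau^{-1}$ and $\tau^{-3}=-\tau-\tau^{-1}$) yields
\[
m=\frac{2\tau^3+10\tau-(a_1+a_2)(\tau^2+2)}{\tau(a_1-a_2)}=\frac{3\bigl(6+4\tau^2+(a_1+a_2)\tau^3\bigr)}{(a_1-a_2)(\tau^2+2)},
\]
which is exactly the quantity $a_3$ of the statement (up to the overall sign that the statement absorbs into $\overline{m_{p,\tau,u}}[-a_3]=\overline{m_{p,\tau,u}}[a_3]^{-1}$ via (40)). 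Since $a_1\neq a_2$ the denominator does not vanish, so $m$ is well defined; granting the admissibility check below, relation (53) gives $\overline{s_{p,\tau}}[a_2]=\overline{s_{p,\tau}}[a_1]\,\overline{m_{p,\tau,u}}[m]$, whence $\overline{s_{p,\tau}}[a_1]^{-1}\overline{s_{p,\tau}}[a_2]=\overline{m_{p,\tau,u}}[m]=\overline{m_{p,\tau,u}}[-a_3]$. The second identity is obtained in the same way from relation (52), writing $\overline{s_{p,\tau}}[a_1]=\overline{m_{p,\tau,l}}[\,\cdot\,]\,\overline{s_{p,\tau}}[a_2]$ and cancelling $\overline{s_{p,\tau}}[a_2]$ on the right. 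A direct route is also available: expand $s_{p,\tau}[a_1]^{\dagger}s_{p,\tau}[a_2]$ from the explicit first rows $g_1=(-\tau+a_1t+at^2+\tau^{-1}t^3)/t$, $g_2=(1+a_3t+t^2)/t$ of the stable generators, simplify with $\tau^4+\tau^2+1=0$, observe that the top two and bottom two coefficients cancel so the relative degree collapses to $2$, and match the first row against that of $m_{p,\tau,u}[-a_3]$; the quaternionic shape then forces equality of the full matrix.

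It remains to justify the restriction $a_3\neq\pm\tau(2\tau^2+1)$, which also supplies the admissibility of $m$ used above. From $\tau^3=-\tau-\tau^{-1}$ one gets $\tau(2\tau^2+1)=2\tau^3+\tau=-(\tau+2\tau^{-1})$, so the forbidden set is precisely $\{\pm(\tau+2\tau^{-1})\}$, the complement of the domain of the multiplicative generators (Lemma 3.9); moreover $\tau^2+2\neq 0$, since otherwise $\tau^4+\tau^2+1=3\neq 0$. Using the identity $(\tau+2\tau^{-1})(\tau^2+2)=-3\tau^3$ one computes
\[
a_3-(\tau+2\tau^{-1})=\frac{6\bigl(3+2\tau^2+a_1\tau^3\bigr)}{(a_1-a_2)(\tau^2+2)},\qquad a_3+(\tau+2\tau^{-1})=\frac{6\bigl(3+2\tau^2+a_2\tau^3\bigr)}{(a_1-a_2)(\tau^2+2)}.
\]
Dividing by $\tau^3$ and using $\tau^{-3}=-\tau-\tau^{-1}$ shows $3+2\tau^2+a\tau^3=0$ if and only if $a=3\tau+\tau^{-1}$; hence $a_3=\tau+2\tau^{-1}$ would force $a_1=3\tau+\tau^{-1}$ and $a_3=-(\tau+2\tau^{-1})$ would force $a_2=3\tau+\tau^{-1}$, both excluded by hypothesis. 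The main obstacle throughout is simply the algebraic bookkeeping: each stable generator carries the auxiliary coefficients $\tau(3-a\tau+6\tau^2)$ and $-3\tau^2+\tau a-2$, and every simplification rests on the systematic substitution $\tau^4+\tau^2+1=0$; organising the work around leading coefficients and relative degrees (as in the proof of Lemma 4.8) keeps it under control.
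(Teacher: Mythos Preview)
Your approach is correct and genuinely different from the paper's. The paper simply declares that the formulas ``are direct from computation'' and then verifies the admissibility condition $a_{3}\ne\pm\tau(2\tau^{2}+1)$ by substituting each forbidden value into the defining formula for $a_{3}$ and solving backwards to obtain $a_{1}=3\tau+\tau^{-1}$ or $a_{2}=3\tau+\tau^{-1}$. You instead invert the transformation rule of Lemma~4.9: given (53), you solve $s'=a_{2}$ for the multiplicative parameter $m$, which is a clean way to avoid any fresh matrix multiplication. Your admissibility argument is also organized differently---you factor $a_{3}\mp(\tau+2\tau^{-1})$ explicitly via the identity $(\tau+2\tau^{-1})(\tau^{2}+2)=-3\tau^{3}$ and read off the conclusion, whereas the paper plugs in and clears denominators. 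Both routes are short; yours makes the dependence on Lemma~4.9 transparent and is arguably tidier.

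One point needs tightening. Your algebra correctly gives
\[
m=\frac{2\tau^{3}+10\tau-(a_{1}+a_{2})(\tau^{2}+2)}{\tau(a_{1}-a_{2})}=\frac{3\bigl(6+4\tau^{2}+(a_{1}+a_{2})\tau^{3}\bigr)}{(a_{1}-a_{2})(\tau^{2}+2)}=a_{3},
\]
so the derivation yields $\overline{s_{p,\tau}}[a_{1}]^{-1}\overline{s_{p,\tau}}[a_{2}]=\overline{m_{p,\,\tau,\,u}}[a_{3}]$, not $\overline{m_{p,\,\tau,\,u}}[-a_{3}]$. Your parenthetical ``up to the overall sign'' does not actually reconcile this with the displayed identity (55); you compute $m=a_{3}$ and then write $\overline{m_{p,\,\tau,\,u}}[m]=\overline{m_{p,\,\tau,\,u}}[-a_{3}]$ in the same sentence, which is inconsistent. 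Either the statement carries a sign typo or you have silently swapped $a_{1}\leftrightarrow a_{2}$ somewhere---you should say which. The same sign question arises in your sketch for (56). This does not affect the structure of your argument or the admissibility check, but the sign should be pinned down rather than waved away.
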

\begin{proof}

\noindent The formulas (55) and (56) are direct from computation.
Suppose that $a_{3}=\tau\left(2\tau^{2}+1\right)$. By the definition
of $a_{3}$, we have
\begin{align*}
 & \,0=-9-6\tau^{2}+a_{1}\tau\left(1+\tau^{2}+\tau^{4}\right)-a_{2}\tau\left(1+4\tau^{2}+\tau^{4}\right)\\
\Leftrightarrow & \,a_{2}=-3\tau^{-3}-2\tau^{-1}\\
\Leftrightarrow & \,a_{2}=3\tau+\tau^{-1},
\end{align*}

\noindent which contradicts the assumption $a_{2}\ne3\tau+\tau^{-1}$.

On the other hand, suppose that $a_{3}=-\tau\left(2\tau^{2}+1\right)$.
By the definition of $a_{3}$, we have
\begin{align*}
 & \,0=9+6\tau^{2}+a_{1}\tau\left(1+4\tau^{2}+\tau^{4}\right)-a_{2}b\left(1+\tau^{2}+\tau^{4}\right)\\
\Leftrightarrow & \,a_{1}=-3\tau^{-3}-2\tau^{-1}\\
\Leftrightarrow & \,a_{1}=3\tau+\tau^{-1},
\end{align*}
which contradicts the assumption $a_{1}\ne3\tau+\tau^{-1}$. $\qedhere$

\noindent \end{proof}
\begin{corollary}

Suppose an element $\tau\in\mathbb{F}_{p}$ satisfies $\tau^{4}+\tau^{2}+1=0$.
For a pair of elements $a_{1}\in\mathbb{F}_{p}\backslash\left\{ \pm\left(\tau+2\tau^{-1}\right)\right\} $
and $a_{2}\in\mathbb{F}_{p}\backslash\left\{ 3\tau+\tau^{-1}\right\} $,
we have
\begin{align}
 & \overline{s_{p,\,\tau}}\left[a_{2}\right]\overline{m_{p,\,\tau,\,u}}\left[a_{1}\right]\overline{s_{p,\,\tau}}\left[a_{2}\right]^{-1}=\overline{m_{p,\,\tau,\,l}}\left[a_{1}\right].
\end{align}
\end{corollary}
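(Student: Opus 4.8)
The plan is to read this identity off directly from formulas (52) and (53) (Lemma 4.9), with no fresh matrix computation. The crucial point is that those two formulas express \emph{both} the product $\overline{m_{p,\,\tau,\,l}}[a_{1}]\,\overline{s_{p,\,\tau}}[a_{2}]$ and the product $\overline{s_{p,\,\tau}}[a_{2}]\,\overline{m_{p,\,\tau,\,u}}[a_{1}]$ as the \emph{same} stable generator $\overline{s_{p,\,\tau}}[a_{3}]$, for the common value $a_{3}=\bigl(2\tau(\tau^{2}+5)-a_{2}(\tau^{2}+a_{1}\tau+2)\bigr)/(\tau^{2}-a_{1}\tau+2)$. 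Since the conditions $a_{1}\notin\{\pm(\tau+2\tau^{-1})\}$ and $a_{2}\notin\{3\tau+\tau^{-1}\}$ in the statement are precisely the hypotheses of Lemma 4.9, both (52) and (53) are available to us here.

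Carrying this out, I would first invoke (53) to get $\overline{s_{p,\,\tau}}[a_{2}]\,\overline{m_{p,\,\tau,\,u}}[a_{1}]=\overline{s_{p,\,\tau}}[a_{3}]$; the denominator $\tau^{2}-a_{1}\tau+2$ of $a_{3}$ is nonzero because $\tau+2\tau^{-1}$ is the unique root of $x\tau=\tau^{2}+2$ and $a_{1}\ne\tau+2\tau^{-1}$. I would then invoke (52), with the same $a_{1}$ and $a_{2}$, to get $\overline{m_{p,\,\tau,\,l}}[a_{1}]\,\overline{s_{p,\,\tau}}[a_{2}]=\overline{s_{p,\,\tau}}[a_{3}]$ with the same $a_{3}$. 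Equating the two right-hand sides gives $\overline{s_{p,\,\tau}}[a_{2}]\,\overline{m_{p,\,\tau,\,u}}[a_{1}]=\overline{m_{p,\,\tau,\,l}}[a_{1}]\,\overline{s_{p,\,\tau}}[a_{2}]$, and right-multiplying both sides by $\overline{s_{p,\,\tau}}[a_{2}]^{-1}$ --- which is legitimate since $\overline{s_{p,\,\tau}}[a_{2}]$ is an element of the group $\mathcal{Q}_{p}$ --- yields (57).

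I do not anticipate any real obstacle here: all the arithmetic is already contained in Lemma 4.9, and what remains is formal group manipulation together with the bookkeeping that every element appearing in (57) is actually defined --- $\overline{m_{p,\,\tau,\,u}}[a_{1}]$ and $\overline{m_{p,\,\tau,\,l}}[a_{1}]$ are defined because $a_{1}\ne\pm(\tau+2\tau^{-1})$, the element $\overline{s_{p,\,\tau}}[a_{2}]$ because $a_{2}\ne3\tau+\tau^{-1}$, and $\overline{s_{p,\,\tau}}[a_{3}]$ because Lemma 4.9 already records that $a_{2}\ne a_{3}$. This corollary is exactly the relation that makes $\overline{s_{p,\,\tau}}[a_{2}]$ play the role of the stable letter and the assignment $\overline{m_{p,\,\tau,\,u}}[a_{1}]\mapsto\overline{m_{p,\,\tau,\,l}}[a_{1}]$ the role of the edge isomorphism $\alpha_{\tau}$ between the upper and lower multiplicative subgroups, in the HNN decomposition of $\mathcal{G}_{p,\,\tau}$ asserted in Theorem 4.1.
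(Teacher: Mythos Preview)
Your proof is correct and is actually cleaner than the paper's own. The paper argues by first applying (53) to get $\overline{s_{p,\,\tau}}[a_{2}]\,\overline{m_{p,\,\tau,\,u}}[a_{1}]=\overline{s_{p,\,\tau}}[a_{3}]$, then applying (56) from Lemma~4.10 to get $\overline{s_{p,\,\tau}}[a_{3}]\,\overline{s_{p,\,\tau}}[a_{2}]^{-1}=\overline{m_{p,\,\tau,\,l}}[a_{4}]$ for a certain $a_{4}$, and finally carrying out a page of algebra with the relation $\tau^{4}+\tau^{2}+1=0$ to verify that $a_{4}=a_{1}$. You bypass Lemma~4.10 entirely by observing that (52) and (53) in Lemma~4.9 share the \emph{same} formula for $a_{3}$, so the two left-hand sides are equal and the conjugation identity drops out with no further computation. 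One small quibble: your remark that ``$\overline{s_{p,\,\tau}}[a_{3}]$ is defined because Lemma~4.9 already records that $a_{2}\ne a_{3}$'' conflates two conditions --- what is needed is $a_{3}\ne 3\tau+\tau^{-1}$, not $a_{3}\ne a_{2}$ --- but since Lemma~4.9 asserts (52) and (53) as equalities in $\mathcal{Q}_{p}$, the well-definedness of $\overline{s_{p,\,\tau}}[a_{3}]$ is already implicit there, and your argument goes through regardless.
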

\begin{proof}

\noindent By (53) in Lemma 4.9, we have
\begin{align*}
\overline{s_{p,\,\tau}}\left[a_{2}\right]\overline{m_{p,\,\tau,\,u}}\left[a_{1}\right]\overline{s_{p,\,\tau}}\left[a_{2}\right]^{-1} & =\overline{s_{p,\,\tau}}\left[a_{3}\right]\overline{s_{p,\,\tau}}\left[a_{2}\right]^{-1},
\end{align*}
where $a_{3}=\frac{2\tau\left(\tau^{2}+5\right)-a_{2}\left(\tau^{2}+a_{1}\tau+2\right)}{\tau^{2}-a_{1}\tau+2}.$
By (56) in Lemma 4.10, we have
\begin{align*}
\overline{s_{p,\,\tau}}\left[a_{3}\right]\overline{s_{p,\,\tau}}\left[a_{2}\right]^{-1} & =\overline{m_{p,\,\tau,\,l}}\left[a_{4}\right],
\end{align*}
where $a_{4}=\frac{3\left(6+4\tau^{2}+\left(a_{3}+a_{2}\right)\tau^{3}\right)}{\left(a_{3}-a_{2}\right)\left(\tau^{2}+2\right)}$.
Expanding by substituting $a_{3}$ expressed by $a_{1}$ and $a_{2}$
into $a_{4}$, we have
\begin{align*}
 & a_{4}\\
= & \,\frac{3\left(6+4\tau^{2}+\left(a_{3}+a_{2}\right)\tau^{3}\right)}{\left(a_{3}-a_{2}\right)\left(2+\tau^{2}\right)}\\
= & \,\frac{3a_{1}\tau\left(3+2\tau^{2}+a_{2}\tau^{3}\right)}{\left(\tau\left(\tau^{2}+5\right)-a_{2}\left(\tau^{2}+2\right)\right)\left(\tau^{2}+2\right)}\\
= & \,-\frac{a_{1}\tau\left(3+2\tau^{2}+a_{2}\tau^{3}\right)}{a_{2}\left(1+\tau^{2}\right)-3\tau-2\tau^{3}},
\end{align*}
where from the assumption $\tau^{4}+\tau^{2}+1=0$, we have simplified
the numerator by the second equality and the denominator by the last.
The same assumption justifies that $a_{1}=a_{4}$ or
\begin{align*}
 & \,\tau\left(3+2\tau^{2}+a_{2}\tau^{3}\right)=-a_{2}\left(1+\tau^{2}\right)+3\tau+2\tau^{3}\\
\Leftrightarrow & \,a_{2}\left(1+\tau^{2}+\tau^{4}\right)=0,
\end{align*}
which establishes (57). $\qedhere$

\noindent \end{proof}

Corollary 4.11 ensures that the conjugation on the upper multiplicative
subgroup by a stable generator induces the isomorphism to the lower
multiplicative subgroup. Note that (57) holds independently of the
choice of $a_{2}$ for $\overline{s_{p,\,\tau}}\left[a_{2}\right]$.
Since the multiplicative subgroup is abelian by Lemma 4.5, one may
choose any $a_{2}$ in $\mathbb{F}_{p}\backslash\left\{ 3\tau+\tau^{-1}\right\} $
by (53) in Lemma 4.9.

To prove that a stable generator and the affine subgroups together
generate an HNN extension, we cannot use the usual ping-pong lemma.
The following Fenchel\textendash Nielsen theorem provides a ping-pong
lemma for HNN extensions.

\noindent \begin{theorem}

Let $G$ be a group, $G_{0}$ be a subgroup of $G$, and $\alpha:H\to K$
be an isomorphism between two subgroups of $G_{0}$. Suppose that
an element $s\in G\backslash G_{0}$ induces the isomorphism by conjugation
as
\begin{align*}
shs^{-1} & =\alpha\left(h\right),\;h\in H,
\end{align*}
and $G$ is generated by $G_{0}$ and $s$. Suppose further that $G$
acts on a set $\Omega$, which contains non-empty disjoint subsets
$X_{0},\,Z_{s},\,Z_{\overline{s}}$ such that
\begin{enumerate}
\item $s\left(X_{0}\cup Z_{s}\right)\subset Z_{s}$ and $s^{-1}\left(X_{0}\cup Z_{\overline{s}}\right)\subset Z_{\overline{s}}$;
\item $h\left(Z_{s}\right)\subset Z_{s}$ for each $h\in H$ and $k\left(Z_{\overline{s}}\right)\subset Z_{\overline{s}}$
for each $k\in K$;
\item $\left(G_{0}\backslash H\right)\left(Z_{s}\right)\subset X_{0}$ and
$\left(G_{0}\backslash K\right)\left(Z_{\overline{s}}\right)\subset X_{0}$;
\item both the restrictions of the actions of $H$ to $Z_{s}$ and $K$
to $Z_{\overline{s}}$ are faithful;
\item the union of the images $\left(G_{0}\backslash H\right)\left(Z_{s}\right)\cup\left(G_{0}\backslash K\right)\left(Z_{\overline{s}}\right)$
misses a point in $X_{0}$.
\end{enumerate}
Then, the group $G$ is the internal HNN extension of the subgroup
$G_{0}$ with $s$ as the stable letter.

\noindent \end{theorem}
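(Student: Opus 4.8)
The plan is to recognize $G$ as the abstract HNN extension it is supposed to be and to certify this identification by a table-tennis argument on $\Omega$. Let $\widetilde{G}$ be the abstract HNN extension of $G_{0}$ along $\alpha\colon H\to K$, with stable letter $\widetilde{s}$; recall that $G_{0}$ embeds in $\widetilde{G}$ and that Britton's lemma provides a reduced syllable form $g_{0}\widetilde{s}^{\,\epsilon_{1}}g_{1}\cdots\widetilde{s}^{\,\epsilon_{n}}g_{n}$ for each element (no pinch $\widetilde{s}^{\,\epsilon}g_{i}\widetilde{s}^{\,-\epsilon}$ with $g_{i}$ in the edge subgroup attached to the sign $\epsilon$), such that any such word with $n\geq1$ is nontrivial. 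Since $s$ obeys the relation $shs^{-1}=\alpha(h)$ and $G=\langle G_{0},s\rangle$, there is a canonical epimorphism $\psi\colon\widetilde{G}\to G$ which is the identity on $G_{0}$ and sends $\widetilde{s}\mapsto s$; the content of the theorem is exactly that $\psi$ is injective. If $w\in\ker\psi$ has a reduced form with $n=0$, then $w=g_{0}\in G_{0}$ and $\psi(w)=g_{0}=1$ forces $w=1$, since $G_{0}$ is literally a subgroup of $G$. So it remains to show that $\psi(w)\neq1$ in $G$ for every reduced $w$ with $n\geq1$.

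First I would conjugate $w$ by $g_{n}$ to delete the trailing $G_{0}$-syllable: the word $w'=(g_{n}g_{0})\,s^{\epsilon_{1}}g_{1}\cdots g_{n-1}s^{\epsilon_{n}}$ has the same interior coefficients and the same sign sequence as $w$, so it is still reduced, while $\psi(w')\neq1\iff\psi(w)\neq1$; hence we may take $w=h_{0}s^{\epsilon_{1}}g_{1}\cdots g_{n-1}s^{\epsilon_{n}}$ with $h_{0}\in G_{0}$. Let $x^{*}\in X_{0}$ be the point supplied by hypothesis (5), so that $x^{*}$ is avoided by both $(G_{0}\backslash H)(Z_{s})$ and $(G_{0}\backslash K)(Z_{\overline{s}})$. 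Evaluating $\psi(w)(x^{*})$ from the right, one should prove by induction on the number of syllables already read that, after each application of an $s^{\pm1}$, the running image lies in $Z_{s}\cup Z_{\overline{s}}$ (and in $Z_{s}$ exactly when that $s^{\pm1}$ was $s^{+1}$): the base case is hypothesis (1) applied to the innermost $s^{\epsilon_{n}}$ and $x^{*}\in X_{0}$, and in the inductive step one reads a coefficient $g_{i}\in G_{0}$ and then an $s^{\pm1}$ --- hypothesis (2) keeps $g_{i}$ inside the current $Z$-set if $g_{i}$ lies in the stabilizing edge subgroup, hypothesis (3) ejects the $Z$-set into $X_{0}$ otherwise, and hypothesis (1) then carries the result into the $Z$-set dictated by the next sign. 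The crux is that the five hypotheses are organized precisely so that the only transitions that would lose control --- feeding $s^{-1}$ a point of $Z_{s}$, or $s$ a point of $Z_{\overline{s}}$ --- are exactly the Britton pinches that reducedness of $w$ rules out; this is where reducedness is consumed, and the disjointness of $X_{0},Z_{s},Z_{\overline{s}}$ is used throughout.

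It then remains to absorb the leading coefficient: $\psi(w)(x^{*})=h_{0}\!\left(\psi(s^{\epsilon_{1}}g_{1}\cdots g_{n-1}s^{\epsilon_{n}})(x^{*})\right)$ lies in $h_{0}(Z_{s})$ or $h_{0}(Z_{\overline{s}})$ according to the sign $\epsilon_{1}$. If $h_{0}$ lies in the edge subgroup stabilizing that $Z$-set, then $\psi(w)(x^{*})$ stays in $Z_{s}$ (resp.\ $Z_{\overline{s}}$) by (2), which is disjoint from $X_{0}\ni x^{*}$; otherwise $\psi(w)(x^{*})$ lands in $(G_{0}\backslash H)(Z_{s})$ (resp.\ $(G_{0}\backslash K)(Z_{\overline{s}})$), which avoids $x^{*}$ by (5). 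In every case $\psi(w)(x^{*})\neq x^{*}$, so $\psi(w)$ is not the identity of $G$; hence $\ker\psi=1$ and $G\cong G_{0}*_{\alpha}$ is the internal HNN extension of $G_{0}$ with $s$ as stable letter. Hypothesis (4), the faithfulness of $H$ on $Z_{s}$ and of $K$ on $Z_{\overline{s}}$, is the companion non-degeneracy condition ensuring that the ping-pong data faithfully encode the edge subgroups, so that the reduced-word analysis genuinely mirrors the HNN normal form. I expect the real work to be organizational rather than conceptual: arranging the inductive case analysis so that each potential Britton pinch is matched to the correct hypothesis, and handling cleanly the two loose $G_{0}$-syllables at the ends of a reduced word --- the trailing one killed by conjugation, the leading one absorbed via the omitted-point hypothesis (5).
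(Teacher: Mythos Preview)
The paper does not prove this theorem; it simply cites de~la~Harpe and one other source. Your table-tennis argument is exactly the standard approach those references use, so strategically you are on target.

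There is, however, a genuine orientation issue in your inductive step that is not merely organizational. You evaluate $\psi(w)(x^{*})$ from the right (a left action) and assert that the only uncontrolled transitions---feeding $s^{-1}$ a point of $Z_{s}$, or $s$ a point of $Z_{\overline{s}}$---are \emph{exactly} the Britton pinches. But with the relation $shs^{-1}=\alpha(h)$ as stated, the pinch for the pattern $s^{-1}g_{i}s^{+1}$ is $g_{i}\in K$, whereas staying in $Z_{s}$ via hypothesis~(2) requires $g_{i}\in H$. If $g_{i}\in H\setminus K$, the word is reduced (no pinch), yet after $g_{i}$ you are still in $Z_{s}$ and must apply $s^{-1}$, and nothing in (1)--(5) controls $s^{-1}(Z_{s})$. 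Your claimed matching fails here.

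The clean fix is to reverse the reading direction: conjugate away the \emph{leading} $g_{0}$ rather than the trailing $g_{n}$, and track $x^{*}\cdot s^{\epsilon_{1}}\cdot g_{1}\cdots$ from the left (a right action). Then after $s^{\epsilon_{j}}$ you sit in $Z_{s}$ iff $\epsilon_{j}=+1$; the coefficient $g_{j}$ stays in $Z_{s}$ exactly when $g_{j}\in H$; and the dangerous next step $s^{\epsilon_{j+1}}=s^{-1}$ gives the pattern $s^{+1}g_{j}s^{-1}$ with $g_{j}\in H$, which \emph{is} a pinch and is therefore excluded by reducedness. Symmetrically for $Z_{\overline{s}}$, $K$, and the pattern $s^{-1}g_{j}s^{+1}$. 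The trailing coefficient $g_{n}g_{0}$ is then absorbed via hypothesis~(5) exactly as you describe for the leading one. This orientation is in fact how the paper itself applies the theorem in Lemma~4.17, acting by right multiplication in $\mathcal{Q}_{p}$.
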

\begin{proof}

\noindent See \citep[�VII.D.12]{MR0959135} or \citep[Lemma 5.6]{MR4735233}.
$\qedhere$

\noindent \end{proof}

\noindent \begin{lemma}

Suppose an element $\tau\in\mathbb{F}_{p}$ satisfies $\tau^{4}+\tau^{2}+1=0$.
Choose an element $M\in\mathcal{Q}_{p}$ such that $M$ is upper-balanced,
$\mathrm{rd}\left(M\right)\ge3$, $\tau\left(M\right)=\tau$ and $\tau2_{\tau}\left(M\right)=-\left(\tau+3\tau^{-1}\right)$.
Then, for any $a\in\mathbb{F}_{p}\backslash\left\{ \pm\left(\tau+2\tau^{-1}\right)\right\} $,
$M\overline{m_{p,\,\tau,\,u}}\left[a\right]$ is upper-balanced, and
has the same relative degree, type, signature, and second-order type
as $M$.

On the other hand, Choose an element $M\in\mathcal{Q}_{p}$ such that
$M$ is lower-balanced, $\mathrm{rd}\left(M\right)\ge3$, $\tau\left(M\right)=\tau$
and $\tau2_{\tau}\left(M\right)=-\left(\tau+3\tau^{-1}\right)$. Then,
for any $a\in\mathbb{F}_{p}\backslash\left\{ \pm\left(\tau+2\tau^{-1}\right)\right\} $,
$M\overline{m_{p,\,\tau,\,l}}\left[a\right]$ is lower-balanced, and
has the same relative degree, type, signature, and second-order type
as $M$.

\noindent \end{lemma}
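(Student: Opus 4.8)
The statement is symmetric under the upper/lower dichotomy, so I would treat the upper-balanced case in full and obtain the lower-balanced case by the entirely analogous computation, with the lower-balanced normalization and $m_{p,\,\tau,\,l}\left[a\right]$ in place of $m_{p,\,\tau,\,u}\left[a\right]$. So assume $M$ is upper-balanced with $\mathrm{rd}\left(M\right)=N\ge3$, $\tau\left(M\right)=\tau$ and $\tau2_{\tau}\left(M\right)=-\left(\tau+3\tau^{-1}\right)$. Since $\tau\left(M\right)=\tau$ and $M$ is upper-balanced, I fix a representative $B=\left(\begin{array}{cc}g_{1} & g_{2}\\-\Phi\overline{g_{2}} & \overline{g_{1}}\end{array}\right)$ of $M$ with
\[
g_{1}=-\tau t^{m}+c_{1}t^{m+1}+\cdots+c_{N}t^{m+N},\qquad g_{2}=t^{m}+d_{1}t^{m+1}+\cdots+d_{N-1}t^{m+N-1},
\]
and note that, by Definition 4.7, the hypothesis $\tau2_{\tau}\left(M\right)=-\left(\tau+3\tau^{-1}\right)$ says precisely that $c_{1}+\tau d_{1}=-2\tau-3\tau^{-1}$. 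Comparing the coefficients of $t^{N}$ and of $t^{N-1}$ in the unitarity relation $(15)$ for $B$, namely $g_{1}\overline{g_{1}}+\Phi g_{2}\overline{g_{2}}\in\mathbb{F}_{p}^{\times}$, yields $d_{N-1}=\tau c_{N}$ and, after substituting the hypothesis, $d_{N-2}=\tau c_{N-1}+\left(\tau+3\tau^{-1}\right)c_{N}$.

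I would then multiply $B$ on the right by the explicit matrix $m_{p,\,\tau,\,u}\left[a\right]$ of Definition 3.7 and read off the outermost coefficients of the first-column entries $g_{1}'=\left(B\,m_{p,\,\tau,\,u}\left[a\right]\right)_{11}$ and $g_{2}'=\left(B\,m_{p,\,\tau,\,u}\left[a\right]\right)_{12}$. The lowest-degree terms cancel formally: the coefficient of $t^{m-1}$ in $g_{1}'$ is $\left(-\tau\right)\left(-\tau\right)+1\cdot\left(-\tau^{2}\right)=0$, and the same cancellation occurs in $g_{2}'$; at the top, the degree-$\left(m+N+1\right)$ coefficients cancel because $d_{N-1}=\tau c_{N}$. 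The decisive point is the next layer of coefficients: using $c_{1}+\tau d_{1}=-2\tau-3\tau^{-1}$, the relation for $d_{N-2}$, and the identity $\tau^{4}+\tau^{2}+1=0$ (equivalently $\tau^{3}=-\tau-\tau^{-1}$ and $\tau^{-2}=-\tau^{2}-1$), one computes that the coefficients of $t^{m}$ in $g_{1}'$ and in $g_{2}'$, and of $t^{m+N}$ in $g_{1}'$ and of $t^{m+N-1}$ in $g_{2}'$, all carry the common factor $a-\tau-2\tau^{-1}$; explicitly they equal $-\tau\left(a-\tau-2\tau^{-1}\right)$, $a-\tau-2\tau^{-1}$, $c_{N}\left(a-\tau-2\tau^{-1}\right)$ and $\tau c_{N}\left(a-\tau-2\tau^{-1}\right)$ respectively. (Without the hypothesis these coefficients would vanish at some value of $a$ depending on $M$; the hypothesis is exactly what forces that value to be $\tau+2\tau^{-1}$.) Since $m_{p,\,\tau,\,u}\left[a\right]$ is defined only for $a\ne\pm\left(\tau+2\tau^{-1}\right)$ — its determinant being $a^{2}+3\tau^{2}$, which vanishes precisely there (Lemma 3.8) — all four of these coefficients are nonzero. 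Hence $B\,m_{p,\,\tau,\,u}\left[a\right]$ is upper-balanced with lowest degree $m$ and highest degree $m+N$, so $\mathrm{rd}\left(M\overline{m_{p,\,\tau,\,u}}\left[a\right]\right)=N$; its type is $-\left(-\tau\left(a-\tau-2\tau^{-1}\right)\right)\left(a-\tau-2\tau^{-1}\right)^{-1}=\tau$; and its signature is $c_{N}\left(a-\tau-2\tau^{-1}\right)\left(-\tau\left(a-\tau-2\tau^{-1}\right)\right)^{-1}=-c_{N}\tau^{-1}=\sigma\left(M\right)$.

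It remains to check that the second-order type is preserved, and for this I would descend one further coefficient. Writing $c_{1}'$ and $d_{1}'$ for the coefficients of $t^{m+1}$ in $g_{1}'$ and $g_{2}'$ (these involve $c_{1},c_{2},d_{1},d_{2}$ and $a$), the $c_{2}$- and $d_{2}$-contributions cancel in the combination $c_{1}'+\tau d_{1}'$, and repeated use of $\tau^{4}+\tau^{2}+1=0$ together with $c_{1}+\tau d_{1}=-2\tau-3\tau^{-1}$ collapses the remaining terms to the clean identity $c_{1}'+\tau d_{1}'=\left(c_{1}+\tau d_{1}\right)\left(a-\tau-2\tau^{-1}\right)$. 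Dividing by the leading coefficient $a-\tau-2\tau^{-1}$ of $g_{2}'$ and adding $\tau$ then gives $\tau2_{\tau}\left(M\overline{m_{p,\,\tau,\,u}}\left[a\right]\right)=\left(c_{1}+\tau d_{1}\right)+\tau=\tau2_{\tau}\left(M\right)$, as desired.

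I expect the only genuinely delicate part of the argument to be this business of checking that each outermost non-vanishing coefficient is a nonzero scalar multiple of $a-\tau-2\tau^{-1}$: this is where the hypothesis $\tau2_{\tau}\left(M\right)=-\left(\tau+3\tau^{-1}\right)$ is essential, and where one must carefully combine the degree-$N$ and degree-$\left(N-1\right)$ consequences of $(15)$ with the relations $\tau^{3}=-\tau-\tau^{-1}$ and $\tau^{-2}=-\tau^{2}-1$; everything else is routine expansion. The lower-balanced assertion then follows by the identical argument after interchanging the roles of $g_{1}$ and $g_{2}$ in the normalization (so that $g_{1}$ begins $t^{m}+\cdots$ and $g_{2}$ begins $-\tau t^{m+1}+\cdots$) and using $m_{p,\,\tau,\,l}\left[a\right]$.
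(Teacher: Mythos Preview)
Your proposal is correct and follows essentially the same approach as the paper: both proofs multiply the normalized representative of $M$ by the explicit matrix $m_{p,\,\tau,\,u}\left[a\right]$ and verify the extremal coefficients directly using the functional equation $(15)$ and the identity $\tau^{4}+\tau^{2}+1=0$. The only organizational difference is that you factor out $a-\tau-2\tau^{-1}$ explicitly from all four extremal coefficients, whereas the paper first assumes the lowest coefficient is nonzero, verifies everything under that assumption, and then shows afterward that its vanishing would force $a=\tau+2\tau^{-1}$; your presentation is slightly cleaner in this respect but the content is the same.
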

\begin{proof}

\noindent Suppose that $M$ is upper-balanced. Choose a representative
$B=\left(\begin{array}{cc}
g_{1} & g_{2}\\
-\Phi\overline{g_{2}} & \overline{g_{1}}
\end{array}\right)$ of $M$ such that
\begin{align*}
 & g_{1}=-\tau t^{m_{1}}+a_{1}t^{m_{1}+1}+\cdots+a_{2}t^{n_{1}}+\tau^{-1}b_{3}t^{n_{1}+1},\\
 & g_{2}=t^{m_{1}}+b_{1}t^{m_{1}+1}+\cdots+b_{2}t^{n_{1}-1}+b_{3}t^{n_{1}}.
\end{align*}

By the functional equation (15), we have $b_{2}=a_{2}\tau-a_{1}b_{3}\tau^{-1}-\left(1+b_{1}\right)b_{3}$.
Then, the entry $\left(Bm_{p,\,\tau,\,u}\left[a\right]\right)_{11}$
is computed to be
\begin{align*}
 & \,-\left(1+\left(a+a_{1}\right)\tau+\left(1+b_{1}\right)\tau^{2}\right)t^{m_{1}}-\left(1-aa_{1}+b_{1}+\left(2+b_{1}\right)\tau^{2}\right)t^{m_{1}+1}\\
 & \,+\cdots+\tau^{-1}b_{3}\left(a+a_{1}-\tau^{3}+b_{1}\tau\right)t^{n_{1}+1}.
\end{align*}

At first, suppose that the coefficient of $t^{m_{1}}$ is nonzero.
Then, the signature remains the same from the assumption $\tau^{4}+\tau^{2}+1=0$.
In succession, the entry $\left(Bm_{p,\,\tau,\,u}\left[a\right]\right)_{12}$
is computed to be
\begin{align*}
 & \,\left(a+a_{1}-\tau^{3}+b_{1}\tau\right)t^{m_{1}}+\left(ab_{1}+a_{1}\tau^{2}-\tau\right)t^{m_{1}+1}\\
 & \,+\cdots+\tau^{-1}b_{3}\left(1+\left(a+a_{1}\right)\tau+\left(1+b_{1}\right)\tau^{2}\right)t^{n_{1}}.
\end{align*}

Comparing the terms, we also see that $Bm_{p,\,\tau,\,u}\left[a\right]$
is upper-balanced and the type remains $\tau$. The second-order type
also remains $-\tau-3\tau^{-1}$ as
\begin{align*}
 & \,\left(a+a_{1}-\tau^{3}+b_{1}\tau\right)^{-1}\left(-\left(1-aa_{1}+b_{1}+\left(2+b_{1}\right)\tau^{2}\right)+\tau\left(ab_{1}+a_{1}\tau^{2}-\tau\right)\right)+\tau\\
= & \,\left(a+a_{1}-\tau^{3}+b_{1}\tau\right)^{-1}\left(a\left(a_{1}+\left(1+b_{1}\right)\tau\right)-b_{1}-a_{1}\tau^{-1}-2\tau^{2}\right)\\
= & \,\left(\tau^{2}-a\tau+2\right)^{-1}\left(a\left(3+\tau^{2}\right)-4\tau-5\tau^{-1}\right)\\
= & \,-\tau-3\tau^{-1},
\end{align*}
where the second equality follows from the assumption $a_{1}+\tau b_{1}+\tau=-\tau-3\tau^{-1}$.
On the other hand, suppose the coefficient of $t^{m_{1}}$ is zero.
Then, we have
\begin{align*}
 & \,a\\
= & \,-a_{1}+\tau^{3}-b_{1}\tau\\
= & \,\tau+2\tau^{-1},
\end{align*}
where the second equality follows from the assumption $a_{1}+\tau b_{1}+\tau=-\tau-3\tau^{-1}$.
This contradicts the fact that $a$ is chosen outside $\left\{ \pm\left(\tau+2\tau^{-1}\right)\right\} $.
The case where $M$ is lower-balanced is proven in the same way. $\qedhere$

\noindent \end{proof}
\begin{lemma}

Suppose an element $\tau\in\mathbb{F}_{p}$ satisfies $\tau^{4}+\tau^{2}+1=0$.
Choose an element $M\in\mathcal{Q}_{p}$ such that $M$ is upper-balanced,
$\mathrm{rd}\left(M\right)\ge2$ and $\tau\left(M\right)=\tau$. Then,
for any $a\in\mathbb{F}_{p}\backslash\left\{ 3\tau+\tau^{-1}\right\} $,
$M\overline{s_{p,\,\tau}}\left[a\right]$ is upper-balanced, and has
the same type and signature as $M$. Moreover, we have
\begin{align}
 & \,\mathrm{rd}\left(M\overline{s_{p,\,\tau}}\left[a\right]\right)=\mathrm{rd}\left(M\right)+4,\\
 & \,\tau2_{\tau}\left(M\overline{s_{p,\,\tau}}\left[a\right]\right)=-\left(\tau+3\tau^{-1}\right).
\end{align}

On the other hand, choose an element $M\in\mathcal{Q}_{p}$ such that
$M$ is lower-balanced, $\mathrm{rd}\left(M\right)\ge2$ and $\tau\left(M\right)=\tau$.
Then, for any $a\in\mathbb{F}_{p}\backslash\left\{ 3\tau+\tau^{-1}\right\} $,
$M\overline{s_{p,\,\tau}}\left[a\right]^{-1}$ is lower-balanced,
and has the same type and signature as $M$. We have
\begin{align}
 & \,\mathrm{rd}\left(M\overline{s_{p,\,\tau}}\left[a\right]^{-1}\right)=\mathrm{rd}\left(M\right)+4,\\
 & \,\tau2_{\tau}\left(M\overline{s_{p,\,\tau}}\left[a\right]^{-1}\right)=-\left(\tau+3\tau^{-1}\right).
\end{align}
\end{lemma}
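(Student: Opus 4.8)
The plan is to reduce the whole statement to an explicit examination of the lowest- and highest-degree terms of the first row of the product, in the style of the proofs of Lemma 4.8 and Lemma 4.13. First I would record that, from the explicit form of $\overline{s_{p,\,\tau}}[a]$ recalled just above, its $(1,1)$-entry $-\tau t^{-1}+a_{1}+at+\tau^{-1}t^{2}$ and its palindromic $(1,2)$-entry $t^{-1}+a_{3}+t$ show that $\overline{s_{p,\,\tau}}[a]$ is \emph{oddly upper-balanced, of type $\tau$ and relative degree $3$}. Hence, for an upper-balanced $M$ of type $\tau$, case (1) of Lemma 3.13 already gives that $M\overline{s_{p,\,\tau}}[a]$ is upper-balanced with type $\tau$; only the signature, the relative degree, and the second-order type require genuine computation. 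I would then fix a representative $B=\left(\begin{smallmatrix}g_{1}&g_{2}\\-\Phi\overline{g_{2}}&\overline{g_{1}}\end{smallmatrix}\right)$ of $M$ with $g_{1}$ supported on $[m,\,m+r]$ and $g_{2}$ on $[m,\,m+r-1]$, where $r=\mathrm{rd}(M)\ge2$, normalized so that the trailing coefficient of $g_{1}$ is $-\tau$ and that of $g_{2}$ is $1$; write $\alpha\ne0$ for the leading coefficient of $g_{1}$, and let $h_{1},h_{2}$ be the first-row entries of $s_{p,\,\tau}[a]$, so that $\overline{h_{2}}=h_{2}$.

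The relative-degree and signature claims come from the extreme terms of $(Bs_{p,\,\tau}[a])_{11}=g_{1}h_{1}-\Phi g_{2}h_{2}$ and $(Bs_{p,\,\tau}[a])_{12}=g_{1}h_{2}+g_{2}\overline{h_{1}}$. A quick inspection of supports shows that $g_{1}h_{1}$ contributes a top term $\alpha\tau^{-1}t^{m+r+2}$ unmatched by $\Phi g_{2}h_{2}$ (which reaches only $t^{m+r+1}$), while $\Phi g_{2}h_{2}$ contributes a bottom term $t^{m-2}$ unmatched by $g_{1}h_{1}$ (which reaches only $t^{m-1}$); likewise $(Bs_{p,\,\tau}[a])_{12}$ turns out to be supported on $[m-2,\,m+r+1]$. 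Thus $M\overline{s_{p,\,\tau}}[a]$ is upper-balanced with $\mathrm{rd}=r+4$, and comparing the ratio $(\text{top coeff})/(\text{bottom coeff})=\alpha\tau^{-1}/(-1)$ of $(Bs_{p,\,\tau}[a])_{11}$ with $\sigma(M)=\alpha/(-\tau)$ gives $\sigma(M\overline{s_{p,\,\tau}}[a])=\sigma(M)$.

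For the second-order type I would compute the coefficients of $t^{m-2}$ and $t^{m-1}$ in both first-row entries of the product. The coefficient of $t^{m-2}$ in $(Bs_{p,\,\tau}[a])_{12}$ is $\tau^{-1}$; the coefficient of $t^{m-1}$ in $(Bs_{p,\,\tau}[a])_{11}$ works out to $\tau^{2}-(a_{3}+1+b_{1})$ and that in $(Bs_{p,\,\tau}[a])_{12}$ to $-\tau+a+b_{1}\tau^{-1}$, where $b_{1}$ denotes the coefficient of $t^{m+1}$ in $g_{2}$. Assembling the quantity $\tau2_{\tau}$ of Definition 4.7, the $b_{1}$-terms cancel at once and, after substituting $a_{3}=-3\tau^{2}+\tau a-2$, the $a$-terms cancel as well, leaving $\tau2_{\tau}(M\overline{s_{p,\,\tau}}[a])=(3\tau^{2}+1)\tau+\tau=3\tau^{3}+2\tau$, which equals $-(\tau+3\tau^{-1})$ precisely because $3(\tau^{4}+\tau^{2}+1)=0$. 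The main obstacle is exactly this last piece of bookkeeping: one must verify that no further cross-terms enter the degree-$(m-2)$ and degree-$(m-1)$ coefficients, that the contributions of $b_{1}$ and of the off-diagonal factor $\Phi$ drop out identically, and that the residual expression collapses to the universal value $-(\tau+3\tau^{-1})$; this cancellation is what makes the second-order type independent of $M$ and of $a$, which is the feature the lemma is designed to record.

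Finally, the lower-balanced case with $\overline{s_{p,\,\tau}}[a]^{-1}$ is entirely symmetric. Since $\overline{s_{p,\,\tau}}[a]^{-1}$ is the projectivization of a scalar multiple of $s_{p,\,\tau}[a]^{\dagger}$, whose first row is $(\overline{h_{1}},\,-h_{2})$, it is oddly \emph{lower-balanced}, of type $\tau$ and relative degree $3$; so case (7) of Lemma 3.13 handles balancedness and type, and the identical coefficient analysis — with the roles of the highest and lowest degrees interchanged — yields that $\mathrm{rd}$ increases by $4$, that the signature is preserved, and that $\tau2_{\tau}(M\overline{s_{p,\,\tau}}[a]^{-1})=-(\tau+3\tau^{-1})$.
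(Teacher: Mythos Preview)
Your proposal is correct and follows essentially the same approach as the paper: both arguments fix a representative of $M$, compute the extreme and next-to-extreme coefficients of the first row of $Bs_{p,\,\tau}[a]$ directly, and then read off the balancedness, relative degree, signature, and second-order type, with the lower-balanced case handled symmetrically via $s_{p,\,\tau}[a]^{\dagger}$. Your only addition is the preliminary appeal to Lemma~3.13 (cases (1) and (7)) to dispose of balancedness and type before turning to the coefficient computation, which is a harmless organizational shortcut rather than a genuinely different route.
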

\begin{proof}

\noindent Suppose $M$ is upper-balanced. Choose a representative
$B=\left(\begin{array}{cc}
g_{1} & g_{2}\\
-\Phi\overline{g_{2}} & \overline{g_{1}}
\end{array}\right)$ of $M$ such that
\begin{align*}
 & g_{1}=-\tau t^{m_{1}}+a_{1}t^{m_{1}+1}+\cdots+\tau^{-1}b_{2}t^{n_{1}+1},\\
 & g_{2}=t^{m_{1}}+b_{1}t^{m_{1}+1}+\cdots+b_{2}t^{n_{1}}.
\end{align*}

Then, the entry $\left(Bs_{p,\,\tau}\left[a\right]\right)_{11}$ is
computed to be
\begin{align*}
-t^{m_{1}-2}-\left(-1+a\tau-4\tau^{2}+b_{1}\right)t^{m_{1}-1}+\cdots+\tau^{-2}b_{2}t^{n_{1}+3},
\end{align*}

\noindent where the signature remains the same. The entry $\left(Bs_{p,\,\tau}\left[a\right]\right)_{12}$
is computed to be
\begin{align*}
\tau^{-1}t^{m_{1}-2}+\left(a-\tau+b_{1}\tau^{-1}\right)t^{m_{1}-1}+\cdots+\tau^{-1}b_{2}t^{n_{1}+2},
\end{align*}

\noindent where $Bs_{p,\,\tau}\left[a\right]$ is upper-balanced,
satisfies (58), and the type remains $\tau$. The value $\tau2_{\tau}\left(Bs_{p,\,\tau}\left[a\right]\right)$
is computed to be $-\left(\tau+3\tau^{-1}\right)$, by using the assumption
$\tau^{4}+\tau^{2}+1=0$, establishing (59). In the case where $M$
is lower-balanced, (60) and (61) are proven in the same way. $\qedhere$

\noindent \end{proof}
\begin{lemma}

Suppose an element $\tau\in\mathbb{F}_{p}$ satisfies $\tau^{4}+\tau^{2}+1=0$.
Choose an element $M\in\mathcal{Q}_{p}$ such that $M$ is lower-balanced,
$\mathrm{rd}\left(M\right)\ge3$, $\tau\left(M\right)=\tau$ and $\tau2_{\tau}\left(M\right)\ne-\left(\tau+3\tau^{-1}\right)$.
Then, for any $a\in\mathbb{F}_{p}\backslash\left\{ 3\tau+\tau^{-1}\right\} $,
$M\overline{s_{p,\,\tau}}\left[a\right]$ is upper-balanced, has type
$\tau$, and satisfies
\begin{align*}
 & \mathrm{rd}\left(M\overline{s_{p,\,\tau}}\left[a\right]\right)=\mathrm{rd}\left(M\right)+1,\,\tau\left(M\overline{s_{p,\,\tau}}\left[a\right]\right)=\tau,\\
 & \sigma\left(M\overline{s_{p,\,\tau}}\left[a\right]\right)=\tau^{-2}\sigma\left(M\right),\,\tau2_{\tau}\left(M\overline{s_{p,\,\tau}}\left[a\right]\right)=-\left(\tau+3\tau^{-1}\right).
\end{align*}

On the other hand, choose an element $M\in\mathcal{Q}_{p}$ such that
$M$ is upper-balanced, $\mathrm{rd}\left(M\right)\ge3$, $\tau\left(M\right)=\tau$
and $\tau2_{\tau}\left(M\right)\ne-\left(\tau+3\tau^{-1}\right)$.
Then, for any $a\in\mathbb{F}_{p}\backslash\left\{ 3\tau+\tau^{-1}\right\} $,
$M\overline{s_{p,\,\tau}}\left[a\right]^{-1}$ is lower-balanced,
has type $\tau$, and satisfies
\begin{align*}
 & \mathrm{rd}\left(M\overline{s_{p,\,\tau}}\left[a\right]\right)=\mathrm{rd}\left(M\right)+1,\,\tau\left(M\overline{s_{p,\,\tau}}\left[a\right]\right)=\tau,\\
 & \sigma\left(M\overline{s_{p,\,\tau}}\left[a\right]^{-1}\right)=\tau^{2}\sigma\left(M\right),\,\tau2_{\tau}\left(M\overline{s_{p,\,\tau}}\left[a\right]\right)=-\left(\tau+3\tau^{-1}\right).
\end{align*}
\end{lemma}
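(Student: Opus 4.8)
The plan is to prove the lemma by the same explicit leading-term bookkeeping used in the preceding lemmas, now exploiting a \emph{double} cancellation at the two ends of the first row. First I would fix a normalized lower-balanced representative $B=\left(\begin{array}{cc}g_{1} & g_{2}\\-\Phi\overline{g_{2}} & \overline{g_{1}}\end{array}\right)$ of $M$ with
\begin{align*}
 & g_{1}=t^{m_{1}-1}+a_{1}t^{m_{1}}+\cdots+a_{2}t^{n_{1}-1}+\sigma t^{n_{1}},\\
 & g_{2}=-\tau t^{m_{1}}+b_{1}t^{m_{1}+1}+\cdots+b_{2}t^{n_{1}-1}+\sigma\tau^{-1}t^{n_{1}},
\end{align*}
where the lowest coefficient of $g_{1}$ is normalized to $1$; here $\tau\left(M\right)=\tau$ forces the lowest coefficient of $g_{2}$ to be $-\tau$, and the functional equation (15) together with $\mathrm{rd}\left(M\right)\ge3$ forces the displayed top coefficients in terms of $\sigma=\sigma\left(M\right)$ as well as the relation $a_{2}-\tau b_{2}=\sigma\left(1-a_{1}-\tau^{-1}b_{1}\right)$. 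In this normalization $\tau2_{\tau}\left(M\right)=b_{1}+\tau a_{1}$.

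Next I would compute the first-row entries of $Bs_{p,\,\tau}\left[a\right]$ using the explicit matrix $s_{p,\,\tau}\left[a\right]$ recalled just before the lemma, whose row entries have short degree windows $[-1,2]$, $[-1,1]$, $[-2,2]$, $[-2,1]$ for $(s)_{11},(s)_{12},(s)_{21},(s)_{22}$ respectively. The crucial point is that $s_{p,\,\tau}\left[a\right]$ is itself upper-balanced of type $\tau$ with signature $-\tau^{-2}$, so this product falls into the single borderline case that Lemma 3.13 must exclude --- case (6), with $\sigma\left(M_{2}\right)=-\left(\tau\left(M_{1}\right)\tau\left(M_{2}\right)\right)^{-1}$ --- and hence the extreme-lowest coefficient (at $t^{m_{1}-2}$) and the extreme-highest coefficient (at $t^{n_{1}+2}$) of $\left(Bs_{p,\,\tau}\left[a\right]\right)_{11}$ vanish identically, the two summands $g_{1}(s)_{11}$ and $g_{2}(s)_{21}$ contributing opposite coefficients there; similarly $\left(Bs_{p,\,\tau}\left[a\right]\right)_{12}$ cancels at its bottom end. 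Computing the \emph{next} coefficient at each end of $\left(Bs_{p,\,\tau}\left[a\right]\right)_{11}$ --- where, after substituting $a_{3}=-3\tau^{2}+\tau a-2$, all dependence on the parameter $a$ drops out --- gives that the coefficient of $t^{m_{1}-1}$ equals $2\tau+3\tau^{3}-\tau2_{\tau}\left(M\right)$ and the coefficient of $t^{n_{1}+1}$ equals $\tau^{-2}\sigma\left(M\right)\bigl(2\tau+3\tau^{3}-\tau2_{\tau}\left(M\right)\bigr)$. Since $2\tau+3\tau^{3}=-\left(\tau+3\tau^{-1}\right)$ \emph{exactly} because $\tau^{4}+\tau^{2}+1=0$, the hypothesis $\tau2_{\tau}\left(M\right)\ne-\left(\tau+3\tau^{-1}\right)$ is precisely what makes both of these second coefficients nonzero; therefore no third cancellation occurs, $\left(Bs_{p,\,\tau}\left[a\right]\right)_{11}$ has degree window exactly $\left[m_{1}-1,\,n_{1}+1\right]$, and so $\mathrm{rd}\left(M\overline{s_{p,\,\tau}}\left[a\right]\right)=\mathrm{rd}\left(M\right)+1$ with $\sigma\left(M\overline{s_{p,\,\tau}}\left[a\right]\right)=\tau^{-2}\sigma\left(M\right)$.

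To finish, I would compute the lowest coefficient $b_{0}'$ of $\left(Bs_{p,\,\tau}\left[a\right]\right)_{12}$ (at $t^{m_{1}-1}$) and verify the identity $b_{0}'=-\tau^{-1}\bigl(2\tau+3\tau^{3}-\tau2_{\tau}\left(M\right)\bigr)$, together with the vanishing of the $t^{n_{1}+1}$-coefficient of $\left(Bs_{p,\,\tau}\left[a\right]\right)_{12}$ and the fact that its $t^{n_{1}}$-coefficient is $-\sigma\left(M\right)b_{0}'\ne0$; these simultaneously show that $Bs_{p,\,\tau}\left[a\right]$ is upper-balanced and that its type is $-a_{0}'\left(b_{0}'\right)^{-1}=\tau$. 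The remaining claim $\tau2_{\tau}\left(M\overline{s_{p,\,\tau}}\left[a\right]\right)=-\left(\tau+3\tau^{-1}\right)$ follows by reading off the $t^{m_{1}}$-coefficients of the two first-row entries and simplifying once more with $\tau^{4}+\tau^{2}+1=0$ --- this being the same stable second-order type produced by Lemma 4.14. For the second assertion ($M$ upper-balanced, multiplied on the right by $\overline{s_{p,\,\tau}}\left[a\right]^{-1}$) I would run the identical argument with the quaternionic-form representative $s_{p,\,\tau}\left[a\right]^{\dagger}=\det\left(s_{p,\,\tau}\left[a\right]\right)s_{p,\,\tau}\left[a\right]^{-1}$, which is lower-balanced; under $\dagger$ the roles of upper- and lower-balanced interchange and the exponent of $\tau$ in the signature formula flips from $-2$ to $2$, with everything else verbatim. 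The main obstacle is purely the bookkeeping: one must keep the constraints from (15) straight so that the two second coefficients at opposite ends of $\left(Bs_{p,\,\tau}\left[a\right]\right)_{11}$ are correctly identified --- up to the nonzero factor $\tau^{-2}\sigma\left(M\right)$ --- with the single quantity $2\tau+3\tau^{3}-\tau2_{\tau}\left(M\right)$, and one must track the degree windows carefully enough to certify both upper-balancedness and the absence of any further cancellation; once those coefficients are in hand, each of the four numerical conclusions is a one-line consequence of $\tau^{4}+\tau^{2}+1=0$.
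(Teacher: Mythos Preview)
Your proposal is correct and follows essentially the same approach as the paper: fix a normalized representative, compute the extremal and next-to-extremal coefficients of the first-row entries of the product, use the functional equation (15) to relate the top and bottom coefficients, and observe that the surviving leading coefficient equals $-(\tau+3\tau^{-1})-\tau2_{\tau}(M)$, which is nonzero by hypothesis. The only difference is cosmetic --- you carry out the explicit computation for the lower-balanced case (multiplying by $s_{p,\tau}[a]$) and defer the upper-balanced case to the dagger symmetry, whereas the paper does the reverse; your additional remark linking the first cancellation to the excluded case (6) of Lemma~3.13 is a nice conceptual gloss not present in the paper's proof.
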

\begin{proof}

\noindent Suppose $M$ is upper-balanced. Choose a representative
$B=\left(\begin{array}{cc}
g_{1} & g_{2}\\
-\Phi\overline{g_{2}} & \overline{g_{1}}
\end{array}\right)$ of $M$ such that
\begin{align*}
 & g_{1}=-\tau t^{m_{1}}+a_{1}t^{m_{1}+1}+\cdots+a_{2}t^{n_{1}}+\tau^{-1}b_{3}t^{n_{1}+1},\\
 & g_{2}=t^{m_{1}}+b_{1}t^{m_{1}+1}+\cdots+b_{2}t^{n_{1}-1}+b_{3}t^{n_{1}}.
\end{align*}

By the functional equation (15), we have $b_{2}=a_{2}\tau-a_{1}b_{3}\tau^{-1}-\left(1+b_{1}\right)b_{3}$.
By expressing $b_{2}$ in $a_{1},\,a_{2},\,b_{3}$, the entry $\left(Bs_{p,\,\tau}\left[a\right]^{\dagger}\right)_{11}$
is computed to be
\begin{align*}
 & \,\left(-1+a_{1}\tau^{-1}+b_{1}-3\tau^{2}\right)t^{m_{1}-1}+\left(6+aa_{1}-a\tau^{-1}+ab_{1}\tau-3b_{1}\tau^{2}\right)t^{m_{1}}\\
 & \,+\cdots+b_{3}\left(1-a_{1}\tau^{-1}-b_{1}+3\tau^{2}\right)t^{n_{1}+1}.
\end{align*}

By the assumption $\tau2_{\tau}\left(M\right)\ne-\left(\tau+3\tau^{-1}\right)$,
we see that the coefficient of $t^{m_{1}-1}$ is nonzero and the signature
is $-b_{3}$. The entry $\left(Bs_{p,\,\tau}\left[a\right]^{\dagger}\right)_{12}$
is computed to be
\begin{align*}
 & \,\left(\tau-a_{1}-b_{1}\tau+3\tau^{3}\right)t^{m_{1}}+\left(a+2a_{1}+\tau-aa_{1}\tau+3a_{1}\tau^{2}+3b_{1}\tau-ab_{1}\tau^{2}+6b_{1}\tau^{3}\right)t^{m_{1}+1}\\
 & \,+\cdots+b_{3}\tau^{-2}\left(-a_{1}-b_{1}\tau+\tau+3\tau^{3}\right)t^{n_{1}+1}.
\end{align*}

Therefore, the matrix $Bs_{p,\,\tau}\left[a\right]^{\dagger}$ is
lower-balanced and has type $\tau$. The second-order type is directly
computed to be $-\left(\tau+3\tau^{-1}\right)$ by using the coefficients.
The case where $M$ is lower-balanced is proven in the same way. $\qedhere$

\noindent \end{proof}

Here is the key lemma used to establish the HNN extension. Recall
that we defined the base group $G_{p,\,\tau}$ for $\tau$ in Lemma
4.8.

\noindent \begin{lemma}

Suppose an element $\tau\in\mathbb{F}_{p}$ satisfies $\tau^{4}+\tau^{2}+1=0$.
Choose an element $M_{1}\in\mathcal{Q}_{p}$ such that $M_{1}$ is
balanced, $\mathrm{rd}\left(M_{1}\right)\ge3$, $\tau\left(M_{1}\right)=\tau$
and $\tau2_{\tau}\left(M_{1}\right)=-\left(\tau+3\tau^{-1}\right)$.
Choose an element $M_{2}\in G_{p,\,\tau}$.
\begin{description}
\item [{(a)}] Suppose $M_{1}$ is oddly upper-balanced and $\sigma\left(M_{1}\right)=-\tau^{-2}$.
The relative degree of $M_{1}M_{2}$ increases in all three cases
below.
\end{description}
\begin{enumerate}
\item When $M_{2}$ is upper-balanced and $\mathrm{rd}\left(M_{2}\right)\ge3$,
$\sigma\left(M_{1}M_{2}\right)=-\tau^{-2}$, $M_{1}M_{2}$ is oddly
upper-balanced, and $\tau2_{\tau}\left(M_{1}M_{2}\right)=-\tau^{3}$.
\item When $M_{2}$ is lower-balanced and $\mathrm{rd}\left(M_{2}\right)\ge3$,
$\sigma\left(M_{1}M_{2}\right)=-1$, $M_{1}M_{2}$ is evenly lower-balanced,
and $\tau2_{\tau}\left(M_{1}M_{2}\right)=-\tau^{3}$.
\item When $M_{2}$ is a lower multiplicative generator $\overline{m_{p,\,\tau,\,l}}\left[a\right]$,
$\sigma\left(M_{1}M_{2}\right)=-1$, $M_{1}M_{2}$ is evenly lower-balanced,
and $\tau2_{\tau}\left(M_{1}M_{2}\right)=-a-\tau^{-1}$, which is
different from both $-\left(\tau+3\tau^{-1}\right)$ and $-\tau^{3}$.
\end{enumerate}
\begin{description}
\item [{(b)}] Suppose $M_{1}$ is evenly upper-balanced and $\sigma\left(M_{1}\right)=-1$.
The relative degree of $M_{1}M_{2}$ increases in all three cases
below.
\end{description}
\begin{enumerate}
\item When $M_{2}$ is upper-balanced and $\mathrm{rd}\left(M_{2}\right)\ge3$,
$\sigma\left(M_{1}M_{2}\right)=-1$, $M_{1}M_{2}$ is evenly upper-balanced,
and $\tau2_{\tau}\left(M_{1}M_{2}\right)=-\tau^{3}$.
\item When $M_{2}$ is lower-balanced and $\mathrm{rd}\left(M_{2}\right)\ge3$,
$\sigma\left(M_{1}M_{2}\right)=-\tau^{2}$, $M_{1}M_{2}$ is oddly
lower-balanced, and $\tau2_{\tau}\left(M_{1}M_{2}\right)=-\tau^{3}$.
\item When $M_{2}$ is a lower multiplicative generator $\overline{m_{p,\,\tau,\,l}}\left[a\right]$,
$\sigma\left(M_{1}M_{2}\right)=-\tau^{2}$, $M_{1}M_{2}$ is oddly
lower-balanced, and $\tau2_{\tau}\left(M_{1}M_{2}\right)=-a-\tau^{-1}$,
which is different from both $-\left(\tau+3\tau^{-1}\right)$ and
$-\tau^{3}$.
\end{enumerate}
\begin{description}
\item [{(c)}] Suppose $M_{1}$ is oddly lower-balanced and $\sigma\left(M_{1}\right)=-\tau^{2}$.
The relative degree of $M_{1}M_{2}$ increases in all three cases
below.
\end{description}
\begin{enumerate}
\item When $M_{2}$ is upper-balanced and $\mathrm{rd}\left(M_{2}\right)\ge3$,
$\sigma\left(M_{1}M_{2}\right)=-1$, $M_{1}M_{2}$ is evenly upper-balanced,
and $\tau2_{\tau}\left(M_{1}M_{2}\right)=-\tau^{3}$.
\item When $M_{2}$ is lower-balanced and $\mathrm{rd}\left(M_{2}\right)\ge3$,
$\sigma\left(M_{1}M_{2}\right)=-\tau^{2}$, $M_{1}M_{2}$ is oddly
lower-balanced, and $\tau2_{\tau}\left(M_{1}M_{2}\right)=-\tau^{3}$.
\item When $M_{2}$ is a upper multiplicative generator $\overline{m_{p,\,\tau,\,u}}\left[a\right]$,
$\sigma\left(M_{1}M_{2}\right)=-1$, $M_{1}M_{2}$ is evenly upper-balanced,
and $\tau2_{\tau}\left(M_{1}M_{2}\right)=a-\tau^{-1}$, which is different
from both $-\left(\tau+3\tau^{-1}\right)$ and $-\tau^{3}$.
\end{enumerate}
\begin{description}
\item [{(d)}] Suppose $M_{1}$ is evenly lower-balanced and $\sigma\left(M_{1}\right)=-1$.
The relative degree of $M_{1}M_{2}$ increases in all three cases
below.
\end{description}
\begin{enumerate}
\item When $M_{2}$ is upper-balanced and $\mathrm{rd}\left(M_{2}\right)\ge3$,
$\sigma\left(M_{1}M_{2}\right)=-\tau^{-2}$, $M_{1}M_{2}$ is oddly
upper-balanced, and $\tau2_{\tau}\left(M_{1}M_{2}\right)=-\tau^{3}$.
\item When $M_{2}$ is lower-balanced and $\mathrm{rd}\left(M_{2}\right)\ge3$,
$\sigma\left(M_{1}M_{2}\right)=-1$, $M_{1}M_{2}$ is evenly lower-balanced,
and $\tau2_{\tau}\left(M_{1}M_{2}\right)=-\tau^{3}$.
\item When $M_{2}$ is a upper multiplicative generator $\overline{m_{p,\,\tau,\,u}}\left[a\right]$,
$\sigma\left(M_{1}M_{2}\right)=-\tau^{-2}$, $M_{1}M_{2}$ is oddly
upper-balanced, and $\tau2_{\tau}\left(M_{1}M_{2}\right)=a-\tau^{-1}$,
which is different from both $-\left(\tau+3\tau^{-1}\right)$ and
$-\tau^{3}$.
\end{enumerate}
\noindent \end{lemma}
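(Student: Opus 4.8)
The plan is to prove all of (a)--(d) by the leading-and-trailing-coefficient bookkeeping that recurs throughout Section~4, using the free-product normal form of $G_{p,\tau}$ to reduce the work. By Lemma~4.8 every nontrivial element $M_2 \in G_{p,\tau}$ is balanced of type $\tau$, and if $\mathrm{rd}(M_2) \ge 3$ it carries $\tau2_\tau(M_2) = -\tau^3$, $\sigma2_\tau(M_2) = \tau + \tau^{-1}$, and a signature fixed by its parity; while the only nontrivial elements of $G_{p,\tau}$ of relative degree $\le 2$ are the multiplicative generators. Hence the case split of the lemma --- $M_2$ upper-balanced with $\mathrm{rd}\ge 3$ (case~(1)), lower-balanced with $\mathrm{rd}\ge 3$ (case~(2)), and a single mismatched multiplicative generator (case~(3)) --- is exhaustive: the matched multiplicative generators are precisely the $M_2$ for which $\mathrm{rd}(M_1M_2)$ does \emph{not} increase, and those are excluded by Lemma~4.13. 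It then suffices to treat part~(a) in detail; parts (b), (c), (d) run in parallel, with the invariant values changed exactly as recorded in their tables.

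For part~(a) I would fix the normalized representative $B_1 = \left(\begin{array}{cc} g_1 & g_2 \\ -\Phi\overline{g_2} & \overline{g_1}\end{array}\right)$ of $M_1$, upper-balanced with lowest coefficient of $g_1$ equal to $-\tau$, expand $g_1$ and $g_2$ to second order at both ends, and use the functional equation~(15) to write the trailing coefficients through the leading ones; the hypotheses $\sigma(M_1) = -\tau^{-2}$ and $\tau2_\tau(M_1) = -(\tau+3\tau^{-1})$ then pin down the remaining sub-leading data. For the right factor I take the normalized representative furnished by Lemma~4.8 in cases~(1)--(2) and the explicit matrix of Definition~3.9 in case~(3). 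Multiplying $B_1 B_2$ and reading off the extreme terms of its $(1,1)$- and $(1,2)$-entries, one checks: in cases~(1)--(2) the extreme terms of the two summands in each entry do not combine to zero --- the constraints that $\sigma$ and $\sigma2_\tau$ impose on the extreme coefficients of both factors rule it out --- so the relative degree strictly increases and one extracts $\tau(M_1M_2) = \tau$, $\tau2_\tau(M_1M_2) = -\tau^3$, and the parity and signature in the table; in case~(3) the hypothesis $\tau2_\tau(M_1) = -(\tau+3\tau^{-1})$ is exactly what forces the cancellation against the multiplicative generator to be only partial, so the relative degree still strictly increases, and a direct computation using $\tau^4 + \tau^2 + 1 = 0$ yields $\tau2_\tau(M_1M_2) = -a - \tau^{-1}$ (and $a - \tau^{-1}$ in parts~(c),(d)). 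The two asserted inequalities then follow from $a \notin \{\pm(\tau + 2\tau^{-1})\}$: since $\tau^3 = -\tau - \tau^{-1}$, one has $-a-\tau^{-1} = -\tau^3 \iff a = -(\tau+2\tau^{-1})$ and $-a-\tau^{-1} = -(\tau+3\tau^{-1}) \iff a = \tau+2\tau^{-1}$, both excluded (and symmetrically for $a-\tau^{-1}$).

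The main obstacle will be the sheer volume of the bookkeeping: even after the reduction, each remaining sub-case forces the simultaneous tracking of balancedness, relative degree, type, second-order type, and signature of $M_1M_2$, all of which must be matched against (a)--(d). The one genuinely delicate point is the second-order type in case~(3): showing that $-a-\tau^{-1}$ (resp. $a-\tau^{-1}$) is distinct from both $-\tau^3$ and $-(\tau+3\tau^{-1})$ is precisely what will later guarantee, in the Fenchel--Nielsen setup of Theorem~4.12, that a coset of $G_{p,\tau}$ modulo a multiplicative subgroup is carried into $X_0$ away from the $Z$-regions --- the hard technical content of the HNN extension. The role of $\tau2_\tau(M_1) = -(\tau+3\tau^{-1})$ throughout is to keep the cancellation against a multiplicative generator partial, so the relative degree never drops; this is the analogue, one order of approximation deeper, of the non-vanishing of leading coefficients exploited in Lemmas~3.13 and~4.8.
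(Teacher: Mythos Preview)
Your plan is correct and follows essentially the same approach as the paper: both arguments proceed by explicit leading- and trailing-coefficient bookkeeping on a normalized representative of $M_1M_2$, invoke Lemma~4.8 to pin down $\sigma(M_2)$, $\tau2_\tau(M_2)=-\tau^3$, and $\sigma2_\tau(M_2)=\tau+\tau^{-1}$, and identify the crucial mechanism as the inequality $\tau2_\tau(M_1)=-(\tau+3\tau^{-1})\ne\tau+\tau^{-1}=\sigma2_\tau(M_2)$ preventing a second cancellation when the leading terms collide. The paper carries out case~(a)-1 in full (including the sub-case $m_2=-n_2-1$ where one cancellation does occur) and then declares the remaining cases analogous, exactly as you propose; your verification of the inequalities in case~(3) via $\tau^3=-\tau-\tau^{-1}$ is also the intended one.
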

\begin{proof}

\noindent Assume the case (a)-1. We choose representatives $B_{1}=\left(\begin{array}{cc}
g_{1,\,1} & g_{1,\,2}\\
-\Phi\overline{g_{1,\,2}} & \overline{g_{1,\,1}}
\end{array}\right)$ for $M_{1}$ and $B_{2}=\left(\begin{array}{cc}
g_{2,\,1} & g_{2,\,2}\\
-\Phi\overline{g_{2,\,2}} & \overline{g_{2,\,1}}
\end{array}\right)$ for $M_{2}$ such that
\begin{align*}
 & g_{1,\,1}=-\tau t^{m_{1}}+a_{1,\,1}t^{m_{1}+1}+\cdots+a_{1,\,2}t^{n_{1}}+\tau^{-1}t^{n_{1}+1},\\
 & g_{1,\,2}=t^{m_{1}}+b_{1,\,1}t^{m_{1}+1}+\cdots+b_{1,\,2}t^{n_{1}-1}+t^{n_{1}},\\
 & g_{2,\,1}=-\tau t^{m_{2}}+a_{2,\,1}t^{m_{2}+1}+\cdots+a_{2,\,2}t^{n_{2}}+\tau^{-1}b_{2,\,3}t^{n_{2}+1},\\
 & g_{2,\,2}=t^{m_{2}}+b_{2,\,1}t^{m_{2}+1}+\cdots+b_{2,\,2}t^{n_{2}-1}+b_{2,\,3}t^{n_{2}},
\end{align*}
where the functional equation (15) implies
\begin{align}
 & b_{1,\,2}=a_{1,\,2}\tau-1-a_{1,\,1}\tau^{-1}-b_{1,\,1},\\
 & b_{2,\,2}=a_{2,\,2}\tau-b_{2,\,3}\left(1+a_{2,\,1}\tau^{-1}+b_{2,\,1}\right).
\end{align}

The entry $\left(B_{1}B_{2}\right)_{11}$ is computed as
\begin{align}
 & \,\tau^{2}t^{m_{1}+m_{2}}-b_{2,\,3}t^{m_{1}-n_{2}-1}-\left(\tau a_{2,\,1}+\tau a_{1,\,1}\right)t^{m_{1}+m_{2}+1}\nonumber \\
 & -\left(b_{2,\,2}+b_{2,\,3}+b_{1,\,1}b_{2,\,3}\right)t^{m_{1}-n_{2}}+\cdots+\left(\tau^{-1}a_{2,\,2}+\tau^{-1}a_{1,\,2}b_{2,\,3}\right)t^{n_{1}+n_{2}+1}\nonumber \\
 & -\left(1+b_{2,\,1}+b_{1,\,2}\right)t^{n_{1}-m_{2}}+\tau^{-2}b_{2,\,3}t^{n_{1}+n_{2}+2}-t^{n_{1}-m_{2}+1},
\end{align}
while the entry $\left(B_{1}B_{2}\right)_{12}$ is computed as
\begin{align}
 & \,-\tau t^{m_{1}+m_{2}}+\tau^{-1}b_{2,\,3}t^{m_{1}-n_{2}-1}+\left(a_{1,\,1}-\tau b_{2,\,1}\right)t^{m_{1}+m_{2}+1}+\left(a_{2,\,2}+\tau^{-1}b_{1,\,1}b_{2,\,3}\right)t^{m_{1}-n_{2}}+\cdots\nonumber \\
 & +\left(\tau^{-1}b_{2,\,2}+a_{1,\,2}b_{2,\,3}\right)t^{n_{1}+n_{2}}+\left(a_{2,\,1}-\tau b_{1,\,2}\right)t^{n_{1}-m_{2}-1}+\tau^{-1}b_{2,\,3}t^{n_{1}+n_{2}+1}-\tau t^{n_{1}-m_{2}}.
\end{align}

By comparing (64) and (65), if $m_{2}\ne-n_{2}-1$, we obtain all
the desired results. Suppose $m_{2}=-n_{2}-1$, where $M_{2}$ is
evenly upper-balanced. By Lemma 4.8, we have $\sigma\left(M_{2}\right)=-1$
and $b_{2,\,3}=\tau^{2}$. Then, the coefficients of $t^{m_{1}+m_{2}}$,
$t^{n_{1}+n_{2}+2}$ in $\left(B_{1}B_{2}\right)_{11}$ and the coefficients
of $t^{m_{1}+m_{2}}$, $t^{n_{1}+n_{2}+1}$ in $\left(B_{1}B_{2}\right)_{12}$
are all zero. By canceling these terms, the entry $\left(B_{1}B_{2}\right)_{11}$
becomes
\begin{align}
 & \,-\left(b_{2,\,2}+b_{2,\,3}+b_{1,\,1}b_{2,\,3}+\tau a_{2,\,1}+\tau a_{1,\,1}\right)t^{m_{1}+m_{2}+1}\nonumber \\
 & +\cdots+\left(\tau^{-1}a_{2,\,2}+\tau^{-1}a_{1,\,2}b_{2,\,3}-1-b_{2,\,1}-b_{1,\,2}\right)t^{n_{1}+n_{2}+1}.
\end{align}

We claim that the coefficients of $t^{m_{1}+m_{2}+1}$, $t^{n_{1}+n_{2}+1}$
in (63) are zero if and only if
\begin{align*}
\tau2_{\tau}\left(M_{1}\right) & =\sigma2_{\tau}\left(M_{2}\right).
\end{align*}

We prove this claim as follows:
\begin{align*}
 & \,0\\
= & \,-\left(b_{2,\,2}+b_{2,\,3}+b_{1,\,1}b_{2,\,3}+\tau a_{2,\,1}+\tau a_{1,\,1}\right)\\
= & \,-\tau\left(a_{1,\,1}+b_{1,\,1}\tau+\tau+\left(a_{2,\,2}-b_{2,\,1}\tau-\tau\right)\right)\\
= & \,-\tau\left(\tau2_{\tau}\left(M_{1}\right)-\sigma2_{\tau}\left(M_{2}\right)\right),
\end{align*}
where the second equality follows from (60), and the last follows
from Definition 4.7. The assumption gives that $\tau2_{\tau}\left(M_{1}\right)=-\left(\tau+3\tau^{-1}\right)$
and Lemma 4.8 gives that $\sigma2_{\tau}\left(M_{2}\right)=\tau+\tau^{-1}$.
Therefore,
\begin{align*}
 & \,\tau2_{\tau}\left(M_{1}\right)=\sigma2_{\tau}\left(M_{2}\right)\\
\Leftrightarrow & \,\tau^{2}=-2\\
\Rightarrow & \,1+\tau^{2}+\tau^{4}=3,
\end{align*}
where the last condition implies $p=3$. Therefore, we obtain that
the coefficients of $t^{m_{1}+m_{2}+1}$, $t^{n_{1}+n_{2}+2}$ in
(63) are nonzero. On the other hand, from (62), the entry $\left(B_{1}B_{2}\right)_{12}$
becomes
\begin{align*}
 & \,\left(a_{1,\,1}-\tau b_{2,\,1}+a_{2,\,2}+\tau^{-1}b_{1,\,1}b_{2,\,3}\right)t^{m_{1}+m_{2}+1}\\
 & +\cdots+\left(\tau^{-1}b_{2,\,2}+a_{1,\,2}b_{2,\,3}+a_{2,\,1}-\tau b_{1,\,2}\right)t^{n_{1}+n_{2}},
\end{align*}
which yields that $M_{1}M_{2}$ is oddly upper-balanced and $\sigma\left(M_{1}M_{2}\right)=-\tau^{-2}$
by using (62) and $b_{2,\,3}=\tau^{2}$.

When $\mathrm{rd}\left(M_{1}\right)\ge5$ (or $\mathrm{rd}\left(M_{2}\right)\ge5$),
we need to specify the three terms starting from the highest degree
and the three terms starting from the lowest degree in the entries
of $B_{1}$ (or $B_{2}$) (see the proof of Claim 2 in the proof of
Lemma 4.18). In that situation, when one computes the matrix multiplication
$B_{1}B_{2}$, then the second-order type of $M_{1}M_{2}$ will turn
out to be the same as $\tau2_{\tau}\left(M_{2}\right)$, provided
that the coefficients of $t^{m_{1}+m_{2}+1}$, $t^{n_{1}+n_{2}+1}$
in (66) are nonzero. Since Lemma 4.8 ensures that $\tau2_{\tau}\left(M_{2}\right)=-\tau^{3}$,
we conclude that $\tau2_{\tau}\left(M_{1}M_{2}\right)=-\tau^{3}$.

The other cases are proven in the same way. The key to the calculation
is to show that when the lowest degree terms in the entries of $M_{1}M_{2}$
is canceled, as before, cancellation occurs only once under the condition
$\tau2_{\tau}\left(M_{1}\right)\ne\sigma2_{\tau}\left(M_{2}\right)$,
from Lemma 4.8. $\qedhere$

\noindent \end{proof}

Recall that in Theorem 4.1, we defined the type group $\mathcal{G}_{p,\,\tau}$
to be generated by all affine generators of type $\tau$ and a stable
generator $\overline{s_{p,\,\tau}}\left[a\right]$ for any $a\in\mathbb{F}_{p}\backslash\left\{ 3\tau+\tau^{-1}\right\} $.
We describe the structure as follows, as claimed in Theorem 4.1.

\noindent \begin{lemma}

Suppose an element $\tau\in\mathbb{F}_{p}$ satisfies $\tau^{4}+\tau^{2}+1=0$.
Then, the type group $\mathcal{G}_{p,\,\tau}$ is the internal HNN
extension with $G_{p,\,\tau}$ as the base group of the HNN extension
and $\overline{s_{p,\,\tau}}\left[a\right]$ as the stable letter
for any $a\in\mathbb{F}_{p}\backslash\left\{ 3\tau+\tau^{-1}\right\} $.

\noindent \end{lemma}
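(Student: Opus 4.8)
\noindent The plan is to deduce the statement from the Fenchel\textendash Nielsen theorem (Theorem 4.12), which is exactly a ping-pong lemma for HNN extensions, with the data assembled in Lemmas 4.8\textendash 4.16. I would let $G=\mathcal{G}_{p,\tau}$ act on $\mathcal{Q}_{p}$ by right multiplication, take the base group to be $G_{0}=G_{p,\tau}$ from Lemma 4.8, take $H$ and $K$ to be the upper and the lower multiplicative subgroups for $\tau$, take $\alpha_{\tau}\colon H\to K$ to be the map $\overline{m_{p,\tau,u}}\left[b\right]\mapsto\overline{m_{p,\tau,l}}\left[b\right]$ (an isomorphism since $H\cong\mathbb{F}_{p}^{\times}\cong K$ by Lemma 4.5), and take the stable letter $s=\overline{s_{p,\tau}}\left[a\right]$. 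Three of the hypotheses of Theorem 4.12 are then essentially free: $G$ is generated by $G_{0}$ and $s$ by the definition of the type group together with Lemma 4.8; conjugation by $s$ induces $\alpha_{\tau}$ by Corollary 4.11; and $s\notin G_{0}$, because $\mathrm{rd}\left(s\right)=3$ with $\tau2_{\tau}\left(s\right)=-\left(\tau+3\tau^{-1}\right)$, while by Lemma 4.8 every element of $G_{p,\tau}$ of relative degree at least $3$ has second-order type $-\tau^{3}$, and $-\left(\tau+3\tau^{-1}\right)\ne-\tau^{3}$ since $p\ne3$.

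\noindent Next I would fix the three ping-pong sets using the invariants already available. Call a balanced $M\in\mathcal{Q}_{p}$ \emph{standardly signed} if $\sigma\left(M\right)$ equals the value dictated by its parity and balancedness as in Lemma 4.8, i.e. $-1$, $-\tau^{-2}$, $-\tau^{2}$ in the evenly, oddly upper-, oddly lower-balanced cases. Let $Z_{s}$ be the set of upper-balanced, standardly signed $M$ with $\tau\left(M\right)=\tau$, $\mathrm{rd}\left(M\right)\ge3$ and $\tau2_{\tau}\left(M\right)=-\left(\tau+3\tau^{-1}\right)$; let $Z_{\overline{s}}$ be the lower-balanced analogue; and let $X_{0}$ be the set of balanced, standardly signed $M$ with $\tau\left(M\right)=\tau$, $\mathrm{rd}\left(M\right)\ge3$ and $\tau2_{\tau}\left(M\right)\ne-\left(\tau+3\tau^{-1}\right)$. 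These are pairwise disjoint (upper versus lower, and $\tau2_{\tau}=-\left(\tau+3\tau^{-1}\right)$ versus $\ne$) and non-empty ($s\in Z_{s}$, $s^{-1}\in Z_{\overline{s}}$ by inspection, and $\overline{a_{p,\tau,u}}\left[f\right]\in X_{0}$ for $f\ne0$ by Lemma 4.8). With these choices conditions (1), (2), (4) of Theorem 4.12 fall to Lemmas 4.13\textendash 4.15: right multiplication by $s^{\pm1}$ either keeps the balancedness, adds $4$ to the relative degree and forces $\tau2_{\tau}$ to $-\left(\tau+3\tau^{-1}\right)$ (Lemma 4.14), or flips the balancedness, adds $1$ to the relative degree, multiplies the signature by $\tau^{\mp2}$ and again forces $\tau2_{\tau}$ to $-\left(\tau+3\tau^{-1}\right)$ (Lemma 4.15), and in every case the signature remains standard relative to the (possibly flipped) parity, so $s\left(X_{0}\cup Z_{s}\right)\subset Z_{s}$ and $s^{-1}\left(X_{0}\cup Z_{\overline{s}}\right)\subset Z_{\overline{s}}$; right multiplication by an element of $H$ (resp.\ $K$) preserves balancedness, relative degree, type, signature and second-order type by Lemma 4.13, so $Z_{s}$ (resp.\ $Z_{\overline{s}}$) is invariant; and faithfulness in (4) is automatic, since right translation by a nontrivial group element has no fixed point on the non-empty sets $Z_{s}$, $Z_{\overline{s}}$.

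\noindent The substance lies in conditions (3) and (5), both powered by Lemma 4.16. Given $M\in Z_{s}$ and $g\in G_{0}\setminus H$, I want $Mg\in X_{0}$. Since $M$ is upper-balanced and standardly signed it satisfies the hypotheses on $M_{1}$ in case (a) or (b) of Lemma 4.16, and the key observation is that $g$ is forced into one of the three shapes treated there: using the free-product description $G_{p,\tau}=\mathrm{Aff}_{p,\tau,u}*\mathrm{Aff}_{p,\tau,l}$ of Lemma 4.8 together with the explicit relative degrees of the affine generators, one checks that every nontrivial element of $G_{p,\tau}$ is balanced of type $\tau$, that the only ones of relative degree below $3$ are the multiplicative generators, and that $\mathrm{Aff}_{p,\tau,u}\setminus H$, $\mathrm{Aff}_{p,\tau,l}\setminus K$ and the reduced words of length $\ge2$ all have relative degree $\ge3$; hence $g$ is upper-balanced with $\mathrm{rd}\ge3$, lower-balanced with $\mathrm{rd}\ge3$, or a lower multiplicative generator. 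In each case Lemma 4.16 gives that $Mg$ is balanced of type $\tau$, standardly signed, of strictly larger relative degree, with $\tau2_{\tau}\left(Mg\right)\in\left\{ -\tau^{3},\,-a-\tau^{-1}\right\} $, none of which equals $-\left(\tau+3\tau^{-1}\right)$ for $p\ne3$, so $Mg\in X_{0}$; the containment $Z_{\overline{s}}\left(G_{0}\setminus K\right)\subset X_{0}$ is symmetric via cases (c), (d). For (5): any $x\in G_{p,\tau}$ with $\mathrm{rd}\left(x\right)\ge3$ lies in $X_{0}$ with $\tau2_{\tau}\left(x\right)=-\tau^{3}$ by Lemma 4.8, and $x$ cannot be of the form $Mg$ with $M\in Z_{s}$, $g\in G_{0}$, since then $M=xg^{-1}\in G_{p,\tau}$ would have $\mathrm{rd}\ge3$ and hence $\tau2_{\tau}\left(M\right)=-\tau^{3}\ne-\left(\tau+3\tau^{-1}\right)$, contradicting $M\in Z_{s}$ (and likewise on the $Z_{\overline{s}}$ side); thus $x$ is a point of $X_{0}$ missed by the union in (5). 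Theorem 4.12 then gives the internal HNN decomposition, and independence of the choice of $a$ follows from Lemma 4.9, which shows that changing $a$ alters $\overline{s_{p,\tau}}\left[a\right]$ only by a multiplicative generator already lying in the base group.

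\noindent I expect the genuine obstacle to be the first half of condition (3): one must be sure that the three shapes enumerated in Lemma 4.16(a)\textendash(d) really exhaust $G_{p,\tau}\setminus H$ and $G_{p,\tau}\setminus K$, which rests on the free-product structure of $G_{p,\tau}$ and on a careful relative-degree count for products of affine generators, and one must verify throughout that the signature stays synchronized with the parity, so that the ping-pong sets are honestly mapped into one another. Once those two points are secured, the rest is a matter of quoting Lemmas 4.13\textendash 4.16, Corollary 4.11 and Lemma 4.8.
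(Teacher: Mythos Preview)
Your proof follows the paper's almost exactly: both apply Theorem 4.12 with the same base group $G_{p,\tau}$, the same associated subgroups $H=M_{p,\tau,u}$, $K=M_{p,\tau,l}$, the same stable letter $\overline{s_{p,\tau}}[a]$, and ping-pong sets carved out by balancedness, type $\tau$, relative degree, and the second-order type value $-(\tau+3\tau^{-1})$, citing Lemmas 4.14--4.15 for condition (1), Lemma 4.13 for (2), Lemma 4.16 for (3), and Lemma 4.9 for (4). The one substantive difference is that you add the ``standardly signed'' requirement to $Z_{s}$, $Z_{\overline{s}}$, $X_{0}$ and restrict $X_{0}$ to $\mathrm{rd}\ge3$; this is a genuine tightening, since Lemma 4.16 carries signature hypotheses on $M_{1}$ that the paper's bare $Z_{s}$ does not enforce, so your version makes condition (3) go through more cleanly (and you correctly check that standard signature is preserved under all the moves). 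For (5) the paper's $X_{0}$ includes $\mathrm{rd}\le2$ and it uses the multiplicative generators (relative degree $2$) as the missed point, whereas your argument via $G_{p,\tau}\cap Z_{s}=\varnothing$ works equally well with your smaller $X_{0}$.
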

\begin{proof}

\noindent Fix an element $a\in\mathbb{F}_{p}\backslash\left\{ 3\tau+\tau^{-1}\right\} $.
We use Theorem 4.12, a variant of the ping-pong lemma. Define $Z_{s}$
(resp. $Z_{\overline{s}}$) to be the set of elements $M\in\mathcal{Q}_{p}$
such that $M$ is upper-balanced (resp. lower-balanced), $\tau\left(M\right)=\tau$,
$\mathrm{rd}\left(M\right)\ge3$, and $\tau2_{\tau}\left(M\right)=-\left(\tau+3\tau^{-1}\right)$.
Define $X_{0}$ to be the set of balanced elements $M\in\mathcal{Q}_{p}$
of type $\tau$ such that $\mathrm{rd}\left(M\right)\le2$ or $\tau2_{\tau}\left(M\right)\ne-\left(\tau+3\tau^{-1}\right)$.
Then, the sets $X_{0}$, $Z_{s}$ and $Z_{\overline{s}}$ are disjoint
and non-empty, since $X_{0}$ includes affine generators, $Z_{s}$
includes $\overline{s_{p,\,\tau}}\left[a\right]$ and $Z_{\overline{s}}$
includes $\overline{s_{p,\,\tau}}\left[a\right]^{-1}$.

In Theorem 4.12, Lemma 4.14 and Lemma 4.15 establish the condition
(1), Lemma 4.13 establishes the condition (2), Lemma 4.16 establishes
the condition (3), and Lemma 4.9 establishes the condition (4). The
only remaining condition is (5). Temporarily denote by $H$ the upper
multiplicative subgroup and denote by $K$ the lower multiplicative
subgroup. Suppose an element $M_{0}$ such that
\[
M_{0}\in\left(G_{p,\,\tau}\backslash H\right)\left(Z_{s}\right)\cup\left(G_{p,\,\tau}\backslash K\right)\left(Z_{\overline{s}}\right).
\]

Lemma 4.16 ensures that the action of $G_{p,\,\tau}\backslash H$
on $Z_{s}$ and $G_{p,\,\tau}\backslash K$ on $Z_{\overline{s}}$
increase the relative degree. Consequently, $\mathrm{rd}\left(M_{0}\right)>3$,
implying that the multiplicative generators, whose relative degree
is 2, cannot be $M_{0}$. This concludes the proof. $\qedhere$

\noindent \end{proof}

\noindent \begin{lemma}

Suppose an element $\tau\in\mathbb{F}_{p}$ satisfies $\tau^{4}+\tau^{2}+1=0$.
Choose a nontrivial element $M\in\mathcal{G}_{p,\,\tau}$ such that
$\mathrm{rd}\left(M\right)\ge3$.
\begin{description}
\item [{(a)}] Suppose $\tau2_{\tau}\left(M\right)=-\left(\tau+3\tau^{-1}\right)$.
\end{description}
\begin{enumerate}
\item When $M$ is upper-balanced, for any $a\in\mathbb{F}_{p}\backslash\left\{ 3\tau+\tau^{-1}\right\} $,
we have
\begin{align*}
\mathrm{rd}\left(M\right) & >\mathrm{rd}\left(M\overline{s_{p,\,\tau}}\left[a\right]^{-1}\right).
\end{align*}
\item When $M$ is lower-balanced, for any $a\in\mathbb{F}_{p}\backslash\left\{ 3\tau+\tau^{-1}\right\} $,
we have
\begin{align*}
\mathrm{rd}\left(M\right) & >\mathrm{rd}\left(M\overline{s_{p,\,\tau}}\left[a\right]\right).
\end{align*}
\end{enumerate}
\begin{description}
\item [{(b)}] Suppose $\tau2_{\tau}\left(M\right)=-\tau^{3}$.
\end{description}
\begin{enumerate}
\item When $M$ is upper-balanced, there exists a ponlynomial $f\left(x\right)\in\mathbb{F}_{p}\left[x\right]$
such that
\begin{align*}
\mathrm{rd}\left(M\right) & >\mathrm{rd}\left(M\overline{a_{p,\,\tau,\,u}}\left[f\right]\right).
\end{align*}
\item When $M$ is lower-balanced, there exists a ponlynomial $f\left(x\right)\in\mathbb{F}_{p}\left[x\right]$
such that
\begin{align*}
\mathrm{rd}\left(M\right) & >\mathrm{rd}\left(M\overline{a_{p,\,\tau,\,l}}\left[f\right]\right).
\end{align*}
\end{enumerate}
\begin{description}
\item [{(c)}] Suppose $\tau2_{\tau}\left(M\right)$ is not $-\left(\tau+3\tau^{-1}\right)$
nor $-\tau^{3}$. 
\end{description}
\begin{enumerate}
\item When $M$ is upper-balanced, we have
\begin{align*}
\mathrm{rd}\left(M\right) & >\mathrm{rd}\left(M\overline{m_{p,\,\tau,\,u}}\left[-\tau^{-1}-\tau2_{\tau}\left(M\right)\right]\right).
\end{align*}
\item When $M$ is lower-balanced, we have
\begin{align*}
\mathrm{rd}\left(M\right) & >\mathrm{rd}\left(M\overline{m_{p,\,\tau,\,l}}\left[\tau^{-1}+\tau2_{\tau}\left(M\right)\right]\right).
\end{align*}
\end{enumerate}
Moreover, the right-hand sides of the inequalities in (c)-1 and (c)-2
are always well-defined.

\noindent \end{lemma}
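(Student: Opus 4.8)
The plan is to prove (a), (b), (c) by explicit matrix computation with a balanced representative $B=\left(\begin{array}{cc}g_{1}&g_{2}\\-\Phi\overline{g_{2}}&\overline{g_{1}}\end{array}\right)$ of $M$; in each case I will carry out the upper-balanced sub-case in full, the lower-balanced one being entirely analogous. Conceptually, each of the three hypotheses on $\tau2_{\tau}(M)$ pins down the type of the last syllable of $M$ in the HNN normal form relative to the base group $G_{p,\tau}$ with stable letter $\overline{s_{p,\tau}}[a]$ (Lemma 4.17): $\tau2_{\tau}(M)=-(\tau+3\tau^{-1})$ signals a trailing stable letter (Lemmas 4.14, 4.15); $\tau2_{\tau}(M)=-\tau^{3}$ signals that the trailing affine block of $M$ ends in an additive generator (Lemma 4.8); and any other value signals a trailing multiplicative generator immediately after a stable letter, the value being $-a-\tau^{-1}$ (resp. $a-\tau^{-1}$) for the lower (resp. upper) parameter $a$, read off from Lemma 4.16. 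Right-multiplication by the inverse of that syllable therefore strictly shortens the normal form, and hence strictly lowers the relative degree by the monotonicity built into Lemmas 4.14--4.16; the job of the computation is to exhibit this drop directly at the level of Laurent polynomials (this is also why the hypothesis $M\in\mathcal{G}_{p,\tau}$ is used — it forces the constrained corner data of Lemma 4.8).

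Concretely, I will first record, for $\mathrm{rd}(M)=N$, the three highest-degree and three lowest-degree coefficients of $g_{1}$ and $g_{2}$, normalized using $\tau(M)=\tau$ and the functional equation (15), so that $\sigma(M)$ and $\tau2_{\tau}(M)$ become explicit expressions in these ``corner data''. I will isolate as a preliminary claim (the ``Claim 2'' cited from the proof of Lemma 4.16) the statement that for $N\ge5$ these corner data, together with $\tau^{4}+\tau^{2}+1=0$, determine the three leading and three trailing coefficients of the product of $B$ with any generator on our list; the residual cases $N=3,4$ are then finished by a direct finite computation, in the spirit of the $\mathrm{rd}=1,2$ analysis in the proof of Theorem 3.20. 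In case (a) I multiply $B$ on the right by $s_{p,\tau}[a]^{\dagger}=\det\!\big(s_{p,\tau}[a]\big)\,s_{p,\tau}[a]^{-1}$; Lemmas 4.9 and 4.10 show the value of $a$ changes the outcome only by a relative-degree-$2$ multiplicative factor, so it is harmless, and comparing corners of $(Bs_{p,\tau}[a]^{\dagger})_{11}$ and $(Bs_{p,\tau}[a]^{\dagger})_{12}$ reveals a first cancellation at each end from $\tau(M)=\tau$ and $\tau^{4}+\tau^{2}+1=0$, and a \emph{second} cancellation at each end precisely because $\tau2_{\tau}(M)=-(\tau+3\tau^{-1})$.

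In case (b) I choose $\deg f$ so that the top degree of $f(t+t^{-1})$ matches the top of $g_{1}$, and the leading coefficient of $f$ so as to kill the leading term of $(Ba_{p,\tau,u}[f])_{11}$; because $g_{p}[\tau]$ is singular (so $g_{p}[\tau]g_{p}[\tau]^{\dagger}=0$, as used in Lemma 4.3) and $\tau2_{\tau}(M)=-\tau^{3}$, the expansion (41) shows this single choice simultaneously kills the trailing term and one further coefficient at each end, so $\mathrm{rd}$ again drops — the $p\equiv1\pmod6$ analogue of the argument for (a) in the proof of Theorem 3.20. In case (c), writing $c:=\tau2_{\tau}(M)$, I multiply $B$ by the relative-degree-$2$ matrix $m_{p,\tau,u}[-\tau^{-1}-c]$, which by the product formula for multiplicative generators telescopes the trailing multiplicative syllable recorded by $c$ (Lemma 4.16), so a single cancellation at each end already lowers $\mathrm{rd}$.

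For the ``moreover'': $\overline{m_{p,\tau,u}}[b]$ is defined exactly when $\det m_{p,\tau,u}[b]=b^{2}+3\tau^{2}\ne0$, i.e. when $b\ne\pm(\tau+2\tau^{-1})$, and likewise for $\overline{m_{p,\tau,l}}$, so I must check $b:=-\tau^{-1}-c\notin\{\pm(\tau+2\tau^{-1})\}$. One has $b=\tau+2\tau^{-1}\iff c=-(\tau+3\tau^{-1})$, excluded by the hypothesis of (c); and since $\tau^{4}+\tau^{2}+1=0$ forces $\tau^{3}=-\tau-\tau^{-1}$, one has $b=-(\tau+2\tau^{-1})\iff c=\tau+\tau^{-1}=-\tau^{3}$, also excluded. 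Hence the right-hand sides of (c)-1 and (c)-2 are always well-defined. The main obstacle is the bookkeeping in case (a): propagating three leading and three trailing coefficients through multiplication by the relative-degree-$3$ stable generator and confirming that the single identity $\tau2_{\tau}(M)=-(\tau+3\tau^{-1})$ genuinely produces a double, not merely single, cancellation at both ends — this is where the preliminary corner-data claim and the separate treatment of $\mathrm{rd}(M)\in\{3,4\}$ carry the weight.
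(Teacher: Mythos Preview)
Your proposal is correct and follows essentially the same approach as the paper: explicit corner-coefficient computation with a balanced representative, multiplying on the right by $s_{p,\tau}[a]^{\dagger}$ in (a), by an additive generator in (b), and by $m_{p,\tau,u}[-\tau^{-1}-c]$ in (c), with the well-definedness in (c) checked exactly as you do. Your HNN normal-form framing via Lemmas 4.14--4.17 is a helpful conceptual overlay that the paper leaves implicit. One wording caution in (b): your phrase ``choose $\deg f$ so that the top degree of $f(t+t^{-1})$ matches the top of $g_{1}$'' is not quite right --- the paper (like the $p=3$ argument in Theorem 3.20 that you cite) first analyzes $Bg_{p}[\tau]^{\dagger}$, finds where its extremal nonzero coefficients actually sit (this is the content of the ``Claim 2'' you mention), and only then chooses $f$; in particular $f$ is typically a constant, and the degree of $f$ grows only in the degenerate case where $Bg_{p}[\tau]^{\dagger}$ suffers extra cancellation. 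Since you explicitly invoke the Theorem 3.20 analogy and the corner-data claim, you are aware of this case split, so this is a matter of phrasing rather than a gap.
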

\begin{proof}

\noindent We prove (a)-1, (b)-1 and (c)-1 together. Suppose that $M$
is upper-balanced. Choose a representative $B=\left(\begin{array}{cc}
g_{1} & g_{2}\\
-\Phi\overline{g_{2}} & \overline{g_{1}}
\end{array}\right)$ of $M$ such that
\begin{align*}
 & g_{1}=-\tau t^{m_{1}}+a_{1}t^{m_{1}+1}+\cdots+a_{2}t^{n_{1}}+\tau^{-1}b_{3}t^{n_{1}+1},\\
 & g_{2}=t^{m_{1}}+b_{1}t^{m_{1}+1}+\cdots+b_{2}t^{n_{1}-1}+b_{3}t^{n_{1}},
\end{align*}

\noindent where the functional equation (15) gives that
\begin{align}
b_{2} & =a_{2}\tau-b_{3}\left(1+a_{1}\tau^{-1}+b_{1}\right).
\end{align}

First, assume that $\tau2_{\tau}\left(M\right)=-\left(\tau+3\tau^{-1}\right)$.
Then, we have
\begin{align}
 & Bs_{p,\,\tau}\left[a\right]^{\dagger}=\nonumber \\
 & \left(\begin{array}{cc}
\tau^{-1}\left(a_{1}+\tau b_{1}-\tau-3\tau^{3}\right)t^{m_{1}-1}+\cdots+\left(-a_{2}\tau+b_{2}+b_{3}\left(2+3\tau^{2}\right)\right)t^{n_{1}+1} & *\\
-\left(a_{1}+\tau b_{1}-\tau-3\tau^{3}\right)t^{m_{1}}+\cdots+\left(-a_{2}+b_{2}\tau^{-1}+b_{3}\left(2\tau^{-1}+3\tau\right)\right)t^{n_{1}+1} & *
\end{array}\right)^{T}.
\end{align}

By the assumption $\tau2_{\tau}\left(M\right)=-\left(\tau+3\tau^{-1}\right)$,
the coefficients of $t^{m_{1}-1}$ in the $\left(1,\,1\right)$ entry
and of $t^{m_{1}}$ in the $\left(1,\,2\right)$ entry in the right-hand
side of (68) vanish. From (67), the coefficient of $t^{n_{1}+1}$
in $\left(1,\,1\right)$ entry in the right-hand side of (68) becomes
\begin{align*}
 & \,\left(-a_{2}\tau+b_{2}+b_{3}\left(2+3\tau^{2}\right)\right)\\
= & \,-b_{3}\tau^{-1}\left(a_{1}+\tau b_{1}-\tau-3\tau^{3}\right),
\end{align*}
which also vanishes by the assumption $\tau2_{\tau}\left(M\right)=-\left(\tau+3\tau^{-1}\right)$.
The coefficient of $t^{n_{1}+1}$ in $\left(1,\,2\right)$ entry in
the right-hand side of (68) vanishes in the same reason. In summary,
we have established the inequality in (a)-1.

Next, assume that $\tau2_{\tau}\left(M\right)$ is not $-\left(\tau+3\tau^{-1}\right)$
nor $-\tau^{3}$. Then, we have
\begin{align}
 & Bm_{p,\,\tau,\,u}\left[a\right]=\left(\begin{array}{cc}
\left(-\left(1+a\tau+\tau\left(a_{1}+b_{1}\tau+\tau\right)\right)t^{m_{1}}+\cdots\right.\\
\left.+\left(a_{2}\tau-b_{2}-b_{3}\left(1+\tau^{2}-a\tau^{-1}\right)\right)t^{n_{1}+1}\right) & *\\
\left(\left(a-\tau^{3}+a_{1}+b_{1}\tau\right)t^{m_{1}}+\cdots\right.\\
\left.+\left(a_{2}\tau^{2}-b_{2}\tau+ab_{3}+b_{3}\tau^{-1}\right)t^{n_{1}}\right) & *
\end{array}\right)^{T}.
\end{align}

If we choose $a$ as $-\tau^{-1}-\tau2_{\tau}\left(M\right)$, the
coefficients of $t^{m_{1}}$ in the $\left(1,\,1\right)$ and $\left(1,\,2\right)$
entries in (69) vanish. Applying (67), the coefficients of $t^{n_{1}+1}$
in the $\left(1,\,1\right)$ entry and of $t^{n_{1}}$ in the $\left(1,\,2\right)$
entry in the right-hand side of (69) also vanish under the same condition
$a=-\tau^{-1}-\tau2_{\tau}\left(M\right)$. We need to verify that
$\overline{m_{p,\,\tau,\,u}}\left[a\right]$ is well-defined as a
multiplicative generator, or in other words
\begin{align}
-\tau^{-1}-\tau2_{\tau}\left(M\right) & \ne\pm\left(\tau+2\tau^{-1}\right).
\end{align}

The condition (70) holds always from the assumption $\tau2_{\tau}\left(M\right)$
is not $-\left(\tau+3\tau^{-1}\right)$ nor $-\tau^{3}$, which establishes
(c)-1.

Finally, we prove (b)-1. Assume that $\tau2_{\tau}\left(M\right)=-\tau^{3}$.
Then, we have
\begin{align}
Bg_{p} & \left[\tau\right]^{\dagger}=\left(\begin{array}{cc}
\left(a_{1}+b_{1}\tau+\tau+\tau^{3}\right)t^{m_{1}}+\cdots+\left(-a_{2}\tau^{2}+b_{2}\tau-b_{3}\tau^{3}\right)t^{n_{1}} & *\\
\left(1-a_{1}\tau-b_{1}\tau^{2}\right)t^{m_{1}+1}+\cdots+\left(-a_{2}\tau+b_{2}-b_{3}\tau^{2}\right)t^{n_{1}} & *
\end{array}\right)^{T}.
\end{align}

By the assumption $\tau2_{\tau}\left(M\right)=-\tau^{3}$ and (67),
the coefficients of $t^{m_{1}}$, $t^{n_{1}}$ in the $\left(1,\,1\right)$
entry and of $t^{m_{1}+1}$, $t^{n_{1}}$ in the $\left(1,\,2\right)$
entry in the right-hand side of (71) vanish. Therefore, we have $\mathrm{rd}\left(M\right)-3\ge\mathrm{rd}\left(Bg_{p}\left[\tau\right]^{\dagger}\right)$.

\noindent \begin{claim2}

Suppose the same condition of (b)-1. Then, for some $\alpha\in\mathbb{F}_{p}$,
we have
\begin{align}
Bg_{p}\left[\tau\right]^{\dagger} & =\left(\begin{array}{cc}
\alpha t^{m_{1}+1}+\cdots-\alpha b_{3}t^{n_{1}-1} & *\\
-\tau\alpha t^{m_{1}+2}+\cdots-\tau^{-1}\alpha b_{3}t^{n_{1}-1} & *
\end{array}\right)^{T}.
\end{align}

\noindent \end{claim2}
\begin{proofclaim2}

\noindent When $\mathrm{rd}\left(B\right)\le4$, (72) is direct by
computation. Suppose $\mathrm{rd}\left(B\right)\ge5$ and rewrite
the entries as

\noindent 
\begin{align*}
 & g_{1}=-\tau t^{m_{1}}+a_{1}t^{m_{1}+1}+a_{3}t^{m_{1}+2}+\cdots+a_{4}t^{n_{1}-1}+a_{2}t^{n_{1}}+\tau^{-1}b_{3}t^{n_{1}+1},\\
 & g_{2}=t^{m_{1}}+b_{1}t^{m_{1}+1}+b_{4}t^{m_{1}+2}+\cdots+b_{5}t^{n_{1}-2}+b_{2}t^{n_{1}-1}+b_{3}t^{n_{1}},
\end{align*}

\noindent where the functional equation (15) gives that
\begin{align}
b_{5} & =-a_{1}a_{2}+a_{4}\tau-b_{2}-b_{1}b_{2}-b_{3}-a_{3}b_{3}\tau^{-1}-b_{1}b_{3}-b_{3}b_{4}.
\end{align}

Then, we have
\begin{align}
 & Bg_{p}\left[\tau\right]^{\dagger}=\nonumber \\
 & \left(\begin{array}{cc}
\left(a_{3}+\tau\left(1-a_{1}\tau+b_{1}+b_{4}\right)\right)t^{m_{1}+1}+\cdots+\left(a_{2}+\tau\left(b_{2}+b_{3}+b_{5}-a_{4}\tau\right)\right)t^{n_{1}-1} & *\\
\left(b_{1}-a_{3}\tau-b_{4}\tau^{2}\right)t^{m_{1}+2}+\cdots+\left(-a_{4}\tau-b_{2}\tau^{2}+b_{5}\right)t^{n_{1}-1} & *
\end{array}\right)^{T},
\end{align}
where we cancelled the coefficients of $t^{m_{1}}$, $t^{n_{1}}$
in the $\left(1,\,1\right)$ entry and of $t^{m_{1}+1}$, $t^{n_{1}}$
in the $\left(1,\,2\right)$ entry by the assumption $\tau2_{\tau}\left(M\right)=-\tau^{3}$.
Applying (67) and (73), the coefficient of $t^{n_{1}-1}$ in (74)
becomes

\noindent 
\begin{align*}
 & \,a_{2}+\tau\left(b_{2}+b_{3}+b_{5}-a_{4}\tau\right)\\
= & \,a_{2}-a_{1}a_{2}\tau-a_{2}b_{1}\tau^{2}-a_{3}b_{3}+a_{1}b_{1}b_{3}+b_{1}^{2}b_{3}\tau-b_{3}b_{4}\tau\\
= & \,-b_{3}\left(a_{3}-b_{1}\tau^{-1}+b_{4}\tau\right),
\end{align*}

\noindent where the second equality follows from the assumption $a_{1}=-b_{1}\tau+\tau^{-1}$.
Therefore, comparing the coefficients in (74), we establish (72) by
choosing $\alpha=a_{3}-b_{1}\tau^{-1}+b_{4}\tau$. $\qed$

\noindent \end{proofclaim2}

We return to the proof of (b)-1, and suppose that $\alpha\ne0$ in
(72). Then, we have
\begin{align}
\left(\tau\alpha\right)^{-1}Bg_{p}\left[\tau\right]^{\dagger}\left(\begin{array}{cc}
t-t^{-1} & 0\\
0 & 0
\end{array}\right)g_{p}\left[\tau\right] & =\left(\begin{array}{cc}
\tau t^{m_{1}}+\cdots-\tau^{-1}b_{3}t^{n_{1}+1} & *\\
-t^{m_{1}}+\cdots-b_{3}t^{n_{1}} & *
\end{array}\right)^{T}.
\end{align}

From (75) and the definition of additive generators (Definition 3.6),
we have
\begin{align*}
\mathrm{rd}\left(M\right) & >\mathrm{rd}\left(M\overline{a_{p,\,\tau,\,u}}\left[\left(\tau\alpha\right)^{-1}\right]\right).
\end{align*}

On the other hand, suppose that $\alpha=0$ in (72). As in the proof
of (a) of Theorem 3.20, consider the Laurent polynomial entry
\begin{align*}
\left(Bg_{p}\left[\tau\right]^{\dagger}\right)_{11} & =\tau_{l}t^{m_{0}}+\cdots+\tau_{h}t^{n_{0}},
\end{align*}
where $m_{0}$ is the power of $t$ in the lowest degree term, $n_{0}$
is the power of $t$ in the highest degree term, and the coefficients
$\tau_{l},\,\tau_{h}$ are units. Denote by $m$ the integer
\begin{align*}
\min\left(\left(m_{0}-m_{1}-1\right),\,\left(n_{1}-n_{0}-1\right)\right),
\end{align*}
satisfying $m\ge1$ by (72). Without loss of generality, suppose $m$
is equal to $\left(m_{0}-m_{1}-1\right)$. Then, the $\left(1,\,1\right)$-entry
of a matrix $\tau_{l}^{-1}\left(t^{-m}+t^{m}\right)Bg_{p}\left[\tau\right]^{\dagger}$
has $t^{m_{1}+1}$ as the lowest degree term. Choose a polynomial
$f\left(x\right)\in\mathbb{F}_{p}\left[x\right]$ such that $f\left(t+t^{-1}\right)=\tau^{-1}\tau_{l}^{-1}\left(t^{-m+1}+t^{m-1}\right)$.
As in (75), we have $\mathrm{rd}\left(M\right)>\mathrm{rd}\left(M\overline{a_{p,\,\tau,\,u}}\left[f\right]\right)$,
which establishes (b)-1.

The cases (a)-2, (b)-2 and (c)-2 are proven in the same way. $\qedhere$

\noindent \end{proof}

\noindent \begin{prooftheorem41}

\noindent By Lemma 4.17, for an element $\tau\in\mathbb{F}_{p}$ satisfies
$\tau^{4}+\tau^{2}+1=0$, we already know that the type group $\mathcal{G}_{p,\,\tau}$
is an internal HNN extension with $G_{p,\,\tau}$ as the base group
and $\overline{s_{p,\,\tau}}\left[a\right]$ as the stable letter
for any $a\in\mathbb{F}_{p}\backslash\left\{ 3\tau+\tau^{-1}\right\} $.
Thus, it suffices to show that
\begin{description}
\item [{(a)}] \noindent the subgroups $\mathcal{G}_{p,\,0},\cdots,\,\mathcal{G}_{p,\,p-1}$
generate the whole group $\mathcal{Q}_{p}$, and 
\item [{(b)}] \noindent the subgroup generated by $\mathcal{G}_{p,\,0},\cdots,\,\mathcal{G}_{p,\,p-1}$
is the free product of them.
\end{description}
\noindent \begin{proofofa}

\noindent We use the induction on the relative degree. Choose an element
$M\in\mathcal{Q}_{p}$. We may assume that $M$ is balanced by Lemma
3.3. When $\mathrm{rd}\left(M\right)=1$, as in the proof of (a) of
Theorem 3.20, the only possible candidates have form $\overline{g_{p}}\left[0\right]^{k}\overline{g_{p}}\left[r\right]^{\pm1}$
for an integer $k$ and $r\in\mathbb{F}_{p}^{\times}$ such that $r^{4}+r^{2}+1\ne0$.
For an integer $N\ge1$, suppose that the subgroups $\mathcal{G}_{p,\,0},\cdots,\,\mathcal{G}_{p,\,p-1}$
generate every element $M_{0}\in\mathcal{Q}_{p}$ such that $\mathrm{rd}\left(M_{0}\right)\le N$,
and suppose $M\in\mathcal{Q}_{p}$ such that $\mathrm{rd}\left(M\right)=N+1$.
We may assume that $M$ is balanced by Lemma 3.3. Suppose that $\tau\left(M\right)$
satisfies
\begin{align*}
\tau\left(M\right)^{4}+\tau\left(M\right)^{2}+1 & \ne0.
\end{align*}

When $M$ is upper-balanced, we have $\mathrm{rd}\left(M\overline{g_{p}}\left[\tau\left(M\right)\right]^{-1}\right)<\mathrm{rd}\left(M\right)$.
When $M$ is lower-balanced, we have $\mathrm{rd}\left(M\overline{g_{p}}\left[\tau\left(M\right)\right]\right)<\mathrm{rd}\left(M\right)$.

Suppose that $\tau\left(M\right)$ satisfies
\[
\tau\left(M\right)^{4}+\tau\left(M\right)^{2}+1=0.
\]

Let $\tau$ denote the type $\tau\left(M\right)$. When $\mathrm{rd}\left(M\right)=2$,
direct computation shows that the only possible candidates are a product
of $\overline{g_{p}}\left[0\right]^{k}$ and a multiplicative generator
for $\tau$, for some integer $k$. Suppose $\mathrm{rd}\left(M\right)\ge3$.
For any possible cases of the second-order type $\tau2_{\tau}\left(M\right)$,
Lemma 4.18 ensures that there exists an element $M'\in\mathcal{G}_{\tau}$
such that $\mathrm{rd}\left(MM'\right)<\mathrm{rd}\left(M\right)$.
We conclude the proof by induction. $\qed$

\noindent \end{proofofa}
\begin{proofofb}

\noindent We use the usual ping-pong lemma. Define $S_{p,\,0}$ to
be the set of unbalanced elements in $\mathcal{Q}_{p}$ and define
$S_{p,\,r}$ to be the set of balanced elements having type $r$ in
$\mathcal{Q}_{p}$. For a pair of integers $r_{1},\,r_{2}$ such that
$r_{1}\ne r_{2}$ and a pair of nontrivial elements $M_{1}\in S_{p,\,r_{1}}$
and $M_{2}\in\mathcal{G}_{p,\,r_{2}}$, it suffices to show that
\begin{align*}
M_{1}M_{2} & \in S_{p,\,r_{2}}.
\end{align*}

By Lemma 3.13, it suffices to consider the signatures of $M_{2}$.

Suppose $r_{2}$ satisfies that $r_{2}^{4}+r_{2}^{2}+1\not\equiv0$
(mod $p$). Then, the group $\mathcal{G}_{p,\,r_{2}}$ is generated
by the elementary generator $\overline{g_{p}}\left[r_{2}\right]$.
Since any nontrivial power of $\overline{g_{p}}\left[r_{2}\right]$
is oddly balanced, and has signature $-r_{2}^{-2}$ when upper-balanced
or $-r_{2}^{2}$ when lower-balanced (\citep[the proof of Theorem 4.9,][]{lee2024salters}),
the conditions (4) and (6) in Lemma 3.13 reduces to $r_{1}\ne r_{2},$
as supposed.

On the other hand, suppose $r_{2}$ satisfies that $r_{2}^{4}+r_{2}^{2}+1\equiv0$
(mod $p$). Then, by Lemma 4.17, the group $\mathcal{G}_{p,\,r_{2}}$
is an HNN extension. We consider Britton's normal form of elements
\begin{align}
g_{0}s^{i_{1}}g_{1}s^{i_{2}}\cdots g_{n-1}s^{i_{n}}g_{n},
\end{align}
where $s$ is a stable generator, $i_{j_{1}}\in\mathbb{Z}$ and $g_{j_{2}}\in G_{p,\,r_{2}}$
for every pair of integers $j_{1},\,j_{2}$ such that $1\le j_{1}\le n$
and $0\le j_{2}\le n$.

By Lemma 4.8, a nontrivial element in the base group $G_{p,\,r_{2}}$
has signature $-1$ when evenly balanced, $-r_{2}^{-2}$ when oddly
upper-balanced, and $-r_{2}^{2}$ when oddly lower-balanced. Therefore,
Lemma 4.13, Lemma 4.14, Lemma 4.15 and Lemma 4.16, inductively from
(76), ensure that any nontrivial element in the group $\mathcal{G}_{p,\,r_{2}}$
also has signature $-1$ when evenly balanced, $-r_{2}^{-2}$ when
oddly upper-balanced, and $-r_{2}^{2}$ when oddly lower-balanced.

Consequently, the conditions (2), (4), (6) and (8) in Lemma 3.13 are
reduced to $r_{1}\ne r_{2}$, as supposed. $\qed$

\noindent \end{proofofb}
\end{prooftheorem41}

\noindent \begin{remark}

The quaternionic group $\mathcal{Q}_{p}$ is closely related to the
projective unitary group defined by the relation
\[
\overline{M}\left[\begin{array}{cc}
1 & 0\\
0 & \Phi
\end{array}\right]M^{T}=\left[\begin{array}{cc}
1 & 0\\
0 & \Phi
\end{array}\right].
\]

In fact, it is more natural to define the \emph{projective similitude
group} as
\begin{align*}
\mathcal{\mathcal{S}}_{p} & :=\left\{ M\in\mathrm{PGL}\left(2,\,\mathbb{F}_{p}\left[t,\,t^{-1}\right]\right)\;:\;\overline{M}\left[\begin{array}{cc}
1 & 0\\
0 & \Phi
\end{array}\right]M^{T}=k\left[\begin{array}{cc}
1 & 0\\
0 & \Phi
\end{array}\right],\:k\in\mathbb{F}_{p}^{\times}\right\} ,
\end{align*}
where the similitude relation is a generalization of the unitarity
relation.

Then, the quaternionic group $\mathcal{Q}_{p}$ is an index 4 subgroup
of $\mathcal{S}_{p}$ when $p>2$, and an index 2 subgroup of $\mathcal{S}_{p}$
when $p=2$. The coset representatives besides the identity are
\begin{align*}
\left[\begin{array}{cc}
1 & 0\\
0 & -1
\end{array}\right],\,\left[\begin{array}{cc}
1 & 0\\
0 & t
\end{array}\right],\,\left[\begin{array}{cc}
1 & 0\\
0 & -t
\end{array}\right].
\end{align*}
\end{remark}
\begin{remark}

As a final remark of this section, we mention that the techniques
we have used here are easily generalizable to more general similitude
groups with similitude relation
\begin{align*}
\overline{X}\left(\begin{array}{cc}
1 & 0\\
0 & a+b\left(t+t^{-1}\right)
\end{array}\right)X^{T} & =k\left(\begin{array}{cc}
1 & 0\\
0 & a+b\left(t+t^{-1}\right)
\end{array}\right).
\end{align*}

Temporarily denote by $\Phi_{a,\,b}$ the value $a+b\left(t+t^{-1}\right)$.
In general, for any field $F$, its projectivization has the corresponding
\emph{quaternionic group}
\begin{align*}
\mathcal{Q}\left[F,\,a,\,b\right] & :=\,\left\{ M\in\mathrm{PGL}\left(2,\,F\left[t,\,t^{-1}\right]\right)\::\:M=\left[\begin{array}{cc}
g_{1} & g_{2}\\
-\Phi_{a,\,b}\overline{g_{2}} & \overline{g_{1}}
\end{array}\right],\,\mathrm{where}\;g_{1},\,g_{2}\in F\left[t,\,t^{-1}\right]\right\} 
\end{align*}
as a subgroup of index 2 or 4. The structure of this group is analyzable
in the same way as Theorem 3.11. For example, the elementary generators
for this group are given by the projectivization of
\begin{align*}
g\left[r\right] & =\left(\begin{array}{cc}
bt-r^{2} & r\\
-r\Phi_{a,\,b} & bt^{-1}-r^{2}
\end{array}\right),
\end{align*}
whose determinant is $r^{4}+ar^{2}+b^{2}$. If there is no solution
to the equation $x^{4}+ax^{2}+b^{2}=0$ over the base field $F$,
the elementary generators freely generate the entire quaternionic
group $\mathcal{Q}\left[F,\,a,\,b\right]$. If there is a solution
to the equation, its multiplicity must be 1, 2, or 4. The case of
multiplicity 4 is possible only when the field characteristic is 2.
The cases where the root multipilcity is 2 or 4 are dealt with analogously
to the case $p=3$ in Theorem 3.11. The other case is analogous to
the cases $p\equiv1$ (mod 6) in Theorem 3.11.

Moreover, for a palindromic Laurent polynomial $f_{1}\in F\left[t,\,t^{-1}\right]$
and any Laurent polynomial $f_{2}\in F\left[t,\,t^{-1}\right]$, the
injection given by
\begin{align*}
\left[\begin{array}{cc}
g_{1} & g_{2}\\
-\overline{g_{2}}\left(f_{1}f_{2}\overline{f_{2}}\right) & \overline{g_{1}}
\end{array}\right] & \mapsto\left[\begin{array}{cc}
g_{1} & f_{2}g_{2}\\
-\overline{f_{2}g_{2}}f_{1} & \overline{g_{1}}
\end{array}\right]
\end{align*}
makes more general analysis possible, if one takes a finite algebraic
extension of $F$.

\noindent \end{remark}

\section{Proof of Theorem 1.1}

In Section 2, for each prime $p$, we defined the formal Burau group
$\mathcal{B}_{p}$, containing the Burau image group $\beta_{3,\,p}\left(B_{3}\right)$
and defined two homomorphisms $\overline{\phi_{p}}$ and $\overline{\rho_{p}}$,
where the images have entries in a ring of characteristic $p$. On
the other hand, we may evaluate the entries of the original Burau
image group $\beta_{3}\left(B_{3}\right)$ at $t=-1$. As in Section
2, we define the evaluation map:
\begin{align*}
\mathrm{eval}_{-1} & :\beta_{3}\left(B_{3}\right)\to\mathrm{GL}\left(3,\,\mathbb{Z}\right).
\end{align*}

For a matrix $B\in\mathrm{eval}_{-1}\left(\beta_{3}\left(B_{3}\right)\right)$,
we write the entries in terms of the four corner entries as
\begin{align*}
B & =\left(\begin{array}{ccc}
B_{11} & 1-B_{11}-B_{13} & B_{13}\\
-1+B_{11}+B_{31} & 3-B_{11}-B_{13}-B_{31}-B_{33} & -1+B_{13}+B_{33}\\
B_{31} & 1-B_{31}-B_{33} & B_{33}
\end{array}\right).
\end{align*}

\noindent \begin{definition}

Define $\rho:\mathrm{eval}_{-1}\left(\beta_{3}\left(B_{3}\right)\right)\to\mathrm{GL}\left(2,\,\mathbb{Z}\right)$
by
\begin{align*}
B & \mapsto\left(\begin{array}{cc}
1-B_{13} & 1-B_{11}\\
1-B_{33} & 1-B_{31}
\end{array}\right).
\end{align*}
\end{definition}
\begin{lemma}

The map $\rho$ is an injective group homomorphism preserving the
determinant.

\noindent \end{lemma}
\begin{proof}

\noindent See \citep[Lemma 3.8]{lee2024salters}. $\qedhere$

\noindent \end{proof}

Put $\pi$ as the natural projection
\begin{align*}
\pi & :\mathrm{GL}\left(2,\,\mathbb{Z}\left[t,\,t^{-1}\right]\right)\to\mathrm{PGL}\left(2,\,\mathbb{Z}\left[t,\,t^{-1}\right]\right).
\end{align*}

Define a map $\overline{\rho}$ to be the composition $\pi\circ\rho\circ\mathrm{eval}_{-1}$.
Recall that we defined $\Delta$ to be a braid $\left(\sigma_{1}\sigma_{2}\sigma_{1}\right)^{2}$
in $B_{3}$.

\noindent \begin{lemma}

The kernel of $\overline{\rho}:\beta_{3}\left(B_{3}\right)\to\mathrm{PSL}\left(2,\mathbb{Z}\right)$
is the center of $\beta_{3}\left(B_{3}\right)$, generated by $\beta_{3}\left(\Delta\right)$.
In other words, $\overline{\rho}$ induces the short exact sequence:
\begin{align*}
1 & \longrightarrow\mathbb{Z}\longrightarrow\beta_{3}\left(B_{3}\right)\longrightarrow\mathrm{PSL}\left(2,\mathbb{Z}\right)\longrightarrow1.
\end{align*}

\noindent \end{lemma}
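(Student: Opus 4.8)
The plan is to reduce everything to standard facts about $B_{3}$ together with a Hopficity argument. Since the Burau representation $\beta_{3}$ is faithful, it restricts to an isomorphism $B_{3}\xrightarrow{\ \sim\ }\beta_{3}\left(B_{3}\right)$ carrying the center $Z\left(B_{3}\right)=\langle\Delta\rangle$ onto $\langle\beta_{3}\left(\Delta\right)\rangle$. Hence it suffices to show that the composite $\overline{\rho}\circ\beta_{3}\colon B_{3}\to\mathrm{PSL}\left(2,\mathbb{Z}\right)$ is surjective with kernel exactly $\langle\Delta\rangle$: transporting this back along $\beta_{3}$ gives $\ker\overline{\rho}=\langle\beta_{3}\left(\Delta\right)\rangle=Z\left(\beta_{3}\left(B_{3}\right)\right)$, and $\langle\beta_{3}\left(\Delta\right)\rangle\cong\mathbb{Z}$ again by faithfulness, which is the asserted short exact sequence.

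For surjectivity, I would compute $\rho\left(\beta_{3}\left(\sigma_{i}\right)|_{t=-1}\right)$ for $i=1,2$ by exactly the calculation that produced (13), now over $\mathbb{Z}$ rather than $\mathbb{F}_{p}$; this gives $\rho\left(\beta_{3}\left(\sigma_{1}\right)|_{t=-1}\right)=\begin{pmatrix}1 & -1\\ 0 & 1\end{pmatrix}$ and $\rho\left(\beta_{3}\left(\sigma_{2}\right)|_{t=-1}\right)=\begin{pmatrix}1 & 0\\ 1 & 1\end{pmatrix}$. These two elementary matrices generate $\mathrm{SL}\left(2,\mathbb{Z}\right)$, so their images generate $\mathrm{PSL}\left(2,\mathbb{Z}\right)$ and $\overline{\rho}\circ\beta_{3}$ is onto. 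Next, using the explicit matrix expression for $\beta_{3,\,p}\left(\Delta\right)$ computed in Section 2 (the Laurent-polynomial formula is equally valid over $\mathbb{Z}$), evaluation at $t=-1$ yields $\beta_{3}\left(\Delta\right)|_{t=-1}=\begin{pmatrix}1 & -2 & 2\\ 2 & -3 & 2\\ 2 & -2 & 1\end{pmatrix}$, whose $\rho$-image is $\begin{pmatrix}-1 & 0\\ 0 & -1\end{pmatrix}$. Since $-I$ and $I$ represent the same element of $\mathrm{PGL}\left(2,\mathbb{Z}\right)$, we get $\overline{\rho}\left(\beta_{3}\left(\Delta\right)\right)=1$, so $\langle\Delta\rangle\subseteq\ker\left(\overline{\rho}\circ\beta_{3}\right)$.

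To conclude, I would use that $\langle\Delta\rangle$ is central and therefore equals its own normal closure, so $\overline{\rho}\circ\beta_{3}$ factors through the quotient $B_{3}/\langle\Delta\rangle\cong\mathrm{PSL}\left(2,\mathbb{Z}\right)$ \citep{MR2435235}, yielding a surjective homomorphism $\mathrm{PSL}\left(2,\mathbb{Z}\right)\twoheadrightarrow\mathrm{PSL}\left(2,\mathbb{Z}\right)$. The group $\mathrm{PSL}\left(2,\mathbb{Z}\right)$ is finitely generated and linear, hence residually finite, hence Hopfian, so this epimorphism is an isomorphism; therefore $\ker\left(\overline{\rho}\circ\beta_{3}\right)=\langle\Delta\rangle$, which by the first paragraph finishes the proof.

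The only genuinely non-formal point is this last step: surjectivity together with $\Delta\in\ker$ does not by itself pin down the kernel, and Hopficity of $\mathrm{PSL}\left(2,\mathbb{Z}\right)$ (equivalently, a direct check that the induced map on $B_{3}/Z\left(B_{3}\right)$ carries a standard generating pair to a standard generating pair) is what closes the gap. Everything else is bookkeeping, the one trap being to remember that $\beta_{3}\left(\Delta\right)$ evaluates at $t=-1$ to $-I$, which is the identity of $\mathrm{PGL}\left(2,\mathbb{Z}\right)$, so that $\beta_{3}\left(\Delta\right)$ genuinely lies in $\ker\overline{\rho}$ and not merely in the preimage of the scalars.
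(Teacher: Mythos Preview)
Your argument is correct. The paper itself gives no proof here---it simply cites \citep[Lemma 3.9]{lee2024salters}---so there is little to compare against directly; your self-contained route via the known isomorphism $B_{3}/\langle\Delta\rangle\cong\mathrm{PSL}(2,\mathbb{Z})$ and Hopficity of $\mathrm{PSL}(2,\mathbb{Z})$ is a clean and standard way to close the argument. One minor wording slip in your final paragraph: it is not $\beta_{3}(\Delta)|_{t=-1}$ that equals $-I$ (that is the $3\times 3$ matrix you correctly wrote out), but rather its $\rho$-image in $\mathrm{GL}(2,\mathbb{Z})$; your computation earlier in the proof has this right, so this is only a slip in the summary sentence.
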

\begin{proof}

\noindent See \citep[Lemma 3.9]{lee2024salters}. $\qedhere$

\noindent \end{proof}

In Section 1, for each prime $p$, we defined a map
\begin{align*}
\mu_{p} & :\mathrm{PSL}\left(2,\,\mathbb{Z}\right)\to\mathrm{PGL}\left(2,\,\mathbb{F}_{p}\left[t,\,t^{-1},\,\left(1+t\right)^{-1}\right]\right),
\end{align*}
to be
\begin{align*}
\mu_{p}\left[\begin{array}{cc}
1 & 0\\
1 & 1
\end{array}\right] & =\left[\begin{array}{cc}
1 & 0\\
0 & -t
\end{array}\right],\;\mu_{p}\left[\begin{array}{cc}
1 & -1\\
0 & 1
\end{array}\right]=\left[\begin{array}{cc}
-\frac{t^{2}}{1+t} & \frac{t}{1+t}\\
\frac{1+t+t^{2}}{1+t} & \frac{1}{1+t}
\end{array}\right].
\end{align*}

One can easily verify the well-definedness of $\mu_{p}$ by computing
a finite set of relations in the presentation of the modular group.
Recall that from (13) and (14) we have
\begin{align}
 & \overline{\phi_{p}}\left(\beta_{3,\,p}\left(\sigma_{1}\right)\right)=\left[\begin{array}{cc}
-\frac{t^{2}}{1+t} & \frac{t}{1+t}\\
\frac{1+t+t^{2}}{1+t} & \frac{1}{1+t}
\end{array}\right],\;\overline{\phi_{p}}\left(\beta_{3,\,p}\left(\sigma_{2}\right)\right)=\left[\begin{array}{cc}
1 & 0\\
0 & -t
\end{array}\right],\\
 & \overline{\rho_{p}}\left(\beta_{3,\,p}\left(\sigma_{1}\right)\right).=\left[\begin{array}{cc}
1 & -1\\
0 & 1
\end{array}\right],\;\overline{\rho_{p}}\left(\beta_{3,\,p}\left(\sigma_{2}\right)\right)=\left[\begin{array}{cc}
1 & 0\\
1 & 1
\end{array}\right].
\end{align}

Define a map
\begin{align*}
M_{p} & :\mathrm{GL}\left(3,\,\mathbb{Z}\left[t,\,t^{-1}\right]\right)\to\mathrm{GL}\left(3,\,\mathbb{F}_{p}\left[t,\,t^{-1}\right]\right)
\end{align*}
to be the usual modulo $p$ map at each entry. Recall that in Section
1 we also defined another modulo $p$ map
\begin{align*}
m_{p}:\mathrm{PGL}\left(2,\,\mathbb{Z}\left[t,\,t^{-1},\,\left(1+t\right)^{-1}\right]\right) & \to\mathrm{PGL}\left(2,\,\mathbb{F}_{p}\left[t,\,t^{-1},\,\left(1+t\right)^{-1}\right]\right).
\end{align*}

From Corollary 2.11 and Lemma 5.3, we have
\begin{align*}
\ker\overline{\rho} & =\left\langle \beta_{3}\left(\Delta\right)\right\rangle =\ker\overline{\phi_{p}}|_{\beta_{3,\,p}},
\end{align*}
from which we can reconstruct the map $\mu_{p}$ taking the cosets
$\left\langle \beta_{3}\left(\Delta\right)\right\rangle A$ in $\beta_{3}\left(\Delta\right)$
to $\overline{\phi_{p}}\left(M_{p}\left(A\right)\right)$. Here is
one of the key lemmas to prove Theorem 1.1.

\noindent \begin{lemma}

The map $\mu_{p}$ is injective if and only if the representation
$\beta_{3,\,p}$ is faithful.

\noindent \end{lemma}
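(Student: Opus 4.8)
The plan is to compare two kernels on the group $\beta_{3}\left(B_{3}\right)$ --- that of the reduction map $M_{p}$ and that of the composite $\overline{\phi_{p}}\circ M_{p}$ --- and to recognise $\mu_{p}$ as the homomorphism that the latter induces on the central quotient. Since $\beta_{3,\,p}=M_{p}\circ\beta_{3}$ and $\beta_{3}$ is faithful, the representation $\beta_{3,\,p}$ is faithful exactly when the restriction $M_{p}|_{\beta_{3}\left(B_{3}\right)}$ is injective; write $K:=\ker\left(M_{p}|_{\beta_{3}\left(B_{3}\right)}\right)$, so that the assertion to prove is equivalent to the statement that $\mu_{p}$ is injective if and only if $K=\left\{ 1\right\} $.

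First I would identify $\mu_{p}$ with the homomorphism induced by $\overline{\phi_{p}}\circ M_{p}$ on $\beta_{3}\left(B_{3}\right)/\left\langle \beta_{3}\left(\Delta\right)\right\rangle $. By Lemma 5.3 the map $\overline{\rho}$ identifies this quotient with $\mathrm{PSL}\left(2,\,\mathbb{Z}\right)$, carrying the classes of $\beta_{3}\left(\sigma_{1}\right),\,\beta_{3}\left(\sigma_{2}\right)$ to the matrices $\overline{\rho_{p}}\left(\beta_{3,\,p}\left(\sigma_{1}\right)\right),\,\overline{\rho_{p}}\left(\beta_{3,\,p}\left(\sigma_{2}\right)\right)$ recalled at the start of this section; and on those same generators $\overline{\phi_{p}}\left(M_{p}\left(\beta_{3}\left(\sigma_{i}\right)\right)\right)=\overline{\phi_{p}}\left(\beta_{3,\,p}\left(\sigma_{i}\right)\right)$ is precisely the value prescribed for $\mu_{p}$. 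Moreover $\overline{\phi_{p}}\left(M_{p}\left(\beta_{3}\left(\Delta\right)\right)\right)=\pi_{p}\left(\phi_{p}\left(\beta_{3,\,p}\left(\Delta\right)\right)\right)=\pi_{p}\left(t^{3}I\right)$ is trivial (see the proof of Corollary 2.11), so $\overline{\phi_{p}}\circ M_{p}$ descends to the quotient, and the descended map agrees with $\mu_{p}$ on a generating set of $\mathrm{PSL}\left(2,\,\mathbb{Z}\right)$, hence equals $\mu_{p}$.

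Next I would compute $L:=\ker\left(\overline{\phi_{p}}\circ M_{p}|_{\beta_{3}\left(B_{3}\right)}\right)$. An element $A\in\beta_{3}\left(B_{3}\right)$ lies in $L$ if and only if $M_{p}\left(A\right)\in\ker\overline{\phi_{p}}\cap\mathcal{B}_{p}$; since $\beta_{3,\,p}\left(B_{3}\right)\subseteq\mathcal{B}_{p}$, Corollary 2.11 gives $\ker\overline{\phi_{p}}\cap\mathcal{B}_{p}=\left\langle \beta_{3,\,p}\left(\Delta\right)\right\rangle =\left\langle M_{p}\left(\beta_{3}\left(\Delta\right)\right)\right\rangle $. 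Hence $A\in L$ if and only if $M_{p}\left(A\right)=M_{p}\left(\beta_{3}\left(\Delta\right)^{k}\right)$ for some $k\in\mathbb{Z}$, i.e. if and only if $A\beta_{3}\left(\Delta\right)^{-k}\in K$ for some $k$. As $\beta_{3}\left(\Delta\right)$ is central, this shows $L=K\cdot\left\langle \beta_{3}\left(\Delta\right)\right\rangle $, which is a subgroup of $\beta_{3}\left(B_{3}\right)$.

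Finally, $\mu_{p}$ is injective if and only if $L=\left\langle \beta_{3}\left(\Delta\right)\right\rangle $, that is, if and only if $K\subseteq\left\langle \beta_{3}\left(\Delta\right)\right\rangle $. But $K\cap\left\langle \beta_{3}\left(\Delta\right)\right\rangle =\left\{ 1\right\} $: if $\beta_{3}\left(\Delta\right)^{k}\in K$ then $\beta_{3,\,p}\left(\Delta\right)^{k}=I$, whereas $\phi_{p}\left(\beta_{3,\,p}\left(\Delta\right)^{k}\right)=t^{3k}I$, which differs from $I$ for $k\ne0$, and $\phi_{p}$ is injective (Lemma 2.4); hence $k=0$. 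Thus $K\subseteq\left\langle \beta_{3}\left(\Delta\right)\right\rangle $ already forces $K=\left\{ 1\right\} $, while the reverse implication is immediate. Combining, $\mu_{p}$ is injective if and only if $K=\left\{ 1\right\} $, i.e. if and only if $\beta_{3,\,p}$ is faithful. I do not expect a genuine obstacle: the argument is a short diagram chase resting on two already-established facts --- that $\ker\overline{\phi_{p}}\cap\mathcal{B}_{p}$ is the infinite cyclic group generated by $\beta_{3,\,p}\left(\Delta\right)$ (Corollary 2.11), and that $\beta_{3,\,p}\left(\Delta\right)$ has infinite order (via $\phi_{p}\left(\beta_{3,\,p}\left(\Delta\right)\right)=t^{3}I$ and Lemma 2.4) --- the only point requiring a little care being the bookkeeping of the identification $\mathrm{PSL}\left(2,\,\mathbb{Z}\right)\cong\beta_{3}\left(B_{3}\right)/\left\langle \beta_{3}\left(\Delta\right)\right\rangle $.
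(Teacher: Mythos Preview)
Your proof is correct and follows essentially the same approach as the paper: the paper sets up the commutative diagram of short exact sequences
\[
\xymatrix{1\ar[r] & \mathbb{Z}\ar[r]\ar[d]^{=} & \beta_{3}\left(B_{3}\right)\ar[d]^{M_{p}}\ar[r]^{\overline{\rho}} & \mathrm{PSL}\left(2,\mathbb{Z}\right)\ar[d]^{\mu_{p}}\ar[r] & 1\\
1\ar[r] & \mathbb{Z}\ar[r] & \beta_{3,\,p}\left(B_{3}\right)\ar[r]^{\overline{\phi_{p}}} & \overline{\phi_{p}}\left(\beta_{3,\,p}\left(B_{3}\right)\right)\ar[r] & 1}
\]
and invokes the four lemma, whereas you carry out the underlying diagram chase by hand --- your computation $L=K\cdot\left\langle \beta_{3}\left(\Delta\right)\right\rangle $ together with $K\cap\left\langle \beta_{3}\left(\Delta\right)\right\rangle =\left\{ 1\right\} $ is precisely the content of that lemma once one knows the left vertical arrow is an isomorphism.
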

\begin{proof}

\noindent By the construction of the map $\mu_{p}$, we have a commutative
diagram:
\begin{align*}
\xymatrix{1\ar[r] & \mathbb{Z}\ar[r]\ar[d]^{=} & \beta_{3}\left(B_{3}\right)\ar[d]^{M_{p}}\ar[r]^{\overline{\rho}} & \mathrm{PSL}\left(2,\mathbb{Z}\right)\ar[d]^{\mu_{p}}\ar[r] & 1\\
1\ar[r] & \mathbb{Z}\ar[r] & \beta_{3,\,p}\left(B_{3}\right)\ar[r]^{\overline{\phi_{p}}} & \overline{\phi_{p}}\left(\beta_{3,\,p}\left(B_{3}\right)\right)\ar[r] & 1
}
\end{align*}
where the map $M_{p}$ restricted on the kernel is an isomorphism.
The four lemma establishes the result desired. $\qedhere$

\noindent \end{proof}

\noindent \begin{definition}

For each prime $p$, define two isomorphic subgroups $\Gamma^{0}\left(p\right)$
and $\Gamma_{0}\left(p\right)$ of the modular group $\mathrm{PSL}\left(2,\,\mathbb{Z}\right)$
as
\begin{align*}
 & \Gamma^{0}\left(p\right):=\left\{ \left[\begin{array}{cc}
a & b\\
c & d
\end{array}\right]\in\mathrm{PSL}\left(2,\,\mathbb{Z}\right)\;:\;b\equiv0\;(\mathrm{mod}\:p)\right\} ,\\
 & \Gamma_{0}\left(p\right):=\left\{ \left[\begin{array}{cc}
a & b\\
c & d
\end{array}\right]\in\mathrm{PSL}\left(2,\,\mathbb{Z}\right)\;:\;c\equiv0\;(\mathrm{mod}\:p)\right\} .
\end{align*}

The group $\Gamma_{0}\left(p\right)$ is called the \emph{Hecke congruence
subgroup} of level $p$.

\noindent \end{definition}

Lemma 2.13 implies that for an element $M\in\mathrm{PSL}\left(2,\,\mathbb{Z}\right)$,
the subgroup $\Gamma^{0}\left(p\right)$ includes $M$ if and only
if $\mu_{p}\left(M\right)$ is contained in $\mathrm{PGL}\left(2,\,\mathbb{F}_{p}\left[t,\,t^{-1}\right]\right)$.
The following provides a computable version of Lemma 5.4.

\noindent \begin{lemma}

The map $\mu_{p}$ is injective if and only if the restriction $\mu_{p}|_{\Gamma^{0}\left(p\right)}$
is injective.

\noindent \end{lemma}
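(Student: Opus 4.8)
The ``only if'' direction is trivial, since a restriction of an injective homomorphism is injective. For the converse, the idea is to play off two facts against each other: $\ker\mu_{p}$ is a \emph{normal} subgroup of $\mathrm{PSL}(2,\mathbb{Z})$ (it is the kernel of a homomorphism on $\mathrm{PSL}(2,\mathbb{Z})$), whereas $\Gamma^{0}(p)$ has \emph{finite index}, namely $p+1$, in $\mathrm{PSL}(2,\mathbb{Z})$. The only extra input is the classical free-product decomposition $\mathrm{PSL}(2,\mathbb{Z})\cong\mathbb{Z}/2\ast\mathbb{Z}/3$.

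In detail, I would set $N:=\ker\mu_{p}\trianglelefteq\mathrm{PSL}(2,\mathbb{Z})$ and let $C:=\bigcap_{g\in\mathrm{PSL}(2,\mathbb{Z})}g\,\Gamma^{0}(p)\,g^{-1}$ be the normal core of $\Gamma^{0}(p)$. Since $[\mathrm{PSL}(2,\mathbb{Z}):\Gamma^{0}(p)]=p+1<\infty$, the core $C$ is an intersection of finitely many finite-index subgroups, hence a normal subgroup of finite index, so $\mathrm{PSL}(2,\mathbb{Z})/C$ is finite. Now assume $\mu_{p}|_{\Gamma^{0}(p)}$ is injective, i.e. $N\cap\Gamma^{0}(p)=\{1\}$. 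As $C\subseteq\Gamma^{0}(p)$, a fortiori $N\cap C=\{1\}$, so the composite $N\hookrightarrow\mathrm{PSL}(2,\mathbb{Z})\twoheadrightarrow\mathrm{PSL}(2,\mathbb{Z})/C$ is injective. Therefore $N$ embeds in a finite group and is finite. But $\mathrm{PSL}(2,\mathbb{Z})\cong\mathbb{Z}/2\ast\mathbb{Z}/3$ admits no nontrivial finite normal subgroup: any finite subgroup of a free product of finite groups is conjugate into one of the two factors, so a finite normal subgroup $N$ would satisfy $N\subseteq\bigcap_{g}g(\mathbb{Z}/2)g^{-1}$ or $N\subseteq\bigcap_{g}g(\mathbb{Z}/3)g^{-1}$, and each such intersection is a normal subgroup contained in a factor, hence trivial because the center of $\mathrm{PSL}(2,\mathbb{Z})$ is trivial. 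Thus $N=\{1\}$, i.e. $\mu_{p}$ is injective. Finally, Lemma 5.4 identifies injectivity of $\mu_{p}$ with faithfulness of $\beta_{3,\,p}$, which makes this statement the promised ``computable'' version: by Lemma 2.13 the map $\mu_{p}$ sends $\Gamma^{0}(p)$ into $\mathrm{PGL}(2,\mathbb{F}_{p}[t,t^{-1}])$, where the structure theory developed above (and Rademacher's explicit generating set for $\Gamma^{0}(p)\cong\Gamma_{0}(p)$) applies.

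There is no genuine analytic difficulty; the step that requires attention is the passage from $\Gamma^{0}(p)$ to its normal core $C$, which is needed because $\Gamma^{0}(p)$ itself is not normal, so one cannot directly quotient by it — the argument really uses normality of $C$ to conclude that $N\cap C=\{1\}$ forces $N$ finite. The remaining ingredient, that $\mathrm{PSL}(2,\mathbb{Z})$ is centerless with no nontrivial finite normal subgroup, is standard for $\mathbb{Z}/2\ast\mathbb{Z}/3$.
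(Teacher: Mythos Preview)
Your proof is correct and takes a genuinely different route from the paper's. The paper argues directly at the level of cosets: it lists the $p+1$ representatives $I,\,S^{-1},\ldots,S^{-(p-1)},\,T$ of $\Gamma^{0}(p)\backslash\mathrm{PSL}(2,\mathbb{Z})$, computes their images under $\mu_{p}$ explicitly (via the closed formula for powers of $\mu_{p}(S^{-1})$), and then invokes Lemma~2.13 to check that none of the nontrivial representatives lands in $\mathrm{PGL}(2,\mathbb{F}_{p}[t,t^{-1}])$; hence $\mu_{p}$ separates the $p+1$ cosets, and injectivity on $\Gamma^{0}(p)$ propagates to all of $\mathrm{PSL}(2,\mathbb{Z})$. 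Your argument instead exploits that $\ker\mu_{p}$ is \emph{normal} while $\Gamma^{0}(p)$ has finite index, forcing $\ker\mu_{p}$ to be finite and then trivial by the free-product structure $\mathrm{PSL}(2,\mathbb{Z})\cong C_{2}\ast C_{3}$. Your approach is shorter and more conceptual, and would apply verbatim with $\Gamma^{0}(p)$ replaced by any finite-index subgroup; the paper's computation, on the other hand, does extra work that is reused later (it shows $\mu_{p}(\Gamma^{0}(p))$ has exactly $p+1$ cosets in the image, which feeds into the index calculations in Section~6).

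One small remark on your last step: the clause ``hence trivial because the center of $\mathrm{PSL}(2,\mathbb{Z})$ is trivial'' is accurate for the $C_{2}$ factor (a normal subgroup equal to $C_{2}$ would force its generator to be central), but for the $C_{3}$ factor centrality is not immediate, since conjugation could a priori swap $t$ and $t^{2}$. The cleaner justification is that in a nontrivial free product $A\ast B$, any normal subgroup contained in a factor is trivial: if $1\neq n\in N\subseteq A$ and $1\neq b\in B$, then $bnb^{-1}$ is a reduced word of syllable length~$3$, hence not in $A$. This is of course standard, and your overall argument stands.
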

\begin{proof}

\noindent The set of cosets $\left\{ \Gamma^{0}\left(p\right)M\right\} _{M\in\mathrm{PSL}\left(2,\,\mathbb{Z}\right)}$
has a bijection to the projective line over the finite field $\mathbb{F}_{p}$.
Therefore, there are exactly $p+1$ cosets with representatives:
\begin{align*}
\left[\begin{array}{cc}
1 & 0\\
0 & 1
\end{array}\right],\,\left[\begin{array}{cc}
1 & -1\\
0 & 1
\end{array}\right],\cdots,\,\left[\begin{array}{cc}
1 & -\left(p-1\right)\\
0 & 1
\end{array}\right],\,\left[\begin{array}{cc}
0 & -1\\
1 & 0
\end{array}\right].
\end{align*}

From the fact that
\begin{align}
\overline{\rho_{p}}\left(\beta_{3,\,p}\left(\sigma_{1}\sigma_{2}\sigma_{1}\right)\right) & =\left[\begin{array}{cc}
0 & -1\\
1 & 0
\end{array}\right],
\end{align}
the images of these representatives under $\mu_{p}$ are
\begin{align*}
\left[\begin{array}{cc}
1 & 0\\
0 & 1
\end{array}\right],\,\left[\begin{array}{cc}
-\frac{t^{2}}{1+t} & \frac{t}{1+t}\\
\frac{1+t+t^{2}}{1+t} & \frac{1}{1+t}
\end{array}\right],\cdots,\,\left[\begin{array}{cc}
-\frac{t^{2}}{1+t} & \frac{t}{1+t}\\
\frac{1+t+t^{2}}{1+t} & \frac{1}{1+t}
\end{array}\right]^{p-1},\,\left[\begin{array}{cc}
\frac{t}{1+t} & \frac{t}{1+t}\\
\frac{1+t+t^{2}}{1+t} & -\frac{t}{1+t}
\end{array}\right],
\end{align*}
where for each integer $n$ such that $1\le n\le p-1$, we have
\begin{align}
\left[\begin{array}{cc}
-\frac{t^{2}}{1+t} & \frac{t}{1+t}\\
\frac{1+t+t^{2}}{1+t} & \frac{1}{1+t}
\end{array}\right]^{n} & =\left[\begin{array}{cc}
\frac{t+\left(-t\right)^{n}\left(1+t+t^{2}\right)}{\left(1+t\right)^{2}} & \frac{t+\left(-t\right)^{n+1}}{\left(1+t\right)^{2}}\\
\frac{\left(1+t+t^{2}\right)\left(1-\left(-t\right)^{n}\right)}{\left(1+t\right)^{2}} & \frac{1+t+t^{2}+t\left(-t\right)^{n}}{\left(1+t\right)^{2}}
\end{array}\right],
\end{align}
which is canonically proven through diagonalization. On the other
hand, we also have
\begin{equation}
\left[\begin{array}{cc}
\frac{t}{1+t} & \frac{t}{1+t}\\
\frac{1+t+t^{2}}{1+t} & -\frac{t}{1+t}
\end{array}\right]^{-1}\left[\begin{array}{cc}
-\frac{t^{2}}{1+t} & \frac{t}{1+t}\\
\frac{1+t+t^{2}}{1+t} & \frac{1}{1+t}
\end{array}\right]^{n}=\left[\begin{array}{cc}
-\frac{1}{t+t^{2}} & -\frac{1}{t+t^{2}}\\
\frac{\left(1+t+t^{2}\right)t\left(-t\right)^{n-3}}{1+t} & \frac{\left(-t\right)^{n}}{t+t^{2}}
\end{array}\right],
\end{equation}
which directly follows from (80). By Lemma 2.13, it suffices to check
whether these elements are in $\mathrm{PGL}\left(2,\,\mathbb{F}_{p}\left[t,\,t^{-1}\right]\right)$.
The $\left(1,\,2\right)$ entry in the right-hand side of (81) is
not a Laurent polynomial, and by L'Hopital's rule, the $\left(1,\,2\right)$
entry in the right-hand side of (80) is a Laurent polynomial over
$\mathbb{F}_{p}$ if and only if
\begin{align*}
1-\left(n+1\right)\left(-t\right)^{n}|_{t=-1}=0,
\end{align*}
or $p$ divides $n$. Since the elements in (80) and (81) have determinant
$\left(-t\right)^{k}$ for some integer $k$ up to square unit, we
have exactly $p+1$ cosets of $\mu_{p}\left(\Gamma^{0}\left(p\right)\right)$.
Consequently, the injectivity of $\mu_{p}|_{\Gamma^{0}\left(p\right)}$
implies the injectivity of $\mu_{p}$. $\qedhere$

\noindent \end{proof}
\begin{corollary}

The representation $\beta_{3,\,p}$ is faithful if and only if the
restriction $\mu_{p}|_{\Gamma^{0}\left(p\right)}$ is injective.

\noindent \end{corollary}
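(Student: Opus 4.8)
\noindent The plan is that this corollary needs no genuinely new argument: it is simply the concatenation of the two biconditionals established just above. First I would invoke Lemma 5.4, which asserts that $\mu_{p}$ is injective if and only if $\beta_{3,\,p}$ is faithful. Then I would invoke Lemma 5.6, which asserts that $\mu_{p}$ is injective if and only if its restriction $\mu_{p}|_{\Gamma^{0}\left(p\right)}$ is injective. Chaining the two equivalences yields
\begin{align*}
\beta_{3,\,p}\text{ faithful}\;\Longleftrightarrow\;\mu_{p}\text{ injective}\;\Longleftrightarrow\;\mu_{p}|_{\Gamma^{0}\left(p\right)}\text{ injective},
\end{align*}
which is precisely the assertion. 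If one wants to spell out directions: faithfulness gives injectivity of $\mu_{p}$ by Lemma 5.4, and then injectivity of the restriction is automatic; conversely, injectivity of $\mu_{p}|_{\Gamma^{0}\left(p\right)}$ forces injectivity of $\mu_{p}$ by Lemma 5.6 (this is the non-formal direction, already handled there), and then faithfulness follows again from Lemma 5.4.

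\noindent I do not expect any obstacle in this step; all the substance has been pushed into Lemma 5.4 and Lemma 5.6. What I would add in the write-up is a sentence explaining \emph{why} this is the reformulation we want: Lemma 5.6 replaces the infinitely generated modular group by the finite-index subgroup $\Gamma^{0}\left(p\right)$ (index $p+1$), which is the transpose of the Hecke congruence subgroup $\Gamma_{0}\left(p\right)$, and for the latter a classical presentation and generating set are available. Combined with Lemma 2.13 — which pins down exactly when $\mu_{p}$ of a modular element has Laurent-polynomial entries, i.e.\ lands in (a coset of) $\mathcal{Q}_{p}$ — and with the structure theorem for $\mathcal{Q}_{p}$, the injectivity of $\mu_{p}|_{\Gamma^{0}\left(p\right)}$ becomes a finite Bass--Serre/Stallings-folding computation, which is what powers the proof of Theorem 1.1. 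So the proof I would write is the two-line deduction above, followed by this forward-looking remark.
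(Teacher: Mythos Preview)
Your proof is correct and matches the paper's own argument exactly: the paper's proof of this corollary is simply ``Direct from Lemma 5.4 and Lemma 5.6,'' which is precisely the chaining of equivalences you wrote. Your additional forward-looking remark about why the reformulation is useful is accurate and well-motivated, though the paper defers such commentary to the surrounding discussion rather than including it in the proof itself.
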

\begin{proof}

\noindent Direct from Lemma 5.4 and Lemma 5.6. $\qedhere$

\noindent \end{proof}

To apply Corollary 5.7, we need to understand the structure of the
group $\Gamma^{0}\left(p\right)$. Rademacher \citep{MR3069525} proved
that the group $\Gamma_{0}\left(p\right)$, isomorphic to $\Gamma^{0}\left(p\right)$,
is a free product of cyclic groups, whose order could be infinite,
2, or 3. Following Rademacher's notation, but transposed, in $\mathrm{PSL}\left(2,\,\mathbb{Z}\right)$
we define

\begin{equation}
S:=\left[\begin{array}{cc}
1 & 0\\
1 & 1
\end{array}\right],\;T:=\left[\begin{array}{cc}
0 & -1\\
1 & 0
\end{array}\right].
\end{equation}

Fix a prime $p$. For an integer $k$ such that $1\le k\le p-1$,
define another unique integer $k_{*}$ such that $1\le k_{*}\le p-1$
and $kk_{*}\equiv-1$ (mod $p$). For each integer $k$ such that
$1\le k\le p-1$, define $V_{k}=T^{-1}S^{-k_{*}}TS^{k}T$. Then, we
have
\begin{align*}
V_{k} & =\left[\begin{array}{cc}
-k_{*} & kk_{*}+1\\
-1 & k
\end{array}\right],
\end{align*}
which is included in $\Gamma^{0}\left(p\right)$.

\noindent \begin{theorem}

For each prime $p$, the group $\Gamma^{0}\left(p\right)$ is generated
by $S,\,V_{1},\,V_{2},\,\cdots,V_{p-1}$. When $p>3$, we need only
$S$ and $2\left\lfloor \frac{p}{12}\right\rfloor +2$ elements among
$V_{1},\cdots,\,V_{p-1}$ to generate the entire group. It is a minimal
set of generators, and the relations in that case are only
\begin{align*}
V_{\chi_{1}}^{2} & =1,\;V_{\chi_{2}}^{2}=1
\end{align*}
when $p\equiv1$ (mod 4) and
\begin{align*}
V_{\lambda_{1}}^{3} & =1,\;V_{\lambda_{2}}^{3}=1
\end{align*}
when $p\equiv1$ (mod 3), where the subscripts are defined to the
integers satisfying $\chi_{i}^{2}\equiv-1$, $\left(2\lambda_{i}-1\right)^{2}\equiv-3$
(mod $p$) for $i=1,\,2$.

\noindent \end{theorem}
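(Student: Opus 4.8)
The plan is to recover Rademacher's theorem by combining the free-product decomposition $\mathrm{PSL}\left(2,\,\mathbb{Z}\right)=\left\langle T\right\rangle *\left\langle ST\right\rangle \cong C_{2}*C_{3}$ (with $T$ of order $2$ and $ST$ of order $3$) and Bass\textendash Serre theory with Rademacher's explicit description of the cosets. Since $\Gamma^{0}\left(p\right)$ has index $p+1$ in $\mathrm{PSL}\left(2,\,\mathbb{Z}\right)$\textemdash the coset space being identified with $\mathbb{P}^{1}\left(\mathbb{F}_{p}\right)$ exactly as in the proof of Lemma 5.6\textemdash I would first let $\Gamma^{0}\left(p\right)$ act on the Bass\textendash Serre tree of $C_{2}*C_{3}$, on which $\mathrm{PSL}\left(2,\,\mathbb{Z}\right)$ acts with trivial edge stabilizers, so that the same holds under restriction. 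The structure theorem for groups acting on trees (equivalently, the Kurosh subgroup theorem) then gives an abstract isomorphism
\[
\Gamma^{0}\left(p\right)\;\cong\;C_{2}^{*a}*C_{3}^{*b}*F_{c},
\]
where $a$ (resp.\ $b$) is the number of elliptic points of order $2$ (resp.\ $3$) of $\Gamma^{0}\left(p\right)$, i.e.\ the number of conjugacy classes of cyclic subgroups of the corresponding order, and $c$ is the first Betti number of the finite quotient graph $\Gamma^{0}\left(p\right)\backslash(\mathrm{tree})$.

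The next step is a standard orbit count; I treat $p>3$, the cases $p=2,3$ following by direct inspection of the index-$3$ and index-$4$ coset pictures. An order-$2$ element of $\Gamma^{0}\left(p\right)$ is a conjugate $gTg^{-1}\in\Gamma^{0}\left(p\right)$, and the cosets in $\Gamma^{0}\left(p\right)\backslash\mathrm{PSL}\left(2,\,\mathbb{Z}\right)\cong\mathbb{P}^{1}\left(\mathbb{F}_{p}\right)$ fixed by right multiplication by $T$ are in bijection with such conjugacy classes; counting them amounts to counting the solutions of $x^{2}+1\equiv0\ (\mathrm{mod}\ p)$, so $a=1+\left(\frac{-1}{p}\right)\in\left\{ 0,2\right\} $. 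Similarly $b$ equals the number of solutions of $x^{2}+x+1\equiv0$, equivalently of $\left(2x-1\right)^{2}\equiv-3$, so $b=1+\left(\frac{-3}{p}\right)\in\left\{ 0,2\right\} $. For $c$ one counts the $p+1$ edges and the two colours of vertices of the quotient graph, of which there are $\frac{1}{2}\left(p+1+a\right)$ of type $C_{2}$ and $\frac{1}{3}\left(p+1+2b\right)$ of type $C_{3}$, giving $c=1+\left(p+1\right)-\frac{1}{2}\left(p+1+a\right)-\frac{1}{3}\left(p+1+2b\right)$; equivalently $c=2g+1$, where $g$ is the genus of $X_{0}\left(p\right)$ and the index formula reads $\frac{p+1}{6}=2g+\frac{a}{2}+\frac{2b}{3}$ (recall $\Gamma^{0}\left(p\right)$ has two cusps). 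A short case analysis according to $p\ \mathrm{mod}\ 12$ then yields $a+b+\left(c-1\right)=2\left\lfloor \frac{p}{12}\right\rfloor +2$, hence the number of free factors is $a+b+c=2\left\lfloor \frac{p}{12}\right\rfloor +3$.

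It remains to pin the free factors to explicit group elements, which is where I would follow Rademacher's combinatorial argument. The parabolic element $S$ generates one cuspidal $C_{\infty}$-factor, each $V_{k}=T^{-1}S^{-k_{*}}TS^{k}T$ lies in $\Gamma^{0}\left(p\right)$, and expanding the $p+1$ cosets of $\Gamma^{0}\left(p\right)$ in continued-fraction (Farey) form shows that $S$ together with all of the $V_{k}$ generate, and that all but $2\left\lfloor \frac{p}{12}\right\rfloor +2$ of the $V_{k}$ may be discarded. Among those retained one keeps the torsion generators: when $p\equiv1\ (\mathrm{mod}\ 4)$ the two indices $\chi$ with $\chi^{2}\equiv-1$ satisfy $\chi_{*}=\chi$ and a direct $2\times2$ computation gives $V_{\chi}^{2}=1$; when $p\equiv1\ (\mathrm{mod}\ 3)$ the two indices $\lambda$ with $\left(2\lambda-1\right)^{2}\equiv-3$, i.e.\ $\lambda^{2}-\lambda+1\equiv0$, satisfy $\lambda_{*}=\lambda-1$ and $V_{\lambda}^{3}=1$; the remaining $2g$ retained $V_{k}$ serve as free generators. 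Matching each retained generator bijectively with a free factor of the decomposition above shows that there are no relations beyond $V_{\chi_{1}}^{2}=V_{\chi_{2}}^{2}=1$ and $V_{\lambda_{1}}^{3}=V_{\lambda_{2}}^{3}=1$, and minimality follows from Grushko's theorem, since rank is additive across free products of cyclic groups, so the rank of $\Gamma^{0}\left(p\right)$ equals $a+b+c=2\left\lfloor \frac{p}{12}\right\rfloor +3$, which is the size of the exhibited generating set.

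I expect the main obstacle to be this last step: translating Rademacher's explicit fundamental-domain and Farey-symbol analysis of $\Gamma^{0}\left(p\right)$ into the precise assertions that exactly $S$ and $2\left\lfloor \frac{p}{12}\right\rfloor +2$ of the $V_{k}$ suffice and that the retained $V_{k}$ are in bijection with the free factors. Once that bijection is secured, checking the indices $\chi_{i},\lambda_{i}$ and the orders of $V_{\chi_{i}},V_{\lambda_{i}}$ is a routine matrix computation; but the enumeration and the absence of further relations are exactly the content of Rademacher's classical analysis, which we cite rather than redo.
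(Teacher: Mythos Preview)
The paper's own ``proof'' of this theorem is simply a citation: ``See \citep[Satz, p.~146]{MR3069525}.'' No argument is given in the paper itself; the result is quoted from Rademacher's 1929 article as a black box.

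Your proposal therefore goes considerably further than the paper does. You supply a modern Bass\textendash Serre/Kurosh derivation of the abstract isomorphism type $\Gamma^{0}(p)\cong C_{2}^{*a}*C_{3}^{*b}*F_{c}$, together with the orbit counts $a=1+\bigl(\tfrac{-1}{p}\bigr)$, $b=1+\bigl(\tfrac{-3}{p}\bigr)$, and the Euler-characteristic computation of $c$; the case analysis modulo $12$ confirming $a+b+c=2\lfloor p/12\rfloor+3$ is correct, and the appeal to Grushko for minimality is clean. This part stands on its own and is a genuine addition over the paper's bare citation. Where you and the paper converge is the final step: pinning the abstract free factors to the explicit elements $S,V_{k}$ and selecting which $V_{k}$ to retain. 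You rightly flag this as the main obstacle and, like the paper, defer it to Rademacher's fundamental-domain/Farey analysis rather than reproving it. So your approach is not different in spirit\textemdash both ultimately rest on Rademacher\textemdash but it is more informative, since it explains \emph{why} the group has the stated shape before invoking the classical reference for the explicit generating set.
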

\begin{proof}

\noindent See \citep[Satz, p. 146]{MR3069525}. $\qedhere$

\noindent \end{proof}

From now on, let $C_{l}$ denote the cyclic group of order $l$, and
let $F_{l}$ denote the free group of rank $l$. Define a quotient
map
\begin{align*}
\omega & :\mathrm{PSL}\left(2,\,\mathbb{Z}\right)\to C_{2}
\end{align*}
by adding relations (cf. (78))
\begin{align*}
\overline{\rho}\left(\beta_{3}\left(\sigma_{1}\right)\right)^{2} & =1,\;\overline{\rho}\left(\beta_{3}\left(\sigma_{1}\right)\right)=\overline{\rho}\left(\beta_{3}\left(\sigma_{2}\right)\right).
\end{align*}

Then, the map $\omega$ can also be alternatively defined by taking
the generating set $\left\{ S,\,T\right\} $ in (82) by adding relations
\begin{align*}
S^{2} & =1,\;S=T,
\end{align*}
since $S=\overline{\rho}\left(\beta_{3}\left(\sigma_{2}\right)\right)$
and $T=\overline{\rho}\left(\beta_{3}\left(\sigma_{1}\sigma_{2}\sigma_{1}\right)\right)$.
For each prime $p$, define a group
\begin{align*}
\mathcal{K}_{p} & :=\ker\left(\omega|_{\Gamma^{0}\left(p\right)}\right).
\end{align*}

\noindent \begin{lemma}

Suppose $p\equiv2,\,5$ (mod 6). Then, $\mathcal{K}_{p}$ is free.
When $p=2$, the rank is 2. When $p\equiv5$ (mod 6), the rank is
$\frac{p+4}{3}$. On the other hand, the group $\mathcal{K}_{3}$
is a free product isomorphic to $\mathbb{Z}*C_{3}*C_{3}$. For the
cases where $p\equiv1$ (mod 6), the group $\mathcal{K}_{p}$ is a
free product of isomorphic to $F_{\frac{p-4}{3}}*C_{3}*C_{3}*C_{3}*C_{3}$.
In each case, there is an algorithm to decide the generating set of
the kernel, as words in $S$ and $\left\{ V_{i}\right\} _{1\le i\le p-1}$.

\noindent \end{lemma}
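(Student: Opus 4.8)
The plan is to read off the structure of $\Gamma^{0}(p)$ from Rademacher's theorem (Theorem 5.8), pass to the index-$2$ subgroup $\mathcal{K}_{p}$ via the Kurosh subgroup theorem — equivalently, via Bass–Serre theory applied to the action on the Bass–Serre tree of the free product — and obtain the explicit generating set by Stallings' folding. First I would note that $\omega|_{\Gamma^{0}(p)}$ is surjective: $S$ lies in $\Gamma^{0}(p)$ and $\omega(S)$ is the nontrivial element of $C_{2}$ by construction, so $\mathcal{K}_{p}$ is a normal subgroup of $\Gamma^{0}(p)$ of index exactly $2$. By Rademacher's theorem $\Gamma^{0}(p)\cong\Gamma_{0}(p)$ is a free product of cyclic groups; writing $\epsilon_{2}$ and $\epsilon_{3}$ for the numbers of elliptic points of order $2$ and $3$ (so $\epsilon_{2}=1+\left(\frac{-1}{p}\right)$ and $\epsilon_{3}=1+\left(\frac{-3}{p}\right)$ for $p$ odd, each lying in $\{0,2\}$ when $p>3$), one gets $\Gamma^{0}(p)\cong F_{\rho}*C_{2}^{*\epsilon_{2}}*C_{3}^{*\epsilon_{3}}$, where the free rank $\rho$ is forced by multiplicativity of the rational Euler characteristic in finite-index subgroups: $\chi(\Gamma^{0}(p))=(p+1)\,\chi(\mathrm{PSL}(2,\mathbb{Z}))=-\frac{p+1}{6}$. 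The two exceptional levels are handled directly from the classical isomorphisms $\Gamma_{0}(2)\cong C_{2}*\mathbb{Z}$ and $\Gamma_{0}(3)\cong C_{3}*\mathbb{Z}$, since Rademacher's uniform description is stated for $p>3$.

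The decisive step is to determine which cyclic factors of $\Gamma^{0}(p)$ lie in $\mathcal{K}_{p}$. Since $\omega(S)=\omega(T)=:\sigma$, expanding $V_{k}=T^{-1}S^{-k_{*}}TS^{k}T$ gives $\omega(V_{k})=\sigma^{\,1+k+k_{*}}$ in $C_{2}$. For an elliptic generator $V_{\chi}$ of order $2$ one has $\chi^{2}\equiv-1\pmod p$, hence $\chi_{*}=\chi$ and $\omega(V_{\chi})=\sigma\neq1$, so $V_{\chi}\notin\mathcal{K}_{p}$ and $\langle V_{\chi}\rangle\cap\mathcal{K}_{p}=1$; such a factor therefore contributes no torsion to $\mathcal{K}_{p}$. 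For an elliptic generator $V_{\lambda}$ of order $3$, the element $\omega(V_{\lambda})$ has order dividing $\gcd(2,3)=1$, hence is trivial, so $\langle V_{\lambda}\rangle\le\mathcal{K}_{p}$. Feeding this into the Kurosh subgroup theorem for the index-$2$ subgroup $\mathcal{K}_{p}$ of the free product — that is, computing the finite quotient graph of groups $\mathcal{K}_{p}\backslash X$ for the Bass–Serre tree $X$ of $\Gamma^{0}(p)$ — each order-$3$ factor splits into exactly two conjugate copies of $C_{3}$ (the double coset set $\mathcal{K}_{p}\backslash\Gamma^{0}(p)/\langle V_{\lambda}\rangle$ has two elements since $\langle V_{\lambda}\rangle\le\mathcal{K}_{p}$), while the $C_{2}$ factors are absorbed into a free group. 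Hence $\mathcal{K}_{p}\cong F_{\rho'}*C_{3}^{*2\epsilon_{3}}$, and $\rho'$ is pinned down by $\chi(\mathcal{K}_{p})=2\,\chi(\Gamma^{0}(p))=-\frac{p+1}{3}$: one gets $\mathcal{K}_{2}\cong F_{2}$; for $p\equiv5\pmod 6$, $\epsilon_{3}=0$ and $\rho'=\frac{p+4}{3}$; for $p=3$, $\epsilon_{3}=1$ and $\mathcal{K}_{3}\cong\mathbb{Z}*C_{3}*C_{3}$; and for $p\equiv1\pmod 6$, $\epsilon_{3}=2$, giving four copies of $C_{3}$ and $\rho'=\frac{p-4}{3}$.

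For the algorithmic clause, starting from Rademacher's explicit finite presentation of $\Gamma^{0}(p)$ on $S,V_{1},\dots,V_{p-1}$ together with the explicit map $\omega$, one runs the Reidemeister–Schreier rewriting process for the index-$2$ subgroup $\ker(\omega|_{\Gamma^{0}(p)})$ — equivalently, builds the two-sheeted Schreier graph of $\ker\omega$ and applies Stallings' folding — producing a finite generating set of $\mathcal{K}_{p}$ as explicit words in $S$ and the $V_{i}$; a Bass–Serre normal-form reduction in the free product then trims this list to one realising the decomposition above, and all of these steps are finite. I expect the main obstacle to be the Kurosh/Bass–Serre bookkeeping: proving cleanly that it is exactly the order-$2$ elliptic generators (detected by the congruence $\chi_{*}=\chi$) that disappear while the order-$3$ ones survive and double, keeping the parities and quadratic-residue conditions consistent across all residue classes of $p$, and dealing separately with $p=2,3$ where the uniform statement of Theorem 5.8 does not apply.
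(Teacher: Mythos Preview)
Your proposal is correct and follows essentially the same approach as the paper's proof: both determine which elliptic generators $V_{\chi_i}$, $V_{\lambda_i}$ lie in $\ker\omega$ (you via the order argument $\gcd(2,3)=1$, the paper via the explicit computation $(\lambda_i)_*=\lambda_i-1$), then pass to the index-$2$ kernel using the Reidemeister--Schreier process. The only cosmetic difference is that you package the rank computation via the rational Euler characteristic and Kurosh/Bass--Serre, whereas the paper counts the Reidemeister--Schreier generators and relations directly; these are equivalent bookkeeping devices and neither buys anything the other does not.
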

\begin{proof}

\noindent We use the usual Reidemeister\textendash Schreier process.
When an element $M\in\mathrm{PSL}\left(2,\,\mathbb{Z}\right)$ is
written in $S$ and $T$, the image $\omega\left(M\right)$ is nontrivial
if and only if the sum of exponents is odd. We choose the representatives
the identity $I$ and $S$ in the corresponding transversals.

For an integer $\chi_{i}$ defined in Theorem 5.8, we have $\left(\chi_{i}\right)_{*}=\chi_{i}$.
Therefore, $\omega\left(V_{\chi_{i}}\right)\ne1$. Using the represenative
$S$, for each generator $V_{\chi_{i}}$ of $\Gamma^{0}\left(p\right)$,
in the kernel we have two generators
\begin{align*}
SV_{\chi_{i}} & ,\,V_{\chi_{i}}S^{-1},
\end{align*}
where we see that the relation $V_{\chi_{i}}^{2}=1$ is trivialized
in the kernel, deleting one generator $V_{\chi_{i}}S^{-1}$.

On the other hand, for an integer $\lambda_{i}$ defined in Theorem
5.8, we have $\left(\lambda{}_{i}\right)_{*}=\lambda_{i}-1$ from
the condition $\left(2\lambda_{i}-1\right)^{2}\equiv-3$ (mod $p$).
Therefore, $\omega\left(V_{\lambda_{i}}\right)=1$. Using the representative
$S$, for each generator $V_{\lambda_{i}}$ of $\Gamma^{0}\left(p\right)$,
in the kernel we have two generators
\begin{align*}
V_{\lambda_{i}},\,SV_{\lambda_{i}}S^{-1},
\end{align*}
where we see that the relation $V_{\lambda_{i}}^{3}=1$ gives another
relation $\left(SV_{\lambda_{i}}S^{-1}\right)^{3}=1$ in the kernel.
Since a free generator $V_{i}$ adds two free generators in the kernel
regardless of the image under $\omega$, we establish the result desired
by computing the rank of each kernel. $\qedhere$

\noindent \end{proof}
\begin{lemma}

The representation $\beta_{3,\,p}$ is faithful if and only if the
restriction $\mu_{p}|_{\mathcal{K}_{p}}$ is injective. Moreover,
we have $\mu_{p}\left(\mathcal{K}_{p}\right)\subset\mathcal{Q}_{p}^{1}$.

\noindent \end{lemma}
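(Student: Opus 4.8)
The plan is to deduce this from Corollary 5.7 by showing that the kernel of $\mu_p$ on $\Gamma^0(p)$ already lies inside $\mathcal{K}_p$, so that passing from $\Gamma^0(p)$ to $\mathcal{K}_p$ costs nothing. First I would record that, by Corollary 5.7, faithfulness of $\beta_{3,p}$ is equivalent to injectivity of $\mu_p|_{\Gamma^0(p)}$, and that since $\mathcal{K}_p\le\Gamma^0(p)$ the implication ``$\mu_p|_{\Gamma^0(p)}$ injective $\Rightarrow\mu_p|_{\mathcal{K}_p}$ injective'' is trivial. Thus the whole content is the inclusion $\ker(\mu_p|_{\Gamma^0(p)})\subseteq\mathcal{K}_p$: granting it, $\ker(\mu_p|_{\Gamma^0(p)})=\ker(\mu_p|_{\mathcal{K}_p})$, so one of these restrictions is injective iff the other is, and the chain of equivalences closes up.

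For the inclusion, and simultaneously the ``moreover'' clause, I would follow the determinant. Given $M\in\Gamma^0(p)$, lift it through $\overline{\rho}$ to $\beta_3(w)$ for a braid word $w$ of exponent sum $e=e(w)$, so that the reconstruction of $\mu_p$ given in this section yields $\mu_p(M)=\overline{\phi_p}\bigl(\beta_{3,p}(w)\bigr)$. Since $\beta_{3,p}(w)\in\mathcal{B}_p$, Corollary 2.6 exhibits the natural representative $\phi_p\bigl(\beta_{3,p}(w)\bigr)=\bigl(\begin{smallmatrix}g_{11}&g_{12}\\-\Phi(-t)^{e}\overline{g_{12}}&(-t)^{e}\overline{g_{11}}\end{smallmatrix}\bigr)$, where the factor $(-t)^{e}$ uses that $\phi_p$ preserves determinants (Lemma 2.4) together with the fact that each $\beta_{3,p}(\sigma_i)$ has determinant $-t$; moreover $g_{11},g_{12}\in\mathbb{F}_p[t,t^{-1}]$ by Lemma 2.13 because $M\in\Gamma^0(p)$. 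The one arithmetic fact to isolate is that each $\beta_3(\sigma_i)$ maps to the nontrivial element of $C_2$ under $\omega$, so $\omega(M)$ is trivial precisely when $e$ is even; equivalently, $M\in\mathcal{K}_p$ iff $e$ is even.

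With this, both halves are immediate. If $M\in\mathcal{K}_p$ then $e$ is even, and multiplying the natural representative by $t^{-e/2}$ produces $\bigl(\begin{smallmatrix}h_1&h_2\\-\Phi\overline{h_2}&\overline{h_1}\end{smallmatrix}\bigr)$ with $h_1=t^{-e/2}g_{11}$, $h_2=t^{-e/2}g_{12}$ Laurent polynomials and determinant $t^{-e}(-t)^{e}=1$, so $\mu_p(M)\in\mathcal{Q}_p^1$; this is the ``moreover''. Conversely, if $M\in\Gamma^0(p)$ and $\mu_p(M)=1$, then $\phi_p(\beta_{3,p}(w))$ is a unit scalar multiple of the identity, whence $(-t)^{e}$ is a square of a unit, which forces $e$ even and hence $M\in\mathcal{K}_p$. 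That establishes $\ker(\mu_p|_{\Gamma^0(p)})\subseteq\mathcal{K}_p$ and finishes the argument.

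I do not expect a genuine obstacle; the only point deserving care is that the determinant descends to the projective group only up to multiplication by unit squares, so one should argue with the parity $e\bmod 2$ (equivalently, the $t$-adic valuation of the determinant modulo $2$) rather than with the determinant itself. That parity is exactly the invariant separating $\mathcal{Q}_p$ from the rest of the image, and it is visibly carried by the Corollary 2.6 representative, so the remaining work is just the bookkeeping of rescaling that representative into the symmetric shape defining $\mathcal{Q}_p$.
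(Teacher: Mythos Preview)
Your proposal is correct and follows essentially the same approach as the paper. Both arguments reduce to Corollary~5.7 and then use the determinant/exponent-sum parity: the paper phrases this as ``$\mu_p(S)=\left[\begin{smallmatrix}1&0\\0&-t\end{smallmatrix}\right]$ is a nontrivial coset representative of $\mu_p(\mathcal{K}_p)$ in $\mu_p(\Gamma^0(p))$'' and separately notes that elements of $\mu_p(\mathcal{K}_p)$ have determinant $(-t)^{2k}$ up to a square unit, while you make the same determinant computation explicit via the exponent sum $e$ to obtain $\ker(\mu_p|_{\Gamma^0(p)})\subseteq\mathcal{K}_p$ and the rescaled $\mathcal{Q}_p^1$-representative in one stroke.
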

\begin{proof}

The image $\mu_{p}\left(\mathcal{K}_{p}\right)$ is an index 2 subgroup
of $\mu_{p}\left(\Gamma^{0}\left(p\right)\right)$, since a nontrivial
coset representative is
\begin{align*}
\mu_{p}\left(S\right) & =\left[\begin{array}{cc}
1 & 0\\
0 & -t
\end{array}\right].
\end{align*}

Therefore, the injectivity of $\mu_{p}|_{\mathcal{K}_{p}}$ is equivalent
to the injectivity of $\mu_{p}|_{\Gamma^{0}\left(p\right)}$. Corollary
5.7 concludes the proof on the injectivity.

By the construction of the map $\omega$, any element $M\in\mu_{p}\left(\mathcal{K}_{p}\right)$
has determinant $\left(-t\right)^{2k}$ for some integer $k$ up to
square unit. Thus, $M$ has a representative $B$ such that $\det\left(B\right)=1$.
$\qedhere$

\noindent \end{proof}

Based on the structure theorem of $\mathcal{Q}_{p}$ (Theorem 3.11),
we will consider the structure of the image $\mu_{p}\left(\mathcal{K}_{p}\right)$.
The key algorithm we use is Stallings' folding process as follows.

\noindent \begin{theorem}

Let $F$ be a free group. For a finite set $S$ of elements in $F$,
there is an algorithm to find a minimal generating set of the group
generated by $S$. In particular, we can effectively compute the rank
of $\left\langle S\right\rangle $.

\noindent \end{theorem}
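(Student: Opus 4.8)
The plan is to use the standard graph-theoretic model for finitely generated subgroups of free groups. Fix a free basis $\{x_1,\dots,x_n\}$ of $F$ and let $R_n$ be the \emph{rose}: the graph with one vertex $*$ and $n$ oriented loops labeled $x_1,\dots,x_n$, so that $\pi_1(R_n,*)=F$. Given a finite set $S=\{w_1,\dots,w_k\}$ of elements of $F$, write each $w_j$ as a reduced word in the $x_i^{\pm1}$ and build a labeled graph $\Gamma_0$ by attaching to a single basepoint $k$ loops, the $j$-th being a segmented path spelling $w_j$; every edge carries a generator label and an orientation, and there is a label-preserving combinatorial map $\Gamma_0\to R_n$. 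By construction, the image of $\pi_1(\Gamma_0,*)\to\pi_1(R_n,*)=F$ is exactly $\langle S\rangle$.

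First I would run Stallings' folding process on $\Gamma_0$. As long as two distinct edges with the same label share an initial vertex, or two distinct edges with the same label share a terminal vertex, identify them and pass to the quotient graph; the label-preserving map to $R_n$ descends, and the image of the induced map on $\pi_1$ is unchanged, since such an identification corresponds to a Nielsen-type move among the loops generating the subgroup. Each fold strictly decreases the number of edges, so the process terminates after finitely many, effectively computable steps, producing a finite labeled graph $\Gamma$ whose map to $R_n$ is now an \emph{immersion}: at each vertex and for each generator there is at most one outgoing and at most one incoming edge with that label. For an immersion the induced map $\pi_1(\Gamma,*)\to F$ is injective, and its image remains $\langle S\rangle$.

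Next I would extract the core and read off the answer. Passing to the \emph{core graph} $\Gamma'$ --- obtained by iteratively deleting valence-one vertices together with their incident edges --- leaves the fundamental group unchanged (or reveals $\langle S\rangle$ to be trivial). Then $\pi_1(\Gamma')$ is free of rank $E(\Gamma')-V(\Gamma')+1$, and this integer, computed purely from the combinatorics of $\Gamma'$, is the rank of $\langle S\rangle$. For an explicit minimal generating set, fix a vertex $v$ and a spanning tree $\mathcal{T}$ of $\Gamma'$; for each edge $e\notin\mathcal{T}$, the loop at $v$ that runs to the initial vertex of $e$ within $\mathcal{T}$, crosses $e$, and returns within $\mathcal{T}$ spells a reduced word $u_e\in F$, and $\{u_e:e\notin\mathcal{T}\}$ is a free basis of $\langle S\rangle$ of the minimal possible cardinality. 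Every step --- reducing words, attaching loops, searching for a foldable pair, pruning, choosing a spanning tree --- is a finite combinatorial operation, so the whole procedure is an algorithm.

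The hard part is the correctness of the folding step: one must check that identifying two same-labeled edges at a common endpoint changes neither the subgroup of $F$ generated nor, once no more folds are possible, the injectivity of $\pi_1(\Gamma,*)\to F$ --- the latter because a label-preserving immersion into $R_n$ lifts reduced paths to reduced paths. Both facts, together with the observation that the edge count strictly decreases under folding (hence termination, hence effectivity), are classical; see Stallings \citep{MR0695906} and the expositions in \citep{MR2130178,MR1882114}. The remaining claims are routine bookkeeping with the Euler characteristic of the core graph.
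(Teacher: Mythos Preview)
Your proposal is correct and gives precisely the standard Stallings folding argument; the paper itself does not supply a proof at all, merely citing \citep{MR0695906,MR2130178,MR1882114}, so your write-up is in fact more detailed than the paper's treatment while following the same underlying method found in those references.
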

\begin{proof}

\noindent See \citep{MR0695906,MR2130178,MR1882114}. $\qedhere$

\noindent \end{proof}

We need a finite computational setting to apply Theorem 5.11 effectively.
Recall that in Lemma 4.6, we defined the upper (resp. lower) affine
subgroup, as the semidirect product of the upper (resp. lower) multiplicative
subgroup acting on the upper (resp. lower) additive subgroup. Suppose
a prime $p$ satisfies $p\equiv1$ (mod 6). By Lemma 4.6, for $a\in\mathbb{F}_{p}\backslash\left\{ \pm\left(\tau+2\tau^{-1}\right)\right\} $
and a polynomial $f\left(x\right)\in\mathbb{F}_{p}\left[x\right]$,
the action of $\overline{m_{p,\,\tau,\,u}}\left[a\right]$ on $\overline{a_{p,\,\tau,\,u}}\left[f\right]$
is represented by a scalar multiplication $\overline{a_{p,\,\tau,\,u}}\left[h_{\tau}\left(a\right)f\right]$
on the polynomial $f$, where $h_{\tau}\left(a\right)=\frac{a-\left(\tau+2\tau^{-1}\right)}{a+\left(\tau+2\tau^{-1}\right)}$.
Similarly, the action of $\overline{m_{p,\,\tau,\,l}}\left[a\right]$
on $\overline{a_{p,\,\tau,\,l}}\left[f\right]$ is represented by
a scalar multiplication $\overline{a_{p,\,\tau,\,l}}\left[h_{\tau}\left(a\right)^{-1}f\right]$
on the polynomial $f$. In any cases, the degree of $f$ is invariant.

\noindent \begin{definition}

Suppose a prime $p$ satisfies $p\equiv1$ (mod 6). Choose an element
$\tau\in\mathbb{F}_{p}$ such that $\tau^{4}+\tau^{2}+1=0$. For any
integer $k>0$, we define the \emph{upper} \emph{affine subgroup}
of level $k$ for $\tau$ to be
\begin{align*}
\mathrm{Aff}_{p,\,\tau,\,u}\left[k\right] & :=\left\langle \overline{a_{p,\,\tau,\,u}}\left[f\right]\::\:\deg\left(f\right)\le k-1\right\rangle \rtimes M_{p,\,\tau,\,u},
\end{align*}
where $M_{p,\,\tau,\,u}$ is the upper multiplicative subgroup for
$\tau$ (cf. Lemma 4.5) and define the \emph{lower} \emph{affine subgroup}
of level $k$ for $\tau$ to be
\begin{align*}
\mathrm{Aff}_{p,\,\tau,\,l}\left[k\right] & :=\left\langle \overline{a_{p,\,\tau,\,l}}\left[f\right]\::\:\deg\left(f\right)\le k-1\right\rangle \rtimes M_{p,\,\tau,\,l},
\end{align*}
where $M_{p,\,\tau,\,l}$ is the lower multiplicative subgroup for
$\tau$ (cf. Lemma 4.5).

Formally, define $\mathrm{Aff}_{p,\,\tau,\,u}\left[0\right]$ and
$\mathrm{Aff}_{p,\,\tau,\,l}\left[0\right]$ to be the trivial group.
Denote by $E_{p^{k}}$ the elementary abelian group of order $p^{k}$.
Then, the groups $\mathrm{Aff}_{p,\,\tau,\,u}\left[k\right]$ and
$\mathrm{Aff}_{p,\,\tau,\,l}\left[k\right]$ are isomorphic to $E_{p^{k}}\rtimes C_{p-1}$.
Define the \emph{base group} of level $k$ for $\tau$ to be
\begin{align*}
G_{p,\,\tau}\left[k\right] & :=\left\langle \mathrm{Aff}_{p,\,\tau,\,u}\left[k\right],\,\mathrm{Aff}_{p,\,\tau,\,l}\left[k\right]\right\rangle .
\end{align*}

By Lemma 4.8, the group $G_{p,\,\tau}\left[k\right]$ is the free
product of the two affine subgroups of level $k$. For any $a\in\mathbb{F}_{p}\backslash\left\{ 3\tau+\tau^{-1}\right\} $,
define the \emph{type group} of level $k$ for $\tau$ to be
\begin{align*}
\mathcal{G}_{p,\,\tau}\left[k\right] & :=\left\langle G_{p,\,\tau}\left[k\right],\,\overline{s_{p,\,\tau}}\left[a\right]\right\rangle ,
\end{align*}
which is invariant with respect to the choice of $a$. By Lemma 4.17,
$\mathcal{G}_{p,\,\tau}\left[k\right]$ is the HNN extension with
$G_{p,\,\tau}\left[k\right]$ as the base group and $\overline{s_{p,\,\tau}}\left[a\right]$
as the stable letter.

Finally, define the \emph{quaternionic group} of level $k$, denoted
by $\mathcal{Q}_{p}\left[k\right]$, to be generated by two sets of
groups:
\begin{enumerate}
\item The type groups $\mathcal{G}_{p,\,r}$ where $r$ satisfies the condition
$r^{4}+r^{2}+1\ne0$.
\item The type groups of level $k$, $\mathcal{G}_{p,\,\tau}\left[k\right]$,
where $\tau$ satisfies the condition $\tau^{4}+\tau^{2}+1=0$.
\end{enumerate}
\noindent \end{definition}

See \citep{MR1954121,MR1239551} for an overview of the fundamentals
of Bass\textendash Serre theory.

\noindent \begin{definition}

For a prime $p$ being 1 modulo 6, $\tau\in\mathbb{F}_{p}$ such that
$\tau^{4}+\tau^{2}+1=0$, and a nonnegative integer $k$, define a
graph of groups $\Gamma_{p,\,\tau}\left[k\right]$ over a graph $X_{p,\,\tau}\left[k\right]$
having two vertices $v_{1},\,v_{2}$ and two edges $e_{1},\,e_{2}$,
such that
\begin{enumerate}
\item the vertex $v_{1}$ has $\mathrm{Aff}_{p,\,\tau,\,u}\left[k\right]$
as the vertex group $G_{v_{1}}$, and the vertex $v_{2}$ has $\mathrm{Aff}_{p,\,\tau,\,l}\left[k\right]$
as the vertex group $G_{v_{2}}$,
\item each edge connects the two vertices $v_{1},\,v_{2}$,
\item the edge $e_{1}$ has the trivial edge group, and the edge $e_{2}$
has the cyclic group $C_{p-1}$ as the edge group, and
\item the assigning maps $\phi_{1}:C_{p-1}\to G_{v_{1}}$ and $\phi_{2}:C_{p-1}\to G_{v_{2}}$
have the image $M_{p,\,\tau,\,u}$ and $M_{p,\,\tau,\,l}$, respectively,
such that for each element $a\in\mathbb{F}_{p}\backslash\left\{ \pm\left(\tau+2\tau^{-1}\right)\right\} $,
\begin{align*}
\phi_{2}\circ\phi_{1}^{-1}\left(\overline{m_{p,\,\tau,\,u}}\left[a\right]\right) & =\overline{m_{p,\,\tau,\,l}}\left[a\right].
\end{align*}
\end{enumerate}
\noindent \end{definition}

By construction, the group $\mathcal{G}_{p,\,\tau}\left[k\right]$
is the fundamental group of the graph of groups $\Gamma_{p,\,\tau}\left[k\right]$,
by choosing the spanning tree containing $e_{1}$. Therefore, the
group $\mathcal{G}_{p,\,\tau}\left[k\right]$ is virtually free, because
the graph $X_{p,\,\tau}\left[k\right]$ is finite and each vertex
group is finite \citep[Proposition 11, p. 120]{MR1954121}.

However, the general method involving HNN extensions to construct
a homomorphism from $\mathcal{G}_{p,\,\tau}\left[k\right]$ whose
kernel is free employs a symmetric group in its range, which causes
inefficiency in real computation. Therefore, in this case, we suggest
another method based on the invariance of the fundamental group with
respect to the choice of the spanning tree.

\noindent \begin{definition}

Suppose a prime $p$ satisfies $p\equiv1$ (mod 6) and an element
$\tau\in\mathbb{F}_{p}$ satisfies $\tau^{4}+\tau^{2}+1=0$. Choose
a nonnegative integer $k$. For any $a\in\mathbb{F}_{p}\backslash\left\{ 3\tau+\tau^{-1}\right\} $,
define a group
\begin{align*}
H_{p,\,\tau}\left[k\right] & :=\left\langle \overline{s_{p,\,\tau}}\left[a\right]\mathrm{Aff}_{p,\,\tau,\,u}\left[k\right]\overline{s_{p,\,\tau}}\left[a\right]^{-1},\,\mathrm{Aff}_{p,\,\tau,\,l}\left[k\right]\right\rangle .
\end{align*}

By the relation (57), the group $H_{p,\,\tau}\left[k\right]$ is the
amalgamated free product of the two subgroups
\begin{align*}
\overline{s_{p,\,\tau}}\left[a\right]\mathrm{Aff}_{p,\,\tau,\,u}\left[k\right]\overline{s_{p,\,\tau}}\left[a\right]^{-1},\,\mathrm{Aff}_{p,\,\tau,\,l}\left[k\right].
\end{align*}

Define a semidirect product $K_{p}\left[k\right]$ to be
\begin{align*}
\left(E_{p^{k}}\times E_{p^{k}}\right)\rtimes\mathbb{F}_{p}^{\times},
\end{align*}
where the action of $b\in\mathbb{F}_{p}^{\times}$ on the left Cartesian
factor is given by $f\mapsto b^{-1}f$ and the action on the right
factor is given by $f\mapsto bf$, where we represent an element $f\in E_{p^{k}}$
as a polynomial in $\mathbb{F}_{p}\left[x\right]$. For an element
$a\in\mathbb{F}_{p}^{\times}$ and a pair of polynomials $f_{1},\,f_{2}\in\mathbb{F}_{p}\left[x\right]$
such that $\deg\left(f_{1}\right)\le k-1$ and $\deg\left(f_{2}\right)\le k-1$,
we write each element in $K_{p}\left[k\right]$ as
\begin{align*}
\left(f_{1},\,f_{2},\,b\right),
\end{align*}
where $f_{1}$ is in the left factor and $f_{2}$ in the right factor.

Define a map
\begin{align*}
\Lambda_{p,\,\tau}\left[k\right] & :H_{p,\,\tau}\left[k\right]\to K_{p}\left[k\right],
\end{align*}
as for each polynomials $f\in\mathbb{F}_{p}\left[x\right]$ such that
$\deg\left(f\right)\le k-1$ and $c\in\mathbb{F}_{p}\backslash\left\{ \pm\left(\tau+2\tau^{-1}\right)\right\} $,
\begin{align*}
 & \overline{s_{p,\,\tau}}\left[a\right]\overline{a_{p,\,\tau,\,u}}\left[f\right]\overline{s_{p,\,\tau}}\left[a\right]^{-1}\mapsto\left(f,\,0,\,1\right),\\
 & \overline{a_{p,\,\tau,\,l}}\left[f\right]\mapsto\left(0,\,f,\,1\right),\\
 & \overline{m_{p,\,\tau,\,l}}\left[c\right]\mapsto\left(0,\,0,\,h_{\tau}\left(c\right)\right).
\end{align*}

By choosing the spanning tree containing $e_{2}$ of $\Gamma_{p,\,\tau}\left[k\right]$,
we have
\begin{align*}
\mathcal{G}_{p,\,\tau}\left[k\right] & =\left\langle \overline{s_{p,\,\tau}}\left[a\right]\right\rangle *H_{p,\,\tau}\left[k\right].
\end{align*}

Therefore, we extend the map $\Lambda_{p,\,\tau}\left[k\right]$ to
$\mathcal{G}_{p,\,\tau}\left[k\right]$ by defining
\begin{align*}
\widehat{\Lambda_{p,\,\tau}}\left[k\right] & :\mathcal{G}_{p,\,\tau}\left[k\right]\to K_{p}\left[k\right],
\end{align*}
as $\overline{s_{p,\,\tau}}\left[a\right]\mapsto1$ and for $h\in H_{p,\,\tau}\left[k\right]$,
$h\mapsto\Lambda_{p,\,\tau}\left[k\right]\left(h\right)$.

Finally, for the four distinct solutions $\tau_{i}$, $1\le i\le4$,
to the equation $x^{4}+x^{2}+1=0$ over $\mathbb{F}_{p}$, extend
further the map $\widehat{\Lambda_{p,\,\tau}}\left[k\right]$ to the
group $\mathcal{Q}_{p}\left[k\right]$ by defining
\begin{align*}
\widehat{\Lambda_{p}}\left[k\right] & :\mathcal{Q}_{p}\left[k\right]\to K_{p}\left[k\right],
\end{align*}
as $g\mapsto1$ for an element $g\in\mathcal{G}_{p,\,r}$, where $r$
satisfies $r^{4}+r^{2}+1\ne0$, and $h\mapsto\widehat{\Lambda_{p,\,\tau}}\left[k\right]\left(h\right)$
for an element $h\in\mathcal{G}_{p,\,\tau}\left[k\right]$.

\noindent \end{definition}

We directly observe that $K_{p}\left[k\right]$ is a finite group
of order $p^{2k}\left(p-1\right)$.

\noindent \begin{lemma}

Suppose $p$ satisfies $p\equiv1$ (mod 6) and an element $\tau\in\mathbb{F}_{p}$
satisfies $\tau^{4}+\tau^{2}+1=0$. Then, for each nonnegative integer
$k$, the subgroup $\ker\left(\Lambda_{p,\,\tau}\left[k\right]\right)$
is a free subgroup of $H_{p,\,\tau}\left[k\right]$. Moreover, $\ker\left(\widehat{\Lambda_{p}}\left[k\right]\right)$
is a free subgroup of $\mathcal{Q}_{p}\left[k\right]$, and there
is an effective way to compute a free generating set.

\noindent \end{lemma}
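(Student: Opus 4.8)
The plan is to deduce freeness of both kernels from the fact that a group acting freely on a tree is free. Since $K_p[k]$ is finite, both $\ker(\Lambda_{p,\tau}[k])$ and $\ker(\widehat{\Lambda_p}[k])$ have finite index in their ambient groups, and each ambient group will be realized as the fundamental group of a finite graph of finite groups; so it suffices to show the kernel is torsion-free, since then it acts on the corresponding Bass--Serre tree meeting each (finite) vertex and edge stabilizer trivially, hence acts freely, hence is free. I would treat $\ker(\Lambda_{p,\tau}[k])$ first and then bootstrap to $\ker(\widehat{\Lambda_p}[k])$.

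For $\ker(\Lambda_{p,\tau}[k])$: by Definition 5.14 together with the conjugation relation (57) of Corollary 4.11, the group $H_{p,\tau}[k]$ is the amalgamated free product of $\overline{s_{p,\tau}}[a]\,\mathrm{Aff}_{p,\tau,u}[k]\,\overline{s_{p,\tau}}[a]^{-1}$ and $\mathrm{Aff}_{p,\tau,l}[k]$ over the finite cyclic group $M_{p,\tau,l}\cong C_{p-1}$, the two factors each being finite and isomorphic to $E_{p^k}\rtimes C_{p-1}$. In particular $H_{p,\tau}[k]$ is the fundamental group of a finite graph of finite groups, so every finite subgroup is conjugate into one of the two factors, and it is enough to check that $\Lambda_{p,\tau}[k]$ is injective on each factor. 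On $\mathrm{Aff}_{p,\tau,l}[k]$ the map is, compatibly with the semidirect structure of Lemma 4.6, just $\overline{a_{p,\tau,l}}[f]\mapsto(0,f,1)$, $\overline{m_{p,\tau,l}}[c]\mapsto(0,0,h_\tau(c))$, whose additive part is injective by Corollary 4.4 and whose multiplicative part is injective because $h_\tau$ realizes the isomorphism of Lemma 4.5; its image is a full copy of $E_{p^k}\rtimes C_{p-1}$ inside $K_p[k]$. On the conjugated upper factor, (57) turns $\overline{m_{p,\tau,u}}[c]$ into $\overline{m_{p,\tau,l}}[c]$, and the same two results give injectivity there. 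Hence no nontrivial torsion of $H_{p,\tau}[k]$ lies in $\ker(\Lambda_{p,\tau}[k])$, which is therefore free.

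For $\ker(\widehat{\Lambda_p}[k])$: by Theorem 4.1 (and the general fact that a subgroup of a free product generated by one subgroup out of each factor is again their free product), $\mathcal{Q}_p[k]$ is the free product of the infinite cyclic groups $\mathcal{G}_{p,r}$, over all $r\in\mathbb{F}_p$ with $r^4+r^2+1\ne0$, together with the four groups $\mathcal{G}_{p,\tau_i}[k]$ attached to the roots of $x^4+x^2+1$. Each $\mathcal{G}_{p,\tau_i}[k]$ is the fundamental group of the finite graph of finite groups $\Gamma_{p,\tau_i}[k]$ of Definition 5.13 (two vertex groups $\mathrm{Aff}_{p,\tau_i,u}[k]$, $\mathrm{Aff}_{p,\tau_i,l}[k]$, two edge groups $1$ and $C_{p-1}$), by Lemma 4.17; and each $\mathbb{Z}=\mathcal{G}_{p,r}$ is the fundamental group of a loop with trivial groups. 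Wedging all of these at a new trivial-group vertex exhibits $\mathcal{Q}_p[k]$ as the fundamental group of a finite graph of finite groups. Now $\widehat{\Lambda_p}[k]$ kills each stable generator $\overline{s_{p,\tau_i}}[a]$ and every $\mathcal{G}_{p,r}$, and restricts to $\Lambda_{p,\tau_i}[k]$ on $H_{p,\tau_i}[k]$; consequently, on each vertex group of the wedged graph of groups it is either trivially injective (the trivial ones) or the injection of the previous paragraph (the affine ones, read off after conjugating by the appropriate stable generator). So $\ker(\widehat{\Lambda_p}[k])$ meets every finite subgroup of $\mathcal{Q}_p[k]$ trivially, is torsion-free, acts freely on the associated Bass--Serre tree, and is free.

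For effectiveness, note that $[\mathcal{Q}_p[k]:\ker(\widehat{\Lambda_p}[k])]\le|K_p[k]|=p^{2k}(p-1)$, so the quotient of the Bass--Serre tree by the kernel is a finite graph whose fundamental group is the kernel; running the Reidemeister--Schreier / coset-enumeration procedure for the explicit finite graph of finite groups against the explicit finite target $K_p[k]$ (equivalently, forming the induced covering graph of groups in the sense of Bass--Serre theory) produces this finite graph, and the edges outside a chosen spanning tree are an explicit free basis — and likewise the one-edge graph of groups describing $H_{p,\tau}[k]$ furnishes a basis of $\ker(\Lambda_{p,\tau}[k])$. The step I expect to be the main obstacle, though it is bookkeeping rather than anything deep, is verifying the injectivity of $\Lambda_{p,\tau}[k]$ on each amalgamation factor: one must be sure the additive and multiplicative contributions do not collapse onto one another, i.e. that the image of a factor really is a full copy of $E_{p^k}\rtimes C_{p-1}$, which forces combining Corollary 4.4, Lemma 4.5 and Lemma 4.6, and in particular using that $h_\tau(c)\ne1$ for every admissible $c$ (equivalently $\tau+2\tau^{-1}\ne0$, valid since $\tau\ne0$ and $p\ne3$).
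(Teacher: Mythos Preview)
Your proof is correct and follows essentially the same strategy as the paper: both show that the kernel meets every conjugate of a (finite) factor trivially---equivalently, is torsion-free---by checking that $\Lambda_{p,\tau}[k]$ is injective on each amalgamation factor, and then deduce freeness; the effectiveness argument via finite index and Reidemeister--Schreier is also the same. The only difference is packaging: the paper invokes the Kurosh subgroup theorem for amalgamated free products (and then the ordinary Kurosh theorem to pass from $\ker(\Lambda_{p,\tau}[k])$ to $\ker(\widehat{\Lambda_p}[k])$), whereas you phrase the identical conclusion via the Bass--Serre tree directly (torsion-free subgroup of a group with finite vertex stabilizers acts freely on the tree, hence is free), after first assembling $\mathcal{Q}_p[k]$ as one wedged graph of finite groups.
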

\begin{proof}

\noindent Temporarily denote by $K$ the subgroup $\ker\left(\Lambda_{p,\,\tau}\left[k\right]\right)$.
Then, by construction, $K$ meets any conjugate of the lower multiplicative
subgroup only trivially. By the Kurosh subgroup theorem for amalgamated
free products \citep[Theorem 14, p. 56]{MR1954121}, $K$ is a free
product of a free subgroup of $H_{p,\,\tau}\left[k\right]$ and conjugates
of subgroups of the factors
\begin{align*}
\overline{s_{p,\,\tau}}\left[a\right]\mathrm{Aff}_{p,\,\tau,\,u}\left[k\right]\overline{s_{p,\,\tau}}\left[a\right]^{-1},\,\mathrm{Aff}_{p,\,\tau,\,l}\left[k\right].
\end{align*}

Suppose $L$ is a conjugate by $M_{0}\in H_{p,\,\tau}\left[k\right]$
of a subgroup of the left factor
\begin{align*}
\overline{s_{p,\,\tau}}\left[a\right]\mathrm{Aff}_{p,\,\tau,\,u}\left[k\right]\overline{s_{p,\,\tau}}\left[a\right]^{-1},
\end{align*}
and suppose $M$ is an element in the intersection of $K$ and $L$.
Then, the left factor includes $M_{0}^{-1}MM_{0}$ and we have
\begin{align*}
\Lambda_{p,\,\tau}\left[k\right]\left(M_{0}^{-1}MM_{0}\right) & =\left(0,\,0,\,1\right),
\end{align*}
which implies $M$ is the identity element. Thus, $K$ meets any conjugate
of the left factor only trivially. In the same way, $K$ meets any
conjugate of the right factor only trivially. Consequently, we have
established that $K$ is free.

Based on the freeness of $K$ and the finiteness of $K_{p}\left[k\right]$,
the group $\ker\left(\widehat{\Lambda_{p,\,\tau}}\left[k\right]\right)$
is free. By Theorem 3.11, and by applying the usual Kurosh subgroup
theorem for free products, we conclude that $\ker\left(\widehat{\Lambda_{p}}\left[k\right]\right)$
is also free. Since the group $K_{p}\left[k\right]$ is finite, we
can find effectively the free generating set by using the Reidemeister\textendash Schreier
process. $\qedhere$

\noindent \end{proof}

Suppose a prime $p$ satisfies $p\equiv1$ (mod 6). Lemma 5.9 ensures
that the group $\mathcal{K}_{p}$ is isomorphic to $F_{\frac{p-4}{3}}*C_{3}*C_{3}*C_{3}*C_{3}$.
Fix a generator $g$ of the order 3 cyclic group $C_{3}$. Define
a map $\epsilon_{p}:\mathcal{K}_{p}\to C_{3}$ as $h\mapsto1$ when
$h$ is included in the free group factor, and an element corresponding
to $g$ in the cyclic factors to the generator $g$ of the range $C_{3}$.

\noindent \begin{lemma}

Suppose a prime $p$ satisfies $p\equiv1$ (mod 6). Then, $\ker\left(\epsilon_{p}\right)$
is free of rank $p+2$.

\noindent \end{lemma}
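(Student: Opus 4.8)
The plan is to realise $\ker(\epsilon_{p})$ as an index-$3$ subgroup of the free product $\mathcal{K}_{p}\cong F_{\frac{p-4}{3}}*C_{3}*C_{3}*C_{3}*C_{3}$ supplied by Lemma 5.9, to prove it is free via the Kurosh subgroup theorem, and then to read off its rank from a rational Euler characteristic count. First I would observe that $\epsilon_{p}$ is surjective: by construction it sends the chosen generator of each of the four cyclic free factors to the generator $g$ of the target $C_{3}$, so the image is all of $C_{3}$ and $N:=\ker(\epsilon_{p})$ has index $3$ in $\mathcal{K}_{p}$.

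For freeness, I would apply the Kurosh subgroup theorem to $N\le F_{r}*C_{3}*C_{3}*C_{3}*C_{3}$ (with $r=\frac{p-4}{3}$), which expresses $N$ as a free product of a free group with various subgroups of the form $N\cap gAg^{-1}$, where $A$ runs over the free factors and $g$ over suitable double-coset representatives. When $A$ is one of the cyclic factors, the restriction of $\epsilon_{p}$ to $A$ is an isomorphism onto $C_{3}$, and since the target is abelian the restriction of $\epsilon_{p}$ to the conjugate $gAg^{-1}$ is again an isomorphism onto $C_{3}$; hence $N\cap gAg^{-1}$ is trivial. When $A=F_{r}$, the intersection $N\cap gF_{r}g^{-1}$ is a subgroup of a free group, hence free. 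Therefore $N$ is a free product of free groups, so $N$ itself is free. (Equivalently: every torsion element of $\mathcal{K}_{p}$ is conjugate into one of the $C_{3}$ factors, hence has nontrivial image under $\epsilon_{p}$, so $N$ is torsion-free; and a torsion-free virtually free group is free.)

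It remains to determine the rank of the free group $N$. Using Wall's rational Euler characteristic, which satisfies $\chi(G_{1}*\cdots*G_{n})=\sum_{i}\chi(G_{i})-(n-1)$, is multiplicative on finite-index subgroups, and takes the values $\chi(C_{3})=\tfrac{1}{3}$ and $\chi(F_{r})=1-r$, one computes $\chi(\mathcal{K}_{p})=4\cdot\tfrac{1}{3}+(1-\tfrac{p-4}{3})-4=-\tfrac{p+1}{3}$. Since $[\mathcal{K}_{p}:N]=3$ we get $\chi(N)=3\,\chi(\mathcal{K}_{p})=-(p+1)$, and as $N$ is free of some rank $s$ we have $\chi(N)=1-s$, so $s=p+2$.

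I expect no serious obstacle here; the only points requiring care are (i) making sure the free-product decomposition of $\mathcal{K}_{p}$ from Lemma 5.9 is the one with respect to which $\epsilon_{p}$ is defined, so that $\epsilon_{p}$ really is injective on each cyclic factor, and (ii) legitimising the rational Euler characteristic (the group is virtually free, hence $\chi$ is defined and multiplicative). If an explicit generating set of $\ker(\epsilon_{p})$ is wanted\textemdash as it will be for the subsequent Stallings folding\textemdash I would instead run the Reidemeister\textendash Schreier procedure with Schreier transversal $\{1,g,g^{2}\}$: the relators coming from the cyclic factors collapse in the kernel, and counting the surviving Schreier generators gives a free group of rank $p+2$ directly.
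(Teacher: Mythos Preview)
Your proof is correct and essentially matches the paper's: both invoke the Kurosh subgroup theorem for freeness and then compute the rank. The paper simply cites Lemma~5.9 (i.e., the Reidemeister--Schreier count you sketch at the end) for the rank, whereas your primary route via Wall's rational Euler characteristic is a clean equivalent that yields $\chi(\mathcal{K}_p)=-(p+1)/3$ and hence rank $p+2$ after multiplying by the index $3$.
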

\begin{proof}

\noindent The freeness is direct from the usual Kurosh subgroup theorem.
The rank is computed by Lemma 5.9. $\qedhere$

\noindent \end{proof}

We present the proof of Theorem 1.1, except for the case $p=3$ for
simplicity. For the case $p=3$, we will directly prove the faithfulness
later in Theorem 5.18.

\noindent \begin{prooftheorem11}

\noindent Fix any prime $p$ such that $p\ne3$. For clarity, we divide
the two cases according to the condition of prime $p$.
\begin{enumerate}
\item When $p\equiv2,\,5$ (mod 6).
\item When $p\equiv1$ (mod 6.
\end{enumerate}
By Lemma 5.10, it suffices to decide whether the restriction $\mu_{p}|_{\mathcal{K}_{p}}$
is injective. We present the algorithm, with input $p$ and possible
outputs ``true'' or ``false'', as follows.

\noindent \begin{cases1}
\begin{description}
\item [{$ $}]~
\item [{(a)}] Find a minimal generating set $S,\,V_{a_{1}},\,\cdots,\,V_{a_{n}}$
of the congruence subgroup $\Gamma^{0}\left(p\right)$ (see \citep{MR3069525,MR1137534,doan2023optimal}).
The rank is given by Theorem 5.8.
\item [{(b)}] Based on (a), find a minimal generating set $\kappa_{1},\,\kappa_{2},\cdots,\,\kappa_{m}$
of $\mathcal{K}_{p}$. By Lemma 5.9, it is a free group isomorphic
to $F_{2}$ (when $p=2$) or $F_{\frac{p+4}{3}}$ (when $p\equiv5$
(mod 6)). Since finitely generated free groups are Hopfian, it suffices
to decide whether the image $\mu_{p}\left(\mathcal{K}_{p}\right)$
is isomorphic to $\mathcal{K}_{p}$.
\item [{(c)}] Compute the images $\mu_{p}\left(\kappa_{1}\right),\,\mu_{p}\left(\kappa_{2}\right),\cdots,\,\mu_{p}\left(\kappa_{m}\right)$.
By Lemma 5.10, these are included in the quaternionic group $\mathcal{Q}_{p}$,
which is freely generated by $p$ elementary generators.
\item [{(d)}] For each image $g$ of the generator of $\mathcal{K}_{p}$,
check the balancedness and compute the type $\tau\left(g\right)$.
If $g$ is not balanced, multiply some power of $\overline{g_{p}}\left[0\right]$
on the right to replace it with the balanced companion (Lemma 3.3).
If $g$ is upper-balanced (resp. lower-balanced), we have a new element
$g_{1}=g\overline{g_{p}}\left[\tau\left(g\right)\right]^{-1}$ (resp.
$g_{1}=g\overline{g_{p}}\left[\tau\left(g\right)\right]$) such that
$\mathrm{rd}\left(g\right)>\mathrm{rd}\left(g_{1}\right)$. Repeat
this process, expressing the images $\mu_{p}\left(\kappa_{1}\right),\,\mu_{p}\left(\kappa_{2}\right),\cdots,\,\mu_{p}\left(\kappa_{m}\right)$
as words $w_{1},\,w_{2},\cdots,\,w_{m}$ in the set of elementary
generators.
\item [{(d)}] Compute the rank of the free subgroup generated by $w_{1},\,w_{2},\cdots,\,w_{m}$
by Stallings' folding process (Theorem 5.11). If the rank is the same
as that of $\mathcal{K}_{p}$, terminate the algorithm with output
``true''. Otherwise, terminate the algorithm with output ``false''.
\end{description}
\noindent \end{cases1}
\begin{cases2}
\begin{description}
\item [{$ $}]~
\item [{(a)}] Find a minimal generating set $S,\,V_{a_{1}},\,\cdots,\,V_{a_{n}}$
of the congruence subgroup $\Gamma^{0}\left(p\right)$ (see \citep{MR3069525,MR1137534,doan2023optimal}).
The rank is given by Theorem 5.8. By Lemma 5.9, the group $\mathcal{K}_{p}$
is a free product isomorphic to $F_{\frac{p-4}{3}}*C_{3}*C_{3}*C_{3}*C_{3}$.
\item [{(b)}] Compute the images of the cyclic factors $C_{3}$ of $\mathcal{K}_{p}$
under $\mu_{p}$. If at least one of them is trivial, terminate here
the algorithm with output ``false''. Suppose these are nontrivial.
\item [{(c)}] Based on (a), find a minimal generating set $\kappa_{1},\,\kappa_{2},\cdots,\,\kappa_{p+2}$
of $\ker\left(\epsilon_{p}\right)$, which is free of rank $p+2$
by Lemma 5.16.
\item [{(d)}] Compute the images $\mu_{p}\left(\kappa_{1}\right),\,\mu_{p}\left(\kappa_{2}\right),\cdots,\,\mu_{p}\left(\kappa_{p+2}\right)$.
By Lemma 5.10, these are included in the quaternionic group $\mathcal{Q}_{p}$.
From Lemma 4.18, for each image $g$ of the generator of $\mathcal{K}_{p}$,
replace $g$ with its balanced companion (Lemma 3.3) and find an elementary,
affine or stable generator (or its inverse) $h$ of a type group $\mathcal{G}_{p,\,\tau\left(g\right)}$
such that $\mathrm{rd}\left(g\right)>\mathrm{rd}\left(gh\right)$
by computing the type $\tau\left(g\right)$ and the second-order type
$\tau2_{\tau\left(g\right)}\left(g\right)$ if needed. We have a new
element $g_{1}=gh$. Repeat this process, expressing the images $\mu_{p}\left(\kappa_{1}\right),\,\mu_{p}\left(\kappa_{2}\right),\cdots,\,\mu_{p}\left(\kappa_{p+2}\right)$
as words $w_{1},\,w_{2},\cdots,\,w_{p+2}$ in the set of the elementary,
affine and stable generators of $\mathcal{Q}_{p}$.
\item [{(e)}] Find the integer $k$ such that $k-1$ is the maximal degree
of $f\in\mathbb{F}_{p}\left[x\right]$ among the additive generators
$\overline{a_{p,\,\tau,\,u}}\left[f\right]$, $\overline{a_{p,\,\tau,\,l}}\left[f\right]$
that appear in the words $w_{1},\,w_{2},\cdots,\,w_{p+2}$. If there
is no such generator in the words, choose $k$ as $0$. Define the
subgroup $W$ of $\mathcal{Q}_{p}$ to be generated by $w_{1},\,w_{2},\cdots,\,w_{p+2}$.
Since finitely generated free groups are Hopfian, it suffices to decide
whether $W$ is free of rank $p+2$.
\item [{(f)}] Compute a group presentation of the image group $\widehat{\Lambda_{p}}\left[k\right]\left(W\right)$
by employing the Todd\textendash Coxeter algorithm in conjunction
with the Reidemeister\textendash Schreier process. This terminates
always in finite time since the range of $\widehat{\Lambda_{p}}\left[k\right]$
is a finite group $K_{p}\left[k\right]$. Denote by $K_{W}$ the group
$\ker\left(\widehat{\Lambda_{p}}\left[k\right]|_{W}\right)$, and
denote by $N$ the order of the group $\widehat{\Lambda_{p}}\left[k\right]\left(W\right)$.
\item [{(g)}] Compute a generating set of $K_{W}$ by the Reidemeister\textendash Schreier
process. The group $K_{W}$ is also a free group since a free group
$\ker\left(\widehat{\Lambda_{p}}\left[k\right]\right)$ (Lemma 5.15)
contains $K_{W}$. If $W$ is free, the Schreier index formula indicates
that the rank of the free subgroup $K_{W}$ is $1+N\left(p+1\right)$.
\item [{(h)}] By Stallings' folding process (Theorem 5.11), compute the
rank of the subgroup $K_{W}$ of $\ker\left(\widehat{\Lambda_{p}}\left[k\right]\right)$.
If the rank is exactly $1+N\left(p+1\right)$, terminate the algorithm
with output ``true''. Otherwise, terminate the algorithm with output
``false''.
\end{description}
We conclude the proof by showing that $\mu_{p}|_{\mathcal{K}_{p}}$
is injective if and only if the algorithm terminates with output ``true''.
In cases (1), the rank of the free subgroup of $\mathcal{Q}_{p}$
generated by $w_{1},\,w_{2},\cdots,\,w_{m}$ is at most the rank of
$\mathcal{K}_{p}$, thus the condition that the rank is full implies
that the image $\mu_{p}\left(\mathcal{K}_{p}\right)$ is isomorphic
to $\mathcal{K}_{p}$. In cases (2), we consider the following commutative
diagram:
\begin{align*}
\xymatrix{1\ar[r] & F_{1+N\left(p+1\right)}\ar[r]\ar@{->>}[d] & F_{p+2}\ar@{->>}[d]\ar[r] & \widehat{\Lambda_{p}}\left[k\right]\left(W\right)\ar[d]^{\Vert}\ar[r] & 1\\
1\ar[r] & K_{W}\ar[r] & W\ar[r] & \widehat{\Lambda_{p}}\left[k\right]\left(W\right)\ar[r] & 1
}
\end{align*}
where the bottom sequence is induced by the map $\widehat{\Lambda_{p}}\left[k\right]$,
the surjection $F_{p+2}\to W$ is given by the universal property
of free groups, and the surjection $F_{1+N\left(p+1\right)}\to K_{W}$
is the restriction given by the Reidemeister\textendash Schreier process.
Since $K_{W}$ is free, the surjection on the left is an isomorphism
if and only if the rank of $K_{W}$ is $1+N\left(p+1\right)$. The
result desired follows from the five lemma. $\qed$

\noindent \end{cases2}
\end{prooftheorem11}
\begin{remark}

Groves\textendash Wilton \citep[Corollary 4.3]{MR2516172} proved
that there exists an algorithm that, given as input a presentation
for a group $G$ and a solution to the word problem in $G$, determines
whether or not $G$ is free. In our case, since the images under the
map $\mu_{p}$ are linear, and the word problem is solvable for finitely
generated linear groups, this algorithm provides an alternative way
to prove Theorem 1.1 for the cases (2).

One can find a finite presentation of $W$ from those of $\widehat{\Lambda_{p}}\left[k\right]\left(W\right)$
and $K_{W}$, and determine whether $W$ is free using the Groves\textendash Wilton
algorithm. If $W$ is finitely presented and free, we can effectively
find its rank by abelianizing it. However, this method of proof also
requires a presentation of the free subgroup $K_{W}$, and we still
need Stallings' folding process.

It is also noteworthy that for each prime $p$, the quaternionic group
$\mathcal{Q}_{p}$ has solvable membership problem, utilizing a similiar
method to that employed in the proof of Theorem 1.1. For the cases
where $p\equiv1$ (mod 6), we have a sequence of finitely generated
subgroups $\mathcal{Q}_{p}\left[k\right]$, $k=0,\,1,\,\cdots$ of
$\mathcal{Q}_{p}$. A finite set of elements $S$ of $\mathcal{Q}_{p}$
is contained in some $\mathcal{Q}_{p}\left[k\right]$. Because $\mathcal{Q}_{p}\left[k\right]$
is also a fundamental group of a finite graph of groups, where all
vertex groups and edge groups are finite, $\mathcal{Q}_{p}\left[k\right]$
has solvable membership problem \citep[Corollary 5.15]{MR2130178}.

\noindent \end{remark}

As mentioned in Section 1, we demonstrate specific examples where
$\beta_{3,\,p}$ is faithful. We begin by listing the generating set
and relations of $\Gamma^{0}\left(p\right)$ for small $p$, obtained
by Rademacher \citep[p. 147]{MR3069525}.
\begin{itemize}
\item $\Gamma^{0}\left(2\right)$: generated by $S,\,V_{1}$; having a relation
$V_{1}^{2}=1$.
\item $\Gamma^{0}\left(3\right)$: generated by $S,\,V_{2}$; having a relation
$V_{2}^{3}=1$.
\item $\Gamma^{0}\left(5\right)$: generated by $S,\,V_{2},\,V_{3}$; having
relations $V_{2}^{2}=1$, $V_{3}^{2}=1$.
\item $\Gamma^{0}\left(7\right)$: generated by $S,\,V_{3},\,V_{5}$; having
relation $V_{3}^{3}=1$, $V_{5}^{3}=1$.
\item $\Gamma^{0}\left(11\right)$: generated by $S,\,V_{4},\,V_{6}$; no
relation.
\item $\Gamma^{0}\left(13\right)$: generated by $S,\,V_{4},\,V_{5},\,V_{8},\,V_{10}$;
having relation $V_{5}^{2}=1$, $V_{8}^{2}=1$, $V_{4}^{3}=1$, $V_{10}^{3}=1$.
\end{itemize}
\noindent \begin{theorem}

The representation $\beta_{3,\,p}$ is faithful when $p\le13$.

\noindent \end{theorem}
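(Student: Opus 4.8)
The plan is to run, for each prime $p\in\{2,3,5,7,11,13\}$, the decision algorithm constructed in the proof of Theorem 1.1 and to verify that it returns the output ``true''; by Corollary 5.7 together with Lemma 5.10 this is exactly the assertion that $\beta_{3,p}$ is faithful. In every case one begins from the Rademacher presentation of $\Gamma^{0}(p)$ recalled just above (the generator $S$ and the listed $V_k$'s, with the listed relations), applies the Reidemeister--Schreier process along the map $\omega$ of Lemma 5.9 to obtain an explicit generating set $\kappa_1,\dots,\kappa_m$ of $\mathcal{K}_p=\ker(\omega|_{\Gamma^{0}(p)})$ as words in $S$ and the $V_k$, and then computes the matrices $\mu_p(\kappa_i)\in\mathrm{PGL}(2,\mathbb{F}_p[t,t^{-1}])$ from the values of $\mu_p$ on $S$ and $T$ recalled at the start of Section 5.

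For $p=2,5,11$ (the cases $p\equiv 2,5\pmod 6$) I would follow Case (1) of the algorithm: by Theorem 3.14 the group $\mathcal{Q}_p$ is free on the $p$ elementary generators $\overline{g_p}[r]$, so each $\mu_p(\kappa_i)$ can be reduced to a word $w_i$ in the $\overline{g_p}[r]$ by repeatedly passing to the balanced companion (Lemma 3.3) and multiplying by $\overline{g_p}[\tau(\cdot)]^{\mp1}$ to lower the relative degree, and then Stallings' folding (Theorem 5.11) computes the rank of $\langle w_1,\dots,w_m\rangle$. Since $\mathcal{K}_2\cong F_2$, $\mathcal{K}_5\cong F_3$, $\mathcal{K}_{11}\cong F_5$ are Hopfian, $\mu_p|_{\mathcal{K}_p}$ is injective precisely when this rank equals $2,3,5$ respectively, and the claim is that the folding returns exactly these values. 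For $p=7,13$ (the cases $p\equiv 1\pmod 6$) one follows Case (2) instead: check that each of the four order-$3$ factors of $\mathcal{K}_p\cong F_{(p-4)/3}*C_3*C_3*C_3*C_3$ has nontrivial $\mu_p$-image; pass to the free group $\ker(\epsilon_p)$ of rank $p+2$ (Lemma 5.16); reduce the images of its generators to words in the elementary, affine and stable generators of $\mathcal{Q}_p$ via Lemma 4.18, tracking types and second-order types; read off the level $k$ from the degrees of the additive generators that appear; push forward by $\widehat{\Lambda_p}[k]$ into the finite group $K_p[k]$; extract a free generating set of $K_W=\ker(\widehat{\Lambda_p}[k]|_W)$ by Reidemeister--Schreier; and finally apply Stallings' folding inside the free group $\ker(\widehat{\Lambda_p}[k])$ (Lemma 5.15) to check that $\mathrm{rank}(K_W)=1+N(p+1)$, where $N=|\widehat{\Lambda_p}[k](W)|$.

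The prime $p=3$ is handled \emph{directly}, since $\mathcal{Q}_3$ is not free and the level machinery of Section 5 was developed only for $p\equiv1\pmod6$. From $\Gamma^{0}(3)=\langle S,V_2\mid V_2^{3}\rangle$ and $\omega(V_2)=1$, Reidemeister--Schreier with transversal $\{I,S\}$ gives $\mathcal{K}_3=\langle S^2\rangle*\langle V_2\rangle*\langle SV_2S^{-1}\rangle\cong\mathbb{Z}*C_3*C_3$; one computes $\mu_3(S^2)=\overline{g_3}[0]^2$ and the explicit matrices $\mu_3(V_2),\mu_3(SV_2S^{-1})$, checks that they are nontrivial (hence of order $3$), and, using the decomposition $\mathcal{Q}_3=\mathcal{G}_{3,0}*\mathcal{G}_{3,1}*\mathcal{G}_{3,2}$ of Theorem 3.20 together with the balancedness-and-type bookkeeping of Lemma 3.13 and Lemma 3.19, verifies by a normal-form argument in the free product that $\langle\mu_3(S^2),\mu_3(V_2),\mu_3(SV_2S^{-1})\rangle$ is the expected free product $\mathbb{Z}*C_3*C_3$, so that $\mu_3|_{\mathcal{K}_3}$ is injective.

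I expect the difficulty to be organizational rather than conceptual. The termination and correctness of the reduction of each $\mu_p(\kappa_i)$ to a word in the generators of $\mathcal{Q}_p$ are guaranteed by Theorem 3.11, but for $p=7,13$ one has to propagate the type and the second-order type carefully through Lemma 4.18, and the several Stallings foldings must be carried out without slips; the only genuinely non-routine point is the $p=3$ case, where the folding step is replaced by an explicit ping-pong inside $\mathcal{Q}_3$.
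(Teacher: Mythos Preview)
Your proposal is correct and follows essentially the same approach as the paper: compute the images of a generating set of $\mathcal{K}_p$ under $\mu_p$, rewrite them as words in the generators of $\mathcal{Q}_p$ via Theorem~3.11, and verify injectivity through the Hopfian property together with a rank/folding check. The only notable difference is that for $p=7,13$ the paper shortcuts the full Case~(2) algorithm by observing that the reduced words involve only multiplicative generators (no additive ones), which lets it work directly with $\mathcal{K}_p$ and invoke the Hopfian property of free products of Hopfian groups, rather than descending to $\ker(\epsilon_p)$ as you propose; both routes succeed.
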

\begin{proof}

Recall that $S=\overline{\rho}\left(\beta_{3}\left(\sigma_{2}\right)\right)$,
$T=\overline{\rho}\left(\beta_{3}\left(\sigma_{1}\sigma_{2}\sigma_{1}\right)\right)$
and $V_{k}=T^{-1}S^{-k_{*}}TS^{k}T$. From (77) and (78), $\mu_{p}$
maps
\begin{align*}
S & \mapsto\left[\begin{array}{cc}
1 & 0\\
0 & -t
\end{array}\right],\;T\mapsto\left[\begin{array}{cc}
\frac{t}{1+t} & \frac{t}{1+t}\\
\frac{1+t+t^{2}}{1+t} & -\frac{t}{1+t}
\end{array}\right].
\end{align*}

For the definitions of the generators involved, refer to Definition
2.15 for the elementary generators; Definition 3.5 and Definition
3.6 for the additive generators; Definition 3.7 for the multiplicative
generators; Definition 3.9 and Lemma 3.10 for the stable generators.

Suppose $p=2$. The group $\mathcal{K}_{2}$ is freely generated by
$S^{2}$ and $SV_{1}$, which are mapped to $\overline{g_{2}}\left[0\right]^{-1}$
and $\overline{g_{2}}\left[0\right]^{-1}\overline{g_{2}}\left[1\right]$,
respectively under $\mu_{2}$. These elements freely generate the
entire free group $\mathcal{Q}_{2}$. Therefore, the free rank of
$\mu_{2}\left(\mathcal{K}_{2}\right)$ is 2, and $\beta_{3,\,2}$
is faithful.

Suppose $p=3$. The group $\mathcal{K}_{3}$ is the free product $\left\langle S^{2}\right\rangle *\left\langle V_{2}\right\rangle *\left\langle SV_{2}S^{-1}\right\rangle $.
Under $\mu_{3}$, $S^{2}$ is mapped to $\overline{g_{3}}\left[0\right]^{-1}$;
$V_{2}$ to $\overline{a_{3,\,2,\,u}}\left[-1\right]$; $SV_{2}S^{-1}$
to $\overline{g_{3}}\left[0\right]^{-1}\overline{a_{3,\,1,\,l}}\left[1\right]\overline{g_{3}}\left[0\right]$.
By Theorem 3.11, the images generate a free product $\left\langle \overline{g_{3}}\left[0\right]\right\rangle *\left\langle \overline{a_{3,\,2,\,u}}\left[1\right]\right\rangle *\left\langle \overline{a_{3,\,1,\,l}}\left[1\right]\right\rangle $.
Since a finite free product of Hopfian groups is Hopfian, $\mu_{3}|_{\mathcal{K}_{3}}$
is injective, which implies $\beta_{3,\,3}$ is faithful.

Suppose $p=5$. The group $\mathcal{K}_{5}$ is freely generated by
$S^{2}$, $SV_{2}$ and $SV_{3}$, which are mapped to
\begin{align*}
\overline{g_{5}}\left[0\right]^{-1},\,\overline{g_{5}}\left[0\right]^{-1}\overline{g_{5}}\left[1\right]\overline{g_{5}}\left[3\right]^{-1}\overline{g_{5}}\left[4\right],\,\overline{g_{5}}\left[0\right]^{-1}\overline{g_{5}}\left[1\right]\overline{g_{5}}\left[3\right]^{-1}\overline{g_{5}}\left[2\right]\overline{g_{5}}\left[3\right]^{-1}\overline{g_{5}}\left[4\right],
\end{align*}
respectively under $\mu_{5}$. Therefore, the rank of $\mu_{5}\left(\mathcal{K}_{5}\right)$
is 3, equal to $\frac{5+4}{3}$, which implies $\beta_{3,\,5}$ is
faithful.

Suppose $p=7$. The group $\mathcal{K}_{7}$ is the free product $\left\langle S^{2}\right\rangle *\left\langle V_{3}\right\rangle *\left\langle SV_{3}S^{-1}\right\rangle *\left\langle V_{5}\right\rangle *\left\langle SV_{5}S^{-1}\right\rangle $.
Under $\mu_{7}$, the five elements in the brakets are respectively
mapped to
\begin{align*}
 & \overline{g_{7}}\left[0\right]^{-1},\,\overline{g_{7}}\left[6\right]^{-1}\overline{m_{7,\,4,\,l}}\left[3\right]\overline{g_{7}}\left[6\right],\\
 & \overline{g_{7}}\left[0\right]^{-1}\overline{g_{7}}\left[1\right]\overline{m_{7,\,5,\,u}}\left[2\right]\overline{g_{7}}\left[1\right]^{-1}\overline{g_{7}}\left[0\right],\\
 & \overline{g_{7}}\left[6\right]^{-1}\overline{m_{7,\,4,\,l}}\left[4\right]\overline{m_{7,\,2,\,l}}\left[5\right]\overline{m_{7,\,4,\,l}}\left[3\right]\overline{g_{7}}\left[6\right],\\
 & \overline{g_{7}}\left[0\right]^{-1}\overline{g_{7}}\left[1\right]\overline{m_{7,\,5,\,l}}\left[5\right]\overline{m_{7,\,3,\,l}}\left[4\right]\overline{m_{7,\,5,\,l}}\left[2\right]\overline{g_{7}}\left[1\right]^{-1}\overline{g_{7}}\left[0\right].
\end{align*}

Since the only multiplicative generators are involved for the type
groups $\mathcal{G}_{7,\,2},\,\mathcal{G}_{7,\,3},\,\mathcal{G}_{7,\,4},\,\mathcal{G}_{7,\,5}$,
we need to consider only the subgroups generated by the multiplicative
generators isomorphic to $C_{6}*C_{6}$ for each $\mathcal{G}_{7,\,\tau}$,
$\tau=2,\,3,\,4,\,5$. Recall that from Lemma 4.5 or (40), we have
\begin{align*}
 & \overline{m_{p,\,\tau,\,u}}\left[a\right]^{-1}=\overline{m_{p,\,\tau,\,u}}\left[-a\right],\\
 & \overline{m_{p,\,\tau,\,l}}\left[a\right]^{-1}=\overline{m_{p,\,\tau,\,l}}\left[-a\right],
\end{align*}
for any well-defined combinations of $p,\,\tau,\,a$. Therefore, the
four order 3 images are four conjugates of multiplicative generators
of different types. By rewriting the presentation of the given finitely
generated free product, we have $\mu_{7}\left(\mathcal{K}_{7}\right)\cong\mathcal{K}_{7}$,
which implies $\beta_{3,\,7}$ is faithful.

Suppose $p=11$. The group $\mathcal{K}_{11}$ is freely generated
by
\begin{align*}
S^{2},\,SV_{4},\,V_{4}S^{-1},\,V_{6},\,SV_{6}S^{-1},
\end{align*}
which are under $\mu_{11}$ respectively mapped to
\begin{align*}
 & \overline{g_{11}}\left[0\right]^{-1},\\
 & \overline{g_{11}}\left[0\right]^{-1}\overline{g_{11}}\left[1\right]\overline{g_{11}}\left[9\right]^{-1}\overline{g_{11}}\left[4\right]\overline{g_{11}}\left[7\right]^{-1}\overline{g_{11}}\left[9\right]\overline{g_{11}}\left[5\right]^{-1}\overline{g_{11}}\left[8\right]\overline{g_{11}}\left[3\right]^{-1}\overline{g_{11}}\left[8\right]\overline{g_{11}}\left[6\right]^{-1}\overline{g_{11}}\left[10\right],\\
 & \overline{g_{11}}\left[10\right]^{-1}\overline{g_{11}}\left[6\right]\overline{g_{11}}\left[8\right]^{-1}\overline{g_{11}}\left[3\right]\overline{g_{11}}\left[6\right]^{-1}\overline{g_{11}}\left[2\right]\overline{g_{11}}\left[4\right]^{-1}\overline{g_{11}}\left[7\right]\overline{g_{11}}\left[4\right]^{-1}\overline{g_{11}}\left[9\right]\overline{g_{11}}\left[1\right]^{-1}\overline{g_{11}}\left[0\right],
\end{align*}
\begin{align*}
 & \left(\overline{g_{11}}\left[10\right]^{-1}\overline{g_{11}}\left[6\right]\overline{g_{11}}\left[8\right]^{-1}\overline{g_{11}}\left[3\right]\overline{g_{11}}\left[6\right]^{-1}\overline{g_{11}}\left[2\right]\overline{g_{11}}\left[4\right]^{-1}\right.\\
 & \left.\overline{g_{11}}\left[7\right]\overline{g_{11}}\left[2\right]^{-1}\overline{g_{11}}\left[6\right]\overline{g_{11}}\left[3\right]^{-1}\overline{g_{11}}\left[8\right]\overline{g_{11}}\left[6\right]^{-1}\overline{g_{11}}\left[10\right]\right),\\
 & \left(\overline{g_{11}}\left[0\right]^{-1}\overline{g_{11}}\left[1\right]\overline{g_{11}}\left[9\right]^{-1}\overline{g_{11}}\left[4\right]\overline{g_{11}}\left[7\right]^{-1}\overline{g_{11}}\left[9\right]\overline{g_{11}}\left[5\right]^{-1}\overline{g_{11}}\left[8\right]\right.\\
 & \left.\overline{g_{11}}\left[3\right]^{-1}\overline{g_{11}}\left[5\right]\overline{g_{11}}\left[9\right]^{-1}\overline{g_{11}}\left[7\right]\overline{g_{11}}\left[4\right]^{-1}\overline{g_{11}}\left[9\right]\overline{g_{11}}\left[1\right]^{-1}\overline{g_{11}}\left[0\right]\right),
\end{align*}
which generate a rank $5=\frac{11+4}{3}$ free subgroup in $\mathcal{Q}_{5}$
by Stallings' folding process. Therefore, $\beta_{3,\,11}$ is faithful.

Suppose $p=13$. The group $\mathcal{K}_{13}$ is isomorphic to $F_{3}*\left(*_{i=4}C_{3}\right)$,
where the free factor is generated by
\begin{align*}
S^{2},\,SV_{5},\,SV_{8},
\end{align*}
and the finite cyclic factors are generated by
\begin{align*}
V_{4},\,SV_{4}S^{-1},\,V_{10},\,SV_{10}S^{-1}.
\end{align*}

Under $\mu_{13}$, the first three generators are respectively mapped
to
\begin{align*}
 & \overline{g_{13}}\left[0\right]^{-1},\\
 & \overline{g_{13}}\left[0\right]^{-1}\overline{g_{13}}\left[1\right]\overline{g_{13}}\left[11\right]^{-1}\overline{m_{13,\,9,\,l}}\left[9\right]\overline{g_{13}}\left[8\right]\overline{m_{13,\,10,\,u}}\left[3\right]\overline{g_{13}}\left[7\right]^{-1}\overline{g_{13}}\left[12\right],\\
 & \left(\overline{g_{13}}\left[0\right]^{-1}\overline{g_{13}}\left[1\right]\overline{g_{13}}\left[11\right]^{-1}\overline{m_{13,\,9,\,l}}\left[9\right]\overline{g_{13}}\left[8\right]\overline{g_{13}}\left[7\right]^{-1}\overline{g_{13}}\left[2\right]\right.\\
 & \left.\overline{g_{13}}\left[5\right]^{-1}\overline{g_{13}}\left[6\right]\overline{g_{13}}\left[11\right]^{-1}\overline{g_{13}}\left[8\right]\overline{m_{13,\,10,\,u}}\left[3\right]\overline{g_{13}}\left[7\right]^{-1}\overline{g_{13}}\left[12\right]\right),
\end{align*}
and the last four generators are respectively mapped to
\begin{align*}
 & \overline{g_{13}}\left[12\right]^{-1}\overline{g_{13}}\left[7\right]\overline{m_{13,\,10,\,u}}\left[3\right]\overline{g_{13}}\left[7\right]^{-1}\overline{g_{13}}\left[12\right],\\
 & \overline{g_{13}}\left[0\right]^{-1}\overline{g_{13}}\left[1\right]\overline{g_{13}}\left[11\right]^{-1}\overline{m_{13,\,9,\,l}}\left[4\right]\overline{g_{13}}\left[11\right]\overline{g_{13}}\left[1\right]^{-1}\overline{g_{13}}\left[0\right],\\
 & \left(\overline{g_{13}}\left[12\right]^{-1}\overline{g_{13}}\left[7\right]\overline{m_{13,\,10,\,u}}\left[10\right]\overline{g_{13}}\left[8\right]^{-1}\overline{g_{13}}\left[11\right]\overline{g_{13}}\left[6\right]^{-1}\overline{g_{13}}\left[5\right]\overline{m_{13,\,4,\,u}}\left[9\right]\right.\\
 & \left.\overline{g_{13}}\left[5\right]^{-1}\overline{g_{13}}\left[6\right]\overline{g_{13}}\left[11\right]^{-1}\overline{g_{13}}\left[8\right]\overline{m_{13,\,10,\,u}}\left[3\right]\overline{g_{13}}\left[7\right]^{-1}\overline{g_{13}}\left[12\right]\right),\\
 & \left(\overline{g_{13}}\left[0\right]^{-1}\overline{g_{13}}\left[1\right]\overline{g_{13}}\left[11\right]^{-1}\overline{m_{13,\,9,\,l}}\left[9\right]\overline{g_{13}}\left[8\right]\overline{g_{13}}\left[7\right]^{-1}\overline{g_{13}}\left[2\right]\overline{g_{13}}\left[5\right]^{-1}\overline{m_{13,\,3,\,l}}\left[10\right]\right.\\
 & \left.\overline{g_{13}}\left[5\right]\overline{g_{13}}\left[2\right]^{-1}\overline{g_{13}}\left[7\right]\overline{g_{13}}\left[8\right]^{-1}\overline{m_{13,\,9,\,l}}\left[4\right]\overline{g_{13}}\left[11\right]\overline{g_{13}}\left[1\right]^{-1}\overline{g_{13}}\left[0\right]\right).
\end{align*}

As observed in the case $p=7$, the four order 3 images are included
in the four conjugates of multiplicative generators of different types.
Through a proper rewriting process of the presentation of the given
finitely generated free product, we establish that $\mu_{13}\left(\mathcal{K}_{13}\right)\cong\mathcal{K}_{13}$.

Alternatively, employing the algorithm presented in the proof of Theorem
1.1, we compute the images under $\widehat{\Lambda_{13}}\left[0\right]$.
By Definition 5.14, the subgroup of $K_{13}\left[0\right]$ generated
by these images is isomorphic to $C_{3}$. It is because
\begin{align*}
 & h_{9}\left(9\right)=\frac{9-2}{9+2}=3,\\
 & h_{10}\left(3\right)=\frac{3-5}{3+5}=3,\\
 & h_{4}\left(9\right)=\frac{9+2}{9-2}=9=3^{2},\\
 & h_{3}\left(10\right)=\frac{10-8}{10+8}=3,
\end{align*}
where the images are in the unit group $\mathbb{F}_{13}^{\times}$.
Therefore, the kernel, which coincides with both $W$ and $K_{W}$,
forms an index 3 free subgroup. By identifying the $15$ generators,
we can apply Stallings' folding process. In any case, we conclude
that $\beta_{3,\,13}$ is faithful. $\qedhere$

\noindent \end{proof}

For any prime number $p$ greater than 13, we can determine whether
$\beta_{3,\,p}$ is faithful in finite time by Theorem 1.1. However,
as $p$ increases, the process of rewriting the images of the generators
of $\mathcal{K}_{p}$ in terms of the generators of $\mathcal{Q}_{p}$
becomes significantly longer. We conjecture that $\beta_{3,\,p}$
is faithful for every $p$, but a generic proof appears to require
a more detailed analysis.

Based on the proof of Theorem 5.18, when $p>3$, we observe that the
order 3 elements in $\mathcal{K}_{p}$ are conjugates of multiplicative
generators. This observation generally applies to any order 3 element
in $\mathcal{Q}_{p}$, by Theorem 3.11, the Kurosh subgroup theorem
for amalgamated free products, and the fact $3$ does not divide $p$.
However, there remain some other nontrivial features, leading us to
propose the following conjecture.

\noindent \begin{conjecture}

Suppose a prime $p$ satisfies $p\equiv1$ (mod 6). Then, the quaternionic
subgroup of level 0, $\mathcal{Q}_{p}\left[0\right]$, contains the
image group $\mu_{p}\left(\mathcal{K}_{p}\right)$. In other words,
there are no additive generators in the rewritten words of the images
of the generators of $\mathcal{K}_{p}$ under $\mu_{p}$. Moreover,
the image group
\begin{align*}
\widehat{\Lambda_{p}}\left[0\right]\left(\mu_{p}\left(\mathcal{K}_{p}\right)\right)
\end{align*}
is isomorphic to $C_{3}$. In particular, the free group $\ker\left(\widehat{\Lambda_{p}}\left[0\right]\right)$
contains the image of the free group $\ker\left(\epsilon_{p}\right)$
under $\mu_{p}$.

\noindent \end{conjecture}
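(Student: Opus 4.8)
The plan is to prove the conjecture's three assertions in turn. Fix a prime $p\equiv 1\pmod 6$ as in the statement, and let $\tau_1,\dots,\tau_4$ be the roots of $x^4+x^2+1$ in $\mathbb{F}_p$, which exist since $3\mid p-1$. By Lemma 5.9 the group $\mathcal{K}_p$ is finitely generated, with an explicit generating set of words in $S$ and the $V_k$ supplied by the Reidemeister--Schreier process; since $V_k=T^{-1}S^{-k_*}TS^kT$, $\mu_p(S^j)=\mathrm{diag}(1,(-t)^j)$, and $\mu_p(T)$ is the fixed matrix displayed in the proof of Theorem 5.18, each $\mu_p(V_k)$ is available in closed form, so the whole problem is to understand the reductions of these finitely many matrices.

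\emph{First assertion.} By Lemma 4.18 the rewriting algorithm applied to an element $M$ of a type group $\mathcal{G}_{p,\tau}$ with $\mathrm{rd}(M)\ge 3$ multiplies $M$ on the right by an inverse stable generator when $\tau2_\tau(M)=-(\tau+3\tau^{-1})$, by a multiplicative generator when $\tau2_\tau(M)$ is neither $-(\tau+3\tau^{-1})$ nor $-\tau^3$, and can only be made to decrease the relative degree using an additive generator in the one remaining case $\tau2_\tau(M)=-\tau^3$; equivalently, $\mathcal{Q}_p[0]$ is exactly the subgroup of $\mathcal{Q}_p$ spanned by the elementary, multiplicative and stable generators. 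Thus the first assertion says that in reducing each $\mu_p(V_k)$ one never meets an intermediate matrix of relative degree $\ge 3$ whose second-order type is $-\tau^3$; in Bass--Serre language, that the Britton normal forms of the $\mu_p(V_k)$ inside each $\mathcal{G}_{p,\tau_i}$ contain no base-group syllable of relative degree $\ge 3$. I would prove this by producing an invariant of the reduction steps incompatible with the configuration ``$\mathrm{rd}\ge 3$ and $\tau2_\tau=-\tau^3$''; a natural candidate is a bound on the degree of the odd part of the matrix entries under $t\mapsto t^{-1}$, which one reads off the explicit forms of the multiplicative and stable generators in Section 3 and which grows with $\deg f$ for the additive generator $\overline{a_{p,\tau,u}}[f]$, combined with the relative-degree and signature bookkeeping of Lemmas 4.8 and 4.13--4.16. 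This is the step I expect to be the main obstacle: for each fixed $p$ it is precisely the finite case check performed in Theorem 5.18, and making it work uniformly in $p$ seems to require genuinely new structural control of the matrices $\mu_p(TS^kT)$ and of their balanced-companion reductions.

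\emph{Second assertion.} Granting the first, $\mu_p(\mathcal{K}_p)\subseteq\mathcal{Q}_p[0]$, so $\widehat{\Lambda_p}[0]$ is defined on it, and since $K_p[0]\cong\mathbb{F}_p^\times$ (the factors $E_{p^0}$ being trivial), its image is the subgroup of $\mathbb{F}_p^\times$ generated by the values $h_\tau(a)$ attached to the multiplicative generators $\overline{m_{p,\tau,u}}[a]$ (and their lower analogues) that appear in the rewritten words. By the remark preceding the conjecture, every order-$3$ element of $\mathcal{Q}_p$ --- in particular the image of a generator of one of the four $C_3$-factors of $\mathcal{K}_p$ from Lemma 5.9 --- is a conjugate of a single multiplicative generator, whose $h_\tau$-value therefore has order $3$; hence the unique order-$3$ subgroup $C_3\le\mathbb{F}_p^\times$ lies in the image. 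For the reverse inclusion one must check that every $h_\tau$-value occurring in the rewriting of the chosen generators lies in that order-$3$ subgroup, which I expect to come out of the closed form of $\mu_p(V_k)$ from the first step together with the determinant identity $\det m_{p,\tau,u}[a]=a^2+3\tau^2$ of Lemma 3.8.

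\emph{Third assertion.} With the first two in hand, $\widehat{\Lambda_p}[0]\circ\mu_p$ is a homomorphism $\mathcal{K}_p\to C_3$, and the last claim is that its kernel contains $\ker\epsilon_p$, i.e.\ that it factors through $\epsilon_p$. One verifies this on the free-product generators of $\mathcal{K}_p$ from Lemma 5.9: the $C_3$-factor generators are already matched by the second step, while the free-factor generators must be shown to have trivial total $h_\tau$-value --- the delicate point, not visibly forced by the first two assertions, which amounts to a compatibility between the chosen free-product decomposition of $\mathcal{K}_p$ and the free-product structure of $\mu_p(\mathcal{K}_p)\le\mathcal{Q}_p[0]$. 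As in the second step, the required computations should follow once the matrices $\mu_p(TS^kT)$ are controlled uniformly in $p$ --- probably by diagonalizing $\mu_p(\sigma_1\sigma_2\sigma_1)$ over a quadratic extension of $\mathbb{F}_p(t)$ and using the scalar-extension injection described at the end of Section 4 --- everything else reducing, via Theorem 3.11 and the Kurosh subgroup theorem, to bookkeeping.
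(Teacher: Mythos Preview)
The statement you are attempting to prove is labeled as a \emph{Conjecture} in the paper; the paper offers no proof of it, only numerical evidence (the cases $p=7$ and $p=13$ in Theorem~5.18) and the observation preceding the conjecture about order-$3$ elements. So there is no ``paper's proof'' against which to compare your proposal.

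As for the proposal itself: it is not a proof but a strategy outline, and you are candid about this. For the first assertion you identify correctly that the issue is whether the rewriting process of Lemma~4.18 ever produces an intermediate matrix with $\tau2_\tau=-\tau^3$, but your suggested invariant (``a bound on the degree of the odd part of the matrix entries under $t\mapsto t^{-1}$'') is not shown to actually exclude this case, and you yourself write that this ``seems to require genuinely new structural control.'' For the second assertion, the containment $C_3\subseteq\widehat{\Lambda_p}[0](\mu_p(\mathcal{K}_p))$ is fine, but the reverse containment is left to ``I expect to come out of'' a closed form you have not produced. For the third assertion you correctly note that it does not follow formally from the first two --- the free-factor generators of $\mathcal{K}_p$ could in principle map to nontrivial elements of $C_3$ even if the image is $C_3$ --- and again defer to computations not carried out.

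In short, each of the three steps contains an explicit gap that you flag but do not close. What you have written is a reasonable roadmap for attacking the conjecture, not a proof of it; the paper's authors evidently did not see how to close these gaps either, which is why they recorded it as a conjecture rather than a theorem.
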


\section{Proof of Theorem 1.2}

For any prime $p$, the symmetric group $S_{3}$ has a faithful representation
to $\mathrm{GL}\left(3,\,\mathbb{F}_{p}\right)$ by the usual permutation
representation. Abusing notation, we identify $S_{3}$ with the representation
in $\mathrm{GL}\left(3,\,\mathbb{F}_{p}\right)$ when a prime $p$
is fixed. Following Salter \citep[Definition 2.5]{MR4228497}, we
formally define the following group.

\noindent \begin{definition}

The \emph{target group} for $\mathbb{F}_{p}$, $\Gamma_{3,\,p}$,
is defined as
\begin{align*}
\Gamma_{3,\,p} & :=\left\{ A\in\mathrm{GL}\left(3,\,\mathbb{F}_{p}\left[t,t^{-1}\right]\right)\::\:vA=v,\:A\overrightarrow{1}=\overrightarrow{1},\:\overline{A}J_{3}A^{T}=J_{3},\:A|_{t=1}\in S_{3}\right\} .
\end{align*}
\end{definition}

Let $\overline{\Gamma_{3,\,p}}$ be the quotient of $\Gamma_{3,\,p}$
by its center. We will also call $\overline{\Gamma_{3,\,p}}$ the
target group, if there is no possibility for confusion. Define the
map $\overline{\beta_{3,\,p}}:B_{3}\to\overline{\Gamma_{3,\,p}}$
to be the composition of the quotient $q_{3,\,p}:\Gamma_{3,\,p}\to\overline{\Gamma_{3,\,p}}$
and the Burau representation modulo $p$ $\beta_{3,\,p}$. Then, we
pose a \emph{Salter-type problem modulo} $p$ on $B_{3}$, following
Salter's question \citep[Question 1.1]{MR4228497}.

\noindent \begin{question}

Is the map $\overline{\beta_{3,\,p}}:B_{3}\to\overline{\Gamma_{3,\,p}}$
surjective?

\noindent \end{question}

When we addressed Salter's original question with the base ring $\mathbb{Z}$
in \citep{lee2024salters}, we observed that the target group is the
same as the formal Burau group we defined, as the condition $A|_{t=1}\in S_{3}$
is redundant \citep[Lemma 3.15]{lee2024salters}. However, over finite
fields, the redundancy may not hold. For example, consider an element
$\overline{g_{11}}\left[3\right]^{2}$ in the quaternionic group $\mathcal{Q}_{11}$.
It has a representative
\begin{align*}
4g_{11}\left[3\right]^{2} & =\left(\begin{array}{cc}
4t^{2}+2t+2+8t^{-1} & t+4+t^{-1}\\
-\left(t+4+t^{-1}\right)\Phi & 8t+2+t^{-1}+4t^{2}
\end{array}\right),
\end{align*}
which has determinant 1 in $\mathbb{F}_{11}$. Since $\left(4g_{11}\left[3\right]^{2}\right)_{11}=1+t$
modulo $1+t+t^{2}$, $\phi_{11}\left(\mathcal{B}_{11}\right)$ includes
$\left(-t\right)4g_{11}\left[3\right]^{2}$ by Lemma 2.10. From (9)
and Definition 2.3, we have a matrix $A\in\mathcal{B}_{11}$ given
by
\begin{align*}
A & =\left(\begin{array}{ccc}
7t^{3}+9t^{2}+2t+1 & 10t^{3}+8t & *\\
t^{2}+8t+10 & 3t^{2}+6t+10 & *\\
* & * & *
\end{array}\right),
\end{align*}
such that $\phi_{11}\left(A\right)=\left(-t\right)4g_{11}\left[3\right]^{2}$.
However, by evaluating $A$ at $t=1$, we have
\begin{align*}
A|_{t=1} & =\left(\begin{array}{ccc}
8 & 7 & 8\\
8 & 8 & 7\\
7 & 8 & 8
\end{array}\right),
\end{align*}
which is not included in the image of the usual permutation representation
of $S_{3}$. Therefore, $\Gamma_{3,\,11}$ is a proper subgroup of
$\mathcal{B}_{11}$.

However, this discrepancy does not pose a major problem in answering
Question 6.2, since $\Gamma_{3,\,11}$ is a finite index subgroup
of $\mathcal{B}_{11}$. The obvious upper bound for the index is
\begin{align}
\left[\mathcal{B}_{p}\::\:\Gamma_{3,\,p}\right] & \le\frac{\left|\mathrm{SL}\left(3,\,p\right)\right|}{3},
\end{align}
although the actual index is likely much smaller. In this paper, we
will not address specific indices or attempt to obtain a more precise
upper bound.

\noindent \begin{lemma}

Suppose a prime $p$ satisfies $p\equiv1$ (mod 6). Then, the quaternionic
subgroup $\mathcal{Q}_{p}$ is not finitely generated.

\noindent \end{lemma}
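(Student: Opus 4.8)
The plan is to derive this from the structure theorem together with two standard facts from combinatorial group theory: a free product of finitely many groups retracts onto each factor, and a group that surjects onto a non-finitely-generated group is itself not finitely generated. By Theorem 3.11(2), $\mathcal{Q}_{p}=\mathcal{G}_{p,0}*\cdots*\mathcal{G}_{p,p-1}$ is a free product of finitely many type groups, so $\mathcal{Q}_{p}$ retracts onto each $\mathcal{G}_{p,r}$ (send one factor identically and the others to the identity, using the universal property of the free product). Hence it suffices to exhibit one type group that is not finitely generated. Since $p\equiv1$ (mod 6), the polynomial $x^{4}+x^{2}+1$ has a root $\tau\in\mathbb{F}_{p}$ (in fact four, as noted after Theorem 3.11), and by Theorem 3.11(5)(e), equivalently Theorem 4.1, the corresponding type group is
\begin{align*}
\mathcal{G}_{p,\tau} & \cong\left(\mathrm{Aff}\left(\mathbb{F}_{p}\left[x\right]\right)*\mathrm{Aff}\left(\mathbb{F}_{p}\left[x\right]\right)\right)*_{\alpha_{\tau}},
\end{align*}
an HNN extension whose associated subgroups are the upper and lower multiplicative subgroups, each finite cyclic of order $p-1$.

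The key step is to construct a retraction $\Psi\colon\mathcal{G}_{p,\tau}\to\mathrm{Aff}\left(\mathbb{F}_{p}\left[x\right]\right)$. On the base group $\mathrm{Aff}_{p,\tau,u}*\mathrm{Aff}_{p,\tau,l}$, let $\Psi$ be the canonical isomorphism $\mathrm{Aff}_{p,\tau,u}\xrightarrow{\sim}\mathrm{Aff}\left(\mathbb{F}_{p}\left[x\right]\right)$ on the first factor (matching $\overline{a_{p,\tau,u}}\left[f\right]$ and $\overline{m_{p,\tau,u}}\left[b\right]$ with the corresponding elements of $\mathbb{F}_{p}\left[x\right]\rtimes\mathbb{F}_{p}^{\times}$) and the canonical isomorphism $\mathrm{Aff}_{p,\tau,l}\xrightarrow{\sim}\mathrm{Aff}\left(\mathbb{F}_{p}\left[x\right]\right)$ on the second, and set $\Psi(\overline{s_{p,\tau}}\left[a\right])=1$. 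To see $\Psi$ respects the HNN presentation, note that by Theorem 4.1 the gluing isomorphism $\alpha_{\tau}$ sends $\overline{m_{p,\tau,u}}\left[b\right]$ to $\overline{m_{p,\tau,l}}\left[b\right]$, and by Lemma 4.5 both of these are carried to the same unit $h_{\tau}(b)\in\mathbb{F}_{p}^{\times}\subset\mathrm{Aff}\left(\mathbb{F}_{p}\left[x\right]\right)$, which is centralized by the identity; hence each relation $\overline{s_{p,\tau}}\left[a\right]\,\overline{m_{p,\tau,u}}\left[b\right]\,\overline{s_{p,\tau}}\left[a\right]^{-1}=\overline{m_{p,\tau,l}}\left[b\right]$ is preserved. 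Since $\Psi$ restricts to an isomorphism on $\mathrm{Aff}_{p,\tau,u}\hookrightarrow\mathcal{G}_{p,\tau}$, it is a retraction. (One could instead invoke the Bass--Serre principle that the fundamental group of a finite graph of groups with finite edge groups is finitely generated if and only if all vertex groups are, applied to the graph of groups underlying $\mathcal{G}_{p,\tau}$; but the explicit retraction is self-contained.) Composing $\Psi$ with the retraction $\mathcal{Q}_{p}\twoheadrightarrow\mathcal{G}_{p,\tau}$ yields a surjection $\mathcal{Q}_{p}\twoheadrightarrow\mathrm{Aff}\left(\mathbb{F}_{p}\left[x\right]\right)$.

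It then remains to observe that $\mathrm{Aff}\left(\mathbb{F}_{p}\left[x\right]\right)\cong\mathbb{F}_{p}\left[x\right]\rtimes\mathbb{F}_{p}^{\times}$ is not finitely generated: it contains the additive group $\mathbb{F}_{p}\left[x\right]\cong C_{p}^{\infty}$ as a normal subgroup of finite index $p-1$, and a finitely generated torsion abelian group is finite, so $C_{p}^{\infty}$ is not finitely generated; since finite-index subgroups of finitely generated groups are finitely generated (Reidemeister--Schreier), neither is $\mathrm{Aff}\left(\mathbb{F}_{p}\left[x\right]\right)$. A group that surjects onto a non-finitely-generated group cannot be finitely generated, so $\mathcal{Q}_{p}$ is not finitely generated. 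The only genuinely non-formal point is checking that $\Psi$ is well defined on the HNN presentation, which rests entirely on the explicit description of $\alpha_{\tau}$ recorded in Theorem 4.1 and Lemma 4.5; everything else is routine.
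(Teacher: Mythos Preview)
Your overall strategy is sound and arguably cleaner than the paper's: retract the free product onto a single type group $\mathcal{G}_{p,\tau}$, then surject that onto $\mathrm{Aff}(\mathbb{F}_{p}[x])$, which is not finitely generated. The paper instead builds a map $\widehat{\Lambda_{p}}:\mathcal{Q}_{p}\to K_{p}=(E_{p^{\infty}}\times E_{p^{\infty}})\rtimes\mathbb{F}_{p}^{\times}$ (the infinite-level version of Definition~5.14) and argues by contradiction that a finite generating set would land in some finite $K_{p}[k]$.

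However, your construction of $\Psi$ has a genuine gap. You send $\overline{m_{p,\tau,u}}[b]\mapsto h_{\tau}(b)$ and $\overline{m_{p,\tau,l}}[b]\mapsto h_{\tau}(b)$ (invoking Lemma~4.5) while also sending both additive families $\overline{a_{p,\tau,u}}[f]$ and $\overline{a_{p,\tau,l}}[f]$ to $f$. But by Lemma~4.6 the two semidirect actions differ in sign: conjugation by $\overline{m_{p,\tau,l}}[a]$ sends $\overline{a_{p,\tau,l}}[f]$ to $\overline{a_{p,\tau,l}}[h_{\tau}(a)^{-1}f]$, whereas in $\mathrm{Aff}(\mathbb{F}_{p}[x])$ the unit $h_{\tau}(a)$ conjugates $f$ to $h_{\tau}(a)f$. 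So your $\Psi$ restricted to $\mathrm{Aff}_{p,\tau,l}$ is not a homomorphism. If you instead take the canonical isomorphism on the lower factor (which forces $\overline{m_{p,\tau,l}}[b]\mapsto h_{\tau}(b)^{-1}$), then the HNN relation fails.

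The repair is easy: keep $\Psi|_{\mathrm{Aff}_{p,\tau,u}}$ as the canonical isomorphism, but on the lower factor send $\overline{a_{p,\tau,l}}[f]\mapsto 0$ and $\overline{m_{p,\tau,l}}[b]\mapsto h_{\tau}(b)$. This \emph{is} a homomorphism on $\mathrm{Aff}_{p,\tau,l}$ (it is the quotient by the normal additive subgroup), the HNN relation is respected since both multiplicative generators map to $h_{\tau}(b)$, and $\Psi$ is still surjective because $\mathrm{Aff}_{p,\tau,u}$ already covers the target. Alternatively, your parenthetical Bass--Serre remark (a finite graph of groups with finite edge groups has finitely generated fundamental group iff all vertex groups are finitely generated) is correct and bypasses the issue entirely; you could simply promote that to the main argument.
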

\begin{proof}

\noindent In the same way as Definition 5.14, but without imposing
restrictions on the degree of polynomials $f\in\mathbb{F}_{p}\left[x\right]$,
define a semidirect product $K_{p}$ to be
\begin{align*}
\left(E_{p^{\infty}}\times E_{p^{\infty}}\right)\rtimes\mathbb{F}_{p}^{\times},
\end{align*}

\noindent and define a map $\widehat{\Lambda_{p}}:\mathcal{Q}_{p}\to K_{p}$.
Then, for any nonnegative integer $k$, the image of each element
$M\in\mathcal{Q}_{p}\left[k\right]$ under $\widehat{\Lambda_{p}}\left[k\right]$
coincides with the image of $M$ under $\widehat{\Lambda_{p}}$.

Suppose that $\mathcal{Q}_{p}$ is finitely generated by a finite
set of generators $g_{1},\cdots,\,g_{n}$. By expressing $g_{1},\cdots,\,g_{n}$
as words in the elementary, affine and stable generators, there must
be an integer $k$ such that $k-1$ is the maximal degree of $f\in\mathbb{F}_{p}\left[x\right]$
among the additive generators $\overline{a_{p,\,\tau,\,u}}\left[f\right]$,
$\overline{a_{p,\,\tau,\,l}}\left[f\right]$ that appear in the words.
Consequently, the group generated by $\widehat{\Lambda_{p}}\left(g_{1}\right),\cdots,\,\widehat{\Lambda_{p}}\left(g_{n}\right)$
is contained in the finite group $K_{p}\left[k\right]$. However,
the set of additive generators has an infinite image under $\widehat{\Lambda_{p}}$,
leading to a contradiction. $\qedhere$

\noindent \end{proof}

From now on, define the group $G_{p}^{1}$ as the intersection of
$\overline{\phi_{p}}\circ\beta_{3,\,p}\left(B_{3}\right)$ and $\mathcal{Q}_{p}^{1}$.

\noindent \begin{corollary}

For the primes $p$ such that $p\equiv1,\,3$ (mod 6), the index $\left[\mathcal{B}_{p}\::\:\beta_{3,\,p}\left(B_{3}\right)\right]$
is infinite.

\noindent \end{corollary}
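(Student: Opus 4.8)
The plan is to argue by contradiction. Since $B_{3}=\langle\sigma_{1},\sigma_{2}\rangle$ is two-generated, so is $\beta_{3,\,p}(B_{3})$; and if a finitely generated subgroup sits at finite index in a group $G$, then $G$ is finitely generated (its generators together with a transversal generate $G$). Thus, if $[\mathcal{B}_{p}:\beta_{3,\,p}(B_{3})]$ were finite, then $\mathcal{B}_{p}$, and hence its homomorphic image $\overline{\phi_{p}}(\mathcal{B}_{p})$, would be finitely generated. I will derive a contradiction by transporting finite generation down to the quaternionic group $\mathcal{Q}_{p}$, which is \emph{not} finitely generated in these cases.

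First I would show that the Laurent image subgroup $I_{p}=\overline{\phi_{p}}(\mathcal{B}_{p})\cap\mathrm{PGL}\!\left(2,\,\mathbb{F}_{p}[t,t^{-1}]\right)$ has finite index in $\overline{\phi_{p}}(\mathcal{B}_{p})$. By Lemma 2.13 the entries of $\phi_{p}(A)$ are Laurent polynomials precisely when $\rho_{p}(A|_{t=-1})_{12}=0$, and since $\det\phi_{p}(A)=(-t)^{k}$ is a unit of $\mathbb{F}_{p}[t,t^{-1}]$ the same condition controls whether $\overline{\phi_{p}}(A)\in\mathrm{PGL}\!\left(2,\,\mathbb{F}_{p}[t,t^{-1}]\right)$. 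Now $\rho_{p}\circ\mathrm{eval}_{-1}\colon\mathcal{B}_{p}\to\mathrm{GL}(2,\mathbb{F}_{p})$ is a homomorphism into a finite group, and vanishing of the $(1,2)$-entry defines the preimage of the lower-triangular Borel subgroup; hence it cuts out a finite-index subgroup $\mathcal{B}_{p}^{\circ}\le\mathcal{B}_{p}$ with $\overline{\phi_{p}}(\mathcal{B}_{p}^{\circ})\subseteq I_{p}$. Since $[\overline{\phi_{p}}(\mathcal{B}_{p}):\overline{\phi_{p}}(\mathcal{B}_{p}^{\circ})]\le[\mathcal{B}_{p}:\mathcal{B}_{p}^{\circ}]<\infty$, the subgroup $\overline{\phi_{p}}(\mathcal{B}_{p}^{\circ})$ has finite index in $I_{p}$; under the standing hypothesis it is finitely generated, being a quotient of $\mathcal{B}_{p}^{\circ}$ (a finite-index subgroup of the finitely generated $\mathcal{B}_{p}$), so $I_{p}$ would be finitely generated as well.

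Next I would note that $I_{p}$ has finite index in $\mathcal{Q}_{p}$: from $I_{p}^{1}=I_{p}\cap\mathcal{Q}_{p}^{1}$ we get $I_{p}^{1}\le I_{p}\le\mathcal{Q}_{p}$, while $[\mathcal{Q}_{p}:\mathcal{Q}_{p}^{1}]\le 2$ by Lemma 2.18 and $[\mathcal{Q}_{p}^{1}:I_{p}^{1}]<\infty$ by Theorem 2.17 (indeed $I_{3}^{1}=\mathcal{Q}_{3}^{1}$ when $p=3$), so $[\mathcal{Q}_{p}:I_{p}]<\infty$. Hence, once more, $\mathcal{Q}_{p}$ would be a finite-index overgroup of the finitely generated group $I_{p}$, and therefore finitely generated. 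But for $p\equiv1\pmod 6$ this contradicts Lemma 6.3, and for $p\equiv3\pmod 6$ — where the only prime is $p=3$ — Theorem 3.20 gives $\mathcal{Q}_{3}\cong\mathbb{Z}*\mathbb{F}_{3}[x]*\mathbb{F}_{3}[x]*\mathbb{F}_{3}[x]*\mathbb{F}_{3}[x]$, whose abelianization $\mathbb{Z}\oplus C_{3}^{\infty}\oplus C_{3}^{\infty}\oplus C_{3}^{\infty}\oplus C_{3}^{\infty}$ is not finitely generated, so neither is $\mathcal{Q}_{3}$. Either way we reach the desired contradiction, which proves the corollary.

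The only genuinely nonroutine point is the finite-index bound of the second step — that restricting to Laurent-polynomial representatives costs only finite index, i.e.\ that $I_{p}$ is commensurable with $\overline{\phi_{p}}(\mathcal{B}_{p})$; this is essentially the ``virtually'' observation recorded after Lemma 2.13, and it rests on Lemma 2.13 together with the finiteness of $\mathrm{GL}(2,\mathbb{F}_{p})$. Everything else is a repeated application of the elementary fact that finite-index subgroups and finite-index overgroups of finitely generated groups are finitely generated, fed by the structural results Theorem 2.17, Lemma 2.18, Lemma 6.3 and Theorem 3.20 already at our disposal.
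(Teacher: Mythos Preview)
Your overall strategy --- assume finite index, deduce that $\mathcal{Q}_{p}$ is finitely generated, and contradict the structure theorem --- is exactly the paper's; the paper merely routes it through $G_{p}^{1}:=\overline{\phi_{p}}\circ\beta_{3,\,p}(B_{3})\cap\mathcal{Q}_{p}^{1}$ rather than $I_{p}$, and quotes the exact index $[\overline{\phi_{p}}(\mathcal{B}_{p}):I_{p}]=p+1$ from the proof of Lemma~5.6 in place of your Borel-preimage argument.

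There is, however, a genuine slip in your second paragraph. You write ``from $I_{p}^{1}=I_{p}\cap\mathcal{Q}_{p}^{1}$ we get $I_{p}^{1}\le I_{p}\le\mathcal{Q}_{p}$'', but the inclusion $I_{p}\le\mathcal{Q}_{p}$ is false. For instance $\overline{\phi_{p}}(\beta_{3,\,p}(\sigma_{2}))=\left[\begin{smallmatrix}1&0\\0&-t\end{smallmatrix}\right]$ lies in $I_{p}$ (it is in $\overline{\phi_{p}}(\mathcal{B}_{p})$ and visibly in $\mathrm{PGL}(2,\mathbb{F}_{p}[t,t^{-1}])$), yet it is not in $\mathcal{Q}_{p}$: any quaternionic representative $\left[\begin{smallmatrix}g_{1}&g_{2}\\-\Phi\overline{g_{2}}&\overline{g_{1}}\end{smallmatrix}\right]$ has determinant in $\mathbb{F}_{p}^{\times}$ by~(15), whereas here the determinant is $-t$ modulo squares of units. (This is precisely one of the nontrivial $\mathcal{S}_{p}/\mathcal{Q}_{p}$ coset representatives listed in Remark~4.19.) So ``$[\mathcal{Q}_{p}:I_{p}]<\infty$'' is not even well-posed.

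The repair is easy and keeps your argument intact: show instead that $[I_{p}:I_{p}^{1}]$ is finite. By Lemma~2.9 every $A\in\mathcal{B}_{p}$ has $\det A=(-t)^{k}$, and since $\ker\overline{\phi_{p}}=\langle\beta_{3,\,p}(\Delta)\rangle$ with $\det\beta_{3,\,p}(\Delta)=(-t)^{6}$, the parity of $k$ descends to a homomorphism $I_{p}\to\mathbb{Z}/2\mathbb{Z}$; one checks (scaling $\phi_{p}(A)$ by $t^{-k/2}$ when $k$ is even) that its kernel is exactly $I_{p}^{1}$. Hence $[I_{p}:I_{p}^{1}]\le 2$, so $I_{p}^{1}$ is finitely generated, and then your chain $[\mathcal{Q}_{p}:\mathcal{Q}_{p}^{1}]\le 2$, $[\mathcal{Q}_{p}^{1}:I_{p}^{1}]<\infty$ finishes as you intended. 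Incidentally, your explicit handling of $p=3$ via the abelianization of $\mathcal{Q}_{3}$ from Theorem~3.20 is actually more careful than the paper, which cites only Lemma~6.3 (stated for $p\equiv 1\pmod 6$).
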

\begin{proof}

\noindent Suppose that $\left[\mathcal{B}_{p}\::\:\beta_{3,\,p}\left(B_{3}\right)\right]$
is finite. Then, $\left[\overline{\phi_{p}}\left(\mathcal{B}_{p}\right)\::\:\overline{\phi_{p}}\circ\beta_{3,\,p}\left(B_{3}\right)\right]$
is also finite. Recall that we defined groups $I_{p}$ and $I_{p}^{1}$
in Definition 2.14. Then, $\left[I_{p}^{1}\;:\:G_{p}^{1}\right]$
is finite, because the index
\begin{align*}
\left[\overline{\phi_{p}}\left(\mathcal{B}_{p}\right)\::\:I_{p}\right]
\end{align*}

\noindent is exactly $p+1$ from the proof of Lemma 5.6. Because Theorem
2.17 ensures that $\left[\mathcal{Q}_{p}^{1}\::\:I_{p}^{1}\right]$
is finite, the index
\begin{align*}
\left[\mathcal{Q}_{p}^{1}\;:\:G_{p}^{1}\right]
\end{align*}
is also finite. However, since $G_{p}^{1}$ is finitely generated,
it implies that $\mathcal{Q}_{p}$ is finitely generated, contradicting
Lemma 6.3. Therefore, the index $\left[\mathcal{B}_{p}\::\:\beta_{3,\,p}\left(B_{3}\right)\right]$
is infinite. $\qedhere$

\noindent \end{proof}
\begin{prooftheorem12}

\noindent Suppose $p=2$. In the proof of Theorem 5.18, we observed
that $\mathcal{K}_{2}$ is freely generated by $S^{2}$ and $SV_{1}$
which are mapped to $\overline{g_{2}}\left[0\right]^{-1}$ and $\overline{g_{2}}\left[0\right]^{-1}\overline{g_{2}}\left[1\right]$,
respectively under $\mu_{2}$. These elements generate the entire
free group $\mathcal{Q}_{2}$. Therefore, we conclude that $\beta_{3,\,2}\left(B_{3}\right)=\mathcal{B}_{2}$.

Suppose a prime $p$ satisfies the condition $p\equiv1$ or $p\equiv3$
(mod 6). By Corollary 6.4, the index $\left[\mathcal{B}_{p}\::\:\beta_{3,\,p}\left(B_{3}\right)\right]$
is infinite. Therefore, $\left[\Gamma_{3,\,p}\::\:\beta_{3,\,p}\left(B_{3}\right)\right]$
must be infinite by (83), the obvious finite upper bound of $\left[\mathcal{B}_{p}\::\:\Gamma_{3,\,p}\right]$.
By Corollary 2.12, we conclude that the index
\begin{align*}
\left[\overline{\Gamma_{3,\,p}}\::\:\overline{\beta_{3,\,p}}\left(B_{3}\right)\right]
\end{align*}
must be infinite.

Finally, suppose $p\equiv5$ (mod 6). By Theorem 3.11, the quaternionic
group $\mathcal{Q}_{p}$ is free of rank $p$. By Lemma 2.18, the
subgroup $\mathcal{Q}_{p}^{1}$ has index 2 in $\mathcal{Q}_{p}$,
and has rank $2p-1$. By Theorem 2.17, since the subgroup $I_{p}^{1}$
has a finite index in $\mathcal{Q}_{p}^{1}$, the free rank of $I_{p}^{1}$
is not smaller than $2p-1$ by the Schreier index formula.

On the other hand, consider the group $G_{p}^{1}$. It is a subgroup
of $I_{p}^{1}$, and is also a free group. However, by Lemma 5.9,
since the free group $\mathcal{K}_{p}$ has rank $\frac{p+4}{3}$,
the rank of $G_{p}^{1}$ is at most $\frac{p+4}{3}$. Given this,
the index
\begin{align*}
\left[I_{p}^{1}\::\:G_{p}^{1}\right]
\end{align*}
is infinite, since in a finitely generated free group, a free subgroup
of smaller rank cannot have finite index by the Schreier index formula.

Therefore, $\left[\overline{\phi_{p}}\left(\mathcal{B}_{p}\right)\::\:\overline{\phi_{p}}\circ\beta_{3,\,p}\left(B_{3}\right)\right]$
is also infinite. Using the obvious upper bound (83), we conclude
that $\left[\overline{\Gamma_{3,\,p}}\::\:\overline{\phi_{p}}\circ\beta_{3,\,p}\left(B_{3}\right)\right]$
is infinite. $\qed$

\noindent \end{prooftheorem12}

$ $

\begin{spacing}{0.9}
\bibliographystyle{amsplain}
\phantomsection\addcontentsline{toc}{section}{\refname}\bibliography{bibgen}

\end{spacing}

$ $

{\small{}Donsung Lee; \href{mailto:disturin@snu.ac.kr}{disturin@snu.ac.kr}}{\small\par}

{\small{}Department of Mathematical Sciences and Research Institute
of Mathematics,}{\small\par}

{\small{}Seoul National University, Gwanak-ro 1, Gwankak-gu, Seoul,
South Korea 08826}{\small\par}

\clearpage{}

\pagebreak{}

\pagenumbering{arabic}

\renewcommand{\thefootnote}{A\arabic{footnote}}
\renewcommand{\thepage}{A\arabic{page}}
\renewcommand{\thetable}{A\arabic{table}}
\renewcommand{\thefigure}{A\arabic{figure}}

\setcounter{footnote}{0} 
\setcounter{section}{0}
\setcounter{table}{0}
\setcounter{figure}{0}
\end{document}